\newtheorem{thm}{Theorem}[section]
\newtheorem*{thm*}{Theorem}
\newtheorem{cor}[thm]{Corollary}
\newtheorem*{cor*}{Corollary}
\newtheorem{lem}[thm]{Lemma}
\newtheorem*{lem*}{Lemma}
\newtheorem{prop}[thm]{Proposition}
\newtheorem*{prop*}{Proposition}
\theoremstyle{definition}
\newtheorem{definition}[thm]{Definition}
\newtheorem*{definition*}{Definition}
\newtheorem*{conjecture*}{Conjecture}
\newtheorem*{condition*}{Condition}
\newtheorem*{assumption*}{Assumption}
\theoremstyle{remark}
\newtheorem{remark}[thm]{Remark}
\newtheorem*{remark*}{Remark}
\newtheorem{example}[thm]{Example}
\newtheorem*{problem*}{Problem}
\newenvironment{skproof}{\textit{Sketch of Proof:}}{\hfill$\square$}
\numberwithin{equation}{section}
\newcommand{\BR}{\mathbb R}
\newcommand{\BC}{\mathbb C}
\newcommand{\BZ}{\mathbb Z}
\newcommand{\CA}{\mathcal A}
\newcommand{\CC}{\mathcal C}
\newcommand{\CF}{\mathcal F}
\newcommand{\CM}{\mathcal M}
\newcommand{\CT}{\mathcal T}
\newcommand{\CL}{\mathcal L}
\newcommand{\CW}{\mathcal W}
\newcommand{\id}{\mathrm{id}}
\newcommand{\DPT}{\textnormal{DPT}}
\DeclareMathOperator{\End}{End}
\DeclareMathOperator{\Hom}{Hom}
\DeclareMathOperator{\Id}{Id}
\DeclareMathOperator{\slice}{slice}
\DeclareMathOperator{\Irr}{Irr}
\DeclareMathOperator{\Sk}{Sk}
\newcommand*{\@old@slash}{}\let\@old@slash\slash
\def\slash{\relax\ifmmode\delimiter"502F30E\mathopen{}\else\@old@slash\fi}
\title{Type $A$ DAHA and Doubly Periodic Tableaux}
\author{Léa Bittmann, Alex Chandler, Anton Mellit, Chiara Novarini}
\definecolor{forest}{rgb}{0.03, 0.47, 0.19}
\definecolor{org}{rgb}{1.0, 0.6, 0.0}
\begin{document}
\maketitle

\begin{abstract}
Analogously to the construction of Suzuki and Vazirani, we construct representations of the $GL_m$-type Double Affine Hecke Algebra at roots of unity. These representations are graded and the weight spaces for the $X$-variables are parametrized by the combinatorial objects we call doubly periodic tableaux. We show that our representations exhaust all graded $X$-semisimple representations, and the direct sum of all our representations is faithful. Analogously to the construction of Jordan and Vazirani of rectangular DAHA representations, we show that our representations can be interpreted in terms of ribbon fusion categories associated to $U_q(\mathfrak{gl}_N)$ at roots of unity. Combining the ribbon structure with faithfulness we deduce a conjecture of Morton and Samuelson about realization of DAHA as a skein algebra of the torus with base string modulo certain local relations.
\end{abstract}



















\section{Introduction}

A large part of the inspiration for the present paper comes from the work of Suzuki and Vazirani \cite{Suzuki2005Tableaux} on the classification of certain representations of the \emph{Double Affine Hecke Algebra} (DAHA) of type $A$, away from roots of unity. The representations considered there are \emph{$X$-semisimple} representations, which are semisimple for the action of the embedded Affine Hecke Algebra (AHA) (analogous to the \emph{calibrated} representations of the AHA studied for example in \cite{Ram2003Affine}). The $X$-semisimple representations of the DAHA were first classified by Cherednik in \cite{Cherednik2003Fourier} (see also \cite{CherednikBook}).
The theory of these $X$-semisimple representations is known to be related to that of quantum groups of type $A$ by the work of Jordan and Vazirani \cite{Jordan2018Schur}, where a certain space of intertwiners is used to construct the so-called rectangular representation of the DAHA (a particular example of $X$-semisimple representations).
The main goal of the present work is to extend these results to representations of the DAHA at roots of unity. 

The DAHA representations in \cite{Jordan2018Schur} have a basis indexed by periodic tableaux on an infinite horizontal strip, and the DAHA representations in \cite{Suzuki2005Tableaux} have a basis indexed by periodic tableaux on an infinite periodic skew shape. 
In the present paper, we require periodicity in two directions, thus we introduce the notion of doubly periodic tableaux. 
A large portion of this paper is dedicated to studying the combinatorics of doubly periodic tableaux, which we find to be interesting in their own right. 
We construct DAHA representations in the roots of unity case by defining an action of the DAHA on doubly periodic tableaux. 

The representations we consider are no longer $X$-semisimple in the usual sense, but they can be seen as \emph{graded} $X$-semisimple representations. Moreover, as in the non root of unity case, the representations constructed using tableaux provide a classification of all graded $X$-semisimple representations.
It is interesting to notice that the graded pieces of our modules turn out to be $X$-semisimple representations of a subalgebra of the DAHA called the \emph{small DAHA}, which is
in fact the double affine Hecke algebra associated to the (non-extended) affine Weyl group. We remark that this is indeed a key point in our classification result, as most of the work in \cite{Suzuki2005Tableaux} still applies to these graded pieces.

In the attempt of building a parallel theory for graded $X$-semsimple representations at roots of unity to that for $X$-semisimple representations away from roots of unity, we give a second construction of our modules using quantum groups. Interestingly, quantum groups at roots of unity have a very rich structure. We focus here on the \emph{fusion category}, defined as the quotient of the category of finite-dimensional representations by negligible modules (see for example \cite{Kirillov1996Inner} for a review), and on the associated \emph{fusion ring}, for which Andersen gave a combinatorial description in \cite{Andersen2014Fusion}. We show that the modules coming from doubly periodic tableaux are isomorphic to modules obtained from some spaces of intertwiners in the fusion category, obtaining a result resembling the one proved in \cite{Jordan2018Schur}.

One of the main ingredients we use is the fact that the fusion category has a \emph{ribbon category} structure (as seen in \cite{Kirillov1996Inner}). This structure allows us to define an action of the DAHA on ribbon graphs, which is isomorphic to the modules coming from doubly periodic tableaux. As an application, we also use this crucial ribbon structure to prove a conjecture of Morton-Samuelson giving an interpretation of the DAHA as a tangle algebra. The proof will rely on a faithfulness result for a module built out of the graded $X$-semisimple representations coming from doubly periodic tableaux.



In Section~\ref{sect_DPT} we introduce DPTs, and study their many interesting combinatorial properties. In particular, in Section~\ref{sect_count}, we obtain some formulas counting the number of DPTs.
In Section~\ref{sectDAHA} we build irreducible representations of the DAHA using those doubly periodic tableaux, and show that all \emph{irreducible graded} $X$-semisimple representations of the DAHA can be constructed this way.
In Section~\ref{dpt_intertwiners} we relate DPTs to intertwining spaces for the type $A$ quantum group at a root of unity, using quantum Shur-Weyl duality. We recover the previously defined action of the AHA on DPT using the ribbon category structure. 
In Section~\ref{sect_ribbon} we study some more precise ribbon calculus theory, in order to extend the AHA action on intertwining spaces to a DAHA action. In the end we prove a conjecture of Morton and Samuelson about realization of DAHA as a skein algebra of the torus with base string modulo certain local relations.

The authors are grateful to Peter Samuelson, David Jordan and Monica Vazirani for useful discussions. The work was supported by the Austrian Science Fund (FWF) Project P-31705

\section{Doubly Periodic Tableaux}\label{sect_DPT}

The main combinatorial objects of interest in this paper are functions $\sigma: S\to\BZ$ where $S\subseteq\BZ^2$. We refer to such functions as \textit{tableaux} (singularly \textit{tableau}).
For our coordinate system, we use \textit{reading directions}. That is, $x$ increases to the right, and $y$ increases downward.
We embed $\BZ^2\subseteq\BR^2$ and  depict a tableau $\sigma$ by displaying the value $\sigma(x,y)$ in the center of the square cell with top left corner $(x,y)$, that is, at $(x+\frac{1}{2},y+\frac{1}{2})$.
We use this identification of lattice coordinates with square cells inherently from now on. 
The tableau $\sigma$ will be called \textit{standard} if the values of $\sigma$ strictly increase in both the $x$ and $y$ direction. 
See Figure \ref{labeling_convention} for an example. 

\subsection{Doubly Periodic Tableaux}\label{sectDPTlattice}
Fix integers $K,N,a,b$, let $m=a N - b K$, and assume that $m>0$. 
\begin{definition}
\label{dptdef}
  A \textit{doubly periodic tableau} with respect to $(K,N,a,b)$ is a surjective function $\sigma:\BZ^2\to\BZ$ satisfying
  \begin{enumerate}
    \item $\sigma(x+K,y-N)=\sigma(x,y)$,
    \item $\sigma(x+a,y-b)=\sigma(x,y)+m$,
  \end{enumerate}
  for all $x,y\in\BZ$. 
  
  Let $\DPT(K,N,a,b)$ denote the set of all \textbf{standard} doubly period tableaux with respect to $(K,N,a,b)$.
\end{definition}


\begin{remark}\label{mod_nk}
  For any $r\in\BZ$, it is immediate from the definition that we have \[\DPT(K,N,a,b)=\DPT(K,N,a+r K,b+r N).\]
 One may consider to what extent $(a,b)$ is determined by $K,N,m$ modulo $(K,N)$. 
Let $g=\gcd(K,N)$ and let $a,b,a',b'\in\BZ$ with $aN-bK=m=a'N-b'K$. 
It follows that 
\begin{equation}\label{a_b_determined}
(a^\prime,b^\prime)=(a,b)+s\left(\frac{K}{g},\frac{N}{g}\right)
\end{equation}
for some $s\in\BZ$. 
In the case that $g=1$, one finds that $K,N,m$ determines $(a,b)$ modulo $(K,N)$, but in general there are $g=\gcd(K,N)$ distinct values for $(a,b)$ modulo $(K,N)$, those corresponding to $s=0,\dots,g-1$ in Equation \ref{a_b_determined}.
\end{remark}

\begin{example}
  Suppose that $K,N>0$. Consider the function $\sigma(x,y)=rx+sy+c$ for some integers $r,s,c$. The two conditions in Definition \ref{dptdef} require $r K - s N=0$ and $r a - s b = m$, so we must have $s = K, r = N$:
  \begin{equation}\label{eq:linear dpt}
    \sigma(x,y) = N x + K y + c.
  \end{equation}
  The function \eqref{eq:linear dpt} is surjective if and only if $\gcd(K,N)=1$. 

  More generally, for arbitrary $K,N$ with $g=\gcd(N,K)$ we perturb \eqref{eq:linear dpt} as follows:
  \begin{equation}
  \label{eq:linear dpt pert}
    \sigma(x,y) = N x + K y + c + \left\lfloor\frac{b x+a y \mod m}{m/g}\right\rfloor.
  \end{equation}
  The correction term is doubly periodic and takes values from $0,\ldots,g-1$. Surjectivity can be shown as follows. Let $i\in \BZ$. Begin by choosing $x, y$ so that $0 \leq i-(N x + K y + c) < g$. Consider the effect of iteratively replacing $(x,y)$ by $(x+K/g, y-N/g)$ in \eqref{eq:linear dpt pert}. After each replacement, the term $N x + Ky + c$ does not change, while the correction term cycles through all the possible values $0,1,\ldots,g-1$. Pick $0\leq r<g$ such that the correction term is equal to $i-(Nx+Ky+c)$. It follows that $\sigma(x+rK/g,y-rN/g)=i$.
  \label{linearex}
\end{example}

\begin{lem}\label{lemmaKNpositive}
  The set $\DPT(K,N,a,b)$ is nonempty if and only if $K,N>0$. 
\end{lem}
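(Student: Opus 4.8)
The plan is to treat the two implications separately.

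For the ``if'' direction, when $K,N>0$ I will simply check that the perturbed linear tableau $\sigma(x,y)=Nx+Ky+c+\lfloor (bx+ay \bmod m)/(m/g)\rfloor$ of Example~\ref{linearex}, with $g=\gcd(K,N)$, lies in $\DPT(K,N,a,b)$. Its surjectivity and the two periodicity identities are already established in that example, so the only thing left to verify is standardness: since the correction term takes values in $\{0,\dots,g-1\}$, the increment $\sigma(x+1,y)-\sigma(x,y)$ equals $N$ plus a quantity in $\{-(g-1),\dots,g-1\}$, hence is at least $N-(g-1)\ge 1$ because $g\mid N$ and $N>0$; the same computation in the $y$-direction uses $g\mid K$ and $K>0$. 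This produces a standard doubly periodic tableau, so the set is nonempty.

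For the ``only if'' direction, fix $\sigma\in\DPT(K,N,a,b)$ and argue in two steps. First I will show that $K$ and $N$ are nonzero of the same sign. Condition~(1), applied at a point and at a shifted point, gives $\sigma(x_0+K,y_0-N)=\sigma(x_0,y_0)=\sigma(x_0-K,y_0+N)$ for every $(x_0,y_0)$. If $(K,N)\neq(0,0)$ but $K\ge 0$ and $N\le 0$, then $(x_0+K,y_0-N)$ dominates $(x_0,y_0)$ coordinatewise and is distinct from it, so strict monotonicity of $\sigma$ in both directions forces $\sigma(x_0+K,y_0-N)>\sigma(x_0,y_0)$, a contradiction; the case $K\le 0$, $N\ge 0$ (not both zero) is excluded symmetrically using the second equality, and $(K,N)=(0,0)$ is excluded because it forces $m=0$. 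Hence either $K,N>0$ or $K,N<0$.

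The main work is the second step, ruling out $K,N<0$. Composing conditions~(1) and~(2) gives $\sigma(x+sa+tK,\,y-sb-tN)=\sigma(x,y)+sm$ for all $s,t\in\BZ$. Since $m>0$ and $|K|,|N|>0$, a one-line computation gives $a/|K|<b/|N|$, so the interval $[\,sa/|K|,\,sb/|N|\,]$ is nonempty with length $sm/(|K||N|)$; choosing $s>0$ large enough that this length exceeds $1$, the interval contains an integer $t$. For such $(s,t)$ one has $sa+tK\le 0$ and $sb+tN\ge 0$, so the point $(p,q):=(x+sa+tK,\,y-sb-tN)$ satisfies $p\le x$ and $q\le y$, whence monotonicity gives $\sigma(p,q)\le\sigma(x,y)$, contradicting $\sigma(p,q)=\sigma(x,y)+sm>\sigma(x,y)$. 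Therefore $K,N<0$ is impossible and we conclude $K,N>0$. I expect this final step to be the crux: it is the only place where both periodicity relations and the positivity of $m$ enter together, and it hinges on the size estimate that makes the relevant interval long enough to capture a lattice point.
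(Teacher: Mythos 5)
Your proof is correct, but the ``only if'' direction is a more elaborate route than the one in the paper. You derive the composed identity $\sigma(x+sa+tK,\,y-sb-tN)=\sigma(x,y)+sm$ and then argue in two stages (first that $K$ and $N$ share a sign, then an interval-density estimate to rule out both being negative). The paper instead makes the specific choices $(s,t)=(N,-b)$ and $(s,t)=(K,-a)$, which cancel the shift in one coordinate and yield immediately $\sigma(x+m,y)=\sigma(x,y)+Nm$ and $\sigma(x,y+m)=\sigma(x,y)+Km$. Since $m>0$ and $\sigma$ is standard, $Nm>0$ and $Km>0$ follow in one line each, with no casework or pigeonhole argument. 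Your density argument does work and is a reasonable fallback when one cannot spot the right cancellation, but it obscures the algebraic identity that makes the statement transparent. On the other hand, your treatment of the ``if'' direction actually fills in a detail the paper leaves implicit: the paper simply cites Example~\ref{linearex} to produce a DPT, but that example only checks surjectivity and the two periodicity relations explicitly; your bound $\sigma(x+1,y)-\sigma(x,y)\ge N-(g-1)\ge 1$ (using $g\mid N$, $N>0$), together with the analogous estimate in $y$, is exactly the verification of standardness that should accompany it.
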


\begin{proof}
  Suppose that $\sigma\in\DPT(K,N,a,b)$. Then we have
  \begin{align}
    \sigma(x+m,y) &= \sigma(x+aN-bK, y-bN+bN) = \sigma(x,y) + N m,\label{eq_sigma_x+m_y}\\
    \sigma(x,y+m) &= \sigma(x-a K+K a, y+a N-b K) = \sigma(x,y) + K m.\label{eq_sigma_x_y+m}
  \end{align}
  Since $m>0$, standardness of $\sigma$ imply $Km,Nm>0$ and therefore $K,N>0$. 
Conversely, Formula \eqref{eq:linear dpt pert} in Example \ref{linearex} gives an element of $\DPT(K,N,a,b)$ whenever $K,N>0$. 
\end{proof}

\begin{example}\label{exampleLineTableau}
Let us describe all $\sigma\in\DPT(K,N,a,b)$ such that $\sigma(x,0)=x$ for $0\leq x<m$.
Assume $\sigma(0,1)=\alpha m + \beta>0$, where $\alpha\geq 0$ and $0\leq \beta<m$, $(\alpha,\beta)\neq(0,0)$. 
Since $\sigma(0,1)=\alpha m + \beta$ there exists $s \in \BZ$ satisfying $(0,1)=(\beta,0)+\alpha(a,-b)+s(K,-N)$. Therefore we have $\sigma(-\beta,1)=\alpha m$ and more generally $\sigma(-r\beta,r)=r\alpha m$ for all $r\in \BZ$. In particular, setting $r=N$ we have $\sigma(-N\beta,N)=N \alpha m$, which implies $\sigma(-N\beta-\alpha m,N)=0$ and therefore we have $K=N\beta + \alpha m$. Values divisible by $m$ can only appear at positions $\sigma(-r\beta+tm,r)=r \alpha m + t N m$.
So surjectivity implies that in the sequence $\{r\alpha\}_{r\in\BZ}$ all residues modulo $N$ must appear, so we need to have $\gcd(\alpha,N)=1$.

Choose $u,v \in \BZ$ such that $u N - v \alpha = 1$. Then we have
  \[
    \sigma(-v\beta, v) = v \alpha m = (u N-1) m,\qquad \sigma(-v\beta - u m, v) = -m.
  \]
Therefore $(a,b)=(v \beta + u m, v) \mod (K,N)$.

Conversely, suppose $m, N, \alpha, \beta\in\BZ$ are given satisfying $m,N>0$, $\alpha\geq 0$, $\gcd(\alpha,N)=1$, $0\leq\beta<m$, $(\alpha,\beta)\neq(0,0)$. Choose $u,v\in\BZ$ satisfying $u N - v \alpha = 1$. Set
\[
K=N\beta + \alpha m,\qquad a=v \beta + u m,\qquad b=v.
\]
In the matrix form, we have 
  \[
    \begin{pmatrix} a & K\\ b & N \end{pmatrix} = \begin{pmatrix} m & \beta \\ 0 & 1\end{pmatrix} \begin{pmatrix} u & \alpha \\ v & N \end{pmatrix}.
  \]
Note that a different choice of $u,v$ leads to equivalent vector $(a,b)$ modulo $(K,N)$.

Set $\sigma(-r\beta+i,r)=r \alpha m+i$ for all $r\in\BZ$, $0\leq i<m$ and extend $\sigma$ to the whole of $\BZ^2$ using \eqref{eq_sigma_x+m_y}. Resulting tableau satisfies
\begin{equation}\label{eq:periodicity}
\sigma(x-\beta,y+1)=\sigma(x,y) + \alpha m,\qquad \sigma(x+m,y)=\sigma(x,y) + mN.
\end{equation}
Using $(K,N)=N(\beta,1) + \alpha(m,0)$, $(a,b)=v(\beta,1)+u(m,0)$, we obtain
\[
\sigma(x+K,y-N)-\sigma(x,y)=-N\alpha m + \alpha mN = 0,\quad \sigma(x+a,y-b)-\sigma(x,y)=-v\alpha m + umN=m.
\]
Finally, to see that $\sigma$ is standard we use
\[
\sigma(i,1)=\alpha m + \beta+i>i=\sigma(i,0)\quad(0\leq i <m-\beta),
\]
\[
\sigma(i,1)=\alpha m + \beta+i-m+Nm>i=\sigma(i,0)\quad(m-\beta\leq i <m),
\]
and periodicity conditions \eqref{eq:periodicity}.

\end{example}

\begin{figure}
  \begin{tikzpicture}[yscale=-1,scale=.5]
      \draw[->,thick] (-6,-3.5)--(-4,-3.5);
      \node at (-5,-4){$x$};
      \draw[->,thick] (-6.5,-3)--(-6.5,-1);
      \node at (-7,-2){$y$};
      \draw[step=1cm,black,thin] (-6,-3) grid (7,7);
      \foreach \x in {-6,...,6} \foreach \y in {-3,...,6} {\pgfmathsetmacro\result{2*\x+3*\y} \node[scale=.7] at (\x+0.5,\y+0.5) {\pgfmathprintnumber{\result}}; }
      \draw[red, thick] (0,0) circle (1mm);
      \foreach \x/\y in {-6/6,-5/6,-5/5,-4/5,-4/4,-3/4,-2/4,-2/3,-1/3,-1/2,0/2,1/2,1/1,2/1,2/0,3/0,4/0,4/-1,5/-1,5/-2,6/-2} \draw[fill=red, fill opacity=0.2] (\x,\y) rectangle (\x+1,\y-1);
      \foreach \x/\y in {-7/6,-6/6,-5/6,-5/5,-4/5,-4/4,-3/4,-2/4,-2/3,-1/3,-1/2,0/2,1/2,1/1,2/1,2/0,3/0,4/0,4/-1,5/-1,5/-2} \draw[fill=blue, fill opacity=0.2] (\x+1,\y+1) rectangle (\x+2,\y);
      \foreach \x/\y in {-1/2,0/2,1/2,1/1,2/1} \draw[fill=red, fill opacity=0.4] (\x,\y) rectangle (\x+1,\y-1);
      \foreach \x/\y in {3/1,4/1,5/1,5/0,6/0} \draw[fill=blue, fill opacity=0.4] (\x,\y) rectangle (\x+1,\y-1);
      \draw[very thick,red](-6,5)--(-5,5)--(-5,4)--(-4,4)--(-4,3)--(-2,3)--(-2,2)--(-1,2)--(-1,1)--(1,1)--(1,0)--(2,0)--(2,-1)--(4,-1)--(4,-2)--(5,-2)--(5,-3)--(7,-3);
      \draw[very thick,blue](-6,7)--(-6,6)--(-4,6)--(-4,5)--(-3,5)--(-3,4)--(-2,4)--(-5+4,5-1)--(-5+4,4-1)--(-4+4,4-1)--(-4+4,3-1)--(-2+4,3-1)--(-2+4,2-1)--(-1+4,2-1)--(-1+4,1-1)--(1+4,1-1)--(1+4,0-1)--(2+4,0-1)--(2+4,-1-1)--(7,-2);
      \draw[very thick,forest](-3,7)--(-3,6)--(-2,6)--(-6+4,6-1)--(-4+4,6-1)--(-4+4,5-1)--(-3+4,5-1)--(-3+4,4-1)--(-2+4,4-1)--(-5+4+4,5-1-1)--(-5+4+4,4-1-1)--(-4+4+4,4-1-1)--(-4+4+4,3-1-1)--(6,1)--(6,0)--(7,0);
  \end{tikzpicture} 
  \caption{Shown above is the DPT $\sigma(x,y)=2x+3y$. We consider the case $(K,N,a,b)=(2,3,4,1)$ so $m=5$. The lattice paths $\CL(\sigma),\CL(\sigma)[4,1]$, and $\CL(\sigma)[8,2]$ are shown (in red, blue, and green respectively), and these bound the regions $\Delta^\prime$ (shown in red) and $\Delta^\prime[4,1]$ (shown in blue). The fundamental domain $\Delta$ is shown in darker red, and $\Delta[4,1]$ is shown in darker blue. The lattice point $(0,0)$ is circled in red. 
  }
  \label{labeling_convention}
\end{figure}
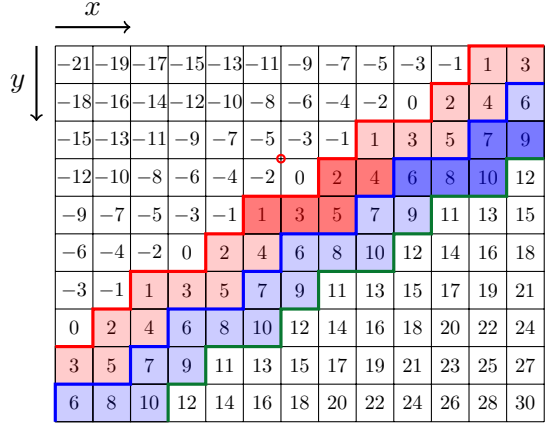

\begin{definition}\label{fundamental_domain_def}
  Let $\sigma\in\DPT(K,N,a,b)$. The \textit{extended fundamental domain} of $\sigma$ is
    \[\Delta^\prime(\sigma)=\{(x,y)\in\BZ^2 \mid  1\leq \sigma(x,y)\leq m\}.\]
  The \textit{fundamental domain} of $\sigma$ is 
  \[\Delta(\sigma)=\{(x,y)\in\Delta^\prime(\sigma) \mid 0\leq y<N\}.\]
\end{definition}

\begin{remark}
  If $\sigma$ is clear from context we will often write $\Delta=\Delta(\sigma)$ and $\Delta'=\Delta'(\sigma)$ for brevity. 
  Given $\sigma\in\DPT(K,N,a,b)$, the restriction $\sigma|_{\Delta'}$ is a standard cylindric tableau in the sense of Postnikov
\cite{Postnikov2005Affine} and Gessel-Krattenthaler \cite{Gessel1997Cylindric}.
\end{remark}

It is useful to consider the natural symmetries on the set of doubly periodic tableaux. There is an action of the abelian group $\CA$ with presentation
\begin{equation}
  \CA=\langle D,L,\pi \ | \ D^KL^{-N}=1,D^{-a}L^b=\pi^m\rangle
\end{equation}
on $\DPT(K,N,a,b)$. 
The generators act on $\sigma\in\DPT(K,N,a,b)$ as follows. For any $(x,y)\in\BZ^2$,
\begin{align}
D\sigma(x,y)&=\sigma(x-1,y)&D\sigma&\in\DPT(K,N,a,b)\label{D_def}\\
L\sigma(x,y)&=\sigma(x,y-1)&L\sigma&\in\DPT(K,N,a,b)\label{L_def}\\
\pi\sigma(x,y)&=\sigma(x,y)+1&\pi\sigma&\in\DPT(K,N,a,b)\label{del_def}
\end{align}
The elements $D^i,L^i,\pi^i$ act bijectively on each $\DPT(K,N,a,b)$ for each $i\in\BZ$ with inverses $D^{-i},L^{-i},\pi^{-i}$. 
Other symmetries exist as well. For example, the map $\sigma(x,y)\mapsto -\sigma(-x,-y)$ is an involution on $\DPT(K,N,a,b)$, and the map
$\sigma(x,y)\mapsto\sigma(y,x)$ gives a bijection
\begin{equation}\label{phi}
  \DPT(K,N,a,b)\xrightarrow{\sim} \DPT(N,K,-b,-a).
\end{equation}


\subsection{DPTs and Periodic Lattice Paths}

\begin{definition}
A \textit{lattice path} $\CL$ in $\BZ^2$ is a sequence $(\CL_i)_{i\in\BZ}$ where $\CL_i\in\BZ^2$, and for each $i\in\BZ$, $\CL_{i+1}=\CL_i+(0,-1)$ or $\CL_{i+1}=\CL_i+(1,0)$.
The \textit{image} of $\CL$ is $\{\CL_i\}_{i\in\BZ}$.
Given a lattice path $\CL$ and integers $a,b$, form the $(a,b)$-shifted path $\CL[a,b]$ where $\CL[a,b]_i=\CL_i+(a,-b)$.
  A lattice path $\CL$ is \textit{$(K,N)$-periodic} if $\CL$ and $\CL[K,N]$ have the same image.
  \end{definition}

  Given a $(K,N)$-periodic lattice path, for each $y\in\BZ$, we define $\CL(y)$, to be the unique value of $x\in\BZ$ such that both $(x,y)$ and $(x,y+1)$ are in the image of $\CL$ (i.e. the $x$ value of the vertical step from $y+1$ to $y$ on $\CL$).
  
There is a partial ordering $\leq$ on the set of $\BZ^2$ lattice paths defined by $\CL\leq\CM$ when $\CL(y)\leq\CM(y)$ for all $y\in\BZ$.

\begin{remark}\label{lattice_path_equivalent}
The collection of all values $\CL(y)$ for $y\in\BZ$ completely determine $\CL$. In fact, an equivalent way of defining $(K,N)$-periodic lattice paths is as weakly decreasing functions $\CL:\BZ\to\BZ$ such that \[\CL(y+N)=\CL(y)-K, \quad \forall y\in \BZ.\]

These are called \textit{cylindric partitions} in \cite{neyman2014cylindric}. 
Another equivalent description is to work with ``infinite diagrams'', i.e. non-empty lower order ideals of $\BZ^2$ with the order defined by $(x,y)\leq (x',y')$ if $x\leq x'$ and $y\leq y'$. The partial ordering $\CL\leq\CM$ defined above coincides with the reverse inclusion ordering of lower order ideals. We may use whichever description is most convenient for the matter at hand. 
\end{remark}

\begin{lem}\label{shift_lemma}
  Let $\CL$ be a lattice path in $\BZ^2$, $(a,b)\in\BZ^2$ and let $\CM=\CL[a,b]$. Then for all $y\in\BZ$,
  \[\CM(y)=\CL(y+b)+a.\]
\end{lem}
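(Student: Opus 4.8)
The plan is to unwind the definitions on both sides and check they agree for each $y\in\BZ$. Recall that $\CM(y)$ is defined as the unique $x$ such that both $(x,y)$ and $(x,y+1)$ lie in the image of $\CM$, i.e. the $x$-coordinate of the vertical step of $\CM$ going from level $y+1$ down to level $y$. Similarly $\CL(y+b)$ is the $x$-coordinate of the vertical step of $\CL$ from level $y+b+1$ down to level $y+b$.

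First I would use that $\CM=\CL[a,b]$ means $\CM_i=\CL_i+(a,-b)$ for all $i$, so the image of $\CM$ is the image of $\CL$ translated by $(a,-b)$: a point $(x,y)$ is in the image of $\CM$ if and only if $(x-a,y+b)$ is in the image of $\CL$. Then $(x,y)$ and $(x,y+1)$ both lie in the image of $\CM$ exactly when $(x-a,y+b)$ and $(x-a,y+b+1)$ both lie in the image of $\CL$; by definition of $\CL(\cdot)$ this says precisely that $x-a=\CL(y+b)$, i.e. $x=\CL(y+b)+a$. Hence $\CM(y)=\CL(y+b)+a$, as claimed. One should also note well-definedness: since $\CL$ is a lattice path, for each level there is a unique such vertical step, so the translated statement transfers uniqueness to $\CM$ without trouble.

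There is essentially no obstacle here; the only thing to be a little careful about is the sign/offset bookkeeping in the shift convention ($\CL[a,b]_i=\CL_i+(a,-b)$, so a shift ``up'' by $b$ in the second coordinate corresponds to replacing $y$ by $y+b$ inside $\CL$), which is exactly where the $+b$ rather than $-b$ in the formula comes from. If one prefers the description of Remark~\ref{lattice_path_equivalent}, where a $(K,N)$-periodic path is a weakly decreasing function $\BZ\to\BZ$, the same computation reads: translating the ``infinite diagram'' / lower order ideal by $(a,-b)$ shifts its column-heights, and the formula $\CM(y)=\CL(y+b)+a$ is the coordinate expression of that translation. Either way the proof is a two-line verification.
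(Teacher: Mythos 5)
Your proof is correct and follows essentially the same route as the paper's: both unwind the definition of $\CL(\cdot)$ via the vertical step and push it through the translation $\CM_i=\CL_i+(a,-b)$. The paper phrases it by choosing the index $i$ of the vertical step in $\CL$ and translating the two endpoints, while you phrase it in terms of membership in the image; the sign/offset bookkeeping in your version is correct.
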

\begin{proof}
Given $y\in\BZ$, choose $i\in\BZ$ such that $\CL_i=(x,y+1)$ and $\CL_{i+1}=(x,y)$ for some $x\in\BZ$. Then $\CM_i=(x+a,y-b+1)$ and $\CM_{i+1}=(x+a,y-b)$ so $\CL(y)=x$ and $\CM(y-b)=x+a$.
\end{proof}

\begin{remark}\label{leq_equiv}
One immediate consequence of Lemma \ref{shift_lemma}, is that $\CL\leq\CL[a,b]$ is equivalent to $\CL(y+b)\geq\CL(y)-a$ for all $y\in\BZ$. Iterating this, we have that $\CL\leq\CL[a,b]$ if and only if the function
\begin{equation}\label{abr-pos-shifting}
  r\mapsto \CL(y+rb)+ ra,
\end{equation}
is increasing on $\BZ$.
\end{remark}


\begin{lem}\label{shift_suff}
  Let $\CL$ be a $(K,N)$-periodic lattice path and suppose that $a\geq K$ and $b\leq N$. Then $\CL<\CL[a,b]$.
\end{lem}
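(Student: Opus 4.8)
The plan is to reduce the strict inequality $\CL<\CL[a,b]$ to a statement about the weakly decreasing function $\CL:\BZ\to\BZ$ using the reformulations already established. By Lemma \ref{shift_lemma}, $\CL[a,b](y)=\CL(y+b)+a$, so I must show $\CL(y)<\CL(y+b)+a$ for all $y\in\BZ$, i.e. $\CL(y)-\CL(y+b)<a$. Since $\CL$ is weakly decreasing and $b\leq N$, we have $y+b\leq y+N$, hence $\CL(y+b)\geq\CL(y+N)=\CL(y)-K$, which gives $\CL(y)-\CL(y+b)\leq K\leq a$. This yields $\CL(y)\leq\CL(y+b)+a$, i.e. $\CL\leq\CL[a,b]$, but I need the \emph{strict} inequality at every $y$.

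To upgrade $\leq$ to $<$, I would argue as follows. The non-strict bound $\CL(y)-\CL(y+b)\leq K\leq a$ becomes an equality only if simultaneously $\CL(y+b)=\CL(y+N)$ (forcing $\CL$ to be constant on $[y+b,y+N]$, using $b\leq N$ and monotonicity) and $K=a$. If $a>K$, strictness is immediate for all $y$ and we are done. The remaining case is $a=K$ together with $b=N$: then $\CL[a,b]=\CL[K,N]$ has the same image as $\CL$ by $(K,N)$-periodicity, and $\CL[K,N](y)=\CL(y+N)+K=\CL(y)$ by Remark \ref{lattice_path_equivalent}; so $\CL=\CL[a,b]$ and the strict inequality \emph{fails}.

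This tells me the hypotheses as literally stated ($a\geq K$, $b\leq N$) are not quite sufficient — the degenerate case $(a,b)=(K,N)$ must be excluded, so I expect the intended hypothesis is $a\geq K$ and $b\leq N$ with $(a,b)\neq(K,N)$, or perhaps $a\geq K$ and $b<N$, or a strict inequality somewhere. Under $a>K$ (any $b\leq N$): strictness at each $y$ is forced by $\CL(y)-\CL(y+b)\leq K<a$. Under $a=K$ and $b<N$: then $y+b<y+N$, and I claim $\CL(y)-\CL(y+b)<K$ for all $y$; indeed if equality held at some $y$ then $\CL$ drops by exactly $K$ over the interval $[y,y+b]$ of length $b<N$ and by another amount $\geq 0$ over $[y+b,y+N]$ summing to $K$, which is fine — so actually equality \emph{can} hold here too (e.g. a path that does all its descending early in the period). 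So the genuinely safe hypothesis is $a>K$, or one must also assume something ruling out such concentration.

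The main obstacle is therefore pinning down exactly which strengthening of the hypothesis makes the strict inequality true at \emph{every} $y$; modulo that, the proof is the two-line monotonicity estimate above. In the writeup I would state the argument for $a>K$ in full — $\CL(y)\leq\CL(y+b)+K<\CL(y+b)+a=\CL[a,b](y)$ for all $y$, whence $\CL<\CL[a,b]$ — and handle any boundary case ($a=K$) by invoking whatever additional hypothesis the authors intend, such as $(a,b)\neq(K,N)$ combined with Remark \ref{leq_equiv} applied to the function $r\mapsto\CL(y+rb)+ra$ to propagate strictness across one full period.
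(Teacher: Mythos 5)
Your opening chain $\CL(y+b)\geq\CL(y+N)=\CL(y)-K\geq\CL(y)-a$ is exactly the paper's computation, and it correctly yields $\CL\leq\CL[a,b]$. Where the proposal goes off the rails is in reading $\CL<\CL[a,b]$ as ``$\CL(y)<\CL[a,b](y)$ for \emph{every} $y$.'' That is not the convention in force here: $<$ is the strict form of the partial order $\leq$ defined just before this lemma, so $\CL<\CM$ means $\CL\leq\CM$ together with $\CL\neq\CM$. You can see pointwise strictness is not what is meant by looking at Lemma~\ref{skew_lemma}, which asserts $\CL(\sigma)<\CL(\sigma)[a,b]$ for \emph{every} DPT $\sigma$; but the linear DPT $\sigma(x,y)=2x+y\in\DPT(1,2,1,1)$ has $\CL(\sigma)(0)=1=\CL(\sigma)[1,1](0)$, so the pointwise-strict version would already fail there.

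Under the correct reading, your estimate plus one short observation finishes the proof. Suppose $\CL=\CL[a,b]$, i.e.\ $\CL(y)=\CL(y+b)+a$ for all $y$. Iterating $N$ times and using $(K,N)$-periodicity gives $\CL(y)=\CL(y+Nb)+Na=\CL(y)-bK+Na$, which forces $m=aN-bK=0$. This contradicts the standing assumption $m>0$ fixed at the start of Section~\ref{sectDPTlattice}, which is in force throughout and which your blind attempt did not take into account. So the lemma is correct exactly as stated; no strengthening such as $a>K$ or $(a,b)\neq(K,N)$ is required. The situations you flagged as potential failures — $a=K$, $b<N$ with the descent concentrated, or $(a,b)=(K,N)$ — are either harmless under the correct meaning of $<$ (a shared $y$-value is allowed, it just yields an empty row of $\Delta'$) or already excluded by $m>0$. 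The paper's proof records only the $\leq$ chain and leaves the $\neq$ step implicit for the same reason.
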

\begin{proof}
We have 
\begin{align*}
\CL(y+b)&\geq\CL(y+N)\\
&= \CL(y)-K\\
&\geq\CL(y)-a\qedhere
\end{align*}
\end{proof}

\begin{definition}\label{LP_fund}
  Let $\CL$ be a $(K,N)$-periodic lattice path such that $\CL<\CL[a,b]$. Let
  \begin{align}
    \Delta'(\CL)&:=\{(x,y)\in\BZ^2 \ | \ \CL(y)\leq x < \CL[a,b](y)\}\\
    \Delta(\CL)&:=\{(x,y)\in\Delta'(\CL) \ | \ 0\leq y < N\}.
  \end{align}
\end{definition}

The following gives a way to obtain periodic lattice paths from doubly periodic tableaux.

\begin{lem}\label{skew_lemma}
  Let $\sigma\in\DPT(K,N,a,b)$. Then 
\[y\mapsto \CL(y)=\min\{x\in\BZ \mid \sigma(x,y)\geq 1\}\]
defines a $(K,N)$-periodic lattice path $\CL=\CL(\sigma)$ such that $\CL<\CL[a,b]$ and $\Delta'(\sigma)=\Delta'(\CL)$.
\end{lem}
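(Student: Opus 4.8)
The plan is to verify each of the three assertions in turn: that $\CL=\CL(\sigma)$ is a well-defined $(K,N)$-periodic lattice path, that $\CL<\CL[a,b]$, and that $\Delta'(\sigma)=\Delta'(\CL)$. First I would check that $\CL(y)=\min\{x\in\BZ\mid\sigma(x,y)\geq 1\}$ is well-defined: for fixed $y$, standardness gives that $x\mapsto\sigma(x,y)$ is strictly increasing, and Equation \eqref{eq_sigma_x+m_y} shows $\sigma(x+m,y)=\sigma(x,y)+Nm$ with $Nm>0$, so $\sigma(\cdot,y)$ takes arbitrarily large positive and arbitrarily large negative values; hence the set $\{x\mid\sigma(x,y)\geq 1\}$ is nonempty and bounded below, so the minimum exists. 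To see $\CL$ is a lattice path in the sense of the earlier definition, I would invoke Remark \ref{lattice_path_equivalent}: it suffices to show $\CL$ is weakly decreasing in $y$ and satisfies $\CL(y+N)=\CL(y)-K$. Weak decrease follows because $\sigma$ increases in the $y$-direction, so if $\sigma(x,y)\geq 1$ then $\sigma(x,y+1)>\sigma(x,y)\geq 1$, whence $\CL(y+1)\leq\CL(y)$. The periodicity $\CL(y-N)=\CL(y)+K$ (equivalently $\CL(y+N)=\CL(y)-K$) is immediate from condition (1) of Definition \ref{dptdef}: $\sigma(x+K,y-N)=\sigma(x,y)$, so $x$ witnesses $\sigma(\cdot,y)\geq 1$ iff $x+K$ witnesses $\sigma(\cdot,y-N)\geq 1$.

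Next, for $\CL<\CL[a,b]$, I would use Lemma \ref{shift_lemma}, which gives $\CL[a,b](y)=\CL(y+b)+a$, so the claim $\CL(y)<\CL(y+b)+a$ for all $y$ is what must be shown. Equivalently, by Remark \ref{leq_equiv}, it is enough to show that $r\mapsto\CL(y+rb)+ra$ is strictly increasing; but $\CL(y+b)+a=\CL[a,b](y)$, and by condition (2) of Definition \ref{dptdef}, $\sigma(x+a,y-b)=\sigma(x,y)+m$ with $m>0$, so $x$ witnesses $\sigma(\cdot,y)\geq 1$ iff $x-a$ witnesses $\sigma(\cdot,y+b)\geq 1-m$. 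Since the smallest $x$ with $\sigma(x,y)\geq 1$ is strictly larger than the smallest $x$ with $\sigma(x,y)\geq 1-m$ (because $\sigma(\cdot,y)$ is strictly increasing and jumps by at most... — here I must be slightly careful), I would argue directly: $\CL[a,b](y)$ is the minimal $x'$ with $\sigma(x'+a,y-b)\geq 1$, i.e. $\sigma(x',y)+m\geq 1$, i.e. $\sigma(x',y)\geq 1-m$. Since $m\geq 1$, every $x$ with $\sigma(x,y)\geq 1$ also has $\sigma(x,y)\geq 1-m$, so $\CL[a,b](y)\leq\CL(y)$; I need the strict inequality, which holds because the value $\sigma(\CL(y)-1,y)\leq 0$ lies in the range $[1-m,0]$ — wait, not necessarily. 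Instead, the cleanest route: $\CL[a,b](y)\leq\CL(y)$ always; and if equality held for some $y$, then by periodicity in $r$ it would force $\CL=\CL[ra,rb]$ for all $r$, contradicting Lemma \ref{shift_suff} applied after using Remark \ref{mod_nk} to replace $(a,b)$ by $(a+rK,b+rN)$ with $r$ large enough that $a+rK\geq K$ and $b+rN>N$... — this needs $b$ to be controllable, which it is since $r$ can be taken large positive so $b+rN\geq N$ fails; so I would instead take $r$ such that the shifted path comparison via Lemma \ref{shift_suff} applies and combine with transitivity of $\leq$ and Remark \ref{leq_equiv} to get strictness. I expect this to be the main obstacle: pinning down the strict inequality cleanly rather than just $\leq$.

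Finally, for $\Delta'(\sigma)=\Delta'(\CL)$, I would unwind both definitions. By Definition \ref{fundamental_domain_def}, $\Delta'(\sigma)=\{(x,y)\mid 1\leq\sigma(x,y)\leq m\}$; by Definition \ref{LP_fund}, $\Delta'(\CL)=\{(x,y)\mid\CL(y)\leq x<\CL[a,b](y)\}$. Fix $y$. Since $\sigma(\cdot,y)$ is strictly increasing, $\sigma(x,y)\geq 1$ iff $x\geq\CL(y)$. And $\sigma(x,y)\leq m$ iff $\sigma(x,y)+m\leq 2m$... — rather, $\sigma(x,y)\leq m$ iff $\sigma(x+a,y-b)=\sigma(x,y)+m\leq 2m$, iff $\sigma(x+a,y-b)\leq 2m$; hmm, more directly $\sigma(x,y)\leq m$ iff $x<\CL(y)+(\text{position where value exceeds }m)$. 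The clean statement: the smallest $x'$ with $\sigma(x',y)>m$, i.e. $\sigma(x',y)\geq m+1$, i.e. $\sigma(x'-a,y+b)=\sigma(x',y)-m\geq 1$, is exactly $\CL(y+b)+a=\CL[a,b](y)$. Hence $\sigma(x,y)\leq m$ iff $x<\CL[a,b](y)$. Combining, $1\leq\sigma(x,y)\leq m$ iff $\CL(y)\leq x<\CL[a,b](y)$, which is exactly $(x,y)\in\Delta'(\CL)$. This completes the proof.
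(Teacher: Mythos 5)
The first part of your argument (well-definedness of the minimum, weak monotonicity in $y$, and $(K,N)$-periodicity of $\CL$) is sound, and the third part (the computation of $\Delta'(\CL)=\Delta'(\sigma)$) correctly arrives at the key identity
\[
\CL[a,b](y)=\min\{x'\in\BZ\mid\sigma(x',y)\geq m+1\},
\]
which is precisely what the paper's proof uses. However, your middle paragraph, where you try to establish $\CL<\CL[a,b]$, contains a sign error that reverses the inequality. You write that $\CL[a,b](y)$ is ``the minimal $x'$ with $\sigma(x'+a,y-b)\geq 1$, i.e.\ $\sigma(x',y)+m\geq 1$, i.e.\ $\sigma(x',y)\geq 1-m$,'' and conclude $\CL[a,b](y)\leq\CL(y)$. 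But from $\CL[a,b](y)=\CL(y+b)+a$, the change of variables $x=x'-a$ gives that $\CL[a,b](y)$ is the minimal $x'$ with $\sigma(x'-a,y+b)\geq 1$, and since $\sigma(x'-a,y+b)=\sigma(x',y)-m$, this is $\sigma(x',y)\geq m+1$, not $\sigma(x',y)\geq 1-m$. Your own third paragraph derives the correct formula and so contradicts your middle one. With the corrected formula, $\CL(y)\leq\CL[a,b](y)$ is immediate from monotonicity of $\sigma(\cdot,y)$ (as $m+1>1$), and $\CL\neq\CL[a,b]$ follows from surjectivity of $\sigma$: the value $1$ appears somewhere, so $\Delta'(\sigma)\neq\emptyset$ and hence $\CL(y)<\CL[a,b](y)$ for at least one $y$.

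The fallback argument you propose for strictness is also unsound as stated, quite apart from the sign problem. You want to replace $(a,b)$ by $(a+rK,b+rN)$ (permitted by Remark~\ref{mod_nk}) so that Lemma~\ref{shift_suff} applies, i.e.\ so that $a+rK\geq K$ and $b+rN\leq N$. These bounds force $1-a/K\leq r\leq 1-b/N$, an interval of length $m/(KN)$, which may contain no integer; for instance $(K,N,a,b)=(5,3,2,1)$ with $m=1$ gives $r\in[3/5,2/3]$. So you cannot in general reduce to the hypotheses of Lemma~\ref{shift_suff}. The clean route is the direct computation in your third paragraph, which is the same as the paper's.
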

\begin{proof}
Define $\CM=\CL[a,b]$, and notice that 
  \[\CM(y)=\CL(y+b)+a=\min\{x \ | \ \sigma(x,y)\geq 1+m\},\]
so we have $\CL<\CM$.
  Now, by definition $\Delta'(\CL)=\{(x,y)\ | \ \CL(y)\leq x\leq\CM(y)\}$ consists of those $(x,y)$ such that $1\leq \sigma(x,y)\leq m$. That is, $\Delta'(\CL)=\Delta^\prime(\sigma)$. 
\end{proof}

\begin{example}
  Continuing from Example \ref{linearex}, consider $\sigma(x,y)=Nx+Ky$.
  Given $y\in\BZ$, we have that $\CL(y)$ is the minimum $x$ value such that $Nx+Ky\geq 1$, so $\CL(y)=\lceil{\frac{1-Ky}{N}}\rceil$. Define $\CM=\CL[a,b]$.
  By Lemma \ref{shift_lemma}, we have $\CM(y)=\CL(y+b)+a=\lceil{\frac{1-Ky-Kb}{N}}\rceil+a$.
  Consider $(K,N,a,b)=(3,2,4,1)$ so that $m=aN-bK=5$.
  We have $\CL(y)=\lceil{\frac{1-3y}{2}}\rceil$ and $\CM(y)=\lceil{\frac{1-3y-3}{2}}\rceil+4$. Notice that $\lceil{\frac{1-3y-3}{2}}\rceil+4>\lceil{\frac{1-3y-8}{2}}\rceil+4=\CL(y)$ so that $\CL<\CM$.
  
  As for the fundamental domain, we have $\CL(0)=1, \CM(0)=3$, giving cells $(1,0),(2,0)$ in $\Delta(\sigma)$ and $\CL(1)=-1, \CM(1)=2$ giving cells $(-1,1), (0,1), (1,1)$ in $\Delta(\sigma)$ (see Figure \ref{labeling_convention}). In this example, there are $8$ standard fillings of $\Delta$, $7$ of which extend periodically to standard fillings of $\Delta^\prime$.
\end{example}


\begin{lem}\label{lem_latt_Delta}
Let $\CL$ be a $(K,N)$-periodic lattice path such that $\CL<\CL[a,b]$. For any $(x,y)\in\BZ^2$ there exists a unique $r\in\BZ$ such that $(x-ra,y+rb) \in \Delta'(\CL)$.
\end{lem}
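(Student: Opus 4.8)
The plan is to fix $(x,y)$ and study how the quantity that decides membership in $\Delta'(\CL)$ behaves as a function of the shift parameter $r$. Recall that $(x-ra,y+rb)\in\Delta'(\CL)$ means precisely
\[
\CL(y+rb)\le x-ra<\CL[a,b](y+rb)=\CL(y+(r+1)b)+a,
\]
where the last equality is Lemma \ref{shift_lemma}. Rearranging, this is equivalent to the pair of inequalities $f(r)\le x$ and $x<f(r+1)$, where I set $f(r):=\CL(y+rb)+ra$. Thus the claim reduces to: \emph{for every integer $x$ there is a unique $r\in\BZ$ with $f(r)\le x<f(r+1)$.}

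The key point is that $f$ is a strictly increasing function $\BZ\to\BZ$. Monotonicity is exactly Remark \ref{leq_equiv}: the hypothesis $\CL<\CL[a,b]$ is equivalent to the function $r\mapsto \CL(y+rb)+ra$ being increasing on $\BZ$ (and strictly, since it takes integer values and is non-constant — indeed $\CL<\CL[a,b]$ gives $f(r+1)>f(r)$ for all $r$). So it remains to check that $f$ is unbounded above and below, which guarantees that the intervals $[f(r),f(r+1))$ for $r\in\BZ$ partition $\BZ$ and hence cover $x$ exactly once. For unboundedness I would use the $(K,N)$-periodicity of $\CL$, which via Remark \ref{lattice_path_equivalent} says $\CL(y+N)=\CL(y)-K$. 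One cannot immediately iterate $f$ because $b$ need not divide $N$; instead, telescoping $f(r+1)-f(r)=\CL(y+(r+1)b)-\CL(y+rb)+a$ over a full period shows $f(r+N)-f(r)=\CL(y+rb+Nb)-\CL(y+rb)+Na=-Kb+Na=m>0$ (using $m=aN-bK>0$), so $f(r+N)=f(r)+m$. Hence $f(r)\to\pm\infty$ as $r\to\pm\infty$, which is exactly what is needed.

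Putting it together: existence follows since $f$ is strictly increasing with $f(r)\to-\infty$ and $f(r)\to+\infty$, so the half-open intervals $\{[f(r),f(r+1)) : r\in\BZ\}$ are disjoint and exhaust $\BZ$, whence any $x\in\BZ$ lies in exactly one of them; uniqueness follows because if $r<r'$ both worked we would get $x<f(r+1)\le f(r')\le x$, a contradiction. I do not expect any real obstacle here — everything is a direct consequence of Lemma \ref{shift_lemma}, Remark \ref{leq_equiv}, and the periodicity rewriting of Remark \ref{lattice_path_equivalent}; the only mild care needed is the observation that $b\nmid N$ in general, so unboundedness should be extracted from the cleaner periodicity relation $f(r+N)=f(r)+m$ rather than from a single step of $f$.
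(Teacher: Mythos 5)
Your proof is correct and follows essentially the same approach as the paper: translate membership of $(x-ra,y+rb)$ in $\Delta'(\CL)$ into the sandwich $f(r)\le x<f(r+1)$ with $f(r)=\CL(y+rb)+ra$, and invoke strict monotonicity of $f$ from Remark~\ref{leq_equiv}. The only stylistic difference is that you supply a separate unboundedness argument via periodicity ($f(r+N)=f(r)+m$), which is harmless but unnecessary — a strictly increasing function $\BZ\to\BZ$ automatically satisfies $f(r+1)\ge f(r)+1$ and hence tends to $\pm\infty$, which is why the paper states the conclusion directly.
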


\begin{proof}
From Fix $(x,y)\in\BZ^2$. From Remark~\ref{leq_equiv}, the map
\[r \mapsto \CL(y+rb) + ra\]
is strictly increasing on $\BZ$. Thus there exists a unique $r\in\BZ$ such that
\begin{align*}
& \CL(y + rb) +ra \leq x < \CL(y + (r+1)b) + (r+1)a,\\
\Leftrightarrow \quad & (x-ra,y+rb) \in \Delta'(\CL).\qedhere
\end{align*}
\end{proof}

More precisely, we also have the following.

\begin{lem}\label{fundamental_lemma}
Let $\CL$ be a $(K,N)$-periodic lattice path such that $\CL<\CL[a,b]$. For any $(x,y)\in\BZ^2$ there exists a unique $(x^\prime,y^\prime,s,r)\in\BZ^4$ such that $(x^\prime,y^\prime)\in \Delta(\CL)$ and $(x,y)=(x^\prime,y^\prime)+s(K,-N)+r(a,-b)$.
\end{lem}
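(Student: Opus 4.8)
The plan is to reduce the two-parameter statement to the one-parameter statement of Lemma~\ref{lem_latt_Delta} by peeling off the $(K,-N)$ direction first and then invoking the $(a,-b)$ direction. Concretely, given $(x,y)\in\BZ^2$, I would first apply Lemma~\ref{lem_latt_Delta} to produce the unique $r\in\BZ$ with $(x-ra,\,y+rb)\in\Delta'(\CL)$. Write $(x_0,y_0)=(x-ra,y+rb)$, so $\CL(y_0)\leq x_0<\CL[a,b](y_0)$. The remaining freedom is to move $(x_0,y_0)$ by integer multiples of $(K,-N)$ so as to land in $\Delta(\CL)$, i.e.\ to adjust the $y$-coordinate into the window $0\leq y'<N$; since $N>0$, there is a unique $s\in\BZ$ with $0\leq y_0+sN<N$ — wait, I need $(x',y')=(x_0,y_0)-s(K,-N)$, so I choose the unique $s$ with $0\le y_0+sN<N$ and set $x'=x_0-sK$, $y'=y_0+sN$.

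The key remaining step is to check that this $(x',y')$ still lies in $\Delta'(\CL)$, so that it lands in $\Delta(\CL)$. This is exactly where $(K,N)$-periodicity of $\CL$ enters: from Remark~\ref{lattice_path_equivalent} we have $\CL(y'+sN)=\CL(y')-sK$, hence $\CL(y')=\CL(y_0+sN)=\CL(y_0)-sK$ rearranges to $\CL(y_0)=\CL(y')+sK$ — more carefully, using $y_0=y'-sN$ and the periodicity relation $\CL(y-N)=\CL(y)+K$ iterated $s$ times, one gets $\CL(y_0)=\CL(y')+sK$. Since translation by $(K,-N)$ commutes with the shift $[a,b]$ (both are lattice translations), we likewise get $\CL[a,b](y_0)=\CL[a,b](y')+sK$. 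Subtracting $sK$ throughout the inequality $\CL(y_0)\le x_0<\CL[a,b](y_0)$ and using $x'=x_0-sK$ yields $\CL(y')\le x'<\CL[a,b](y')$, i.e.\ $(x',y')\in\Delta'(\CL)$; combined with $0\le y'<N$ this gives $(x',y')\in\Delta(\CL)$. Then $(x,y)=(x',y')+s(K,-N)+r(a,-b)$ by construction.

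For uniqueness, suppose $(x,y)=(x_1',y_1')+s_1(K,-N)+r_1(a,-b)$ with $(x_1',y_1')\in\Delta(\CL)$. Because $(x_1',y_1')\in\Delta'(\CL)$ and it is obtained from $(x,y)$ by a shift of the form $-s_1(K,-N)-r_1(a,-b)$, I first want to recover $r_1$: applying the $(K,-N)$-periodicity of $\CL$ shows that translating any element of $\Delta'(\CL)$ by a multiple of $(K,-N)$ stays in $\Delta'(\CL)$, so $(x,y)-r_1(a,-b)=(x_1'+s_1K,\,y_1'-s_1N)\in\Delta'(\CL)$; by the uniqueness clause of Lemma~\ref{lem_latt_Delta} this forces $r_1=r$. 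Hence $(x_1'+s_1K,y_1'-s_1N)=(x_0,y_0)$, and comparing $y$-coordinates gives $y_1'-s_1N=y_0=y'+sN-\ldots$ — since $0\le y_1'<N$ and $0\le y'<N$ and $y_1'\equiv y'\pmod N$, we get $y_1'=y'$, hence $s_1=s$ and then $x_1'=x'$.

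I expect the only genuine subtlety to be bookkeeping the sign conventions and the interaction of the two translations — verifying that translating by $(K,-N)$ preserves $\Delta'(\CL)$, which is the content of $(K,N)$-periodicity, and that it therefore does not disturb the value of $r$ from Lemma~\ref{lem_latt_Delta}. Everything else is a routine consequence of $N>0$ (giving a unique residue window for $y'$) and of Lemma~\ref{lem_latt_Delta} (giving existence and uniqueness of $r$), so the proof is essentially a matter of assembling these two ingredients carefully.
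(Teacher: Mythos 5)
Your proposal is correct and takes essentially the same route as the paper: apply Lemma~\ref{lem_latt_Delta} to pin down $r$, then use $(K,N)$-periodicity to pick the unique $s$ putting $y'$ in the window $[0,N)$. You are somewhat more explicit than the paper in verifying that translation by $(K,-N)$ preserves membership in $\Delta'(\CL)$ and in spelling out the uniqueness argument, but the decomposition and the ingredients used are the same.
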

\begin{proof}
 Using Lemma~\ref{lem_latt_Delta}, given $(x,y)\in \BZ^2$ there is a unique $r\in\BZ$ such that $(x'',y''):=(x,y)-r(a,-b)\in\Delta'(\CL)$. Furthermore, there exists a unique $s\in\BZ$ such that $0\leq y''+sN<N$ and therefore $(x',y'):=(x'',y'')-s(K,-N)\in\Delta(\CL)$. Thus we have found unique $(x',y',s,r)$ such that $(x,y)=(x',y')+s(K,-N)+r(a,-b)$ and $(x',y')\in\Delta(\CL)$.
\end{proof}

\begin{lem}\label{fund_lem_iff}
  Let $\sigma\in\DPT(K,N,a,b)$, $r\in\BZ$, and $(x,y),(x',y')\in\BZ^2$. Then $\sigma(x,y)=\sigma(x',y')+rm$ if and only if $(x,y)=(x',y')+s(K,-N)+r(a,-b)$ for some $s\in\BZ$. 
\end{lem}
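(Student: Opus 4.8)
The plan is to prove both implications by leveraging the structure results already established. The key observation is that the two periodicity conditions in Definition~\ref{dptdef} generate, under iteration, exactly the sublattice of translations $(x,y)\mapsto(x,y)+s(K,-N)+r(a,-b)$, under which $\sigma$ changes by adding $rm$ (the $(K,-N)$ direction contributes $0$, the $(a,-b)$ direction contributes $m$). This handles the ``if'' direction immediately: if $(x,y)=(x',y')+s(K,-N)+r(a,-b)$, then iterating condition (1) $s$ times and condition (2) $r$ times (using also their inverses for negative $s,r$) yields $\sigma(x,y)=\sigma(x',y')+rm$.

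For the ``only if'' direction, suppose $\sigma(x,y)=\sigma(x',y')+rm$. First I would reduce to the case where both points lie in the fundamental domain $\Delta=\Delta(\sigma)=\Delta(\CL(\sigma))$, using Lemma~\ref{fundamental_lemma}: write $(x,y)=(x_0,y_0)+s_1(K,-N)+r_1(a,-b)$ and $(x',y')=(x_0',y_0')+s_2(K,-N)+r_2(a,-b)$ with $(x_0,y_0),(x_0',y_0')\in\Delta$. By the ``if'' direction just proven, $\sigma(x_0,y_0)=\sigma(x,y)-r_1 m$ and $\sigma(x_0',y_0')=\sigma(x',y')-r_2 m$, so $\sigma(x_0,y_0)=\sigma(x_0',y_0')+(r+r_1-r_2)m$. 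But both $\sigma(x_0,y_0)$ and $\sigma(x_0',y_0')$ lie in $\{1,\dots,m\}$ by Definition~\ref{fundamental_domain_def} (since $\Delta\subseteq\Delta'$), so two values in $\{1,\dots,m\}$ differing by a multiple of $m$ must be equal: $\sigma(x_0,y_0)=\sigma(x_0',y_0')$ and $r=r_2-r_1$.

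It then remains to show that $\sigma(x_0,y_0)=\sigma(x_0',y_0')$ with both points in $\Delta$ forces $(x_0,y_0)=(x_0',y_0')$; combining this with the displayed expressions gives $(x,y)-(x',y')=(s_1-s_2)(K,-N)+(r_1-r_2)(a,-b)$ with $r_1-r_2=-r$, which is the desired form. This injectivity of $\sigma|_\Delta$ is the step I expect to be the main obstacle, and I would argue it via standardness. Suppose $(x_0,y_0)\neq(x_0',y_0')$ in $\Delta$ with equal $\sigma$-value. Since $\sigma$ strictly increases in both coordinate directions, we cannot have $x_0\le x_0'$ and $y_0\le y_0'$ (nor the reverse) unless the points coincide; so WLOG $x_0<x_0'$ and $y_0>y_0'$. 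Now move $(x_0',y_0')$ by the translation $(K,-N)$ an appropriate number of times, or use that along any row $\sigma$ is strictly increasing and $\sigma(x+m,y)=\sigma(x,y)+Nm$ from \eqref{eq_sigma_x+m_y}: the set of cells in a fixed row $y$ with a given $\sigma$-value is a single $\BZ$-orbit under $x\mapsto x+m$, and within one period $[\CL(y),\CL[a,b](y))$ (which has length at most... ) the values are strictly increasing hence distinct. Comparing the value at $(x_0,y_0)$ with values in row $y_0'$: since $\sigma$ is standard, $\sigma(x_0,y_0)>\sigma(x_0,y_0')$, and $\sigma(\cdot,y_0')$ is strictly increasing along its row with all residues accounted for, so the unique cell in row $y_0'$ with value $\sigma(x_0,y_0)$ lies strictly to the right of $(x_0,y_0')$; tracking this against the lattice path constraints defining $\Delta$ (namely $\CL(y_0')\le x<\CL[a,b](y_0')$ and the inequality $\CL<\CL[a,b]$ from Lemma~\ref{skew_lemma}) forces this cell out of $\Delta'$, contradicting $(x_0',y_0')\in\Delta$. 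Making this last geometric comparison precise — essentially that $\Delta$ meets each ``value class'' of $\sigma$ exactly once — is the crux; it may be cleanest to instead observe directly that $\sigma$ restricted to $\Delta'$ is a standard cylindric tableau whose $m$ distinct values $1,\dots,m$ each occur exactly once in $\Delta$, which follows by counting: $|\Delta|=m$ (each value $1,\dots,m$ appears, and appears only once by standardness along the path), giving the bijection $\sigma|_\Delta\colon\Delta\to\{1,\dots,m\}$.
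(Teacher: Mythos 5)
Your argument follows the same route as the paper's: reduce to the fundamental domain $\Delta=\Delta(\sigma)$ and invoke injectivity of $\sigma|_\Delta$, which the paper asserts without proof and which you correctly identify at the end as following by counting ($|\Delta|=m$ from Lemma~\ref{lem_Delta_m_cells}, together with the fact that every value $1,\dots,m$ occurs in $\Delta$ by surjectivity of $\sigma$ and $(K,N)$-periodicity) --- the intermediate geometric discussion about rows and lattice-path inequalities can simply be dropped. One sign slip to fix: you should get $\sigma(x_0,y_0)-\sigma(x_0',y_0')=(r+r_2-r_1)m$, hence $r=r_1-r_2$, which then yields $(x,y)-(x',y')=(s_1-s_2)(K,-N)+r(a,-b)$ directly, whereas the $r_1-r_2=-r$ you wrote would give the wrong sign on the $(a,-b)$ term.
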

\begin{proof}
  The backwards direction follows immediately from the definition of $\DPT(K,N,a,b)$. For the forward direction, assume without loss of generality that $(x,y)\in\Delta(\sigma)$. Then $(x',y')+r(a,-b)\in\Delta'(\sigma)$ since $\sigma((x',y')+r(a,-b))=\sigma(x',y')+rm=\sigma(x,y)$. Now by Lemma \ref{fundamental_lemma} there exists $s\in\BZ$ such that $(x',y')+r(a,-b)+s(K,-N)\in\Delta(\sigma)$. Since $\sigma$ assigns distinct values to all cells in $\Delta(\sigma)$, the result follows. 
\end{proof}



\begin{lem}\label{lem_Delta_m_cells}
  Let $\CL$ be a $(K,N)$-periodic lattice path such that $\CL<\CL[a,b]$. Then $\Delta(\CL)$ contains exactly $m$ cells.
\end{lem}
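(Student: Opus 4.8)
The claim is that $\Delta(\CL)$ has exactly $m$ cells whenever $\CL$ is a $(K,N)$-periodic lattice path with $\CL < \CL[a,b]$. The plan is to count cells row by row and then sum over the $N$ rows $y = 0, 1, \dots, N-1$ that appear in $\Delta(\CL)$. By Definition~\ref{LP_fund}, for each fixed $y$ the cells of $\Delta'(\CL)$ in row $y$ are exactly those $(x,y)$ with $\CL(y) \le x < \CL[a,b](y)$, so row $y$ contributes $\CL[a,b](y) - \CL(y)$ cells (a nonnegative integer, since $\CL < \CL[a,b]$). By Lemma~\ref{shift_lemma}, $\CL[a,b](y) = \CL(y+b) + a$, so the number of cells in $\Delta(\CL)$ is
\[
  \#\Delta(\CL) = \sum_{y=0}^{N-1}\bigl(\CL(y+b) + a - \CL(y)\bigr) = aN + \sum_{y=0}^{N-1}\CL(y+b) - \sum_{y=0}^{N-1}\CL(y).
\]

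\textbf{Key step: the telescoping/periodicity computation.} It remains to show the difference of the two sums equals $-bK$, so that the total is $aN - bK = m$. Here I would use the periodicity relation from Remark~\ref{lattice_path_equivalent}, namely $\CL(y+N) = \CL(y) - K$ for all $y$. The sum $\sum_{y=0}^{N-1}\CL(y+b)$ ranges over a window of $N$ consecutive integers shifted by $b$; comparing it termwise to $\sum_{y=0}^{N-1}\CL(y)$, each index $y+b$ that lies outside $[0,N-1]$ can be brought back inside by subtracting a multiple of $N$ from the argument, which by the periodicity relation adds a corresponding multiple of $K$ to the value. More concretely: writing $b = qN + b_0$ is unnecessary — it is cleanest to observe that for \emph{any} $c \in \BZ$ the function $y \mapsto \CL(y) + \frac{K}{N}\cdot(\text{something})$... actually the clean statement is that $\sum_{y=0}^{N-1}\CL(y+c) = \sum_{y=0}^{N-1}\CL(y) - cK$ for every $c \in \BZ$, which one proves by induction on $c$: incrementing $c$ by $1$ replaces the term $\CL(c)$ (for the right endpoint, appropriately) — more precisely, $\sum_{y=0}^{N-1}\CL(y+c+1) - \sum_{y=0}^{N-1}\CL(y+c) = \CL(N+c) - \CL(c) = -K$ by the periodicity relation, and the base case $c=0$ is trivial; the case $c<0$ follows by running the induction backwards. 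Applying this with $c = b$ gives $\sum_{y=0}^{N-1}\CL(y+b) - \sum_{y=0}^{N-1}\CL(y) = -bK$, hence $\#\Delta(\CL) = aN - bK = m$.

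\textbf{Main obstacle.} There is no serious obstacle; the only care needed is in the bookkeeping of the periodic sum, i.e.\ making sure the identity $\sum_{y=0}^{N-1}\CL(y+c) = \sum_{y=0}^{N-1}\CL(y) - cK$ is handled for all integers $c$ (both signs), which the induction above does cleanly. One should also note at the outset that $\Delta(\CL)$ is finite and each row contributes a well-defined nonnegative count, which is immediate from $\CL < \CL[a,b]$ together with Remark~\ref{leq_equiv}. Everything else is the elementary summation displayed above.
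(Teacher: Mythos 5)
Your proof is correct, but it takes a genuinely different route from the paper. The paper argues geometrically: the portion of the lattice path from $y=0$ to $y=N-1$ consists of $K$ horizontal steps and $N$ vertical steps; shifting the path right by one unit contributes one new cell for each vertical step ($N$ cells), while shifting it up by one unit removes one cell for each horizontal step ($K$ cells), so shifting by $(a,-b)$ contributes $aN-bK=m$ cells in total. You instead count cells row by row, writing $\#\Delta(\CL)=\sum_{y=0}^{N-1}\bigl(\CL(y+b)+a-\CL(y)\bigr)$ via Lemma~\ref{shift_lemma}, and then evaluate the sum using the periodicity relation $\CL(y+N)=\CL(y)-K$ through the telescoping identity $\sum_{y=0}^{N-1}\CL(y+c)=\sum_{y=0}^{N-1}\CL(y)-cK$. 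The two arguments are of roughly equal length, but yours is fully explicit and algebraically self-contained, whereas the paper's is more visual and implicitly assumes the additivity of the "add a row of cells per vertical step / remove a row per horizontal step" operations across the $a$ right-shifts and $b$ up-shifts. Your telescoping lemma cleanly encapsulates exactly what that additivity amounts to, so nothing is lost and the bookkeeping is airtight.
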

\begin{proof}
  Since $\CL$ is $(K,N)$ periodic, the portion of the lattice path from $y=0$ to $y=N-1$ consists of $K$ steps in the $(1,0)$ direction and $N$ steps in the $(0,-1)$ direction. Shifting $\CL$ to the right by one unit contributes one cell for each $(0,-1)$ step, and shifting $\CL$ up by one unit subtracts one cell for each $(1,0)$ step. Therefore shifting right by $a$ units and up by $b$ units contributes $m=aN-bK$ cells to $\Delta$. 
\end{proof} 

Given a $(K,N)$-periodic lattice path $\CL$ it follows from Lemma \ref{fundamental_lemma}, that $\BZ^2$ is partitioned by copies of $\Delta:=\Delta(\CL)$ indexed by pairs $(r,s)\in\BZ^2$. More precisely, given $s,r\in\BZ$ we define
\begin{equation}
  \Delta_{s,r}:=\{(x,y)+s(K,-N)+r(a,-b)\ | \ (x,y)\in\Delta\}.
\end{equation} 
One has
\[\BZ^2 = \bigsqcup_{(s,r)\in\BZ^2}\Delta_{s,r}. \]

Given a $(K,N)$-periodic lattice path $\CL$, each filling $\sigma$ of $\Delta(\CL)$ by numbers $1,\dots,m$ can be extended to a filling $\sigma'$ of $\Delta'(\CL)$ by periodic extension: for each pair $(x,y)\in\Delta'(\CL)$, there exists a unique $s\in\BZ$ such that $(x+Ks,y-Ns)\in\Delta$, and in this case we define $\sigma'(x,y)=\sigma(x+Ks,y-Ns)$. Note that the extended filling is not necessarily standard. 

\begin{lem}\label{extend_suff}
  Suppose that $a\leq K$ and $b\geq N-1$. Let $\CL$ be a $(K,N)$-periodic lattice path. Then every standard filling of $\Delta(\CL)$ extends to a standard filling of $\Delta'(\CL)$.
\end{lem}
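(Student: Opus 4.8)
The goal is to show that when $a\le K$ and $b\ge N-1$, every standard filling $\sigma$ of $\Delta(\CL)$ extends periodically to a standard filling $\sigma'$ of $\Delta'(\CL)$. Standardness of $\sigma'$ means: for every cell $(x,y)\in\Delta'(\CL)$ such that $(x+1,y)\in\Delta'(\CL)$ we have $\sigma'(x,y)<\sigma'(x+1,y)$, and similarly $\sigma'(x,y)<\sigma'(x,y+1)$ whenever $(x,y+1)\in\Delta'(\CL)$. Since $\Delta'(\CL)$ is the disjoint union of the $(K,-N)$-translates of $\Delta(\CL)$ (by Lemma~\ref{fundamental_lemma} restricted to $r$ fixed, or directly from the partition statement), and since $\sigma$ is already standard on $\Delta(\CL)$, the only inequalities that could fail are those comparing a cell near the ``top'' copy of $\Delta$ with a cell in the copy shifted by $(K,-N)$ — i.e.\ adjacent cells that straddle the boundary between two translated fundamental domains.

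First I would make precise which adjacencies cross a boundary. A horizontal adjacency $(x,y),(x+1,y)$ both in $\Delta'(\CL)$ lies within a single translate $\Delta(\CL)+s(K,-N)$ unless $x+1=\CL(y)+s K$ for the relevant $s$, i.e.\ unless $(x+1,y)$ sits on the left lattice-path boundary of a translate; similarly a vertical adjacency $(x,y),(x,y+1)$ crosses a boundary exactly when $y+1\equiv 0\pmod N$, say $y=sN-1$, so that $(x,y)$ is in the translate $\Delta+s'(K,-N)$ with $s'=s-1$ relative region... — I would organize this by writing every cell of $\Delta'(\CL)$ uniquely as $(x_0,y_0)+s(K,-N)$ with $(x_0,y_0)\in\Delta(\CL)$ and reducing the desired inequality, via $\sigma'\bigl((x_0,y_0)+s(K,-N)\bigr)=\sigma(x_0,y_0)$, to an inequality purely among values of $\sigma$ on $\Delta(\CL)$ together with the periodicity relations \eqref{eq_sigma_x+m_y} and \eqref{eq_sigma_x_y+m} for the periodic extension of $\sigma$ across the $(a,-b)$-direction — no wait, here we only extend across $(K,-N)$, so the relevant relation is just that the two adjacent cells lie in consecutive $(K,-N)$-translates and $\sigma'$ is literally constant on $(K,-N)$-orbits. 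The cross-boundary inequality then becomes: if $(x_0,y_0)\in\Delta(\CL)$ and $(x_0,y_0)+(1,0)-(K,-N)=(x_0+1-K,y_0+N)\in\Delta(\CL)$, we need $\sigma(x_0,y_0)<\sigma(x_0+1-K,y_0+N)$; and the vertical analogue with $(x_0,y_0)$ and $(x_0-K,y_0+N+1)$... I would instead phrase everything in terms of the lattice path directly, as follows.

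The cleanest route: use Lemma~\ref{skew_lemma}/Remark~\ref{lattice_path_equivalent} to think of $\Delta(\CL)$ as a cylindric skew diagram sitting between $\CL$ and $\CL[a,b]$, with the rows $y=0,\dots,N-1$. A horizontal step of $\CL$ at height $y$ happens iff $\CL(y)=\CL(y-1)$ roughly; the point is that standardness of $\sigma'$ on $\Delta'(\CL)$ fails only if there is an entry on the rightmost end of row $y$ of one copy of $\Delta$ that is $\ge$ an entry directly below-or-right of it in the copy translated by $(K,-N)$. Because $a\le K$, the shifted path $\CL[a,b]$ does not reach as far right (relative to the period) as a full period $K$ would, so the right boundary of $\Delta$ in row $y+N$ lies weakly to the left of where it would need to be to create a bad horizontal adjacency; because $b\ge N-1$, the vertical extent is controlled so that when we descend $N+1$ rows (from a cell in $\Delta$ to the cell below it in the next $(K,-N)$-copy) the horizontal shift $K$ is large enough that the two cells are in fact not vertically adjacent — i.e.\ no vertical adjacency crosses the boundary at all. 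So I expect exactly one of the two types of cross-boundary adjacency to be automatically vacuous and the other to be handled by a short monotonicity argument using that $\sigma$ takes values $1,\dots,m$ on the $m$ cells of $\Delta$ (Lemma~\ref{lem_Delta_m_cells}): the value at the left end of a row is forced to be small enough, or rather, one uses $\sigma'\ge 1$ on all of $\Delta'$ together with $\sigma'\le m$...

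Concretely, the key steps in order: (1) Record that $\Delta'(\CL)=\bigsqcup_{s\in\BZ}\bigl(\Delta(\CL)+s(K,-N)\bigr)$ and that $\sigma'$ is constant on $(K,-N)$-orbits, so all within-copy inequalities hold by hypothesis. (2) Enumerate cross-boundary adjacent pairs: a pair $\{(x,y),(x,y+1)\}\subseteq\Delta'(\CL)$ with the two cells in different copies, and a pair $\{(x,y),(x+1,y)\}\subseteq\Delta'(\CL)$ likewise. (3) Show, using $b\ge N-1$ and $(K,N)$-periodicity of $\CL$ (weakly decreasing, $\CL(y+N)=\CL(y)-K$), that no vertical pair is cross-boundary — the vertical neighbours $(x,y),(x,y+1)$ always lie in the same $(K,-N)$-translate of $\Delta$, because the copies are stacked with horizontal offset $K$ per $N$ rows and $b\ge N-1$ forces... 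I'd verify that $(x,y)\in\Delta+s(K,-N)$ implies $(x,y+1)\in$ the same copy unless $y+1$ crosses a multiple of $N$, and at such a crossing the $x$-coordinate jumps by $K$ so the cells can't both be in $\Delta'$ and vertically adjacent. (4) For horizontal cross-boundary pairs $(x,y),(x+1,y)$ with $(x,y)$ the right-end cell of row $y$ in copy $\Delta+s(K,-N)$ and $(x+1,y)$ the left-end cell of row $y$ in copy $\Delta+(s+1)(K,-N)$: here $a\le K$ is exactly what guarantees such adjacency can occur only when the gap closes, and then $\sigma'(x+1,y)=\sigma(x+1-K,y+N)$ wait — I need $\sigma'(x+1,y)=\sigma'(x+1-K,y+N)+\text{nothing}$ since $\sigma'$ is $(K,-N)$-periodic, so $\sigma'(x+1,y)=\sigma((x+1)-( s+1)K\ldots)$; the upshot is that the inequality $\sigma'(x,y)<\sigma'(x+1,y)$ reduces to comparing the largest entry in row $y$ of $\Delta$ with the smallest entry in row $y+N$ of $\Delta$, and I would close this by the observation that standardness of $\sigma$ within $\Delta$ plus $\CL<\CL[a,b]$ forces the entry below-left relation — more honestly, I would cite that $\sigma$ restricted to $\Delta'$ being a filling (not yet known standard) has $\sigma'(x,y+N)=\sigma'(x+K,y)$, so the needed inequality is $\sigma'(x,y)<\sigma'(x+1,y)$ where $(x+1,y)$ and $(x+1+K,y-N)$... and $(x+1+K,y-N)\in\Delta'$, $\sigma'$ there equals $\sigma'(x+1,y)$; since $\CL$ is weakly decreasing this cell is weakly right of $(x,y)$, and the within-$\Delta$ standardness chains the comparison.

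\medskip

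\noindent\textbf{Expected main obstacle.} The delicate point is step (4): translating "$a\le K$'' into the statement that a horizontal cross-boundary adjacency, when it exists, is pinned down to comparing specific cells, and then proving that inequality from within-$\Delta$ standardness of $\sigma$ alone. This requires carefully tracking, via Lemma~\ref{shift_lemma} and the weakly-decreasing/periodic description of $\CL$ (Remark~\ref{lattice_path_equivalent}), exactly which cell of $\Delta(\CL)$ corresponds to $(x+1,y)$ under the $(K,-N)$-shift, and exhibiting a chain of horizontal/vertical steps inside $\Delta$ connecting $(x,y)$'s orbit-representative to that cell so that standardness of $\sigma$ propagates. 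The hypotheses $a\le K$ and $b\ge N-1$ are presumably jointly exactly what makes such a chain exist (one bounds the horizontal drift, the other the vertical drift, of the relevant cell relative to the single row $y$). Getting the two inequalities and the boundary bookkeeping exactly right — including the edge cases where a row of $\Delta$ is empty or where $\CL$ has several consecutive horizontal steps — is where the real work lies; everything else is the routine partition-of-$\BZ^2$ observation.
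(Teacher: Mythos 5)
Your decomposition of $\Delta'(\CL)$ into the $(K,-N)$-translates of $\Delta(\CL)$ and the observation that only cross-boundary adjacencies need checking is the right frame, and your step~(3) — vertical neighbours across a multiple-of-$N$ row boundary are too far apart horizontally to both lie in $\Delta'$ — is the substance of the paper's proof. But your step~(4) is chasing an obstruction that does not exist: the copies $\Delta(\CL)+s(K,-N)$ occupy \emph{disjoint} row bands $\{y : -sN\le y<(1-s)N\}$, so two cells at the same height $y$ automatically lie in the same copy, and horizontal adjacency is always within-copy. There is no ``right-end of row $y$ in copy $s$'' adjacent to a ``left-end of row $y$ in copy $s+1$'', and the elaborate chain-of-steps argument you flag as the ``expected main obstacle'' is vacuous. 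This matters because you have also misallocated the hypotheses: you assign $b\ge N-1$ to step~(3) and $a\le K$ to step~(4), whereas both are needed together for the vertical case, and your step~(3) as written never actually produces the inequality that closes it.

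The argument the paper gives is much shorter and worth internalizing: $\Delta(\CL)$ sits in the rows $0,\dots,N-1$ and in the columns $[\CL(N-1),\,\CL(b)+a-1]$ (leftmost point of $\CL$ on those rows, rightmost point of $\CL[a,b]$), i.e.\ inside a rectangle of height exactly $N$ and width $\CL(b)+a-\CL(N-1)$. Since $b\ge N-1$ and $\CL$ is weakly decreasing, $\CL(b)\le\CL(N-1)$, so the width is at most $a\le K$. A width-$\le K$, height-$N$ region and its $(K,-N)$-translate can never contain cells that are vertically adjacent (the translate in row $-1$ starts at column $\ge\CL(N-1)+K\ge\CL(b)+a$, past the right edge of $\Delta$ in row $0$), and the disjoint row bands already rule out horizontal cross-boundary adjacency. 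Hence no inequality needs to be checked at all; the periodic extension is standard automatically. Your plan, by contrast, would have you trying to prove a ``cross-boundary comparison'' that never arises, while the one genuine step (the width bound) is left as a hand-wave.
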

\begin{proof}
  The region $\Delta(\CL)$ is bounded inside a rectangle of width $\CL(b)+a-\CL(N-1)$ and height $N$. The desired result follows in the case that $\CL(b)+a-\CL(N-1)\leq K$, which indeed holds under the assumptions given. 
\end{proof}

Let $\Omega(K,N,a,b)$ denote the set of pairs $(\CL,\sigma)$ where 
\begin{enumerate}
\item $\CL$ is a $(K,N)$-periodic lattice path satisfying $\CL<\CL[a,b]$,
\item $\sigma$ is a standard filling of $\Delta(\CL)$ with numbers $1,\dots,m$ such that the periodic extension $\sigma^\prime$ to $\Delta^\prime$ is also standard.
\end{enumerate}


\begin{thm}
\label{dpt_bijection}
  The map 
  \[\sigma\mapsto (\CL(\sigma),\sigma|_{\Delta(\sigma)})\]
  as defined in Lemma~\ref{skew_lemma}, defines a bijection 
  \[\DPT(K,N,a,b) \simeq \Omega(K,N,a,b).\]
\end{thm}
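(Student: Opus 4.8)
The plan is to construct an explicit inverse to the map $\Phi\colon\sigma\mapsto(\CL(\sigma),\sigma|_{\Delta(\sigma)})$ and check that the two composites are the identity. First I would verify that $\Phi$ is well-defined, i.e. lands in $\Omega(K,N,a,b)$: by Lemma~\ref{skew_lemma}, $\CL(\sigma)$ is a $(K,N)$-periodic lattice path with $\CL(\sigma)<\CL(\sigma)[a,b]$, which is condition (1); for condition (2), note that $\sigma|_{\Delta(\sigma)}$ takes the values $1,\dots,m$ exactly once each, since Lemma~\ref{lem_Delta_m_cells} (applied to $\CL=\CL(\sigma)$, using $\Delta'(\sigma)=\Delta'(\CL(\sigma))$) gives $|\Delta(\sigma)|=m$ and standardness forces all values in $\Delta(\sigma)\subseteq\Delta'(\sigma)$ to be distinct elements of $\{1,\dots,m\}$. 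Standardness of $\sigma|_{\Delta(\sigma)}$ and of its periodic extension $\sigma|_{\Delta'(\sigma)}$ are inherited directly from standardness of $\sigma$, since the extension within $\Delta'$ is literally the restriction of $\sigma$.

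Next I would define the inverse map $\Psi\colon\Omega(K,N,a,b)\to\DPT(K,N,a,b)$. Given $(\CL,\sigma)$, first extend $\sigma$ periodically to $\sigma'$ on $\Delta'(\CL)$ as in the paragraph before Lemma~\ref{extend_suff} (hypothesis (2) guarantees $\sigma'$ is standard), and then extend to all of $\BZ^2$: for $(x,y)\in\BZ^2$, use Lemma~\ref{lem_latt_Delta} to find the unique $r\in\BZ$ with $(x-ra,y+rb)\in\Delta'(\CL)$, and set $\widetilde\sigma(x,y):=\sigma'(x-ra,y+rb)+rm$. I would check that $\widetilde\sigma$ satisfies the two defining relations of a DPT: relation (1), $\widetilde\sigma(x+K,y-N)=\widetilde\sigma(x,y)$, holds because $(x,y)$ and $(x+K,y-N)$ have the same associated $r$ (the shift by $(K,-N)$ preserves membership in $\Delta'(\CL)$, by $(K,N)$-periodicity of $\CL$) and $\sigma'$ is $(K,-N)$-periodic by construction; relation (2), $\widetilde\sigma(x+a,y-b)=\widetilde\sigma(x,y)+m$, holds because shifting $(x,y)$ by $(a,-b)$ decreases the associated $r$ by $1$, so the formula picks up exactly one extra $m$. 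Surjectivity of $\widetilde\sigma$ onto $\BZ$ follows since $\sigma'$ already hits $1,\dots,m$ and the $+rm$ term sweeps out all residues' worth of shifts. Standardness of $\widetilde\sigma$ on all of $\BZ^2$ is the one genuinely global check: within a single translate of $\Delta'(\CL)$ it is standardness of $\sigma'$, and across the boundary between $\Delta'(\CL)$ and its $(a,-b)$-translate (shifted by $m$) one needs that the inequality $\widetilde\sigma(x,y)<\widetilde\sigma(x+1,y)$ and $\widetilde\sigma(x,y)<\widetilde\sigma(x,y+1)$ still hold; this follows from $\CL<\CL[a,b]$ together with Remark~\ref{leq_equiv}, which is precisely the statement that moving in the positive $x$ or $y$ direction never jumps "backwards" past a full period.

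Finally I would check the two composites. For $\Psi\circ\Phi=\id$: starting from $\sigma\in\DPT$, we have $\Phi(\sigma)=(\CL(\sigma),\sigma|_{\Delta})$, and reconstructing via $\Psi$ we must recover $\sigma$; this is exactly the content of Lemma~\ref{fund_lem_iff}, which says the DPT relations determine $\sigma$ everywhere from its values on $\Delta(\sigma)$ once we know $\CL(\sigma)=\CL$ (so that $\Delta'(\sigma)=\Delta'(\CL)$ matches the region on which we did the intermediate periodic extension). For $\Phi\circ\Psi=\id$: starting from $(\CL,\sigma)\in\Omega$, we get $\widetilde\sigma=\Psi(\CL,\sigma)$, and we must check $\CL(\widetilde\sigma)=\CL$ and $\widetilde\sigma|_{\Delta(\widetilde\sigma)}=\sigma$. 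The second is immediate since $\Delta(\widetilde\sigma)=\Delta(\CL)$ (as $\widetilde\sigma$ takes values $1,\dots,m$ exactly on $\Delta'(\CL)$ restricted to $0\le y<N$, by construction) and $\widetilde\sigma=\sigma'$ there; for the first, $\CL(\widetilde\sigma)(y)=\min\{x\mid\widetilde\sigma(x,y)\ge 1\}$, and since $\widetilde\sigma\ge 1$ on $\Delta'(\CL)$ and $\widetilde\sigma\le 0$ strictly to the left of $\CL(y)$ (again using the increasing property from Remark~\ref{leq_equiv}), this minimum is exactly $\CL(y)$. I expect the main obstacle to be the global standardness check for $\widetilde\sigma$ in the construction of $\Psi$ — bookkeeping the inequalities across the $(a,-b)$-periodicity seam, where one must carefully combine the local standardness of $\sigma'$ with the strict inequality $\CL<\CL[a,b]$; everything else is routine once the right lemmas (\ref{lem_latt_Delta}, \ref{fundamental_lemma}, \ref{fund_lem_iff}) are invoked.
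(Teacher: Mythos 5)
Your proposal is correct and follows essentially the same route as the paper: verify well-definedness, build the explicit inverse by using Lemma~\ref{lem_latt_Delta} to attach to each $(x,y)$ the unique $r$ with $(x-ra,y+rb)\in\Delta'(\CL)$ and set $\widetilde\sigma(x,y)=\sigma'(x-ra,y+rb)+rm$, then check the two composites via Lemma~\ref{fundamental_lemma} and the coincidence of $\CL$ with the left boundary of $\Delta(\widetilde\sigma)$. The cross-seam standardness check that you flag as the main obstacle is handled in the paper by the clean observation that the construction forces $rm<\widetilde\sigma(x,y)\le(r+1)m$, so a neighbor $(x+1,y)$ or $(x,y+1)$ that falls into a strictly later $(a,-b)$-translate automatically gets a strictly larger value; this is the precise form of the ``never jumps backwards past a full period'' heuristic you appeal to via Remark~\ref{leq_equiv}.
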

\begin{proof}
By definition, for all $\sigma \in\DPT(K,N,a,b)$, $\sigma|_{\Delta(\sigma)}$ is a standard filling of $\Delta(\CL)=\Delta(\sigma)$ such that the periodic extension to $\Delta'(\CL)=\Delta'(\sigma)$ is also standard. Thus $(\CL(\sigma),\sigma|_{\Delta(\sigma)}) \in \Omega(K,N,a,b)$.

We must define a map from $\Omega(K,N,a,b)$ to $\DPT(K,N,a,b)$ and show it is inverse to $\sigma\mapsto (\CL(\sigma),\sigma|_{\Delta(\sigma)})$.
  Fix $(\CL,\sigma)\in\Omega(K,N,a,b)$. By defintion, the periodic extension $\sigma'$ of $\sigma$ to $\Delta'(\CL)$ is also standard. 
From Lemma \ref{lem_latt_Delta}, for all $(x,y)\in\BZ^2$, there exists unique $r\in\BZ$ such that $(x-ra,y+rb)\in \Delta'(\CL)$. Define, for all $(x,y)\in\BZ^2$,
 \[\tilde{\sigma}(x,y):=\sigma'(x-ra,y+rb)+rm.\]
Note that $rm < \tilde{\sigma}(x,y) \leq (r+1)m$.

Let check that $\tilde{\sigma} \in\DPT(K,N,a,b)$. By defintion of $\sigma'$, $\tilde{\sigma}$ is $(K,N)$-periodic. Moreover, one has $(x+a,y-b )\in\Delta'(\CL[(r+1)a,(r+1)b]) $, so 
\[\tilde{\sigma}(x+a,y-b) = \tilde{\sigma}(x,y) +m.\]
Moreover, let us prove that the filling of $\tilde{\sigma}$ is standard.

If $(x+1-ra,y+rb)\in \Delta'(\CL)$, then 
\[\tilde{\sigma}(x+1,y)>\tilde{\sigma}(x,y),\]
as the filling $\sigma'$ of $\Delta'$ is supposed standard. Otherwise, $(x+1,y)\in \Delta'(\CL[(r+1)a,(r+1)b])$, and $(r+1)m < \tilde{\sigma}(x+1,y) \leq (r+2)m$. In particular,
\[\tilde{\sigma}(x+1,y)> \tilde{\sigma}(x,y).\]
Similarly, if $(x-ra,y+1+rb)\in \Delta'(\CL)$, then $\tilde{\sigma}(x+1,y)>\tilde{\sigma}(x,y)$. Otherwise, as $z\mapsto\CL(z)$ is striclty decreasing, then $(x,y+1)\in \Delta'(\CL[(r+1)a,(r+1)b])$, and thus in any case
\[\tilde{\sigma}(x,y+1)> \tilde{\sigma}(x,y).\]

  
  We now show that the map $(\CL,\sigma)\mapsto \tilde{\sigma}$ is inverse to $\sigma\mapsto(\CL(\sigma),\sigma|_{\Delta(\sigma)})$.

  Given $\sigma\in\DPT(K,N,a,b)$, we send $\sigma\mapsto(\CL(\sigma),\sigma|_{\Delta(\sigma)})\mapsto\tilde{\sigma}$. 
  Given $(x,y)\in\BZ^2$, using Lemma~\ref{fundamental_lemma}, we write $(x,y)=(x',y')+s(K,-N)+r(a,-b)$ where $(x',y')\in\Delta(\CL)$. Then $\tilde{\sigma}(x,y)=\sigma|_{\Delta(\sigma)}(x',y')+rm=\sigma(x,y)$. 
  Going the other way, we send $(\CL,\sigma)\mapsto\tilde{\sigma}\mapsto (\CL(\tilde{\sigma}),\tilde{\sigma}|_{\Delta(\tilde{\sigma})})$.
  The equality $\sigma=\tilde{\sigma}|_{\Delta(\tilde{\sigma})}$ comes for free since $\tilde{\sigma}$ is defined by extending $\sigma$. 
  For $0\leq y<N$, by definition both $\CL$ and $\CL(\tilde{\sigma})$ coincide with the leftmost boundary of $\Delta(\tilde{\sigma})$. Since both lattice paths are $(K,N)$ periodic, we find that $\CL=\CL(\tilde{\sigma})$.
\end{proof}

In Section~\ref{sect_count}, we consider ways to get a finite count of DPTs by counting up to certain equivalences. One such result can be obtained as a corollary to Theorem \ref{dpt_bijection}.

If $\CL$ is a $(K,N)$-periodic lattice path such that $\CL<\CL[a,b]$, let $\DPT(\CL)$ denote the set
\[\DPT(\CL) :=\{\sigma\in\DPT(K,N,a,b) \mid \CL(\sigma)=\CL\}.\]

\begin{cor}\label{cor_DPT_CL}
  The map 
  \[\sigma\mapsto \sigma|_{\Delta(\sigma)}\]
  as defined in Lemma~\ref{skew_lemma}, defines a bijection 
  \[ \DPT(\CL) \simeq \left\lbrace \begin{array}{c}
\text{ standard fillings }  \sigma  \text{ of } \Delta(\CL)\\
\text{ such that } \sigma' \text{ is standard }\end{array} \right\rbrace.\]
\end{cor}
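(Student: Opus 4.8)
The plan is to derive Corollary~\ref{cor_DPT_CL} directly from Theorem~\ref{dpt_bijection} by simply restricting the bijection to the fibre over a fixed lattice path $\CL$. First I would recall that Theorem~\ref{dpt_bijection} gives a bijection $\DPT(K,N,a,b)\xrightarrow{\sim}\Omega(K,N,a,b)$ sending $\sigma\mapsto(\CL(\sigma),\sigma|_{\Delta(\sigma)})$. By the very definition $\DPT(\CL)=\{\sigma\in\DPT(K,N,a,b)\mid \CL(\sigma)=\CL\}$, so under the Theorem's bijection $\DPT(\CL)$ corresponds exactly to the subset of $\Omega(K,N,a,b)$ consisting of those pairs whose first coordinate is $\CL$.

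Next I would identify that subset explicitly. A pair $(\CL,\sigma)\in\Omega(K,N,a,b)$ with fixed first coordinate $\CL$ is, by the definition of $\Omega(K,N,a,b)$, the same data as a standard filling $\sigma$ of $\Delta(\CL)$ with the numbers $1,\dots,m$ whose periodic extension $\sigma'$ to $\Delta'(\CL)$ is also standard (the hypothesis that $\CL$ is $(K,N)$-periodic with $\CL<\CL[a,b]$ is already part of the setup of the Corollary, so there is no condition left on $\CL$ to impose). Hence the fibre of $\Omega(K,N,a,b)$ over $\CL$ is canonically the set on the right-hand side of the claimed bijection, via $(\CL,\sigma)\mapsto\sigma$.

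Finally I would assemble these two identifications: restricting the bijection of Theorem~\ref{dpt_bijection} to $\DPT(\CL)\subseteq\DPT(K,N,a,b)$ lands in the fibre of $\Omega(K,N,a,b)$ over $\CL$, and composing with the projection $(\CL,\sigma)\mapsto\sigma$ yields precisely the map $\sigma\mapsto\sigma|_{\Delta(\sigma)}$ (using that $\Delta(\sigma)=\Delta(\CL(\sigma))=\Delta(\CL)$ for $\sigma\in\DPT(\CL)$, which is part of Lemma~\ref{skew_lemma}). Since restriction of a bijection to a subset and composition with a bijection both preserve bijectivity, the resulting map is a bijection onto the stated set of fillings.

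I do not expect any genuine obstacle here: the statement is a formal consequence of Theorem~\ref{dpt_bijection} together with unwinding the definitions of $\Omega(K,N,a,b)$ and $\DPT(\CL)$. The only point requiring a moment's care is to confirm that for $\sigma\in\DPT(\CL)$ the domain $\Delta(\sigma)$ appearing in $\sigma|_{\Delta(\sigma)}$ really equals the fixed $\Delta(\CL)$, so that the target set is independent of $\sigma$; this is immediate from $\CL(\sigma)=\CL$ and the definition of the fundamental domain. Everything else is bookkeeping, so the proof will be short.
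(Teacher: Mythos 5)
Your proposal is correct and is essentially the argument the paper leaves implicit by labeling this a corollary of Theorem~\ref{dpt_bijection}: restrict the bijection $\sigma\mapsto(\CL(\sigma),\sigma|_{\Delta(\sigma)})$ to the fibre over $\CL$, identify that fibre with the set of admissible fillings via the definition of $\Omega(K,N,a,b)$, and note that $\Delta(\sigma)=\Delta(\CL)$ for $\sigma\in\DPT(\CL)$ so the map is exactly $\sigma\mapsto\sigma|_{\Delta(\CL)}$. There is no gap and nothing further to compare.
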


Consider the group $\langle D,L\rangle\leq \CA$ and its action on $\DPT(K,N,a,b)$ as defined in \ref{D_def} and \ref{L_def}.
Notice that $D$ and $L$ also act on $\Omega(K,N,a,b)$ by $D(\CL,\sigma)=(D\CL,D\sigma)$ where $D\CL(y)=\CL(y)+1$ and $D\sigma(x,y)=\sigma(x-1,y)$. Similarly $L(\CL,\sigma)=(L\CL,L\sigma)$ where $L\CL(y)=\CL(y-1)$, and $L\sigma(x,y)=\sigma(x,y-1)$. By construction, the actions of $D$ and $L$ commute with the bijection of Theorem \ref{dpt_bijection}, and therefore we have a bijection  $$\DPT(K,N,a,b)\slash\langle D,L\rangle\simeq\Omega(K,N,a,b)\slash\langle D,L\rangle$$ between the sets of orbits.

\subsection{DPTs and Partitions}\label{sect_DPT_part}

It will be convenient, particularly in relation with quantum groups in Section~\ref{quantum_fusion}, to describe $\Omega(K,N,a,b)$ using the language of partitions. An \textit{integer partition with $N$ parts} is a weakly decreasing sequence $\lambda=(\lambda_1,\dots,\lambda_N)$ of integers.

Define the \emph{fundamental alcove} (the terminology here comes from the representation theory of quantum groups at root of unity, see Section~\ref{quantum_fusion}):
\[\CA_{K,N}:=\left\lbrace \lambda =(\lambda_1,\ldots,\lambda_N)\in\BZ^N \mid \begin{array}{l}
\lambda_1 \geq \lambda_2 \geq \cdots \geq \lambda_N\\
\lambda_1 - \lambda_N \leq K
\end{array}\right\rbrace.\]

\begin{prop}\label{prop_latt_part}
The following defines a bijection:
\[
\begin{array}{ccc}
 \left\lbrace \begin{array}{c}
(K,N) \text{-periodic } \\
 \text{ lattice paths }\end{array} \right\rbrace & \to & \CA_{K,N},\\
 \CL & \mapsto & \lambda(\CL):=(\CL(0),\CL(1),\ldots, \CL(N-1)).
\end{array}
\]
\end{prop}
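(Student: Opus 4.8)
The plan is to show the map $\CL\mapsto\lambda(\CL)$ is well-defined into $\CA_{K,N}$, and then construct an explicit inverse. For well-definedness: by Remark~\ref{lattice_path_equivalent}, a $(K,N)$-periodic lattice path is the same data as a weakly decreasing function $\CL:\BZ\to\BZ$ with $\CL(y+N)=\CL(y)-K$. In particular $\CL(0)\geq\CL(1)\geq\cdots\geq\CL(N-1)$, giving the first chain of inequalities for $\lambda(\CL)$, and $\CL(N-1)\geq\CL(N)=\CL(0)-K$, i.e. $\CL(0)-\CL(N-1)\leq K$. Hence $\lambda(\CL)\in\CA_{K,N}$.

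For the inverse: given $\lambda=(\lambda_1,\ldots,\lambda_N)\in\CA_{K,N}$, define $\CL:\BZ\to\BZ$ by $\CL(y)=\lambda_{j+1}-qK$ where $y=qN+j$ with $q\in\BZ$, $0\leq j<N$ (the unique such decomposition). First I would check this is weakly decreasing: within a block $qN\leq y<(q+1)N$ this is the chain $\lambda_1\geq\cdots\geq\lambda_N$, and across the block boundary we need $\CL((q+1)N-1)\geq\CL((q+1)N)$, i.e. $\lambda_N-qK\geq\lambda_1-(q+1)K$, which is exactly $\lambda_1-\lambda_N\leq K$. Then by construction $\CL(y+N)=\CL(y)-K$, so by Remark~\ref{lattice_path_equivalent} this $\CL$ is a genuine $(K,N)$-periodic lattice path. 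Finally, $\lambda(\CL)=(\CL(0),\ldots,\CL(N-1))=(\lambda_1,\ldots,\lambda_N)$ since each $\CL(j)=\lambda_{j+1}$ for $0\leq j<N$ (taking $q=0$), and conversely starting from a lattice path $\CL$, recovering $\lambda(\CL)$ and rebuilding gives back $\CL$ because a $(K,N)$-periodic path is determined by its values on $\{0,\ldots,N-1\}$.

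There is essentially no hard part here — the content is just unwinding the "cylindric partition" reformulation from Remark~\ref{lattice_path_equivalent}. The only thing to be careful about is bookkeeping the index shift between the $1$-indexed partition $\lambda=(\lambda_1,\ldots,\lambda_N)$ and the $0$-indexed path values $\CL(0),\ldots,\CL(N-1)$, and making sure the single inequality $\lambda_1-\lambda_N\leq K$ is precisely what makes the periodic extension across block boundaries remain weakly decreasing (and that it is both necessary and sufficient). I would present the two maps, verify each lands in the right set, and note the two compositions are the identity, which is immediate from the fact (Remark~\ref{lattice_path_equivalent}) that such a path is uniquely determined by its restriction to any window of $N$ consecutive integers together with the periodicity relation.
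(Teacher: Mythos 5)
Your proof is correct and follows essentially the same route as the paper: verify the image lies in $\CA_{K,N}$ via the periodicity relation $\CL(y+N)=\CL(y)-K$ and weak monotonicity, then build the explicit inverse $\CL_\lambda(i+rN)=\lambda_{i+1}-rK$ and check it is weakly decreasing using $\lambda_1-\lambda_N\leq K$. Your write-up is a bit more verbose (checking the block-boundary inequality explicitly and spelling out both compositions), but there is no substantive difference from the paper's argument.
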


\begin{proof}
For all $(K,N)$-periodic lattice paths $\CL$, $\CL(0)\leq \CL(-1) = \CL(N-1) +K$, thus the tuple $(\CL(0),\CL(1),\ldots, \CL(N-1))$ is indeed in $\CA_{K,N}$.

Moreover, for all $\lambda \in \CA_{K,N}$, define a $(K,N)$-periodic lattice path $\CL_\lambda$ via, for all $0\leq i \leq N-1$, $r\in\BZ$,
\[\CL_\lambda(i + rN) = \lambda_{i+1} -r K.\]
Then $\lambda\mapsto \CL_\lambda$ is the inverse map of $\lambda \mapsto \lambda(\CL)$.
\end{proof}

\begin{example}
The lattice path $\CL(\sigma)$ represented in Figure~\ref{labeling_convention}, obtained from the linear doubly periodic tableau $\sigma$ of Example~\ref{linearex} for $(K,N)=(3,2)$ is also the periodic lattice path $\CL_\lambda$ constructed from the partition $\lambda = (1,-1)$.
\end{example}

Define actions of $D$ and $L$ on partitions $\lambda \in \CA_{K,N}$ by 
\begin{align}
  D\cdot\lambda&=(\lambda_1+1,\dots,\lambda_N+1)\label{D_lambda} \\
  L\cdot\lambda&=(\lambda_N+K,\lambda_1,\lambda_2,\dots,\lambda_{N-1})\label{L_lambda}.
\end{align} 
These actions are invertible and preserve the fundamental alcove $\CA_{K,N}$. Hence one can consider, for all $i\in\BZ$, $D^i\lambda, L^i\lambda \in \CA_{K,N}$.

Here again, these notations come from the context of the fusion ring, as these actions correspond to tensoring with the determinant and the line representation (see Section \ref{quantum_fusion}).

As in \eqref{D_def} and \eqref{L_def}, the actions of $D$ and $L$ correspond respectively to horizontal and vertical translations. Indeed, we have the following result.

\begin{lem}\label{lem_latt_DL}
For all periodic lattice paths $\CL$ and all $(a,b)\in\BZ^2$,
\[\lambda(\CL[a,b])= D^a L^{-b}\lambda(\CL).\]
\end{lem}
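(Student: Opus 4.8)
The plan is to unravel both sides of the claimed equality $\lambda(\CL[a,b]) = D^a L^{-b} \lambda(\CL)$ directly from the definitions, reducing everything to the single shift relation $\CM(y) = \CL(y+b)+a$ from Lemma~\ref{shift_lemma}. Since $D$ and $L$ are invertible and the group $\langle D, L\rangle$ is abelian, it suffices to handle the two generators separately: first show $\lambda(\CL[1,0]) = D\lambda(\CL)$ and $\lambda(\CL[0,-1]) = L\lambda(\CL)$ (or $\lambda(\CL[0,1]) = L^{-1}\lambda(\CL)$), and then iterate and compose. Actually the cleanest route is probably to prove the two identities $\lambda(\CL[1,0]) = D\cdot\lambda(\CL)$ and $\lambda(\CL[0,1]) = L^{-1}\cdot\lambda(\CL)$, and then for general $(a,b)$ write $\CL[a,b] = (\CL[1,0])[1,0]\cdots$ shifted $a$ times in $x$ and $-b$ times... more precisely use that $\CL[a,b] = (\cdots((\CL[1,0])[1,0])\cdots)[0,1]\cdots$ with $a$ copies of $[1,0]$ and $b$ copies of $[0,-1]$, i.e. $b$ copies of the inverse of $[0,1]$.

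For the horizontal generator: by Lemma~\ref{shift_lemma} with $(a,b) = (1,0)$ we get $\CL[1,0](y) = \CL(y)+1$ for all $y$, so in particular $\lambda(\CL[1,0])_{i} = \CL[1,0](i-1) = \CL(i-1)+1 = \lambda(\CL)_i + 1$ for $1\le i\le N$, which is exactly $D\cdot\lambda(\CL)$ by \eqref{D_lambda}. For the vertical generator: by Lemma~\ref{shift_lemma} with $(a,b)=(0,1)$ we get $\CL[0,1](y) = \CL(y+1)$. Recall from Proposition~\ref{prop_latt_part} (or Remark~\ref{lattice_path_equivalent}) that a $(K,N)$-periodic lattice path satisfies $\CL(y+N) = \CL(y) - K$, and that $\lambda(\CL)_j = \CL(j-1)$. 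Then $\lambda(\CL[0,1])_j = \CL[0,1](j-1) = \CL(j)$, so for $1\le j\le N-1$ this is $\lambda(\CL)_{j+1}$, while for $j = N$ it is $\CL(N) = \CL(0) - K = \lambda(\CL)_1 - K$. Comparing with \eqref{L_lambda}, which gives $L\cdot\mu = (\mu_N+K, \mu_1,\dots,\mu_{N-1})$, we see $\lambda(\CL[0,1]) = L^{-1}\cdot\lambda(\CL)$, i.e. $\lambda(\CL[0,-1]) = L\cdot\lambda(\CL)$.

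Finally I would assemble the general case. Since $\CL[a,b] = \CL[1,0][1,0]\cdots[1,0][0,-1]\cdots[0,-1]$ (with the shifts composing additively, $\CL[a,b][c,d] = \CL[a+c,b+d]$, which is immediate from the definition $\CL[a,b]_i = \CL_i + (a,-b)$), applying the two base cases $a$ times and then $b$ times and using that the $D,L$ actions are invertible and commute on $\CA_{K,N}$ yields $\lambda(\CL[a,b]) = D^a L^{-b}\lambda(\CL)$. One small point to make explicit is that each intermediate shifted path is again $(K,N)$-periodic and still lands in $\CA_{K,N}$ under $\lambda$, so that iterating is legitimate; this follows because $(K,N)$-periodicity of $\CL$ is visibly preserved by any translation $\CL \mapsto \CL[a,b]$ (the image of $\CL[a,b]$ and of $\CL[a,b][K,N] = \CL[a+K,b+N]$ agree since $\CL$ and $\CL[K,N]$ have the same image), and Proposition~\ref{prop_latt_part} guarantees $\lambda$ is defined on all such paths.

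I do not expect any serious obstacle here: the whole statement is essentially a bookkeeping exercise translating the geometric shift $\CL\mapsto\CL[a,b]$ through the dictionary $\CL\leftrightarrow\lambda(\CL)$ of Proposition~\ref{prop_latt_part}. The only place requiring a little care is the index bookkeeping in the vertical case — getting the wraparound term $\lambda_N + K$ to match correctly and confirming that the vertical translation corresponds to $L^{-1}$ rather than $L$ (equivalently, that $\CL[0,-1]$, the downward shift in the $y$-reading direction, corresponds to $L$), since the paper's sign conventions for reading directions and for the shift notation $\CL[a,b]_i = \CL_i + (a,-b)$ could otherwise be mixed up.
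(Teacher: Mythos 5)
Your proof is correct and follows essentially the same route as the paper: verify the identity on the two generators via Lemma~\ref{shift_lemma} together with the periodicity relation $\CL(y+N)=\CL(y)-K$, then compose using $\CL[a,b][c,d]=\CL[a+c,b+d]$. The only cosmetic difference is that you check the shift $\CL[0,1]$ against $L^{-1}$ whereas the paper checks $\CL[0,-1]$ against $L$; these are equivalent, and your index bookkeeping for the wraparound term $\lambda_1-K$ is right.
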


\begin{proof}
For all $(K,N)$-periodic lattice path,
\[\lambda(\CL[1,0]) = (\CL(0)+1, \CL(1)+1 ,\ldots , \ldots, \CL(N-1) +1 ) = D\lambda(\CL),\]
and
\[\lambda(\CL[0,-1]) =(\CL(-1) + K, \CL(0), \ldots, \CL(N-2)) = L\lambda(\CL).\qedhere\]
\end{proof}

Thus we consider the shifted partition \begin{equation}\label{shifted_partition}
\lambda[a,b]:=D^a L^{-b}\lambda.
\end{equation}


Given partitions $\lambda,\mu$ with $N$ parts, if $\lambda_i\leq\mu_i$ for all $1\leq i\leq N$, we define the skew shape 
\[
\mu\setminus\lambda:=\left\lbrace(x,y)\in\BZ^2 \mid \begin{array}{c}
0 \leq x\leq N-1\\
\lambda_{y+1}\leq x<\mu_{y+1}
\end{array} \right\rbrace.
\]

If $\lambda_i\leq\lambda[a,b]_i$ for $1\leq i\leq N$, it follows from Lemma \ref{lem_Delta_m_cells} that the skew shape $\lambda[a,b]\setminus\lambda$ has exactly $m$ cells. In this case we define \begin{equation}
\Delta(\lambda)=\lambda[a,b]\setminus\lambda,
\end{equation}
\begin{equation}
\Delta'(\lambda)=\{(x+rK,y-rN)\in\BZ^2 \mid (x,y)\in\Delta(\lambda),r\in\BZ\}.
\end{equation}
Notice that for $\sigma\in\DPT(K,N,a,b)$ we have 
\begin{align*}
& \Delta(\sigma)=\Delta(\CL(\sigma))=\Delta(\lambda(\sigma)),\\
\text{and } & \Delta'(\sigma)=\Delta'(\CL(\sigma))=\Delta'(\lambda(\sigma)).
\end{align*}

\subsection{Action of the extended affine Weyl group}
The extended affine Weyl group of type $GL_m$, i.e. the extended affine permutation group, acts on the set of double periodic tableaux with respect to $(K,N,a,b)$.

Let $\widehat{\mathfrak{S}}^e_m$ be the extended affine Weyl group:
\[
\widehat{\mathfrak{S}}_m^e = \left\langle \pi, s_i, i\in \BZ/m\BZ \mid \begin{array}{ll}
      s_i s_{i+1} s_i = s_{i+1} s_i s_{i+1}, & i\in \BZ/m\BZ\\
      s_i s_j = s_j s_i, & j \neq i\pm 1 \mod m\\
      s_i^2=1, & i\in \BZ/m\BZ\\
      \pi s_i=s_{i+1}\pi, &  i \in \BZ/m\BZ.
\end{array}\right\rangle.
\]

\begin{remark}
The extended affine Weyl group contains the Weyl group, i.e. the symmetric group $\mathfrak{S}_m$, generated by the reflections $s_i$ for $1\leq i\leq m$, as well as the affine Weyl group or affine permutation group $\widehat{\mathfrak{S}}_m$ as the subgroup generated by the reflections $s_i, i \in \BZ/m\BZ$.
\end{remark}

It is useful to recall that the elements of the extended Weyl group can be interpreted as affine permutations.

\begin{definition}
  An $m$-affine permutation is a bijection $f:\BZ\to \BZ$ satisfying $f(i+m)=f(i)+m$. Affine permutations are encoded by tuple $(f(0), f(1), \cdots f(m-1))$. An $m$-tuple of integers corresponds to an $m$-affine permutation if the residues of the entries modulo $m$ are all distinct. An affine permutation is positive if $f(i)\geq 0$ whenever $i\geq 0$.
\end{definition}

The elements of the extended affine Weyl group act on $\BZ$ as $m$-affine permutations as follows:

\[
\begin{array}{ll}
s_i(j)=j+1 & \text{for $j \equiv i \mod m$} \\
s_i(j)=j-1 & \text{for $j \equiv i+1 \mod m$}\\
s_i(j)=j & \text{for $j \not\equiv i,i+1 \mod m$}\\
\pi(j)=j+1 & \text{for all $j$}.
\end{array}
\]

\begin{remark}
The elements of the affine Weyl group $\widehat{\mathfrak{S}}_m$ can be interpreted as the affine permutations $f$ satisfying $\sum_{i=1}^mf(i)=\frac{m(m+1)}{2}$.
\end{remark}

\begin{remark}\label{remGrassmannianPermutations}
With this interpretation of the affine Weyl group we can give a convenient description of the minimal length coset of representatives $\widehat{\mathfrak{S}}_m/ \mathfrak{S}_m$ as the permutations satisfying $f(1)<f(2)<\dots <f(m)$  (see e.g. \cite{Humphreys}).
\end{remark}


The extended affine Weyl group acts on the set of (non necessarily standard) doubly periodic tableaux as follows.

For $f \in \widehat{\mathfrak{S}}_m^e$ and $\sigma$ a doubly periodic tableau, we set $f\sigma$ to be the filling of $\BZ^2$ given by
\[f\sigma(x,y):=f(\sigma(x,y)), \quad (x,y) \in \BZ^2.\]

This filling is doubly periodic but not necessarily standard, even if $\sigma\in \DPT(K,N,a,b)$.


\begin{definition}
Let $\sigma \in \DPT(K,N,a,b)$.
We say a permutation $f \in \widehat{\mathfrak{S}}_m^e$ is \emph{allowed to act on} $\sigma$ if $f\sigma \in \DPT(K,N,a,b)$.
\end{definition}

\begin{example}\label{exAllowedOnLineTableau}
We consider a tableau $\sigma$ as in Example \ref{exampleLineTableau}. Clearly no permutation in $\mathfrak{S}_m$ is allowed to act on $\sigma$. Let us consider now a permutation $f$ such that $f(1)<f(2)< \dots<f(m)$. We can see that for $N$ and $\alpha$ sufficiently large, more precisely for $N > \frac{f(m)-f(1)}{m}$ and $\alpha>\frac{f(m)}m$, 
the permutation $f$ is allowed to act on $\sigma$.
\end{example}

\begin{remark}\label{WeylGroupAction}
The action of the Weyl group $\mathfrak{S}_m$ preserves $\DPT(\CL)$, and thus preserves the extended fundamental domain and the fundamental domain associated to a tableau $\sigma$. 
\end{remark}

\begin{prop}\label{propTransitivity}
For $\sigma_1, \sigma_2\in \DPT(K,N, a, b)$ there is a unique $m$-affine permutation $f\in \widehat{\mathfrak{S}}_m^e$ such that $\sigma_2(x,y) = f(\sigma_1(x,y))$ for all $x,y\in\BZ^2$. We denote this affine permutation by $\sigma_2 \sigma_1^{-1}$.
\end{prop}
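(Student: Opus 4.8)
The plan is to construct the affine permutation $f$ directly and then verify the required properties. Fix $\sigma_1, \sigma_2 \in \DPT(K,N,a,b)$. For each $j \in \BZ$, surjectivity of $\sigma_1$ guarantees there is some $(x,y) \in \BZ^2$ with $\sigma_1(x,y) = j$; define $f(j) := \sigma_2(x,y)$. The first task is to check this is well-defined, i.e. independent of the choice of $(x,y)$. If $\sigma_1(x,y) = \sigma_1(x',y') = j$, then by Lemma~\ref{fund_lem_iff} (applied with $r=0$) we have $(x,y) = (x',y') + s(K,-N)$ for some $s \in \BZ$; since $\sigma_2$ is $(K,N)$-periodic by condition (1) of Definition~\ref{dptdef}, it follows that $\sigma_2(x,y) = \sigma_2(x',y')$. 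Hence $f: \BZ \to \BZ$ is a well-defined function, and by construction $\sigma_2(x,y) = f(\sigma_1(x,y))$ for all $(x,y)$.

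Next I would verify that $f$ is an $m$-affine permutation. The relation $f(j+m) = f(j) + m$ follows from condition (2) of Definition~\ref{dptdef}: pick $(x,y)$ with $\sigma_1(x,y) = j$; then $\sigma_1(x+a,y-b) = j+m$, so $f(j+m) = \sigma_2(x+a,y-b) = \sigma_2(x,y) + m = f(j) + m$. For bijectivity it suffices (since $f$ commutes with translation by $m$) to show that $f$ restricted to any set of $m$ consecutive residues is a bijection onto a set of $m$ distinct residues mod $m$; equivalently, that the values $f(1), \dots, f(m)$ are distinct mod $m$. Using the fundamental domain $\Delta = \Delta(\sigma_1)$, which by Lemma~\ref{lem_Delta_m_cells} has exactly $m$ cells on which $\sigma_1$ takes the values $1, \dots, m$ (each exactly once, by standardness), we see $\{f(1),\dots,f(m)\}$ equals the multiset of values $\sigma_2$ takes on $\Delta(\sigma_1)$. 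By the same argument applied to $\sigma_2$ (its own fundamental domain carries $1,\dots,m$) together with Lemma~\ref{fund_lem_iff}, any two cells of $\Delta(\sigma_1)$ carry $\sigma_2$-values that differ neither by a multiple of $m$ nor are equal — indeed if $\sigma_2(p) \equiv \sigma_2(q) \bmod m$ for $p,q \in \Delta(\sigma_1)$, write $\sigma_2(p) = \sigma_2(q) + rm$; by Lemma~\ref{fund_lem_iff} this forces $p = q + s(K,-N) + r(a,-b)$, and then $\sigma_1(p) = \sigma_1(q) + rm$, but both $\sigma_1(p), \sigma_1(q) \in \{1,\dots,m\}$ forces $r = 0$ and hence $p = q$. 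So the $m$ values are distinct mod $m$, and $f$ is an $m$-affine permutation.

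It remains to identify $f$ as an element of $\widehat{\mathfrak{S}}_m^e$ (the extended affine Weyl group is exactly the group of $m$-affine permutations, via the correspondence recalled before the statement) and to establish uniqueness. Uniqueness is immediate: if $g$ is another such affine permutation, then for any $j \in \BZ$ choose $(x,y)$ with $\sigma_1(x,y) = j$ (surjectivity), and $g(j) = g(\sigma_1(x,y)) = \sigma_2(x,y) = f(j)$, so $g = f$.

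The main obstacle is the well-definedness/bijectivity step — specifically, making sure that the periodicity relations of the two tableaux interact correctly with the combinatorics of the fundamental domain, which is exactly what Lemma~\ref{fund_lem_iff} is designed to deliver; once that lemma is in hand the argument is essentially bookkeeping. I would present well-definedness and the affine-permutation property first, then uniqueness, keeping the fundamental-domain count as the one genuinely structural ingredient.
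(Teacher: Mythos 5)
Your proof is correct and takes essentially the same approach as the paper: both define $f$ by the formula $f(\sigma_1(x,y))=\sigma_2(x,y)$, use Lemma~\ref{fund_lem_iff} to show $f$ is well-defined and that $f(0),\dots,f(m-1)$ are distinct modulo $m$, and obtain uniqueness from surjectivity. You additionally spell out the verification that $f(j+m)=f(j)+m$ via condition (2) of the DPT definition, a step the paper leaves implicit; otherwise the logical structure is the same.
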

\begin{proof}
Given an affine permutation $f$ with $\sigma_2=f\sigma_1$, then for $0\leq i<m$, pick $(x,y)\in\BZ^2$ with $\sigma_1(x,y)=i$. Then \begin{equation}\label{eq_fi}
f(i)=\sigma_2(x,y).
\end{equation}
This proves uniqueness.

Moreover, \eqref{eq_fi} gives a well defined affine permutation. Indeed, if $\sigma_1(x,y)=\sigma_1(x',y')$, then $(x',y')=(x,y)+s(K,-N)$ for some $s\in\BZ$ by Lemma~\ref{fund_lem_iff}, and it follows that $\sigma_2(x,y)=\sigma_2(x',y')$.
  We must also show the values $f(0),\dots,f(m-1)$ are distinct modulo $m$. Suppose not, and let $0\leq i<j<m$ with $f(i)=f(j)+rm$ for some $m$. There exists $(x,y)\in\BZ^2$ and $(x',y')\in\BZ^2$ such that $\sigma_1(x,y)=i$, and $\sigma_1(x',y')=j$. Thus $\sigma_2(x,y)=f(i)=\sigma_2(x',y') +rm $. Then again by Lemma \ref{fund_lem_iff}, we have $(x,y)=(x',y')+r(a,-b)+s(K,-N)$ for some $s\in\BZ$. We conclude that $i=j+rm$ from which it follows that $r=0$ and $i=j$, a contradiction.
\end{proof}

\begin{remark}
  Fix $\sigma_0\in \DPT(K,N, a, b)$. Then any other $\sigma\in \DPT(K,N, a, b)$ can be encoded by the affine permutation $f_\sigma=\sigma_0 \sigma^{-1}$. 
We have the following \emph{sorting algorithm}. 
Suppose $f_\sigma(i)>f_\sigma(i+1)$ for some $i\in\BZ$. Then switching the positions of $i$ and $i+1$ in $\sigma$ again produces a standard DPT. 
Indeed, switching of $i$ and $i+1$ is impossible if and only if $i$ is directly to the north or to the west of $i+1$, but in that case we must have $f_\sigma(i)<f_\sigma(i+1)$, because otherwise $\sigma_0$ would not be a DPT, so we obtain a contradiction. 
Applying this operation finite number of times (equal to the number of inversions of $\sigma_0 \sigma^{-1}$) we obtain a DPT of the form $\pi^c\sigma_0$ for some $c\in\BZ$. 
Reversing this procedure, we can produce all DPTs (up to addition of a constant) by starting with some fixed DPT $\sigma_0$ and switching positions of $i, i+1$ whenever possible. 
\end{remark}

We now want to give a description of the permutations which are allowed to act on a given DPT. To do so, we need to introduce the notion of content of a DPT.

\begin{definition}\label{def_content}
The \emph{content} function of $\sigma \in \DPT(K,N,a,b)$ is the function $C_\sigma:\BZ \to \BZ/(N+K)\BZ$ that associates to $i \in \BZ$ the difference $x-y \mod (N+K)$, where $\sigma(x,y)=i$.
\end{definition}

Notice that, if two cells $(x,y),(x',y')\in \BZ^2$ are both labeled with $i$, then the difference $x-y-(x'-y')$ is a multiple of $N+K$, so the content function is well defined.

\begin{remark}
Recall that a box $(x,y)$ is located along the $j$-th diagonal if $x-y=j$.
Then we note that the content function of $i$ determines along which diagonals we can find the box labeled by $i$.
\end{remark}

\begin{remark}\label{remContentPartition}
We observe that a tableau $\sigma$ is completely determined if we know its content function and its associated partition. This is clear once we notice that using this information we can fill in the fundamental domain of $\sigma$.
\end{remark}

The following is easy to check.

\begin{prop}\label{propContent}
The generator $s_i$ of $\widehat{\mathfrak{S}}_m^e$ is allowed to act on $\sigma \in \DPT(K,N,a,b)$ if and only if $C_\sigma(i)-C_\sigma(i+1)\neq \pm 1 \mod (N+K)$. 
\end{prop}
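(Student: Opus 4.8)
The plan is to recall exactly what $s_i$ does to $\sigma$ and then translate the standardness condition for $s_i\sigma$ into a condition on contents. Let $\sigma\in\DPT(K,N,a,b)$ and recall that $s_i$ acts as an $m$-affine permutation that swaps the residue classes $i$ and $i+1$ modulo $m$ and fixes all others. Hence, passing from $\sigma$ to $s_i\sigma$, every cell labelled $j$ with $j\equiv i$ or $j\equiv i+1\pmod m$ has its label incremented or decremented by $1$, and all other labels are unchanged. The resulting filling $s_i\sigma$ is automatically $(K,N)$-periodic and satisfies condition (2) of Definition \ref{dptdef} (since $s_i$ commutes with translation by $m$), and it is clearly surjective, so $s_i$ is allowed to act on $\sigma$ if and only if $s_i\sigma$ is \emph{standard}, i.e. its values strictly increase in the $x$ and $y$ directions.

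The next step is to localize this: standardness can only fail between two cells that are horizontally or vertically adjacent and whose labels differ by exactly $1$ in $\sigma$, because applying $s_i$ changes any label by at most $1$, so a pair of adjacent cells with labels differing by $\geq 2$ in $\sigma$ still has distinct, correctly ordered labels in $s_i\sigma$, and a pair already adjacent-in-value is the only place a violation can be created. So I would argue: $s_i\sigma$ fails to be standard if and only if there exist adjacent cells (say $(x,y)$ and its eastern or southern neighbour) carrying consecutive values $\{k,k+1\}$ in $\sigma$ such that applying $s_i$ reverses their order. Because $\sigma$ is standard, in such an adjacent pair the smaller value $k$ sits to the west or north of $k+1$. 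Applying $s_i$ reverses the order precisely when $s_i(k)>s_i(k+1)$, and since $s_i$ only moves the classes $i,i+1$, a short case check on the residues of $k$ and $k+1$ modulo $m$ shows this happens exactly when $k\equiv i+1$ and $k+1\equiv i+2$, i.e. when $\{k,k+1\}$ maps to $\{i+1,i\}$ reversed — equivalently, when one of $k,k+1$ lies in class $i$ and the other in class $i+1$, with the class-$(i+1)$ element being the smaller one; the upshot is that a violation occurs iff the cells labelled $i$ and $i+1$ (in some chosen fundamental domain, via the content function) are adjacent in $\BZ^2$, one being directly east/south of the other.

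Finally I would convert "the boxes labelled $i$ and $i+1$ are vertically or horizontally adjacent somewhere in $\BZ^2$" into the content statement. By Definition \ref{def_content}, if $\sigma(x,y)=i$ then $C_\sigma(i)=x-y\bmod(N+K)$, and likewise for $i+1$; two cells are horizontally adjacent when their $(x-y)$-values differ by $1$ and vertically adjacent when they differ by $-1$. Using Lemma \ref{fund_lem_iff} (the labelled positions of $i$ form a single $(K,-N)$-coset, and similarly for $i+1$) together with the fact that $N+K$ is the period of the diagonal coordinate $x-y$ on such cosets, one checks that there exist representatives of the $i$-cell and the $(i+1)$-cell that are horizontally or vertically adjacent if and only if $C_\sigma(i)-C_\sigma(i+1)\equiv\pm1\pmod{N+K}$. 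Combining the two equivalences, $s_i$ is \emph{not} allowed to act on $\sigma$ iff $C_\sigma(i)-C_\sigma(i+1)\equiv\pm1\pmod{N+K}$, which is the claim. The main obstacle I anticipate is the bookkeeping in this last step: one must be careful that adjacency of \emph{some} pair of representatives (not a fixed pair) is what is controlled by the content, and that no spurious adjacency in the diagonal direction $x-y$ arises from a different $(K,-N)$-translate; Lemma \ref{fund_lem_iff} and the periodicity relations \eqref{eq_sigma_x+m_y}–\eqref{eq_sigma_x_y+m} are exactly what rule this out.
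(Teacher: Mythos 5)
The paper gives no written proof (it says ``easy to check''), so your plan is essentially the argument being alluded to: reduce to standardness of $s_i\sigma$, localize violations to pairs of adjacent cells with consecutive labels $k,k+1$ in the classes $i,i+1\bmod m$, and translate adjacency into the content condition. This is the right approach and reaches the correct conclusion.

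Two points to fix. First, the residue bookkeeping in the middle step is slipped: a reversal $s_i(k)>s_i(k+1)$ forces $s_i(k)=k+1$ and $s_i(k+1)=k$, hence $k\equiv i$ and $k+1\equiv i+1\pmod m$, not $k\equiv i+1$, $k+1\equiv i+2$ as you wrote; correspondingly it is the class-$i$ element, not the class-$(i+1)$ element, that is the smaller of the pair. Your final geometric conclusion (the box $i+1$ lies directly east or south of the box $i$) is nevertheless the right one, matching the paper's remark after the proposition, so this is a local slip rather than a structural error.

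Second, the direction you correctly single out as the ``main obstacle'' -- from $C_\sigma(i)-C_\sigma(i+1)\equiv\pm1\pmod{N+K}$ back to actual adjacency -- is not really handled by Lemma \ref{fund_lem_iff} or the periodicity identities. What you need is: (a) because the cells labeled $i$ form a single $(K,-N)$-coset whose diagonal values $x-y$ run over a full congruence class modulo $K+N$, you may pick representatives of the $i$-cell and the $(i+1)$-cell whose diagonals differ by \emph{exactly} $1$ (not merely $1$ modulo $K+N$); and (b) once the diagonals differ by exactly $1$, \emph{standardness of $\sigma$} alone forces the two cells to be horizontal or vertical neighbours: if $\sigma(x,y)=i$, $\sigma(x',y')=i+1$ and $x'-y'=x-y+1$, writing $(x',y')=(x+1+k,\,y+k)$, the case $k>0$ gives $\sigma(x',y')>\sigma(x+1,y)>\sigma(x,y)=i$, hence $\sigma(x',y')\ge i+2$, a contradiction, while $k<0$ gives $\sigma(x',y')<\sigma(x,y)=i$, again a contradiction; so $k=0$ and the cells are adjacent (the vertical case $x'-y'=x-y-1$ is symmetric). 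Making this explicit, with standardness rather than Lemma \ref{fund_lem_iff} doing the work, closes the gap you anticipated.
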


Notice that the above proposition states in terms of the content function that we can swap $i$ and $i+1$ if and only if $i+1$ is not located right below $i$ or immediately to its right. 

\begin{prop}\label{remAllowedPermutations}
If an $m$-affine permutation $f$ is allowed to act on $\sigma$, then, if $f=\pi^r s_{j_1}\dots s_{j_s}$ is a minimal length expression of $f$, for each $\ell=1 \dots s$, the permutation $s_{j_\ell}$ is allowed to act on $s_{j_{\ell+1}}\dots s_{j_s}\sigma$.
\end{prop}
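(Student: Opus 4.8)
The plan is to proceed by induction on the length $s$ of the minimal expression, using the characterization of "allowed to act" via the content function from Proposition~\ref{propContent}. The key observation is that applying $s_{j_s}$ (the last generator) to $\sigma$ should produce another element of $\DPT(K,N,a,b)$, and then one can invoke the inductive hypothesis on the shorter permutation $\pi^r s_{j_1}\cdots s_{j_{s-1}}$ acting on $s_{j_s}\sigma$. So the crux is: \emph{if $f = \pi^r s_{j_1}\cdots s_{j_s}$ is reduced and $f\sigma$ is standard, then $s_{j_s}\sigma$ is standard.}

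The first step is to translate the hypothesis into the language of inversions. Using the affine permutation encoding, $f\sigma$ standard means the labels $1,\dots,m$ (read off via $\sigma_0\mapsto f$-composition, cf.\ the sorting algorithm remark) are arranged consistently with standardness. More directly, by Proposition~\ref{propContent}, for a single generator $s_i$ the obstruction to acting is precisely $C_\sigma(i)-C_\sigma(i+1)\equiv\pm1\pmod{N+K}$, which happens exactly when the cell labeled $i+1$ sits immediately east or immediately south of the cell labeled $i$. So I must show: if $s_{j_s}$ is \emph{not} allowed to act on $\sigma$, then $f$ is not allowed to act on $\sigma$, or else the expression is not reduced. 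The point is that $s_{j_s}$ being the last letter of a reduced word for $f$ means $\ell(f s_{j_s}) = \ell(f)-1$, i.e.\ $s_{j_s}$ "undoes" an inversion of $f^{-1}$ — equivalently, in the one-line notation of $f$, positions $j_s$ and $j_s+1$ form an inversion. If $s_{j_s}\sigma$ fails to be standard, the cells labeled $j_s$ and $j_s+1$ in $\sigma$ are in the forbidden (adjacent N/W) configuration; since $\sigma_0$ is a genuine DPT, this forces $f(j_s) < f(j_s+1)$, contradicting that $j_s$ is an inversion position of $f$. This is essentially the same argument as in the sorting-algorithm remark, run in reverse, and it is the main obstacle: one must carefully align the "reduced word ends in $s_{j_s}$" condition with the combinatorial adjacency condition, being mindful of the $\pi^r$ prefix (which only shifts all labels and commutes past the $s_i$'s by relabeling indices, so it does not affect whether a swap of two adjacent labels is possible).

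Once that single-step claim is established, the induction is routine: set $\sigma' = s_{j_s}\sigma \in \DPT(K,N,a,b)$, and note that $\pi^r s_{j_1}\cdots s_{j_{s-1}}$ is a reduced expression (a prefix of a reduced word is reduced) for an affine permutation allowed to act on $\sigma'$ (since $(\pi^r s_{j_1}\cdots s_{j_{s-1}})\sigma' = f\sigma \in \DPT(K,N,a,b)$). By the inductive hypothesis, for each $\ell = 1,\dots,s-1$, the generator $s_{j_\ell}$ is allowed to act on $s_{j_{\ell+1}}\cdots s_{j_{s-1}}\sigma' = s_{j_{\ell+1}}\cdots s_{j_s}\sigma$, which together with the base case $\ell = s$ gives the full statement. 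The base case $s=0$ (or $s=1$) is immediate since then there is nothing to check beyond the hypothesis itself. I expect the write-up to be short, with all the subtlety concentrated in the reverse sorting argument and the bookkeeping of how $\pi^r$ interacts with the generators.
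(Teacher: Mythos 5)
Your proof is correct and rests on the same key idea as the paper's: a reduced expression for $f$ gives the length inequality that, via the standard Coxeter fact $\ell(w s_i)>\ell(w)\Leftrightarrow w(i)<w(i+1)$, turns a forbidden NW-adjacency of $j_\ell, j_\ell+1$ into a standardness violation for $f\sigma$. The only difference is organization: the paper argues by contradiction directly at an arbitrary index $\ell$, splitting $f = h\,s_{j_\ell}\,g$ and using $\ell(hs_{j_\ell})>\ell(h)$, whereas you peel off the last letter $s_{j_s}$ (using $\ell(fs_{j_s})<\ell(f)$, i.e.\ $f(j_s)>f(j_s+1)$) and then induct on the length of the prefix — both valid, the first slightly more direct. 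One small slip: where you write ``since $\sigma_0$ is a genuine DPT, this forces $f(j_s)<f(j_s+1)$,'' you mean $f\sigma$ (the image tableau, which is standard by hypothesis), not the reference tableau $\sigma_0$.
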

\begin{proof}
Assume that there is a value of $\ell$ for which $s_{j_\ell}$ is not allowed to act on $s_{j_{\ell+1}}\dots s_{j_s}\sigma$. Let $h=\pi^r s_{j_1}\dots s_{j_{\ell-1}}$ and $g=s_{j_{\ell+1}} \dots s_{j_s}$. As $s_{j_\ell}(g\sigma)$ is not standard, we have $s_{j_\ell}(g\sigma)(x+1,y)=i$ and $s_{j_\ell}(g\sigma)(x,y)=i+1$ or $s_{j_\ell}(g\sigma)(x,y+1)=i$ and $s_{j_\ell}(g\sigma)(x,y)=i+1$, for some choice of $(x,y) \in \BZ^2$. By the minimality of the length of the expression for $f$, we have that $l(hs_{j_\ell})>l(h)$, which implies (see e.g. \cite{Humphreys}) that $h(i)<h(i+1)$.  This means, in the first case, that $hs_{j_\ell}(g\sigma)(x+1,y)<hs_{j_\ell}(g\sigma)(x,y)$. Similarly, in the second case, we have $hs_{j_\ell}(g\sigma)(x,y+1)<hs_{j_\ell}(g\sigma)(x,y)$. In both cases this implies that $f\sigma$ is not standard, contradicting our hypothesis.
\end{proof}

We put together the last two results to describe the affine permutations which are allowed to act on a given tableau.

\begin{prop}
An $m$ affine permutation $f=\pi^rs_{j_1}\dots s_{j_s}$ is allowed to act on $\sigma$ if and only if, for all $\ell=1, \dots s$, we have
\[
C_\sigma(s_{j_s}\dots s_{j_{\ell+1}}(j_\ell))-C_\sigma(s_{j_s} \dots s_{j_{\ell+1}}(j_\ell+1)) \neq \pm 1 \mod (N+K),
\]
where, for $\ell=s$, we set $s_{j_{\ell+1}}=\id$.
\end{prop}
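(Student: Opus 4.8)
The plan is to combine Proposition~\ref{propContent} with Proposition~\ref{remAllowedPermutations} and its converse, unwinding the action of $f$ one generator at a time from the right.

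\medskip

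First I would observe that the statement is really an iterated version of Proposition~\ref{propContent}. Write $f = \pi^r s_{j_1}\cdots s_{j_s}$ as a reduced expression and set, for $\ell = 1,\dots,s$, the partial product $g_\ell := s_{j_{\ell+1}}\cdots s_{j_s}$ (with $g_s = \id$), so that $g_\ell \sigma$ is a well-defined DPT precisely when $f$ is allowed to act on $\sigma$ (by Proposition~\ref{remAllowedPermutations}, peeling off the leading $\pi^r s_{j_1}\cdots s_{j_{\ell-1}}$, which is harmless since $\pi$ and the $s_i$'s act bijectively on DPTs). Applying Proposition~\ref{propContent} to the tableau $g_\ell\sigma$, the generator $s_{j_\ell}$ is allowed to act on $g_\ell\sigma$ if and only if $C_{g_\ell\sigma}(j_\ell) - C_{g_\ell\sigma}(j_\ell+1) \neq \pm 1 \bmod (N+K)$.

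\medskip

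The key computational step is then to identify the content function of $g_\ell\sigma$ in terms of that of $\sigma$. Since $g_\ell\sigma(x,y) = g_\ell(\sigma(x,y))$ for all $(x,y)$, the cell carrying the value $i$ in $g_\ell\sigma$ is the cell carrying the value $g_\ell^{-1}(i)$ in $\sigma$; hence $C_{g_\ell\sigma}(i) = C_\sigma(g_\ell^{-1}(i))$ for all $i\in\BZ$. Since $g_\ell = s_{j_{\ell+1}}\cdots s_{j_s}$ and each $s_j$ is an involution, $g_\ell^{-1} = s_{j_s}\cdots s_{j_{\ell+1}}$. Substituting $i = j_\ell$ and $i = j_\ell+1$ gives exactly
\[
C_{g_\ell\sigma}(j_\ell) - C_{g_\ell\sigma}(j_\ell+1) = C_\sigma\bigl(s_{j_s}\cdots s_{j_{\ell+1}}(j_\ell)\bigr) - C_\sigma\bigl(s_{j_s}\cdots s_{j_{\ell+1}}(j_\ell+1)\bigr),
\]
which is the quantity appearing in the claimed inequality.

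\medskip

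Finally I would assemble the equivalence. For the forward direction: if $f$ is allowed to act on $\sigma$, Proposition~\ref{remAllowedPermutations} says each $s_{j_\ell}$ is allowed to act on $g_\ell\sigma$, and the two displayed identities above then force the stated inequality for every $\ell$. For the converse, I would run an induction on $s$ building up $f\sigma$ from $\sigma$ by applying $s_{j_s}, s_{j_{s-1}}, \dots, s_{j_1}$ in turn: at stage $\ell$ the inequality for that $\ell$, together with Proposition~\ref{propContent} and the content identity $C_{g_\ell\sigma}(i) = C_\sigma(g_\ell^{-1}(i))$, guarantees that $s_{j_\ell}$ is allowed to act on $g_\ell\sigma$, so the partial product stays a standard DPT; after $s$ steps we obtain a standard DPT, and applying $\pi^r$ (always allowed) shows $f\sigma\in\DPT(K,N,a,b)$. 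The main subtlety to get right is the bookkeeping of indices — checking that $g_\ell^{-1}$ really is $s_{j_s}\cdots s_{j_{\ell+1}}$ in that order and that the reducedness of the expression for $f$ (used in Proposition~\ref{remAllowedPermutations}) is genuinely needed for the forward direction but not for the converse — rather than any deep structural obstacle.
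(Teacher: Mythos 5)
Your proof is correct and is essentially the paper's intended argument: the paper gives no separate proof of this proposition, only the sentence ``We put together the last two results,'' and your chain — apply Proposition~\ref{propContent} to each partial product $g_\ell\sigma := s_{j_{\ell+1}}\cdots s_{j_s}\sigma$, translate via the identity $C_{g_\ell\sigma}(i)=C_\sigma(g_\ell^{-1}(i))$ with $g_\ell^{-1}=s_{j_s}\cdots s_{j_{\ell+1}}$, use Proposition~\ref{remAllowedPermutations} for the forward direction and induction from $\ell=s$ down to $\ell=1$ for the converse — is exactly that combination spelled out. One small imprecision: the phrase ``$g_\ell\sigma$ is a well-defined DPT \emph{precisely} when $f$ is allowed to act'' overstates things (a later $s_{j_{\ell'}}$ with $\ell'<\ell$ could fail while all $g_\ell\sigma$ up to some point are still standard), and the justification ``since $\pi$ and the $s_i$'s act bijectively on DPTs'' is off target since the $s_i$'s do not in general preserve $\DPT(K,N,a,b)$; the correct justification is just that Proposition~\ref{remAllowedPermutations}, applied from $\ell=s$ downward, directly yields that each $g_\ell\sigma$ is standard and that $s_{j_\ell}$ is allowed on it.
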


The following is the analog in our situation of  Proposition~3.20 of \cite{Suzuki2005Tableaux}.

\begin{prop}\label{propContentFunction}
Let $A \in \BZ_{>0}$, $B\in \BZ/A\BZ$, and $C:\BZ \rightarrow \BZ / A \BZ$, satisfying
\begin{enumerate}
\item $C(i+m)=C(i)+B \mod A$; 
\item For $i,j \in C^{-1}(r), r \in \BZ/ A \BZ$, such that there is no integer between $i$ and $j$ in $C^{-1}(r)$, there exist unique $i<p_\pm<j$ such that $p_\pm \in C^{-1}(r\pm 1)$.
\end{enumerate}
Then there exists integers $K,N,a,b$ such that $N+K=A, a+b=B, aN-Kb=m$ and $\sigma \in \DPT(K,N,a,b)$ such that $C=C_\sigma$. 
\end{prop}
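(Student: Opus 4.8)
The goal is to reconstruct, from the abstract data $(A,B,C)$, a quadruple $(K,N,a,b)$ and a DPT $\sigma$ with $C_\sigma=C$. The key invariants must satisfy $N+K=A$, $a+b=B$, $aN-bK=m$. The plan is to first extract $(K,N,a,b)$ from the combinatorial behaviour of $C$ and then build $\sigma$ using Remark~\ref{remContentPartition} (a DPT is determined by its content function together with its associated partition).

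\textbf{Step 1: recovering $(K,N,a,b)$.} I would start by analysing condition (2) applied to consecutive elements of a single fibre $C^{-1}(r)$. Order the values $0,1,\dots,m-1$ and think of them as positions to be placed into cells; condition (2) says that strictly between two consecutive members $i<j$ of a fibre $C^{-1}(r)$ there is exactly one element of $C^{-1}(r+1)$ and exactly one of $C^{-1}(r-1)$. Interpreting $r$ as the diagonal index $x-y \bmod A$, moving from a cell on diagonal $r$ to the next-larger label on diagonal $r$ forces passing through exactly one cell on diagonal $r+1$ (a step east) and one on diagonal $r-1$ (a step south), which is exactly the local picture of a standard filling along a diagonal. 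Counting: in one ``period'' of $m$ consecutive labels, the number of labels landing on each residue class $r$, weighted appropriately, must reproduce the shape data. Concretely, $N$ should be recovered as (essentially) the number of times the diagonal index decreases by $1$ as $i$ runs $0,1,\dots,m-1$ wrapping around, and $K$ as the number of times it increases; condition (1) $C(i+m)=C(i)+B$ fixes the global shift and hence determines $a+b=B$ and, together with the constraint $aN-bK=m$, pins down $(a,b)$ modulo $(K,N)$ as in Remark~\ref{mod_nk}. I expect this bookkeeping — making precise that the fibre structure of $C$ forces exactly $K$ ``east'' and $N$ ``south'' net displacements per period — to be the technical heart of the argument.

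\textbf{Step 2: building the tableau.} Having $(K,N,a,b)$, set $m=aN-bK$ and attempt to place labels into $\BZ^2$ by: put label $0$ somewhere, then inductively place label $i+1$ adjacent to label $i$ — east if $C(i+1)-C(i)\equiv 1$, south if $C(i+1)-C(i)\equiv -1$ — and when the difference is neither $\pm 1$, choose any cell on the correct diagonal consistent with standardness. Condition (2) guarantees the greedy/diagonal placement never conflicts: between consecutive same-diagonal labels the unique $r\pm1$ elements provide exactly the ``staircase'' needed so that the assignment extends to a genuine standard filling of a fundamental domain $\Delta$. Then periodically extend via the rules $\sigma(x+K,y-N)=\sigma(x,y)$ and $\sigma(x+a,y-b)=\sigma(x,y)+m$; surjectivity is automatic since $\sigma$ takes all values $0,\dots,m-1$ on $\Delta$ and then all of $\BZ$ by the second periodicity. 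Standardness of the periodic extension should follow from Lemma~\ref{extend_suff}-type reasoning once one has chosen $(a,b)$ in the representative satisfying $a\leq K$, $b\geq N-1$ (allowed by Remark~\ref{mod_nk}), or more directly from the diagonal-staircase structure enforced by condition (2). Finally $C_\sigma=C$ holds by construction because the diagonal of label $i$ was defined to be $C(i)$.

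\textbf{Main obstacle.} The delicate point is Step~1: extracting a consistent, well-defined $(K,N,a,b)$ purely from $(A,B,C)$ and verifying $aN-bK=m$ from condition~(2) alone. One must show that the ``number of south steps'' and ``number of east steps'' per period are genuinely well-defined (independent of where one starts counting) and that condition~(2) forces them to be positive integers summing correctly — this is where the analog of \cite[Proposition~3.20]{Suzuki2005Tableaux} does its work, and I would expect to follow that argument closely, adapting the skew-shape bookkeeping there to the doubly periodic (cylindric) setting using Lemma~\ref{lem_Delta_m_cells} to certify that the reconstructed $\Delta$ has exactly $m$ cells.
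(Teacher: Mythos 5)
The overall plan is sound at a high level (extract the lattice path from the fibre structure, fill it, then verify the $(K,N,a,b)$ relations), but both of your steps are operationalized incorrectly, and the gaps are where the real work lives.

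In Step 1, you propose to recover $K$ and $N$ by counting how often the diagonal index $C(i)$ ``increases by $1$'' or ``decreases by $1$'' as $i$ runs through a period of $m$ labels. This cannot work: in a standard DPT, $C(i+1)-C(i)$ is not constrained to $\pm1$ (consecutive labels can sit on diagonals arbitrarily far apart), so the sequence $C(0),C(1),\dots,C(m-1)$ simply does not step by $\pm1$, and the proposed tally is undefined. You acknowledge this in Step~2 (``when the difference is neither $\pm1$, choose any cell on the correct diagonal''), which contradicts the premise of Step~1. The paper's proof instead walks \emph{over diagonals}, not over labels. For each residue $r\in\BZ/A\BZ$ it looks at the minimum positive element $i_r^{(1)}:=\min\bigl(C^{-1}(r)\cap\BZ_{>0}\bigr)$ (each fibre is unbounded by condition (2), so this is well-defined), then for $r=0,\dots,A-1$ it takes one step right if $i_{r+1}^{(1)}>i_r^{(1)}$ and one step up otherwise. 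This makes $A=K+N$ steps, and the count of right versus up steps defines $K$ and $N$. Condition (2) is used to prove an interlacing lemma — the ordering of $i_r^{(j)}$ and $i_{r+1}^{(j)}$ is independent of $j$ — which is what certifies standardness later.

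Step 2 has the same issue: the greedy ``place $i+1$ adjacent to $i$, otherwise choose any consistent cell'' is not an algorithm. You have not explained which cell to choose, nor why a consistent choice exists. The paper sidesteps this entirely by filling each diagonal in one go: having placed the lattice path and set $\sigma(\CL_r)=i_r^{(1)}$, it defines $\sigma(\CL_r+(j-1,j-1))=i_r^{(j)}$ for all $j\in\BZ$, so each fibre $C^{-1}(r)$ fills diagonal $r$ in order. Standardness along rows and columns then reduces to the interlacing lemma. The $(a,b)$ periodicity and the identity $aN-bK=m$ are deduced at the end from condition (1) by comparing diagonal $0$ with diagonal $B$ (they find $(a,-b)$ as the constant translation between boxes labeled $i$ and $i+m$), not chosen in advance by appealing to Remark~\ref{mod_nk} and Lemma~\ref{extend_suff} as you suggest. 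So you have the right ingredients in mind — the fibre structure of $C$ is the key, and $N+K=A$ is the right normalization — but the specific mechanism you propose would not run, and the interlacing lemma is the missing technical device that makes the paper's construction go through.
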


\begin{proof}
First of all, one can see that conditions $(1)$ and $(2)$ imply that $C$ is surjective, and that for all $r\in \BZ/A\BZ$, $C^{-1}(r)$ is unbounded.

For all $r\in \BZ/A\BZ$, let us write $C^{-1}(r)= \left\lbrace i_r^{(j)} \mid j \in \BZ \right\rbrace$, such that
\[\forall j \in \BZ, i_r^{(j)} < i_r^{(j+1)}, \quad i_r^{(1)} = \min\{C^{-1}(r)\cap \BZ_{>0}\}.\]
By condition $(2)$ and an induction on $j$, we have the following,
\begin{enumerate}[label=(\roman*)]
  \item if $i_{r}^{(1)} < i_{r+1}^{(1)}$ then  $\left\lbrace\begin{array}{l}
i_{r}^{(j)} < i_{r+1}^{(j)} \\
i_{r}^{(j)} > i_{r+1}^{(j-1)}
\end{array}\right.$, for all $j\in\BZ$,
  \item if $i_{r}^{(1)} > i_{r+1}^{(1)}$ then  $\left\lbrace\begin{array}{l}
i_{r}^{(j)} > i_{r+1}^{(j)} \\
i_{r}^{(j-1)} < i_{r+1}^{(j)}
\end{array}\right.$, for all $j\in\BZ$,
\end{enumerate}

We will construct the DPT $\sigma$, and its associated lattice path $\CL$ recursively. First let $\sigma(0,0)=i_0^{(1)}$, and $\CL_0=(0,0)$.

For all $0\leq r \leq A-1$, suppose $\CL_0,\ldots,\CL_r$ are constructed and $\sigma(\CL_0), \ldots , \sigma(\CL_r)$ are defined. We distinguish two cases:
\begin{itemize}
  \item if $i_{r+1}^{(1)} > i_{r}^{(1)}$, then $\CL_{r+1} = \CL_r + (1,0)$ and $\sigma(\CL_{r+1}) = i_{r}^{(1)}$ \quad $\ytableausetup{boxsize=0.8cm}
\begin{ytableau}
i_r^{(1)} & i_{r+1}^{(1)}
\end{ytableau}$ ,
  \item if $i_{r+1}^{(1)} < i_{r+1}^{(1)}$, then $\CL_{r+1} = \CL_r + (0,-1)$ and $\sigma(\CL_{r+1}) = i_{r+1}^{(1)}$ \quad $\ytableausetup{boxsize=0.8cm}
\begin{ytableau}
 i_{r+1}^{(1)} \\
 i_r^{(1)}
\end{ytableau}$ .
\end{itemize}
Once $\CL_A$ is constructed, let $\CL_A=(K,-N)$. Necessarily, $N+K=A$.

Then for $r>A$, we continue constructing  $\CL_r$ and $\sigma(\CL_r)$ recursively (the values of $i_r^{(1)}$ being $A$-periodic). Similarly, we build $\CL_r$ and $\sigma(\CL_r)$ for $r\geq 0$ by downward recursion (the new box is placed to the left or below the previous box).
By construction, $\CL$ is a $(K,N)$-periodic lattice path. Note that we have filled exactly one box in each diagonal $x-y=\text{cst}$. 

Next, for all $(r,j)\in \BZ^2$, let 
\[\sigma(\CL_r + (j-1,j-1)) = i_{r}^{(j)}.\]
This defines a $(K,N)$-periodic filling of the whole plane $\BZ^2$. Moreover, for all $(x,y)\in \BZ^2$, $\sigma(x,y)=i_r^{(j)}$ with $r=x-y$. 

Moreover, this filling is standard. Indeed, for all $(x,y)\in \BZ^2$, let $\sigma(x,y) = i_{x-y}^{(j)}$. Consider $\sigma(x+1,y) = i_{x-y+1}^{(j')}$. The configuration is the following
\[\ytableausetup{boxsize=1.3cm}
\begin{ytableau}
 i_{x-y}^{(j)} & i_{x-y+1}^{(j')}\\
\none &  i_{x-y}^{(j+1)}
\end{ytableau} .\]
By definition of $\sigma$, by moving diagonally in the plane, we encounter these configurations
\[\ytableausetup{boxsize=1.4cm}
\begin{ytableau}
 i_{x-y}^{(1)} & i_{x-y+1}^{(j'-j+1)}\\
\none &  i_{x-y}^{(2)}
\end{ytableau} \quad \text{ and } \quad
\begin{ytableau}
 i_{x-y}^{(0)} & i_{x-y+1}^{(j'-j)}\\
\none &  i_{x-y}^{(1)}
\end{ytableau}.\]
Now, if $i_{x-y+1}^{(1)}> i_{x-y}^{(1)}$, then from the first configuration, $j'=j$ and by statement~$(i)$ from above, $\sigma(x+1,y)=i_{x-y+1}^{(j)} > \sigma(x,y)$.
Otherwise, if $i_{x-y+1}^{(1)} < i_{x-y}^{(1)}$, then from the second configuration, $j'=j+1$ and by statement~$(ii)$ from above, $\sigma(x+1,y)=i_{x-y+1}^{(j+1)} > \sigma(x,y)$. We can prove similarly that $\sigma(x,y+1)>\sigma(x,y)$.

Using condition $(1)$, we see that $C(i_0^{(1)} +m) =B$, so the value $i_0^{(1)}+m$ appears in the $B$-th diagonal of $\sigma$. Let $(a,-b)\in\BZ^2$ be the position of the box labeled $i_0^{(1)}+m$ in that diagonal. We necessarily have $a+b =B$. Thus $\sigma(a,-b) = \sigma(0,0) +m$. From condition $(1)$ again, all the values appearing the $B$-th diagonal are $m$ plus the values appearing in the zeroth diagonal, and both diagonals are ordered increasingly. Thus the translation sending $i$ to $i+m$, between those two diagonals, is always $(a,-b)$, and we have $\sigma(x+a,x-b)=\sigma(x,x) +m$, for all $x\in \BZ$. 
We now consider the diagonals 1 and $B+1$, whose values are $m$ plus values of the former. From condition $(2)$, the value $i_1^{(j')}$ is the unique value between $i_{0}^{(j)}$ and $i_{0}^{(j+1)}$ in diagonal 1 ($j'=j$ or $j+1$). Necessarily, the value $i_1^{(j')} +m$ is the unique value between $i_{0}^{(j)}+m$ and $i_{0}^{(j+1)}+m$ in diagonal $B+1$. So the value $i_1^{(j')} +m$ is also obtained from $i_1^{(j')}$ by translation by $(a,-b)$.

\begin{center}
\begin{tikzpicture}
\draw (-1.4,1.4) rectangle  node{$i_{0}^{(j)} $} (0,0);
\draw (0,1.4) rectangle  node{$i_{1}^{(j')} $} (1.4,0);
\draw (0,0) rectangle  node{$i_{0}^{(j+1)} $} (1.4,-1.4);
\node at (2.1,-0.7) {$\ddots$};
\node at (-0.7,2.1) {$\ddots$};
\draw[align=center] (6.6,2.4) rectangle  node{$i_{0}^{(j)}$\\ $+m $} (8,1);
\draw[align=center] (8,1) rectangle  node{$i_{1}^{(j')}$\\ $+m $} (9.4,2.4);
\draw[align=center] (8,1) rectangle  node{$i_{0}^{(j+1)}$\\ $+m $} (9.4,-0.6);
\node at (10.1,0.3) {$\ddots$};
\node at (7.3,3.1) {$\ddots$};
\draw[->, very thick, red] (2.5,0) to node[midway, above] {$(a,-b)$} (6,1) ;
\end{tikzpicture}
\end{center}
From one diagonal to the next, we show iteratively that, for all $(x,y)\in\BZ^2$, $\sigma(x+a,y-b) = \sigma(x,y) + m$. Thus $\sigma$ is indeed a doubly periodic tableau with respect to $(K,N,a,b)$. From this reasoning, we also deduce that $\CL< \CL[a,b]$ and that the boxes between these lattice paths only have fillings between 1 and $m$, thus the relation $m=aN-bK$ is satisfied (using Lemma~\ref{lem_Delta_m_cells} for example).

Finally, one clearly has $C=C_\sigma$.

\end{proof}

\subsection{Some Counting Formulas and Bounds on the Number of DPT}\label{sect_count}

In order to obtain finite counts of DPT, we consider two restrictions: first counting DPT with 0 in $(0,0)$, then counting DPT with fixed lattice path. We then relate the two ways using Dyck paths.

Let $\Lambda(K,N,a,b)$ denote the set of pairs $(\lambda,\sigma)$ where 
\begin{enumerate}
\item $\lambda=(\lambda_1,\dots,\lambda_N)\in\CA_{K,N}$  with $\lambda_i\leq\lambda[a,b]_i$ for all $1\leq i\leq N$, 
\item $\sigma$ is a standard filling of $\Delta(\lambda)$ with numbers $1,\dots,m$, 
\item  for each box $(x,y)$ in the $N$th row of $\lambda[a,b]\setminus\lambda$, we have $\sigma(x,y)<\sigma(x+K,y-N+1)$ whenever $(x+K,y-N+1)\in \Delta(\lambda)$.
\end{enumerate}

As a consequence of Proposition~\ref{prop_latt_part}, the following map is a bijection:
\begin{align*}
\Omega(K,N,a,b) & \xrightarrow{\sim} \Lambda(K,N,a,b),\\
(\CL,\sigma)&\mapsto(\lambda(\CL),\sigma)
\end{align*}
where the inverse map sends $(\lambda,\sigma)$ to $(\CL_\lambda,\sigma)$.
Composing with the bijection in Theorem \ref{dpt_bijection} gives a bijection 
\[\DPT(K,N,a,b)\xrightarrow{\sim}\Lambda(K,N,a,b).\]

We extend the actions of $D$ and $L$ to $\Lambda(K,N,a,b)$ to commute with the bijection with $\DPT(K,N,a,b)$. The action of $D$ shifts the fundamental domain to the right by one, so we set $D(\lambda,\sigma)=(D\lambda,D\sigma)$ where $D\sigma(x,y)=\sigma(x-1,y)$. The action of $L$ cuts the bottom row off of the fundamental domain, shifts it by $(K,-N)$, and appends it as a new top row. Thus we have $L(\lambda,\sigma)=(L\lambda,L\sigma)$ where $L\sigma(x,y)=\sigma(x',y'-1)$ where $(x',y'-1)\in\Delta(\lambda)$ and $(x,y-1)=(x',y'-1)\mod (K,-N)$. 
\begin{example}
  For $(K,N,a,b)=(3,2,4,1)$, let $\lambda=(2,0)$. Then $D\lambda=(3,1)$ and $L\lambda=(3,2)$ and $\lambda[a,b]=(4,3)$. The operators $D,L$ act on the filling $\sigma$ of $(4,3)\setminus(2,0)$ with reading word $13524$ as follows:
\begin{center}
\ytableausetup{boxsize=1.5em}
\ytableaushort{\none \none \none 24, \none 135,}
* {5,5}
\ $\xleftarrow{D}$\ 
\ytableausetup{boxsize=1.5em}
\ytableaushort{\none \none 24, 135,}
* {4,4}
\ $\xrightarrow{L}$\ 
\ytableausetup{boxsize=1.5em}
\ytableaushort{\none \none\none 135, \none\none 24,}
* {6,6}
\end{center}
where the leftmost box in each case has $x$ coordinate $0$ and all boxes have $y$ coordinate $0$ or $1$.
\end{example}

\begin{thm}\label{dpt_mod_dl}
  There is a bijection between $\DPT(K,N,a,b)\slash\langle D,L\rangle$ and the set of all pairs $(\lambda,\sigma)$, where 
\begin{enumerate}
\item $\lambda=(\lambda_1,\dots,\lambda_{N-1},0)\in \CA_{K,N}$ is an partition with $\lambda_i\leq\lambda[a,b]_i$ for all $1\leq i\leq N$,
\item $\sigma$ is a standard filling of $\Delta(\lambda)$ such that $1$ is in the first row,
\item for each box $(x,y)$ in the $N$th row of $\Delta(\lambda)$, we have 
\[\sigma(x,y)<\sigma(x+K,y-N+1)\]
whenever $(x+K,y-N+1)\in \Delta(\lambda)$.
\end{enumerate}
\end{thm}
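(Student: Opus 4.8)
The strategy is to realize $\DPT(K,N,a,b)/\langle D,L\rangle$ as a set of normalized representatives for the $\langle D,L\rangle$-action on $\Lambda(K,N,a,b)$, and then identify those representatives with the combinatorial data in the statement. First I would recall the bijection $\DPT(K,N,a,b)\xrightarrow{\sim}\Lambda(K,N,a,b)$ constructed just above, together with the fact that $D$ and $L$ act compatibly on both sides. So it suffices to describe a fundamental domain for $\langle D,L\rangle$ acting on $\Lambda(K,N,a,b)$.

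Next I would analyze the $\langle D,L\rangle$-orbit of a pair $(\lambda,\sigma)\in\Lambda(K,N,a,b)$. Using the explicit formulas $D\lambda=(\lambda_1+1,\dots,\lambda_N+1)$ and $L\lambda=(\lambda_N+K,\lambda_1,\dots,\lambda_{N-1})$, the key observation is that the composite $D^{?}L^{?}$ can always normalize $\lambda$ so that $\lambda_N=0$: given any $\lambda\in\CA_{K,N}$, apply $L$ repeatedly to cyclically rotate the parts (this is well-defined on $\CA_{K,N}$ since $\lambda_1-\lambda_N\le K$ guarantees the rotated tuple stays in the alcove), bringing the minimal part to the last position, and then apply a power of $D$ to shift it to $0$; one checks this representative with $\lambda_N=0$ is unique in its orbit because $D$ changes $\lambda_N$ by $\pm1$ and $L$ permutes the parts cyclically, so no nontrivial element of $\langle D,L\rangle$ fixes the set $\{\lambda : \lambda_N=0\}$ pointwise except by also acting nontrivially on the tableau in a way that moves $1$ out of the first row. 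More precisely, I would track where the entry $1$ sits: the bijection with $\Omega$ and the definition of the periodic extension mean that applying $D$ and $L$ translates the fundamental domain, and exactly one translate of the domain has its box labeled $1$ in the top row while $\lambda_N=0$. This pins down a unique orbit representative.

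Then I would match conditions. Condition (1) of $\Lambda(K,N,a,b)$ (that $\lambda\in\CA_{K,N}$ with $\lambda_i\le\lambda[a,b]_i$) transfers verbatim, with the added normalization $\lambda_N=0$, giving condition (1) of the theorem. Condition (2) of $\Lambda$ (standard filling of $\Delta(\lambda)$) plus the normalization "$1$ in the first row" gives condition (2) of the theorem. Condition (3) of $\Lambda$ — the inequality $\sigma(x,y)<\sigma(x+K,y-N+1)$ for boxes in the $N$th row whenever the target lies in $\Delta(\lambda)$ — is exactly condition (3) of the theorem and is what encodes standardness of the periodic extension $\sigma'$ across the cut between row $N-1$ and row $0$ of the next period; this was already built into the definition of $\Omega(K,N,a,b)$ via Theorem~\ref{dpt_bijection}, so no new work is needed there. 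Finally I would verify that the normalization is compatible with the $L$-action description given right before the theorem (cutting the bottom row, shifting by $(K,-N)$, appending as top row): since $L\lambda$ again has last part obtained by the cyclic rotation, iterating $L$ does reach a representative with $\lambda_N=0$, and combining with $D$ to fix the position of $1$ finishes the argument.

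\textbf{Main obstacle.} The delicate point is proving \emph{uniqueness} of the normalized representative — that is, that the two normalizations ($\lambda_N=0$ and "$1$ in the first row of $\Delta(\lambda)$") together cut each $\langle D,L\rangle$-orbit in exactly one point. One must rule out the possibility that some $D^iL^j$ with $(i,j)\ne(0,0)$ preserves both normalizations; this requires understanding the interaction between the cyclic shift on partitions induced by $L$ and the simultaneous relabeling of the tableau, and checking that moving $1$ back to the first row forces $i$ and $j$ to be determined. I expect this to follow from a careful bookkeeping argument using Lemma~\ref{fundamental_lemma} (the partition of $\BZ^2$ into translates $\Delta_{s,r}$) — each orbit element corresponds to a choice of which translate of the fundamental domain one reads off, and the two normalizations pick out a unique translate — but making this fully rigorous is where the real content lies.
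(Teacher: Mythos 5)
Your plan is essentially the paper's own proof: pass to $\Omega(K,N,a,b)/\langle D,L\rangle$ via Theorem~\ref{dpt_bijection} and normalize each orbit to the unique representative with $\lambda_N=0$ and $1$ in (the leftmost box of) the first row. The "main obstacle" you flag — uniqueness of this normalized representative — is in fact asserted without detail in the paper too; the clean argument you are looking for is that there are exactly $N$ orbit elements with $\lambda_N=0$ (for each $j\in\{0,\dots,N-1\}$ take $i=-\lambda_{N-j}$ in $D^iL^j$; distinctness holds because $|j-j'|<N$ rules out nontrivial multiples of $(K,-N)$), and since $L$ shifts the row containing $1$ cyclically through $\BZ/N\BZ$, exactly one of these $N$ has $1$ in row $0$. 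One minor imprecision: "bringing the minimal part to the last position" is automatic for alcove weights (they are already weakly decreasing), so that phrase does no normalizing work — the actual choice is which of the $N$ cyclic rotations one selects, and that is what the location of $1$ disambiguates.
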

\begin{proof}
We give a bijection between pairs $(\lambda,\sigma)$ satisfying the conditions above and the set $\Omega(K,N,a,b)\slash\langle D,L\rangle$. The result follows by composing this bijection with the bijection \\  $\DPT(K,N,a,b)\slash\langle D,L\rangle\to\Omega(K,N,a,b)\slash\langle D,L\rangle$ which follows from Theorem \ref{dpt_bijection}.
Given $[(\CL,\sigma)]\in\Omega(K,N,a,b)\slash\langle D,L\rangle$, there is a unique representative $(\CL_0,\sigma_0)$ in $\Omega(K,N,a,b)$ such that $\sigma_0$ assigns a $1$ to the first box in the top row of $\Delta(\CL_0)$ and the first box in the last row of $\Delta(\CL_0)$ has $x$ coordinate $0$. Define the partition $\lambda_0$ by $(\lambda_0)_i=\CL_0(i-1)$ for $1\leq i\leq N$. Since $\CL_0$ is $(K,N)$ periodic, and $(\lambda_0)_N=0$ by assumption, it follows that $(\lambda_0)_1\leq K$. By definition we have $\lambda_0[a,b]_i=\CL_0[a,b](i-1)$ for $1\leq i\leq N$. Therefore $\Delta(\CL_0)=\lambda_0[a,b]\setminus\lambda_0$ and we define $[(\CL,\sigma)]\mapsto (\lambda_0,\sigma_0)$.

  Let $(\lambda,\sigma)$ satisfy the given conditions. Then the partition $\lambda$ extends uniquely to a $(K,N)$-periodic lattice path $\CL$ and $\sigma$ gives a standard filling of $\Delta(\CL)$ which extends to a standard filling of $\Delta'(\CL)$ by assumption. The map $(\lambda,\sigma)\mapsto [(\CL,\sigma)]$ is inverse to the map $[(\CL,\sigma)]\mapsto (\lambda_0,\sigma_0)$. 
\end{proof}


\begin{example}\label{mod_dl_ex}
  Let us return to the case $(K,N,a,b)=(3,2,4,1)$. The partitions $\lambda$ with $2$ parts, $\lambda_1\leq 3$, and $\lambda_2=0$ are $(3,0),(2,0),(1,0)$ and $(0,0)$. We have $(3,0)[4,1]=(4,4)$, $(2,0)[4,1]=(4,3)$, $(1,0)[4,1]=(4,2)$, and $(0,0)[4,1]=(4,1)$. The associated skew shapes $\lambda[a,b]\setminus\lambda$ are:
\begin{center}
  \ytableausetup{boxsize=1em}
  \ydiagram{3+1,0+4} \ \ \ \ \ 
  \ydiagram{2+2,0+3} \ \ \ \ \ 
  \ydiagram{1+3,0+2} \ \ \ \ \ 
  \ydiagram{0+4,0+1}
\end{center}
There are $11$ fillings of these satisfying the conditions of Theorem \ref{dpt_mod_dl}:
  
\begin{center}
\ytableausetup{boxsize=1em}
    \begin{ytableau}
    \none & \none & 1& 3 \\
    2 & 4 & 5
    \end{ytableau}
\ \ 
\ytableausetup{boxsize=1em}
    \begin{ytableau}
    \none & \none & 1& 4 \\
    2 & 3 & 5
    \end{ytableau}
\ \ 
\ytableausetup{boxsize=1em}
    \begin{ytableau}
    \none & \none & 1& 5 \\
    2 & 3 & 4
    \end{ytableau}
\ \ 
\ytableausetup{boxsize=1em}
    \begin{ytableau}
    \none & 1 & 2& 4 \\
    3 & 5
    \end{ytableau}
\ \ 
\ytableausetup{boxsize=1em}
    \begin{ytableau}
    \none & 1 & 2& 5 \\
    3 & 4
    \end{ytableau}
\ \ 
\ytableausetup{boxsize=1em}
    \begin{ytableau}
    \none & 1 & 3& 4 \\
    2 & 5
    \end{ytableau}
    
\vspace{3mm}

\ytableausetup{boxsize=1em}
    \begin{ytableau}
    \none & 1 & 3& 5 \\
    2 & 4
    \end{ytableau}
\ \ 
\ytableausetup{boxsize=1em}
    \begin{ytableau}
    \none & 1 & 4& 5 \\
    2 & 3
    \end{ytableau}
 \ \ 
\ytableausetup{boxsize=1em}
    \begin{ytableau}
    1& 2 & 3& 5 \\
    4
    \end{ytableau}
  \ \ 
\ytableausetup{boxsize=1em}
    \begin{ytableau}
    1& 2 & 4& 5 \\
    3
    \end{ytableau}   
 \ \ 
\ytableausetup{boxsize=1em}
    \begin{ytableau}
    1& 3 & 4& 5 \\
    2
    \end{ytableau}
\end{center}
It follows from Theorem \ref{dpt_mod_dl} that $|\DPT(3,2,4,1)\slash\langle D,L\rangle|=11$. Compare with Example \ref{mod_partial_ex}.
\end{example}

\begin{prop}
 For any $(\lambda,\sigma)\in\Lambda(K,N,a,b)$, we have
 \begin{equation}
  D^KL^{-N}(\lambda,\sigma)=(\lambda,\sigma).
  \end{equation}
\end{prop}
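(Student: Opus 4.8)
The plan is to deduce this from the relation $D^{K}L^{-N}=1$ that is already built into the action of $\CA$ on $\DPT(K,N,a,b)$, and transport it along the bijection $\DPT(K,N,a,b)\xrightarrow{\sim}\Lambda(K,N,a,b)$.

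First I would recall that the bijection $\DPT(K,N,a,b)\xrightarrow{\sim}\Lambda(K,N,a,b)$ was constructed by composing the bijection of Theorem~\ref{dpt_bijection} with the bijection $\Omega(K,N,a,b)\xrightarrow{\sim}\Lambda(K,N,a,b)$, and that the actions of $D$ and $L$ on $\Lambda(K,N,a,b)$ were \emph{defined} precisely so as to commute with this composite bijection. Hence it suffices to verify that $D^{K}L^{-N}$ acts as the identity on $\DPT(K,N,a,b)$.

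Next I would carry out the (routine) computation at the level of tableaux: for $\sigma\in\DPT(K,N,a,b)$ and $(x,y)\in\BZ^{2}$, unwinding the definitions \eqref{D_def} and \eqref{L_def} gives $(L^{-N}\sigma)(x,y)=\sigma(x,y+N)$ and hence $(D^{K}L^{-N}\sigma)(x,y)=\sigma(x-K,y+N)$. Applying condition (1) of Definition~\ref{dptdef} with $(x,y)$ replaced by $(x-K,y+N)$ shows that this equals $\sigma(x,y)$, so $D^{K}L^{-N}\sigma=\sigma$; transporting along the bijection then yields $D^{K}L^{-N}(\lambda,\sigma)=(\lambda,\sigma)$.

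I do not expect any genuine obstacle here. The only point requiring care is the bookkeeping of the order of composition and the sign conventions in the two shifts, so that one lands on condition (1) of Definition~\ref{dptdef} and not on condition (2). One could instead argue directly on $\Lambda(K,N,a,b)$: on the partition coordinate the identity is immediate, since $D^{K}L^{-N}\lambda=\lambda$ follows at once from \eqref{D_lambda} and \eqref{L_lambda}; but checking it on the filling $\sigma$ directly is more tedious, because the $L$-action cyclically permutes rows of the fundamental domain, so the route through $\DPT(K,N,a,b)$ is cleaner.
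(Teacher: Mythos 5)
Your proof is correct and takes essentially the same route as the paper's: the paper likewise reduces to the observation that $D^{K}L^{-N}$ fixes every $\sigma\in\DPT(K,N,a,b)$ and then transports along the bijection using compatibility of the $D,L$ actions. You merely spell out the one-line coordinate computation (which is indeed just condition (1) of Definition~\ref{dptdef}) that the paper leaves implicit.
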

\begin{proof}
  This follows from the fact that we have $D^K L^{-N}(\sigma)=\sigma$ for any $\sigma\in\DPT(K,N,a,b)$ together with the fact that the actions of $D,L$ commute with the natural bijection $\DPT(K,N,a,b)\to\Lambda(K,N,a,b)$. 
\end{proof}



We now define an action of $\pi$ on $(\lambda,\sigma)\in\Lambda(K,N,a,b)$. 
Let $(x_m,y_m)$ be the cell in $\lambda[a,b]\setminus\lambda$ such that $\sigma(x_m,y_m)=m$. Pick $s\in\BZ$ such that $(x_0,y_0):=(x_m,y_m)-(a,-b)+s(K,-N)$ has $0\leq y_0<N$. Then $x_0=\lambda_{y_m+b-sN+1}-1$ and $\sigma(x_0,y_0)=0$. 
Define 
\[
  (\pi\lambda)_i =
  \begin{cases}
     \lambda_i-1 & \textnormal{if}\ i= y_m+b-sN+1 \\
     \lambda_i & \textnormal{else}
  \end{cases}
\]
so that $(x_0,y_0)\in\pi\lambda[a,b]$ and define $\pi\sigma$ by setting $\pi\sigma(x_0,y_0)=1$ and $\pi\sigma(x,y)=\sigma(x,y)+1$ for all cells which $\pi\lambda[a,b]\setminus\pi\lambda$ has in common with $\lambda[a,b]\setminus\lambda$ (that is, all cells besides $(x_0,y_0)$ and $(x_m,y_m)$.
Let
$\pi(\lambda,\sigma)=(\pi\lambda,\pi\sigma)$. 
Intuitively, letting $\Delta$ denote the domain of $\sigma$, we are finding the box containing $m$, removing it from $\Delta$, locating the corresponding box containing $0$ just to the left of $\Delta$, adding that box to $\Delta$, and then adding $1$ everywhere. 
For example, for $(K,N,a,b)=(3,2,4,1)$ we have
\begin{center}
    \begin{ytableau}
    \none & \none & 2 & 4 \\
    1 & 3 & 5
    \end{ytableau}
\ $\xrightarrow{\pi}$\ 
    \begin{ytableau}
    \none & 1 & 3 & 5 \\
    2 & 4 
    \end{ytableau}
\end{center}
where in this case $s=1$.
One can uniquely extend this definition to get pairs $\pi^i(\lambda,\sigma)=(\pi^i\lambda,\pi^i\sigma)$ for all $i\in\BZ$.  By construction, given $\sigma\in\DPT(K,N,a,b)$ we have 
\[\lambda(\CL(\pi^c\sigma))=\pi^c\lambda(\CL(\sigma))\]
\[\Delta(\pi^c\sigma)=\pi^c\lambda(\sigma)[a,b]\setminus\pi^c\lambda(\sigma)\]
for any $c\in\BZ$. 
Translating Theorem \ref{dpt_bijection} into the language of partitions, we see there is a bijection between $\DPT(K,N,a,b)$ and $\Lambda(K,N,a,b)$ sending $\sigma\mapsto(\lambda,\sigma|_{\Delta(\lambda)})$ where $\lambda=\lambda(\CL(\sigma))$. This bijection commutes with the operators $D,L,\pi$. Therefore we have the following.
\begin{thm}\label{dpt_mod_partial}
  There is a bijection between $\DPT(K,N,a,b)\slash\langle \pi\rangle$ and the set of pairs $(\lambda,\sigma)$ satisfying 
  \begin{enumerate}
  \item $\lambda=(\lambda_1,\dots,\lambda_{N-1},0)\in \CA_{K,N}$ is an partition with $\lambda_i\leq\lambda[a,b]_i$ for all $1\leq i\leq N$,
\item $\sigma$ is a standard filling of $\Delta(\lambda)$ with 1 in the $N$th row
\item for each box $(x,y)$ in the $N$th row of $\lambda[a,b]\setminus\lambda$, we have \[\sigma(x,y)<\sigma(x+K,y-N+1)\]
whenever $(x+K,y-N+1)\in \Delta(\lambda)$
\end{enumerate}
\end{thm}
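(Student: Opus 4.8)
The plan is to mirror the proof of Theorem \ref{dpt_mod_dl}, replacing the group $\langle D,L\rangle$ by $\langle\pi\rangle$, and using the already-established compatibility of the bijection $\DPT(K,N,a,b)\xrightarrow{\sim}\Lambda(K,N,a,b)$ with the operator $\pi$. Concretely, I would first note that the bijection $\sigma\mapsto(\lambda(\CL(\sigma)),\sigma|_{\Delta(\sigma)})$ commutes with $\pi$ (this was asserted just before the statement, using $\lambda(\CL(\pi^c\sigma))=\pi^c\lambda(\CL(\sigma))$ and $\Delta(\pi^c\sigma)=\pi^c\lambda(\sigma)[a,b]\setminus\pi^c\lambda(\sigma)$), so it descends to a bijection $\DPT(K,N,a,b)\slash\langle\pi\rangle\xrightarrow{\sim}\Lambda(K,N,a,b)\slash\langle\pi\rangle$. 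Thus it suffices to produce a canonical representative in each $\langle\pi\rangle$-orbit of $\Lambda(K,N,a,b)$ and to check that the set of such representatives is exactly the set of pairs $(\lambda,\sigma)$ described in conditions (1)--(3).

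The key step is the \emph{normalization}: given an orbit $[(\lambda,\sigma)]\in\Lambda(K,N,a,b)\slash\langle\pi\rangle$, I claim there is a unique representative in which the entry $1$ lies in the $N$th (bottom) row of $\Delta(\lambda)=\lambda[a,b]\setminus\lambda$ and in which the bottom row starts at $x$-coordinate $0$. To see existence, recall from the definition of the $\pi$-action that applying $\pi$ moves the box containing $m$ out of $\Delta$ and inserts a new box containing (after the global $+1$ shift) the value $1$ immediately to the left of $\Delta$, at a position with $0\le y_0<N$; iterating $\pi$ enough times we can arrange that this freshly-created $1$-box sits in row $N$. Since $\Delta(\lambda)$ has exactly $m$ cells (Lemma \ref{lem_Delta_m_cells}) and the entries are $1,\dots,m$, there is a well-defined finite number of $\pi$-steps needed; uniqueness follows because once $1$ is forced into row $N$ and the row is normalized to begin at $x=0$, any further nonzero power of $\pi$ moves $1$ out of that position (it removes the $m$-box and relocates the new $1$-box, which lands in a different row or shifts the shape). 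I would then read off that the normalized $\lambda$ satisfies $\lambda_N=0$ (the bottom row of $\Delta(\lambda)$ begins at $x=\lambda_N$, which we set to $0$), hence $\lambda\in\CA_{K,N}$ with last part zero and $\lambda_i\le\lambda[a,b]_i$; condition (2) is precisely "$1$ in the $N$th row"; and condition (3) — that $\sigma(x,y)<\sigma(x+K,y-N+1)$ for boxes in the bottom row whenever the target lies in $\Delta(\lambda)$ — is exactly the standardness of the periodic extension $\sigma'$ across the seam $y=N-1\to y=N$, i.e. the condition already built into membership in $\Omega(K,N,a,b)$ and hence $\Lambda(K,N,a,b)$.

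For the converse, given $(\lambda,\sigma)$ satisfying (1)--(3), condition (3) guarantees (via Lemma \ref{extend_suff}-type reasoning, or directly since (3) is the only seam constraint) that $\sigma$ extends to a standard filling $\sigma'$ of $\Delta'(\lambda)$, so $(\CL_\lambda,\sigma)\in\Omega(K,N,a,b)$ and $(\lambda,\sigma)\in\Lambda(K,N,a,b)$; it is in normal form by construction, so its orbit maps back to it, establishing that the two maps are mutually inverse. The main obstacle I anticipate is pinning down the uniqueness half of the normalization precisely: one must verify that no two distinct powers $\pi^j$, $\pi^k$ with $j\not\equiv k$ produce pairs both satisfying "$1$ in row $N$, bottom row at $x=0$," which amounts to tracking carefully how the shape $\lambda$ and the position of the $0$-box (equivalently the new $1$-box) evolve under $\pi$ — using the explicit formula $(x_0,y_0)=(x_m,y_m)-(a,-b)+s(K,-N)$ and the fact that $\DPT(K,N,a,b)\slash\langle\pi\rangle$ is the orbit set of an honest $\BZ$-action. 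Once that bookkeeping is done, everything else is a routine translation of Theorem \ref{dpt_bijection} into the partition language, exactly as in Example \ref{mod_dl_ex} versus the present setup.
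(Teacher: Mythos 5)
Your proposal is correct and follows essentially the same route as the paper: the paper's proof is the one-line normalization ``apply $\pi$ or $\pi^{-1}$ repeatedly until the first box in the bottom row of $\Delta(\lambda)$ has $x$-coordinate zero and is filled by~$1$,'' which is exactly your canonical representative. The existence and uniqueness you flag as the delicate step are in fact cleanest at the DPT level: for the DPT $\tau$ corresponding to $(\lambda,\sigma)$, the required shift is $c=1-\tau(0,N-1)$, which is visibly unique, and standardness of $\tau$ then forces $\CL(\pi^c\tau)(N-1)=0$ automatically, so no bookkeeping of how $\lambda$ and the $0$-box migrate under $\pi$ is needed.
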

\begin{proof}
For any pair $(\lambda,\sigma)$, apply $\pi$ (or $\pi^{-1}$) repeatedly until the first box in the bottom row of $\Delta(\lambda)$ has $x$ coordinate zero and is filled by $1$. 
\end{proof}

\begin{example}\label{mod_partial_ex}
  Let us return to the case $(K,N,a,b)=(3,2,4,1)$. The partitions $\lambda$ with $2$ parts, $\lambda_1\leq 3$, and $\lambda_2=0$ are $(3,0),(2,0),(1,0)$ and $(0,0)$. We have $(3,0)[4,1]=(4,4)$, $(2,0)[4,1]=(4,3)$, $(1,0)[4,1]=(4,2)$, and $(0,0)[4,1]=(4,1)$. The associated skew shapes are
\begin{center}
  \ytableausetup{boxsize=1em}
  \ydiagram{3+1,0+4} \ \ \ \ \ 
  \ydiagram{2+2,0+3} \ \ \ \ \ 
  \ydiagram{1+3,0+2} \ \ \ \ \ 
  \ydiagram{0+4,0+1}
\end{center}
There are $11$ fillings of these satisfying the conditions of Theorem \ref{dpt_mod_partial}:
  
\begin{center}
\ytableausetup{boxsize=1em}
    \begin{ytableau}
    \none & \none & \none& 2 \\
    1 & 3 & 4 & 5
    \end{ytableau}
\ \ 
\ytableausetup{boxsize=1em}
    \begin{ytableau}
    \none & \none & \none& 3 \\
    1 & 2 & 4 & 5
    \end{ytableau}
\ \ 
\ytableausetup{boxsize=1em}
    \begin{ytableau}
    \none & \none & \none& 4 \\
    1 & 2 & 3 & 5
    \end{ytableau}
\ \ 
\ytableausetup{boxsize=1em}
    \begin{ytableau}
    \none & \none & 3& 5 \\
    1 & 2 & 4
    \end{ytableau}
\ \ 
\ytableausetup{boxsize=1em}
    \begin{ytableau}
    \none & \none & 3& 4 \\
    1 & 2 & 5
    \end{ytableau}
\ \ 
\ytableausetup{boxsize=1em}
    \begin{ytableau}
    \none & \none & 2& 5 \\
    1 & 3 & 4
    \end{ytableau}
    
\vspace{3mm}

\ytableausetup{boxsize=1em}
    \begin{ytableau}
    \none & \none & 2& 4 \\
    1 & 3 & 5
    \end{ytableau}
\ \ 
\ytableausetup{boxsize=1em}
    \begin{ytableau}
    \none & \none & 2& 3 \\
    1 & 4 & 5
    \end{ytableau}
 \ \ 
\ytableausetup{boxsize=1em}
    \begin{ytableau}
    \none & 2 & 4& 5 \\
    1& 3
    \end{ytableau}
  \ \ 
\ytableausetup{boxsize=1em}
    \begin{ytableau}
    \none & 2 & 3& 5 \\
    1& 4
    \end{ytableau}   
 \ \ 
\ytableausetup{boxsize=1em}
    \begin{ytableau}
    \none & 2 & 3& 4 \\
    1& 5
    \end{ytableau}
\end{center}
It follows from Theorem \ref{dpt_mod_partial} that $|\DPT(3,2,4,1)\slash\langle \pi\rangle|=11$. 
By comparison with Example \ref{mod_dl_ex} we have see that $\DPT(3,2,4,1)\slash\langle D,L\rangle$ and $\DPT(3,2,4,1)\slash\langle \pi\rangle$ have the same number of elements. 
\end{example}

The set $\DPT(K,N,a,b)/\langle D,L\rangle$ can be thought of as the set of $\sigma\in\DPT(K,N,a,b)$ with $0$ in the $(0,0)$ cell. The same can be said about $\DPT(K,N,a,b)/\langle \pi\rangle$. Thus the cardinalities of these sets necessarily coincide. In fact, there is a nice bijection. 

\begin{thm}\label{mod_dl_mod_pi_bij}
 There is a bijection
 \[\DPT(K,N,a,b)\slash\langle D,L\rangle \xrightarrow{\sim}\DPT(K,N,a,b)\slash\langle \pi\rangle.\]
\end{thm}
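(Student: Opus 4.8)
The plan is to build the bijection concretely using the two parametrizations already established. By Theorem~\ref{dpt_mod_dl}, $\DPT(K,N,a,b)/\langle D,L\rangle$ is in bijection with the set of pairs $(\lambda,\sigma)$ where $\lambda=(\lambda_1,\dots,\lambda_{N-1},0)\in\CA_{K,N}$ with $\lambda_i\le\lambda[a,b]_i$, $\sigma$ is a standard filling of $\Delta(\lambda)$ with $1$ in the \emph{first} row, satisfying the cylindricity condition (3). By Theorem~\ref{dpt_mod_partial}, $\DPT(K,N,a,b)/\langle\pi\rangle$ is in bijection with the analogous set of pairs, but now with $1$ in the \emph{$N$th} (bottom) row. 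So it suffices to produce a bijection between these two combinatorial sets of pairs. The obvious idea is to use the operator $\pi$ (which is a bijection on all of $\DPT(K,N,a,b)$): starting from a representative with $1$ in the first row, apply $\pi^{-1}$ repeatedly until $1$ lands in the bottom row; this is exactly how the representatives for $\DPT/\langle\pi\rangle$ are selected in the proof of Theorem~\ref{dpt_mod_partial}. Symmetrically, apply $D^iL^j$ to move a bottom-row representative to a first-row one.

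The key steps, in order, are as follows. First, I would recall from the proof of Theorem~\ref{dpt_mod_dl} that a class in $\DPT/\langle D,L\rangle$ has a unique representative $\sigma$ with $0$ in the cell $(0,0)$ and $1$ in the top row of $\Delta(\sigma)$; call this representative $\sigma_{DL}$. Second, from the proof of Theorem~\ref{dpt_mod_partial}, a class in $\DPT/\langle\pi\rangle$ has a unique representative $\tau$ with $0$ in $(0,0)$ and $1$ in the bottom ($N$th) row of $\Delta(\tau)$; call it $\tau_\pi$. Third, I claim the map $[\sigma]_{\langle D,L\rangle}\mapsto[\sigma_{DL}]_{\langle\pi\rangle}$ is a well-defined bijection. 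Well-definedness is immediate because $\sigma_{DL}$ is a canonical choice depending only on the $\langle D,L\rangle$-class. For bijectivity, I would construct the inverse: given $[\tau]_{\langle\pi\rangle}$ with canonical representative $\tau_\pi$, send it to $[\tau_\pi]_{\langle D,L\rangle}$. That these two maps are mutually inverse reduces to the following statement: every $\sigma\in\DPT(K,N,a,b)$ lies in a single $\langle D,L,\pi\rangle$-orbit, and within that orbit there is exactly one element with $0$ in $(0,0)$ and $1$ in the top row, and exactly one element with $0$ in $(0,0)$ and $1$ in the bottom row; moreover these two elements have the \emph{same} $\langle D,L\rangle$-class applied-then-$\pi$-quotiented and vice versa — which is automatic once one notes $D,L,\pi$ all commute and $\langle D,L\rangle$ and $\langle\pi\rangle$ together with the relation $D^{-a}L^b=\pi^m$ generate $\CA$. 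Concretely: the composite "take $\sigma_{DL}$, then take its $\pi$-canonical form, then take that one's $\langle D,L\rangle$-canonical form" returns $\sigma_{DL}$, because moving by $\pi$'s and then by $D,L$'s stays in the same $\CA$-orbit and the canonical forms are unique.

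I would present this cleanly by working in the partition language. Using the sorting/normalization in the proofs of Theorems~\ref{dpt_mod_dl} and~\ref{dpt_mod_partial}, the bijection sends the pair $(\lambda,\sigma)$ with $1$ in row~$1$ to the pair obtained by applying the $\pi$-action (Definition of $\pi$ on $\Lambda(K,N,a,b)$ given just before Theorem~\ref{dpt_mod_partial}) the appropriate number of times $c$ so that $1$ migrates to row~$N$, then renormalizing the partition so its last part is $0$ via the residual $D,L$ action. Since $\pi$ acts invertibly on $\Lambda(K,N,a,b)$ and commutes with the bijection to $\DPT(K,N,a,b)$, and since both target sets are subsets of $\Lambda(K,N,a,b)$ cut out by "last part of $\lambda$ is $0$" together with a row-position condition for the entry $1$, the map is manifestly invertible: run $\pi^{-c}$ and renormalize. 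The main obstacle I anticipate is purely bookkeeping: verifying that after applying $\pi^{\pm c}$ the cylindricity condition (3) is preserved and that the entry $1$ indeed reaches the desired row in a well-defined number of steps (equivalently, that the orbit of $1$ under the $\pi$-action visits every row exactly as expected). This follows from the explicit description of the $\pi$-action on $(\lambda,\sigma)$ — removing the box containing $m$, re-inserting a box containing $0$ on the left, shifting all entries by $1$ — together with Lemma~\ref{fundamental_lemma} guaranteeing the re-inserted box has a well-defined row in $\{0,\dots,N-1\}$ after the $D^sL^{?}$ normalization; no genuinely new idea is required beyond what is in the proofs of Theorems~\ref{dpt_mod_dl} and~\ref{dpt_mod_partial}.
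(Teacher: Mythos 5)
The paper's proof is a single explicit operation with no iteration of $\pi$. Take the pair $(\lambda,\sigma)$ parametrizing a $\langle\pi\rangle$-class (so $\lambda_N=0$ and $1$ is in the $N$th row), apply $L$ to cycle the $N$th row, which carries the $1$, to the top (giving $(L\lambda)_1=K$ and $1$ in the first row), and then apply $D^{-\lambda_{N-1}}$ to restore $\lambda_N=0$. The composite $D^{-\lambda_{N-1}}L$ is visibly invertible and gives the bijection between the pair-sets of Theorems~\ref{dpt_mod_dl} and~\ref{dpt_mod_partial}. You mention this mechanism only in passing (``Symmetrically, apply $D^iL^j$ to move a bottom-row representative to a first-row one'') and instead develop an iterative $\pi$-based route; this is a genuinely different route, and it has a gap.

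The map $[\sigma]_{\langle D,L\rangle}\mapsto[\sigma_{DL}]_{\langle\pi\rangle}$ is well-defined, but you do not establish injectivity: you must rule out that two distinct $\langle D,L\rangle$-canonical representatives can be $\pi$-translates of each other, and your appeal to ``the canonical forms are unique'' within an $\CA$-orbit does not do this (a $D,L$-translate and a $\pi$-translate of $\sigma_{DL}$ can coincide without the translation being trivial). The implicit orbit-counting also needs care because the $\CA$-action on $\DPT(K,N,a,b)$ is not free; for example the linear tableau $\sigma(x,y)=Nx+Ky$ has stabilizer of order $m$ in $\CA$. Furthermore, your alternative computational description in the last paragraph (apply $\pi^{-c}$ until $1$ reaches row $N$, then ``renormalize the partition so its last part is $0$ via the residual $D,L$ action'') does not actually compute $[\sigma_{DL}]_\pi$: the final $D,L$ step changes the $\pi$-class of the output. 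In the paper's normalization from the proof of Theorem~\ref{dpt_mod_partial}, the $\pi$-iteration stops at the unique power for which $\sigma(0,N-1)=1$, and this simultaneously forces $\lambda_N=0$, so no residual $D,L$ normalization should arise; the condition ``$1$ in the bottom row'' alone is not the right stopping criterion. In short, the decisive idea is the $D^{-\lambda_{N-1}}L$ operation, which your plan glances at but does not develop, and your proposed $\pi$-based replacement does not, in its present form, yield a proof.
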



\begin{proof}
  It suffices to give a bijection between the pairs $(\lambda,\sigma)$ appearing in Theorem \ref{dpt_mod_dl} and those appearing in Theorem \ref{dpt_mod_partial}. Given a pair $(\lambda,\sigma)\in \Lambda(K,N,a,b)$ with $\lambda_N=0$ and 1 in the last now, the pair $(L\lambda,L\sigma)$ has $L\lambda_1=K$ and the filling has 1 in the first row. Now apply $D^{-\lambda_{N-1}}$ to get a pair $(D^{-\lambda_{N-1}}L\lambda,D^{-\lambda_{N-1}}L\sigma)$ satisfying $(D^{-\lambda_{N-1}}L\lambda)_1\leq K$, $(D^{-\lambda_{N-1}}L\lambda)_N=0$, and has 1 in the first row. This operation is clearly reversible. 
\end{proof}
\begin{prop}
  For any $(\lambda,\sigma)\in\Lambda(K,N,a,b)$, we have
  \begin{equation}
    \pi^m(\lambda,\sigma)=D^{-a}L^b(\lambda,\sigma)
  \end{equation}
\end{prop}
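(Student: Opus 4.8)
The plan is to translate everything back to tableaux via the bijection $\DPT(K,N,a,b)\xrightarrow{\sim}\Lambda(K,N,a,b)$ and then exploit the two defining relations of the symmetry group $\CA$. Recall from the excerpt that the operators $D,L,\pi$ on $\Lambda(K,N,a,b)$ were constructed precisely so that this bijection is $\langle D,L,\pi\rangle$-equivariant. Hence it suffices to prove the identity $\pi^m\sigma = D^{-a}L^b\sigma$ for all $\sigma\in\DPT(K,N,a,b)$, and this is essentially immediate from the defining relation $D^{-a}L^b=\pi^m$ of $\CA$ together with the fact that $\CA$ acts on $\DPT(K,N,a,b)$. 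More concretely: on a doubly periodic tableau, $\pi^m\sigma(x,y)=\sigma(x,y)+m$ by iterating \eqref{del_def}, while $D^{-a}L^b\sigma(x,y)=\sigma(x+a,y-b)$ by iterating \eqref{D_def} and \eqref{L_def}; these agree for every $(x,y)\in\BZ^2$ exactly by property (2) in Definition~\ref{dptdef}. Pulling this back along the equivariant bijection gives the claim on $\Lambda(K,N,a,b)$.

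First I would state that the bijection $\DPT(K,N,a,b)\to\Lambda(K,N,a,b)$ of Theorem~\ref{dpt_bijection} (combined with the partition reformulation) intertwines the $D$, $L$ and $\pi$ actions on both sides; this was already asserted in the excerpt just before Theorem~\ref{dpt_mod_partial}. Then I would verify the identity on the $\DPT$ side: iterating \eqref{del_def} $m$ times shows $\pi^m\sigma(x,y)=\sigma(x,y)+m$; iterating \eqref{D_def} and \eqref{L_def} shows $D^{-a}L^b\sigma(x,y)=\sigma(x+a,y-b)$; and Definition~\ref{dptdef}(2) says $\sigma(x+a,y-b)=\sigma(x,y)+m$, so $\pi^m\sigma=D^{-a}L^b\sigma$ as elements of $\DPT(K,N,a,b)$. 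Finally, transporting this equality of tableaux through the equivariant bijection yields $\pi^m(\lambda,\sigma)=D^{-a}L^b(\lambda,\sigma)$ in $\Lambda(K,N,a,b)$.

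There is essentially no obstacle here: the statement is a bookkeeping consequence of the relations already built into $\CA$ and of the care taken in defining the $\pi$-action on $\Lambda(K,N,a,b)$ so as to match the $\pi$-action on $\DPT(K,N,a,b)$. If one wanted a fully self-contained argument on $\Lambda(K,N,a,b)$ without invoking the $\DPT$ side, the only point requiring attention would be to unwind the definition of $\pi$ given just before the proposition (remove the box containing $m$, find the box containing $0$ immediately left of $\Delta$, re-attach it, add $1$ everywhere) and check that applying it $m$ times performs exactly the translation $(\lambda,\sigma)\mapsto(\lambda[-a,b],\,\sigma(\,\cdot+a,\,\cdot-b))=D^{-a}L^b(\lambda,\sigma)$; by Lemma~\ref{lem_latt_DL} the partition part matches ($\pi^m$ shifts $\lambda$ to $\lambda[-a,b]=D^{-a}L^b\lambda$), and the filling part matches because each application of $\pi$ increments all entries by $1$ while cyclically relocating one box. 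But the cleanest route is simply to push the identity down from $\DPT(K,N,a,b)$, where it is nothing more than Definition~\ref{dptdef}(2).
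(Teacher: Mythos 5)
Your proof is correct and follows the same route the paper uses: establish $\pi^m\sigma=D^{-a}L^b\sigma$ on the $\DPT$ side (where it is the defining relation from Definition~\ref{dptdef}(2) and the presentation of $\CA$) and transport it through the $\langle D,L,\pi\rangle$-equivariant bijection with $\Lambda(K,N,a,b)$. You simply spell out in more detail the verification on the $\DPT$ side that the paper takes as already known.
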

\begin{proof}
  This follows from the fact that we have $\pi^m\sigma=D^{-a}L^b\sigma$ for all $\sigma\in\DPT(K,N,a,b)$ and the bijection of $\DPT(K,N,a,b)$ with $\Lambda(K,N,a,b)$ commutes with the actions of $D,L,\pi$. 
\end{proof}


A $(K,N)$-Dyck path is a portion of a lattice path from $(x,y)$ to $(x+K,y-N)$ for some $x,y$. The collection of $(K,N)$-Dyck paths are in correspondence with $(K,N)$-periodic lattice paths in the following way. Any $(K,N)$-Dyck path extends periodically to a $(K,N)$-periodic lattice path. Conversely, given a $(K,N)$-periodic lattice path, take an infinite line with slope $-\frac{K}{N}$ which is disjoint with the path and below it, and move the line towards the path until it touches. If the line touches the path at $(x,y)$, then it also touches at $(x+K,y-N)$. If $N$ and $K$ are relatively prime, it is impossible to have any intersection points in between $(x,y)$ and $(x+K,y-N)$. In the relatively prime case, the $(K,N)$-Dyck paths modulo translation by the vector $(K,-N)$ are in bijective correspondence with $(K,N)$-periodic lattice paths and therefore lattice paths modulo $\langle D,L\rangle$ are in bijection with the $(K,N)$-Dyck paths ending at $(0,0)$. 
For the non-relatively prime case, see \cite{Gorsky2017Dyck}, where they give a bijection between $(dK,dN)$-Dyck paths and $d$ tuples of $(K,N)$-Dyck paths subject to certain gluing data. 
\begin{example}
  A periodic lattice path corresponding to a linear DPT (as in Example \ref{linearex}) gives rise to the Dyck path which minimally stays above the diagonal.
\end{example}
One can associate to a $(K,N)$-Dyck path $d$ a partition $\lambda(d)=(\lambda_1,\dots,\lambda_N)$ where the $\lambda_i$ are the $x$ values of the vertical steps. Using this, we can define a skew shape $\Delta(d)=\lambda(d)[a,b]\setminus \lambda(d)$. 
With the obvious actions of $D$ and $L$ in mind, we conclude the following. 

\begin{thm}\label{dpt_dyck}
Suppose that $\gcd(K,N)=1$. Then there is a bijection between the set $\DPT(K,N,a,b)\slash\langle D,L\rangle$ and the set of pairs $(d,\sigma)$ such that
\begin{enumerate}
\item $d$ is a $(K,N)$-Dyck path ending at $(0,0)$ such that $\lambda(d)[a,b]\geq\lambda(d)$
\item $\sigma$ is a filling of $\Delta(d)$ with numbers $1,\dots,m$ such that for each box $(x,y)$ in the $N$th row of $\Delta(d)$, we have $\sigma(x,y)<\sigma(x+K,y-N+1)$ whenever $(x+K,y-N+1)\in \Delta(d)$
\end{enumerate}
\end{thm}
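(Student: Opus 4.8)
The plan is to chain together three bijections. The first, $\DPT(K,N,a,b)/\langle D,L\rangle\simeq\Omega(K,N,a,b)/\langle D,L\rangle$, is already available: it is the $D,L$-equivariance of the bijection of Theorem~\ref{dpt_bijection} noted after Corollary~\ref{cor_DPT_CL}. The second will identify $\Omega(K,N,a,b)/\langle D,L\rangle$ with the set of pairs $(d,\sigma)$ where $d$ is a $(K,N)$-Dyck path ending at $(0,0)$ with $\CL_d<\CL_d[a,b]$ and $\sigma$ is a standard filling of $\Delta(\CL_d)$ whose periodic extension to $\Delta'(\CL_d)$ is standard, $\CL_d$ denoting the periodic extension of $d$. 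The third is just a rewriting of the two conditions on $(d,\sigma)$ in the form stated (with ``filling'' read as ``standard filling'', consistently with Theorem~\ref{dpt_mod_dl}).

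To build the second bijection, the key point I would establish is that, because $\gcd(K,N)=1$, the action of $\langle D,L\rangle$ on the set of $(K,N)$-periodic lattice paths (which is by integer translation, $D^KL^{-N}$ acting trivially) is \emph{free}: if a translation by $(p,-q)$ fixes a $(K,N)$-periodic lattice path $\CL$, then $\CL(y)-\CL(y+q)$ is a constant equal to $p$, and combined with $\CL(y+N)=\CL(y)-K$ this forces $Np=qK$, so $\gcd(K,N)=1$ gives $(p,-q)\in\BZ(K,-N)$, i.e.\ the element of $\langle D,L\rangle$ is trivial. On the other hand, the discussion preceding the theorem shows that in the coprime case every $\langle D,L\rangle$-orbit of $(K,N)$-periodic lattice paths contains exactly one path of the form $\CL_d$ with $d$ a $(K,N)$-Dyck path ending at $(0,0)$. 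Combining these: for $(\CL,\sigma)\in\Omega(K,N,a,b)$ there is a unique $g\in\langle D,L\rangle$ with $g\CL=\CL_d$, and $g(\CL,\sigma)=(\CL_d,\sigma_d)$ is then the unique element of its orbit with lattice-path component $\CL_d$; sending the orbit to $(d,\sigma_d)$ and, conversely, $(d,\sigma)$ to the orbit of $(\CL_d,\sigma)$ are mutually inverse. Moreover the defining condition ``$\CL<\CL[a,b]$'' of $\Omega$ is $\langle D,L\rangle$-invariant, since $D(\CL[a,b])=(D\CL)[a,b]$ and $L(\CL[a,b])=(L\CL)[a,b]$ by Lemma~\ref{shift_lemma}, so it descends to the condition $\CL_d<\CL_d[a,b]$.

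For the rewriting: first, by Proposition~\ref{prop_latt_part} and Lemma~\ref{lem_latt_DL} we have $\lambda(\CL_d[a,b])=\lambda(d)[a,b]$, and since a $(K,N)$-periodic lattice path and its $(a,b)$-shift are both determined on $\{0,\dots,N-1\}$ by the periodicity $\CL(y+N)=\CL(y)-K$, the inequality $\CL_d\le\CL_d[a,b]$ is equivalent to $\lambda(d)[a,b]\ge\lambda(d)$ componentwise (and then $\CL_d<\CL_d[a,b]$ holds automatically, as $\CL_d=\CL_d[a,b]$ would give $(a,-b)\in\BZ(K,-N)$, hence $m=0$); in particular $\Delta(\CL_d)=\lambda(d)[a,b]\setminus\lambda(d)=\Delta(d)$. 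Second, as in the identification of $\Omega$ with $\Lambda$ preceding Theorem~\ref{dpt_mod_dl}, I would observe that the copies of $\Delta(d)$ tiling $\Delta'(d)$ occupy disjoint bands of $N$ consecutive rows, so the only pairs of $(0,1)$- or $(1,0)$-adjacent cells of $\Delta'(d)$ not internal to a single copy are the pairs $\{(x,N-1),(x,N)\}$ with $(x,N-1)$ in the $N$th row of $\Delta(d)$ and $(x,N)$ in the adjacent copy, carrying the value $\sigma(x+K,0)$. Hence the periodic extension of $\sigma$ is standard exactly when $\sigma(x,N-1)<\sigma(x+K,0)$ for every such $(x,N-1)$ with $(x+K,0)\in\Delta(d)$, which is condition~(2) of the statement. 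Composing the three bijections gives the theorem.

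The step I expect to be the main obstacle is the freeness/unique-normalization argument, i.e.\ ensuring that replacing the lattice-path component of $(\CL,\sigma)$ by its Dyck representative pins down the filling uniquely rather than only up to a nontrivial stabilizer; this is precisely where the hypothesis $\gcd(K,N)=1$ is used, and it is also why the non-coprime case requires working instead with the decomposition of $(dK,dN)$-Dyck paths into tuples of $(K,N)$-Dyck paths with gluing data as in~\cite{Gorsky2017Dyck}, rather than with a single Dyck path.
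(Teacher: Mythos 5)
Your proof is correct and follows essentially the same route the paper intends: the paper's own justification for Theorem~\ref{dpt_dyck} is only the one-line remark ``with the obvious actions of $D$ and $L$ in mind,'' relying on the preceding discussion of the Dyck-path representative and on the $\langle D,L\rangle$-equivariant bijection from Theorem~\ref{dpt_bijection}, which is exactly what you spell out. Your explicit verification that coprimality makes the $\langle D,L\rangle$-action on $(K,N)$-periodic lattice paths free (so that choosing the Dyck representative also pins down the filling) is the detail the paper leaves implicit, and your translation of the standardness-of-the-periodic-extension condition into condition~(2), including the observation that only the vertical adjacencies between row $N-1$ and the next copy can fail, is accurate.
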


\begin{example}\label{mod_partial_ex_2}
  Let us return again to the case $(K,N,a,b)=(3,2,4,1)$. There are 2 Dyck paths from $(-3,-2)$ to $(0,0)$ staying above the diagonal. The associated skew shapes are
\begin{center}
  \ytableausetup{boxsize=1em}
  \ydiagram{1+3,0+2} \ \ \ \ \ 
  \ydiagram{0+4,0+1}
\end{center}
There are $11$ fillings of these satisfying the conditions of Theorem \ref{dpt_dyck}:
  
\begin{center}
\ytableausetup{boxsize=1em}
    \begin{ytableau}
    \none & 1 & 2& 4 \\
    3& 5
    \end{ytableau}
  \ \ 
    \begin{ytableau}
    \none & 1 & 2& 5 \\
    3& 4
    \end{ytableau}   
 \ \ 
    \begin{ytableau}
    \none & 1 & 4& 4 \\
    2& 5
    \end{ytableau}
 \ \ 
    \begin{ytableau}
    \none & 1 & 3& 5 \\
    2& 4
    \end{ytableau}
 \ \ 
    \begin{ytableau}
    \none & 1 & 4& 5 \\
    2& 3
    \end{ytableau}
 \ \ 
    \begin{ytableau}
    \none & 2 & 3& 4 \\
    1& 5
    \end{ytableau}

\vspace{3mm}

    \begin{ytableau}
    \none & 2 & 3& 5 \\
    1& 4
    \end{ytableau}
 \ \ 
    \begin{ytableau}
    \none & 2 & 4& 5 \\
    1& 3
    \end{ytableau}
 \ \ 
    \begin{ytableau}
    1 & 3 & 4& 5 \\
    2
    \end{ytableau}
 \ \ 
    \begin{ytableau}
    1 & 2 & 4& 5 \\
    3
    \end{ytableau}
 \ \ 
    \begin{ytableau}
    1 & 2 & 3& 5 \\
    4
    \end{ytableau}
\end{center}
\end{example}

With Lemmas \ref{shift_suff} and \ref{extend_suff} in mind, we see that for every $(K,N)$-Dyck path $d$, we have $\lambda(d)[K,N-1]>\lambda(d)$, and every standard filling of $\Delta(\lambda):=\lambda[K,N-1]\setminus\lambda$ extends to a standard filling of $\Delta'(\lambda)$. Thus the case $(K,N,a,b)=(K,N,K,N-1)$ should be the simplest case for which we can attempt writing down an exact formula to count $\DPT(K,N,a,b)\slash\langle D,L\rangle$. 
Note that $|\DPT(K,N,K,N-1)\slash\langle D,L\rangle|=|\DPT(N,K,1,0)\slash\langle D,L\rangle|$
by Remark \ref{mod_nk} and Equation \ref{phi} 
so we find that (unsurprisingly) the simplest case is where $(a,b)=(1,0)$. It turns out the result is related to the rational shuffle conjecture. 
By Theorem \ref{dpt_dyck} and the discussion above, $\DPT(K,N,1,0)\slash\langle D,L\rangle$ is counted by pairs $(d,\sigma)$ where $d$ is a $(K,N)$-Dyck path and $\sigma$ is a standard filling of $\Delta(d)$.
  In the case that $\gcd(K,N)=1$, these are exactly the rational parking functions appearing in \cite{armstrong2016rational} which are shown to be counted by $K^{N-1}$ in their Corollary 4 
  We give an independent proof of this fact which works not only in the relatively prime case, but for general pairs $(K,N)$.

\begin{prop}\label{k_n-1}
  For any $K,N>0$, we have
  \begin{equation}
    |\DPT(K,N,1,0)\slash\langle D,L\rangle|=K^{N-1}.
  \end{equation}
\end{prop}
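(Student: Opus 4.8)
The plan is to specialize the combinatorial model already in hand. By Theorem~\ref{mod_dl_mod_pi_bij} it suffices to count $\DPT(K,N,1,0)\slash\langle\pi\rangle$, and for this Theorem~\ref{dpt_mod_partial} gives an explicit description. So first I would set $(a,b)=(1,0)$, observe that then $m=aN-bK=N$ and $\lambda[1,0]=D\lambda$, and note that the skew shape $\Delta(\lambda)=\lambda[1,0]\setminus\lambda$ degenerates: it has exactly one cell in each row $y=0,1,\dots,N-1$, namely $c_y:=(\lambda_{y+1},y)$, with $c_{N-1}=(0,N-1)$. Thus a pair $(\lambda,\sigma)$ as in Theorem~\ref{dpt_mod_partial} is precisely a weakly decreasing sequence $K\geq\lambda_1\geq\cdots\geq\lambda_{N-1}\geq\lambda_N=0$ (condition~(1) is automatic since $\lambda_i\leq\lambda_i+1$) together with a bijection $\sigma\colon\{c_0,\dots,c_{N-1}\}\to\{1,\dots,N\}$.

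Next I would pin down which constraints survive. Since no two of the cells $c_0,\dots,c_{N-1}$ lie in a common row, and two of them share a column iff their row-indices are consecutive with equal $\lambda$-part, standardness of $\sigma$ is equivalent to: $\sigma$ increases down each maximal block of cells lying in a single column. Condition~(2) forces $\sigma(c_{N-1})=1$ (the last row contains only $c_{N-1}$). Condition~(3) concerns the single cell $(0,N-1)$ of the $N$th row and asks $\sigma(0,N-1)<\sigma(K,0)$ whenever $(K,0)\in\Delta(\lambda)$, which is automatic once $\sigma(0,N-1)=1$. Finally $\lambda_{N-1}\geq 1$: otherwise $c_{N-2}=(0,N-2)$ and $c_{N-1}=(0,N-1)$ share the column $x=0$, forcing $\sigma(c_{N-2})<\sigma(c_{N-1})=1$, impossible. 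Hence $(\lambda_1,\dots,\lambda_{N-1})$ is an arbitrary weakly decreasing sequence with entries in $\{1,\dots,K\}$, and $c_{N-1}$ is the unique cell in column $0$.

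The core step is then a bijection between such pairs $(\lambda,\sigma)$ and functions $g\colon\{2,\dots,N\}\to\{1,\dots,K\}$. Given $(\lambda,\sigma)$, let $g(v)$ be the column of the cell carrying the value $v$; this is well defined because the values $2,\dots,N$ occupy $c_0,\dots,c_{N-2}$, whose columns $\lambda_1,\dots,\lambda_{N-1}$ lie in $\{1,\dots,K\}$. For the inverse, given $g$ put $m_v:=|g^{-1}(v)|$, let $\lambda$ consist of $m_v$ copies of each $v=K,K-1,\dots,1$ followed by a single $0$ (so $\lambda$ has $N$ parts, $\lambda_N=0$, $\lambda_1\leq K$, hence $\lambda\in\CA_{K,N}$), assign the value $1$ to $c_{N-1}$, and in each column $v\geq 1$ place the values of $g^{-1}(v)$ in increasing order from top to bottom. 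By the criterion above this is a standard filling with $1$ in the last row, and condition~(3) holds vacuously; a short check shows the two assignments are mutually inverse. (Equivalently one can skip the explicit bijection: for fixed $\lambda$ the admissible $\sigma$ are counted by the multinomial coefficient $\binom{N-1}{m_1,\dots,m_K}$ obtained by distributing $\{2,\dots,N\}$ among the column-blocks, and $\sum_{m_1+\cdots+m_K=N-1}\binom{N-1}{m_1,\dots,m_K}=K^{N-1}$.)

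Putting this together, $|\DPT(K,N,1,0)\slash\langle D,L\rangle|$ equals the number of functions $\{2,\dots,N\}\to\{1,\dots,K\}$, namely $K^{N-1}$; the case $N=1$ is immediate since then $\lambda=(0)$, $\Delta(\lambda)=\{(0,0)\}$, and there is a unique filling, matching $K^0=1$. Note the argument uses nothing about $\gcd(K,N)$. The only delicate point — the step I would be most careful about — is correctly reading off which of the three conditions of Theorem~\ref{dpt_mod_partial} survive the specialization $(a,b)=(1,0)$, in particular verifying that condition~(3) becomes vacuous and extracting the extra constraint $\lambda_{N-1}\geq 1$; everything after that is bookkeeping.
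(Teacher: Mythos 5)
Your proof is correct, but it follows a genuinely different route from the paper's. The paper proves the count directly: it fixes the column $\Sigma=\{(0,y)\mid 0\leq y<N\}$, normalizes $\sigma(0,0)=0$, shows that $\sigma$ is entirely determined by its restriction to $\Sigma$ (using the lattice generated by $(K,-N)$ and $(1,0)$), pins down the constraints as ``$\sigma(0,1),\dots,\sigma(0,N-1)$ are distinct nonzero residues mod $N$ lying strictly between $0$ and $KN$'' (which gives the upper bound $K^{N-1}$), and then verifies that every such assignment extends to a standard DPT by comparison with the linear base tableau. You instead route through the structure theorems already proved: the bijection $\DPT/\langle D,L\rangle\simeq\DPT/\langle\pi\rangle$ of Theorem~\ref{mod_dl_mod_pi_bij} followed by the explicit description of $\DPT/\langle\pi\rangle$ in Theorem~\ref{dpt_mod_partial}, then exploit the fact that for $(a,b)=(1,0)$ the shape $\Delta(\lambda)=D\lambda\setminus\lambda$ has exactly one cell per row, so the data collapses to a function $\{2,\dots,N\}\to\{1,\dots,K\}$ recording the column of each value. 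Your specialization of the three conditions of Theorem~\ref{dpt_mod_partial} is accurate: condition~(1) is automatic since $\lambda_i<\lambda_i+1$, condition~(2) forces $\sigma(0,N-1)=1$ and hence $\lambda_{N-1}\geq 1$, and condition~(3) is vacuous because the sole $N$th-row cell carries the minimal value $1$; the bijection (or the multinomial sum) then yields $K^{N-1}$, and the argument makes no use of $\gcd(K,N)=1$, matching the paper's claim of full generality. The trade-off: the paper's proof is more self-contained and also produces the explicit parametrization by the ``column vector'' $(\sigma(0,1),\dots,\sigma(0,N-1))$, which is closer in spirit to parking functions; your proof is shorter given the earlier combinatorics and has the advantage of working entirely at the level of the finite skew shapes, which is where the rest of Section~\ref{sect_count} lives. (One minor wording slip: two of the cells $c_0,\dots,c_{N-1}$ can share a column without having \emph{consecutive} row indices, e.g.\ if three consecutive parts of $\lambda$ are equal; but your subsequent formulation --- $\sigma$ increases down each maximal column block --- is the correct criterion and is what you actually use.)
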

\begin{proof}
  Define $\Sigma=\{(0,y) \ | \ 0\leq y<N\}$ and assume $\sigma(0,0)=0$. For any $(x,y)\in\BZ^2$ there exists unique $r,s\in\BZ$ such that $(x,y)+r(K,-N)+s(1,0)\in\Sigma$ so the values $\sigma(x,y)$ for $(x,y)\in\Sigma$ completely determine $\sigma$. We have $\sigma(0,N)=\sigma(K,0)=KN$ so for $0<i<N$ we have $0<\sigma(0,i)<KN$.
  Suppose that $\sigma(0,i)=\sigma(0,j)+rN$ for some $r\in\BZ$ and $0<i<j<N$. 
  It follows from Lemma \ref{fund_lem_iff} that $(0,j)=(0,i)+s(K,-N)+r(1,0)$. But $0<i<j<N$ implies $s=0$ so we conclude that $r=0$ and consequently $i=j$. This is a contradiction, and we conclude that all values $\sigma(0,i)$ are distinct mod $N$ for $0\leq i<N$. Thus $\sigma$ is determined by a choice of $N-1$ numbers which are all distinct and nonzero mod $N$ (that is, an $N$-affine permutation) and are strictly between $0$ and $KN$, so $K^{N-1}$ is an upper bound for the number of DPT. It remains to show that every such filling determines a DPT.
 
  Let $\sigma_0$ be the DPT determined by $\sigma_0(0,y)=y$ for $0\leq y<N$. We have that $\sigma_0(x,y)=y+(r+s)N$ where $(x,y)=(0,y')+s(K,-N)+r(1,0)$. 
Let $f:\BZ\to\BZ$ be an $N$-affine permutation such that $f(0)=0$ and the numbers $f(1),\dots,f(N-1)$ are contained in the integer interval $(0,KN)$. 
  Then $f$ determines a filling $\sigma_f=f\sigma_0$. 
  We have $\sigma_f(x,y)=f\sigma_0(x,y)=f(y+(r+s)N)$ where $(x,y)=(0,y')+s(K,-N)+r(1,0)$. 
  We now check that $f_\sigma$ is standard. We have $f_\sigma(x+1,y)=f_\sigma(x,y)+N$ by definition. 
  Moving from $(x,y)$ to $(x,y+1)$ either leaves both $s$ and $r$ fixed, in which case $\sigma_f(x,y)<\sigma_f(x,y+1)$ by assumption, or we decrease $s$ by $1$ and increase $r$ by $K$ in which case $\sigma_f(x,y+1)=y+1+(r+K+s-1)N>y+(r+s)N=\sigma_f(x,y)$.
\end{proof}

One may consider what happens in the more general case $\gcd(K,a)=1$. It turns out the argument above partially generalizes but provides only an upper bound. 

\begin{prop}\label{upper_bound}
  Suppose that $\gcd(K,a)=1$. Then 
  \begin{equation}
    |\DPT(K,N,a,b)\slash\langle D,L\rangle|\leq K^{m-1}
  \end{equation}
\end{prop}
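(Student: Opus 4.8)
The plan is to carry out the argument of Proposition~\ref{k_n-1} with the height-$N$ column used there replaced by the height-$m$ column $\Sigma=\{(0,y)\mid 0\le y<m\}$; the hypothesis $\gcd(K,a)=1$ is precisely what makes $\Sigma$ a fundamental domain for the relevant lattice, and the lack of an automatic standardness check at the end is what leaves us with only an inequality.

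First I would recall, as noted just before the statement, that $\DPT(K,N,a,b)\slash\langle D,L\rangle$ is in bijection with $\{\sigma\in\DPT(K,N,a,b)\mid\sigma(0,0)=0\}$, so it suffices to bound the size of the latter set. Put $L=\BZ(K,-N)+\BZ(a,-b)\subseteq\BZ^2$; since $\det\begin{pmatrix}K&a\\-N&-b\end{pmatrix}=aN-bK=m$, the index $[\BZ^2:L]$ equals $m$. The first key step is the claim that $\Sigma=\{(0,y)\mid 0\le y<m\}$ is a complete set of coset representatives for $\BZ^2\slash L$. To see this I would use $\gcd(K,a)=1$ to solve $sK+ra=x_0$ for any given $x_0\in\BZ$, which exhibits some $y'$ with $(x_0,y')\in L$; running over the general solution $(s,r)=(s_0+ta,\,r_0-tK)$ one finds $y'=y'_0-tm$, so $\{y'\mid(x_0,y')\in L\}$ is a single coset of $m\BZ$. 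Hence every $(x_0,y_0)\in\BZ^2$ is congruent modulo $L$ to some $(0,y)$ with $0\le y<m$, and since $|\Sigma|=m=[\BZ^2:L]$ this $y$ is unique, proving the claim. (This is the only place $\gcd(K,a)=1$ is used; without it $\Sigma$ fails to span $\BZ^2\slash L$.)

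Next, fix $\sigma$ with $\sigma(0,0)=0$ and set $v_i=\sigma(0,i)$ for $0\le i<m$, so $v_0=0$. From the two periodicities of Definition~\ref{dptdef} one gets $\sigma\bigl(c+s(K,-N)+r(a,-b)\bigr)=\sigma(c)+rm$ for all $c\in\BZ^2$, so by the claim $\sigma$ is determined by the tuple $(v_0,\dots,v_{m-1})$. I would then record three constraints on it: (i) standardness forces $0=v_0<v_1<\dots<v_{m-1}$; (ii) \eqref{eq_sigma_x_y+m} gives $v_{m-1}<\sigma(0,m)=\sigma(0,0)+Km=Km$; (iii) if $v_i\equiv v_j\pmod m$ with $0\le i<j<m$ then Lemma~\ref{fund_lem_iff} yields $(0,i)\equiv(0,j)\pmod L$, contradicting the claim, so $v_0,\dots,v_{m-1}$ are pairwise distinct modulo $m$, i.e.\ they form a complete residue system. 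Combining these: $\{v_1,\dots,v_{m-1}\}$ is obtained by choosing, for each residue $\rho\in\{1,\dots,m-1\}$, one element of the $K$-element progression $\{\rho,\rho+m,\dots,\rho+(K-1)m\}\subseteq(0,Km)$, after which the increasing order of the $v_i$ is forced. That is $K^{m-1}$ choices, so $\sigma\mapsto(v_1,\dots,v_{m-1})$ embeds $\{\sigma\mid\sigma(0,0)=0\}$ into a set of size $K^{m-1}$, which is the asserted bound.

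I do not expect any individual step to be hard — they are all short lattice bookkeeping — and the real point is why the method only yields an inequality: for an arbitrary admissible tuple $(v_1,\dots,v_{m-1})$ the filling reconstructed from it via $\sigma(c+s(K,-N)+r(a,-b))=\sigma(c)+rm$ need not be standard at cells outside the column $\Sigma$. (When $(a,b)=(1,0)$, as in Proposition~\ref{k_n-1}, it always is, and one gets equality.) Determining exactly which tuples give genuine DPTs is the extra input needed to turn $\le K^{m-1}$ into an exact count, and is not pursued here.
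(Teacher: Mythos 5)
Your proof is correct and follows essentially the same route as the paper's: both identify the column $\Sigma=\{(0,y)\mid 0\le y<m\}$ as a fundamental domain for the lattice generated by $(K,-N)$ and $(a,-b)$ (using $\gcd(K,a)=1$ for existence of representatives), deduce that $\sigma$ with $\sigma(0,0)=0$ is determined by $(\sigma(0,1),\dots,\sigma(0,m-1))$, and bound these by standardness, $\sigma(0,m)=Km$, and pairwise-distinctness of residues mod $m$ via Lemma~\ref{fund_lem_iff}. Your presentation is slightly more explicit about the coset picture and about why the argument gives only an upper bound, but it is the same argument.
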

\begin{proof}
  Let $\Sigma'=\{(0,y) \ | \ y\in\BZ\}$ and let $\Sigma=\{(0,y) \ | \ 0\leq y<m\}$. 
  First we show that for any $(x,y)\in\BZ^2$ there exist unique $r,s\in\BZ$ such that $(x,y)+r(K,-N)+s(a,-b)\in\Sigma$. 
  Let $p,q\in\BZ$ such that $pK+qa=1$. 
  Observe that $(0,y'):=(x,y)+(-xp)(K,-N)+(-xq)(a,-b)\in\Sigma'$. Now, given we have $(0,y')+(-ra)(K,-N)+rK(a,-b)=(0,y'+rm)$ for any $r\in\BZ$. 
  Given $\sigma\in\DPT(K,N,a,b)$, with $\sigma(0,0)=0$, we have $\sigma(0,m)=\sigma((0,0)-a(K,-N)+K(a,-b)=Km$. 
  It follows from Lemma \ref{fund_lem_iff} that the values $\sigma(0,j)$ are distinct mod $m$ for $0\leq j<m$. 
  Thus $\sigma$ is determined by a choice of $m-1$ numbers strictly between $0$ and $Km$ which are all distinct and nonzero mod $m$. 
\end{proof}

We now move on to the case of DPT corresponding to a fixed partition $\lambda\in\CA_{K,N}$. Suppose that $\lambda[a,b]>\lambda$, and let 
$\omega(\lambda)=(\lambda_N,\dots,\lambda_N)$.
Let $\Delta_0(\lambda)=\lambda[a,b]\setminus\omega(\lambda)$.
Let $D\subseteq\Delta_0(\lambda)$. Say that a cell $u=(x,y)\in D$ is $\textit{active}$ with respect to $D$ if the cells $(x+1,y), (x,y+1),$ and $(x+1,y+1)$ are in $\Delta_0(\lambda)\setminus D$. 
Given an active cell $u\in D$, define $\alpha_u(D)$ to be the subset of $\Delta_0(\lambda)$ gotten from $D$ by removing $(x,y)$ and adding $(x+1,y+1)$. Such an operation is called an \textit{excited move} on $D$. Let $\mathcal{E}(\lambda)$ denote the set of diagrams obtainable by performing a sequence of excited moves starting from $\lambda\setminus\omega(\lambda)$.

\begin{remark}
Corollary \ref{cor_DPT_CL} and Lemma \ref{extend_suff} imply that when $a\leq K$ and $b\geq N-1$, counting elements of $\DPT(\lambda)$ is the same as counting standard fillings of $\Delta(\lambda)$. 
  Suppose that $a\leq K$ and $b\geq N-1$ and fix a partition $\lambda\in\CA_{K,N}$.  Then by \cite[Theorem 1.2]{pak2018hook} (first announced by Naruse in 2014) we have 
  \begin{equation}
|\DPT(\lambda)|=m!\sum_{D\in\mathcal{E}(\lambda)}\prod_{u\in\Delta_0(\lambda)\setminus D}\frac{1}{h(u)}
  \end{equation}
  where $h(u)$ denotes the hook length of the cell $u$ in $\Delta_0(\lambda)$.
\end{remark}


\section{DAHA Representations from Doubly Periodic Tableaux}\label{sectDAHA}


\subsection{The double affine Hecke algebra}\label{SectDAHA}
We recall the definition of the double affine Hecke algebra of type $GL_m$ over $\BC[q^{\pm 1}, t^{\pm 1}]$.

\begin{definition}\label{def_DAHA}
Let $m \in \BZ_{>2}$. The double affine Hecke algebra $\ddot{H}_{q,t}(m)$ is the algebra over $\BC[q^{\pm 1},t^{\pm 1}]$ generated by 
\[
T_0,T_1, \dots, T_{m-1}, \pi^{\pm 1}, X_1^{\pm 1},X_2^{\pm 1}, \dots, X_m^{\pm 1},
\]
subject to the relations
\begin{enumerate}
  \item \label{rel_Hecke} $(T_i-q)(T_i+1)=0$, for $i=0, \dots, m-1$;
  \item \label{rel_TTT} $T_iT_{j}T_i=T_{j}T_iT_{j}$, for $j=i \pm 1 \mod m$;
  \item \label{rel_TiTj} $T_iT_j=T_jT_i$ if $j \neq i \pm 1 \mod m$;
  \item \label{rel_TiXi} $T_iX_iT_i=qX_{i+1}$, for $i=1, \dots m-1$, $T_0X_mT_0=t^{-1}q X_1$;
  \item \label{rel_TiXj} $T_iX_j=X_jT_i$, for $j\neq i,i+1$;
  \item $\pi X_i \pi^{-1}=X_{i+1}$, for $i=1, \dots, m-1$, $\pi X_m\pi^{-1}=t^{-1}X_1$;
  \item \label{rel_piTi}$\pi T_i \pi^{-1}= T_{i+1}$, for $i=0, \dots, m-2$, $\pi T_{m-1}\pi^{-1}=T_0$;
  \item\label{rel_XiXj} $X_iX_j=X_jX_i$, for $i,j \in \{1, \dots, m \}.$
\end{enumerate}
For $m=2$, the double affine Hecke algebra $\ddot{H}_{q,t}(2)$ is the algebra defined by the same generators and relations (1),(4)-(8).

The \emph{small DAHA} $\ddot{H}_{q,t}(m)^s$ is the subalgebra of $\ddot{H}_{q,t}(m)$ generated by $T_0,T_1, \dots, T_{m-1}$ and $X_1^{\pm 1},X_2^{\pm 1}, \dots, X_m^{\pm 1}$.
\end{definition}

\begin{remark}\label{rem_AHA}
The DAHA $\ddot{H}_{q,t}(m)$ contains as subalgebras two copies of the affine Hecke algebra, one generated by $T_0, \dots, T_{m-1}, \pi^{\pm 1}$ and the other by $T_1, \dots, T_{m-1},X_1^{\pm 1}, \dots, X_m^{\pm 1}$.

Let us denote by $\dot{H}_q(m)$ the copy of the AHA generated by $T_1, \dots, T_{m-1}$ and $X_1^{\pm 1}, \dots, X_m^{\pm 1}$. In fact, using \eqref{rel_TiXi}, one only needs $T_1, \dots, T_{m-1}$ and $X_1^{\pm 1}$. 
\end{remark}

It is convenient to extend the notation $T_i$, $X_i$ to all $i\in \BZ$ in such a way that $T_{i+m} = T_i$, $X_{i+m} = t^{-1} X_i$. 
Moreover, for $f=\pi^rs_{j_1}\dots s_{j_s}$, we set $T_f=\pi^rT_{j_1}\dots T_{j_s}.$

We will be considering the grading on $\ddot{H}_{q,t}(m)$ given by $\deg(\pi)=1$ and $\deg(X_i)=\deg(T_i)=0$. 

\subsection{Semisimple representations}

Recall that a \emph{weight} is an $m$-tuple $\underline{w}=(w_i)$, where $w_i \in q^{\BZ}$. For a $\ddot{H}_{q,t}$-module $M$ we define its \emph{weight space} of weight $\underline{w}$ as the space
\[
M_{\underline{w}}=\{v \in M \mid (X_i-w_i)v=0, i=1, \dots, m\}.
\]
If $M_{\underline{w}}\neq 0$ we say that $\underline{w}$ is a weight of $M$.

\begin{definition}\label{defn_semisimple}
A representation $M$ of $\ddot{H}_{q,t}$ is called $X$-semisimple if it is finitely generated and admits a decomposition $M=\oplus_{\underline{w}} M_{\underline{w}}$ with $\dim M_{\underline{w}}<\infty$ for all weights $\underline{w}$. 
\end{definition}

\begin{remark}
We observe that Definition~\ref{defn_semisimple} makes sense for representations of the small DAHA $\ddot{H}_{q,t}(m)^s$ and of the AHA $\dot{H}_{q,t}(m)$, therefore we will use the same terminology for $\ddot{H}_{q,t}(m)^s$-modules and $\dot{H}_{q,t}(m)$-modules.
\end{remark}

The following is the analogue of a well-known result on semisimple representations of the affine Hecke algebra (see for example \cite{Ram2003Affine}). This result was proven in Suzuki-Vazirani in \cite[Proposition 4.14]{Suzuki2005Tableaux}, for the full DAHA. As the generators $\pi^{\pm 1}$ do not play any role in the proof, it extends to representations of the small DAHA.


\begin{prop}
Let $L$ be an irreducible $X$-semisimple $\ddot{H}_{q,t}(m)$-module or $\ddot{H}_{q,t}(m)^s$-module. Then $\dim L_{\underline{w}} \leq 1$, for all weights $\underline{w}$.
\end{prop}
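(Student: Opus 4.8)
The plan is to mimic the classical argument for calibrated/semisimple modules over the affine Hecke algebra, adapted to the (small) DAHA. The key point is that the intertwiner operators built from the $T_i$ together with the $X$-weights give enough transitivity on weight spaces to force one-dimensionality.

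First I would recall the standard intertwiner construction. For $1\le i\le m-1$ (and for $i=0$ in the affine case, which is not needed here since $\pi$ plays no role), one forms the element $\Phi_i = T_i + \frac{c}{1 - X_{i+1}X_i^{-1}}$ (with $c$ the appropriate constant coming from relation \eqref{rel_Hecke}, namely $c=q-1$ in this normalization) inside a suitable localization, or more usefully one works directly with the rational operator $\varphi_i$ on weight vectors: if $v\in L_{\underline w}$ with $w_{i+1}\ne w_i$, then using relation \eqref{rel_TiXi} the vector $T_i v + \frac{(q-1)w_i}{w_i - w_{i+1}} v$ lies in $L_{s_i\underline w}$, where $s_i$ permutes the $i$-th and $(i+1)$-th entries of $\underline w$. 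Moreover this intertwiner is invertible whenever $w_{i+1}/w_i \ne q^{\pm 1}$ (one computes $\varphi_i\varphi_i$ acts as a nonzero scalar $\frac{(w_i-qw_{i+1})(w_i - q^{-1}w_{i+1})}{(w_i-w_{i+1})^2}$), and for the degenerate cases $w_{i+1}=q^{\pm1}w_i$ one checks directly from the quadratic relation that $T_i$ has a unique eigenvalue on the generalized weight vector, forcing $\dim L_{\underline w}\le \dim L_{s_i\underline w}$ in a controlled way.

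Next, take any weight $\underline w$ of $L$ with $\dim L_{\underline w}$ maximal. The main case is when $\underline w$ is \emph{generic} in the sense that no degenerate pair $w_{i+1}=q^{\pm1}w_i$ occurs among adjacent entries reachable by $\mathfrak S_m$; there the intertwiners $\varphi_i$ are all invertible and give isomorphisms $L_{\underline w}\xrightarrow{\sim} L_{s_i\underline w}$, so by irreducibility the submodule generated by $L_{\underline w}$ is all of $L$ and every weight space has the same dimension $d:=\dim L_{\underline w}$. One then argues, exactly as in Suzuki--Vazirani and Ram, that the subalgebra of $\End(L)$ generated by the commuting $X_i$ together with the $T_i$ acting within and between weight spaces forces $d=1$: concretely, choose a weight vector $v$, look at the cyclic $\ddot H^s$-submodule it generates, use the PBW-type triangularity to see that the $T_w v$ for $w$ ranging over $\mathfrak S_m$ already span a subspace meeting each weight space in dimension $1$, and this subspace is a nonzero submodule, hence all of $L$. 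When degenerate adjacent pairs do occur, one handles them by the inequality $\dim L_{\underline w}\le \dim L_{s_i\underline w}$ noted above (valid in both directions once one also uses that $L$ is $X$-semisimple so $T_i$ is semisimple on $L_{\underline w}\oplus L_{s_i\underline w}$), together with a standard argument that a weight with $w_{i+1}=w_i$ cannot occur in an irreducible $X$-semisimple module; chaining these inequalities along the connected component of the $\mathfrak S_m$-action on the weight support again yields $\dim L_{\underline w}\le 1$.

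The main obstacle, and the only real content beyond bookkeeping, is the transitivity/connectedness argument: one must show that the weights actually appearing in an irreducible $X$-semisimple $L$ all lie in a single "$\mathfrak S_m$-string" with no forbidden adjacent ratios $w_{i+1}=w_i$, so that the intertwiners do move one between weight spaces without obstruction; this is precisely where the analogous argument for the AHA (cited from \cite{Ram2003Affine}) and Suzuki--Vazirani's \cite[Proposition 4.14]{Suzuki2005Tableaux} do the work, and the remark in the excerpt that "the generators $\pi^{\pm1}$ do not play any role in the proof" tells us the same argument transfers verbatim to the small DAHA. So in practice the cleanest writeup is simply: the statement for $\ddot H_{q,t}(m)$ is \cite[Proposition 4.14]{Suzuki2005Tableaux}, and since $\pi$ is not used, the identical proof applies to $\ddot H_{q,t}(m)^s$; an irreducible $X$-semisimple module over the full DAHA restricts to a (not necessarily irreducible, but $X$-semisimple with all weight multiplicities inherited) module over the small DAHA, and conversely, so the bound $\dim L_{\underline w}\le 1$ holds in both cases.
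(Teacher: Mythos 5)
Your proposal is correct and takes essentially the same approach as the paper: the paper's proof consists of citing \cite[Proposition 4.14]{Suzuki2005Tableaux} for the full DAHA and observing that $\pi^{\pm 1}$ plays no role in that proof, so it carries over to the small DAHA, which is precisely what you conclude. The final clause about ``restricting'' an irreducible full-DAHA module to the small DAHA (and ``conversely'') is unnecessary and does not by itself give the small-DAHA case (restriction need not be irreducible, and not every irreducible small-DAHA module extends); the correct justification, which you also give, is simply that the Suzuki--Vazirani argument never uses $\pi$.
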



\subsection{The polynomial representation}
The DAHA $\ddot{H}_{q,t}(m)$ admits a polynomial representation $\mathcal{P}=\BC(q,t)[Y_1^{\pm 1}, \dots, Y_m^{\pm 1}]$. We recall the main facts about this representation which will be useful for us in the following. For more details we refer to \cite{CherednikBook}.

  The representation $\mathcal{P}$ has a basis of weight vectors labeled by the minimal length right coset representatives in $\widehat{\mathfrak{S}}_m / \mathfrak{S}_m$. In particular, the identity $1$ is part of this basis and has weight $(1, q, \dots, q^m)$. It will be useful for us to note that $T_f(1)$ gives a basis, as $f$ varies in the set $S=\{ f \in \widehat{\mathfrak{S}}_m \mid f(1)<f(2)< \dots <f(m) \}$, which is the set of minimal length coset representatives of $\widehat{\mathfrak{S}}_m / \mathfrak{S}_m$, as we saw in Remark \ref{remGrassmannianPermutations}.
  
  The polynomial representation is known to be faithful and remains faithful as $q$ and $t$ are specialized to values which are not roots of unity.

\subsection{DAHA representations from DPTs}\label{sect_DAHA_DPT}
We will define here a graded representation of the DAHA which splits as a direct sum of weight spaces with respect to the action of the $X_i$'s. 

\begin{remark}
Our representation will not satisfy the usual definition of $X$-semisimple DAHA representation as the weight spaces will not be finite dimensional.
\end{remark}

Let $q$ be a primitive root of unity of order $K+N$. For a cell $(x,y)$ in $\BZ^2$ its \emph{weight} is $q^{x-y}$. If $\sigma$ is a DPT then for any $i\in\BZ$ we set
\[
  w_\sigma(i) = q^{x-y}, \qquad(\sigma(x,y)=i).
\]
Note that, as $q$ is a root of unity of order $N+K$, the weight is well defined by the same argument we used to define the content function. In fact, the weight $w_\sigma(i)$ is $q^{C_\sigma(i)}$, where $C_\sigma(i)$ is, by abuse of notation, any representative of the content $C_\sigma(i)$ in $\BZ$.

Consider the vector space over $\BC$, denoted by $W_{(K,N,a,b)}$, with basis $\nu_\sigma$ where $\sigma$ runs over the set $\DPT(K,N,a,b)$. Recall that $q$ is now an $N+K$-th root of unity. We set $t=q^{-a-b}$.

Note that we have
\[
  m = aN-bK = (a+b)N = -(a+b)K \pmod{K+N}.
\]
This implies
\[
  q^m = t^{K} = t^{-N}.
\]

\begin{prop}\label{prop_DAHA_act}
The DAHA $\ddot{H}_{q,t}(m)$ acts on $W_{(K,N,a,b)}$ by 
\begin{align*}
 X_i \nu_\sigma & = w_\sigma(i) \nu_\sigma,\qquad \pi \nu_\sigma = \nu_{\pi \sigma},\\
  T_i \nu_\sigma & = \begin{cases}
    q \nu_\sigma                                                                                                                                          & \text{if $i$ is to the left of $i+1$,} \\
    -\nu_\sigma                                                                                                                                           & \text{if $i$ is on top of $i+1$,}      \\
    - \frac{1-q}{1-w_\sigma(i)w^{-1}_\sigma(i+1)} \nu_\sigma + \frac{1-qw_\sigma(i)w^{-1}_\sigma(i+1)}{1-w_\sigma(i)w^{-1}_\sigma(i+1)} \nu_{s_i(\sigma)} & \text{otherwise.}
  \end{cases}
\end{align*}
\end{prop}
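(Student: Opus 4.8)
The plan is to verify each defining relation of $\ddot{H}_{q,t}(m)$ from Definition \ref{def_DAHA} directly on the basis $\{\nu_\sigma\}$. The formulas for $T_i$ are precisely the standard calibrated-type formulas (as in \cite{Ram2003Affine}, \cite{Suzuki2005Tableaux}), so many of the verifications reduce to the analogous ones already present in the literature; the role of this proof is to track how the root-of-unity specialization $q^{K+N}=1$, $t=q^{-a-b}$ interacts with the combinatorics of DPTs. First I would record the basic compatibility facts: the content function $C_\sigma:\BZ\to\BZ/(N+K)\BZ$ satisfies $C_\sigma(i+m)=C_\sigma(i)+(a+b)\bmod(N+K)$ (since $(a,-b)$ shifts the diagonal $x-y$ by $a+b$), hence $w_\sigma(i+m)=q^{a+b}w_\sigma(i)=t^{-1}w_\sigma(i)$, which is exactly the compatibility needed for the convention $X_{i+m}=t^{-1}X_i$ and $T_{i+m}=T_i$ to be consistent with the action. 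I would also note $w_\sigma(i)\neq w_\sigma(i+1)$ always (since $i$ and $i+1$ occupy cells whose diagonals differ, as they are comparable or adjacent in the standard tableau), so the denominators $1-w_\sigma(i)w_\sigma(i+1)^{-1}$ never vanish; and in the two degenerate cases ($i$ left of $i+1$, or $i$ on top of $i+1$) one has $w_\sigma(i)w_\sigma(i+1)^{-1}=q^{\mp1}$, so the "otherwise" formula degenerates correctly to $q\nu_\sigma$ and $-\nu_\sigma$ respectively — this is what makes the three cases uniform and is the key to the braid relations.

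Next I would check the relations in the order: (\ref{rel_Hecke}) the quadratic relation $(T_i-q)(T_i+1)=0$; (\ref{rel_XiXj}) and (\ref{rel_TiXj}) which are immediate since the $X_i$ act diagonally and $T_i$ changes only the positions of $i,i+1$ (and $s_i(\sigma)$ has the same content function at all $j\neq i,i+1$, by Proposition \ref{propContent} / Remark after it); (\ref{rel_TiXi}) the cross relations $T_iX_iT_i=qX_{i+1}$ and $T_0X_mT_0=t^{-1}qX_1$, which become a $2\times2$ matrix computation in the span of $\nu_\sigma,\nu_{s_i\sigma}$ when $s_i$ is allowed, and a direct scalar check otherwise — here the identity $X_{m} = t^{-1}X_1$ together with $w_\sigma(m)=t^{-1}w_\sigma(1)$ handles the $i=0$ case; then relation (6), $\pi X_i\pi^{-1}=X_{i+1}$ and $\pi X_m\pi^{-1}=t^{-1}X_1$, which follows from $w_{\pi\sigma}(i) = w_\sigma(i-1)$ (since $\pi\sigma(x,y)=\sigma(x,y)+1$ means the cell labeled $i$ in $\pi\sigma$ is the cell labeled $i-1$ in $\sigma$) together with the periodicity $w_\sigma(0)=w_\sigma(m)\cdot t = t^{-1}w_\sigma(1)\cdot t$... more carefully $w_{\pi\sigma}(1)=w_\sigma(0)=t^{-1}w_\sigma(m)$, matching $\pi X_m\pi^{-1}=t^{-1}X_1$; and (\ref{rel_piTi}), $\pi T_i\pi^{-1}=T_{i+1}$, which holds because the relative position of $i+1,i+2$ in $\pi\sigma$ equals that of $i,i+1$ in $\sigma$, and the weights match by the shift above, so the three cases transport verbatim.

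The main obstacle will be the braid relations (\ref{rel_TTT}): $T_iT_{i+1}T_i=T_{i+1}T_iT_{i+1}$ (indices mod $m$), together with relation (\ref{rel_TiTj}) for commuting $T_i,T_j$. Relation (\ref{rel_TiTj}) is straightforward — when $|i-j|\geq2$ mod $m$, the transpositions $s_i$ and $s_j$ act on disjoint pairs of cells, the scalars in the "otherwise" case depend only on the local weights which are unaffected, and one checks the $2$- or $4$-dimensional local operators commute. For the braid relation I would work inside the span of the basis vectors $\nu_{w\sigma}$ for $w$ in the parabolic subgroup $\langle s_i,s_{i+1}\rangle\cong\mathfrak{S}_3$ acting on $\sigma$ (noting that this parabolic acts on DPTs exactly as it does on standard Young tableaux of a $3$-box skew shape, since only the three cells labeled $i,i+1,i+2$ move and their pairwise diagonal differences behave as in the finite case): this is a finite, case-by-case check over the possible "shapes" formed by the cells containing $i,i+1,i+2$ (row, column, hook, and the generic case), and in each case the verification is identical to the one in \cite[Section 4]{Suzuki2005Tableaux} or \cite{Ram2003Affine} once one observes that all weights involved are of the form $q^c$ with $c$ the content, and the needed non-vanishing of denominators holds because in a standard configuration of three mutually-constrained cells no two contents among $i,i+1,i+2$ can differ by a multiple of $N+K$ unless forced, while the genuinely dangerous coincidence (contents of $i$ and $i+2$ equal, which would make the middle denominator vanish) is ruled out exactly as in the non-root-of-unity case because such a configuration cannot occur in a standard skew tableau. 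I would therefore present the braid check by reducing to these finitely many local shapes and invoking the corresponding computation from \cite{Suzuki2005Tableaux}, being careful only to note that the specialization $q^{N+K}=1$ does not introduce any new denominator vanishing. Finally I would remark that relations (4) and (6) for $i=0$, and (7) for $i=m-1$, are where the chosen value $t=q^{-a-b}$ is essential and where one uses $q^m=t^K=t^{-N}$; these I would verify as scalar identities in $q$ after substituting the content shifts computed at the start.
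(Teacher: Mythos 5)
Your proposal is correct and follows essentially the same route as the paper, which simply verifies the $\pi X_i\pi^{-1}$ relations explicitly and declares the rest a direct computation; you usefully spell out those computations and the structural ingredients (content periodicity $C_\sigma(i+m)=C_\sigma(i)+(a+b)$, non-vanishing of denominators, reduction of the braid relation to local $\mathfrak{S}_3$-type checks) that the paper leaves implicit. One small sign slip: from $w_\sigma(i+m)=t^{-1}w_\sigma(i)$ you should get $w_\sigma(0)=t\,w_\sigma(m)$, not $t^{-1}w_\sigma(m)$ as written in your parenthetical; the verification $\pi X_m\pi^{-1}\nu_\sigma = w_{\pi^{-1}\sigma}(m)\nu_\sigma = w_\sigma(m+1)\nu_\sigma = t^{-1}w_\sigma(1)\nu_\sigma$ still goes through unchanged.
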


\begin{proof}
Some of the defining relations of the DAHA can be immediately verified:

\[
  \pi X_i \nu_\sigma = w_\sigma(i) \nu_{\pi \sigma},\qquad X_i \pi \nu_\sigma = X_i\nu_{\pi \sigma} = w_\sigma(i-1)\nu_{\pi \sigma}\;\Rightarrow\; \pi X_i = X_{i+1} \pi
\]
\[
  \pi X_m = t^{-1} X_1 \pi,\quad\text{where}\quad t=q^{-a-b}.
\]

The remaining relations can be verified by direct computation.
\end{proof}

\begin{remark}\label{remTAction}
Note that $T_i\nu_\sigma$ is a multiple of $\nu_\sigma$ when $s_i$ is not allowed to act on $\sigma$ and, when $s_i$ is allowed to act on $\sigma$, is a sum of a multiple of $\nu_\sigma$ and a multiple of $\nu_{s_i\sigma}$. In particular, it will be useful to observe that this multiple of $\nu_{s_i \sigma}$ is non-zero by Proposition \ref{propContent}.
\end{remark}

\begin{remark}\label{remMapToPoly}
We observe that for the values of $K,N,a,b$ in Example \ref{exampleLineTableau}, we can define a map of $\ddot{H}_{q,t}$-modules from the polynomial representation $\mathcal{P}$ to $W_{(K,N,a,b)}$ sending the identity to the tableau with fundamental domain a line with labels from 1 to $m$, i.e. the tableau $\pi \sigma$, where $\sigma$ is the tableau we defined in the example. 
\end{remark}

For $\CL$ a $(K,N)$-periodic lattice path such that $\CL<\CL[a,b]$, let $W_\CL$ be the subspace of $W_{(K,N,a,b)}$ with basis $\nu_\sigma$ for $\sigma \in\DPT(\CL)$. 
The next result follows immediately from the expressions given in Proposition~\ref{prop_DAHA_act}, and Remark~\ref{WeylGroupAction}.

\begin{prop}\label{prop_AHA_act}
$W_\CL$ is a finite-dimensional $\dot{H}_q(m)$-submodule of $W_{(K,N,a,b)}$.
\end{prop}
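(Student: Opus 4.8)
The plan is to show that $W_\CL$ is finite-dimensional and closed under the action of the generators $T_1,\dots,T_{m-1}$ and $X_1^{\pm1},\dots,X_m^{\pm1}$ of $\dot{H}_q(m)$. Finite-dimensionality is immediate: by Corollary~\ref{cor_DPT_CL} (or directly from the definition of $\DPT(\CL)$) the set $\DPT(\CL)$ is in bijection with the standard fillings $\sigma$ of the finite region $\Delta(\CL)$ whose periodic extension $\sigma'$ is standard, and there are only finitely many fillings of the $m$-cell region $\Delta(\CL)$; hence $\dim W_\CL = |\DPT(\CL)| < \infty$.

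For closure under $\dot{H}_q(m)$, I would invoke Remark~\ref{WeylGroupAction}: the action of the finite Weyl group $\mathfrak{S}_m$ (generated by $s_1,\dots,s_{m-1}$) preserves each $\DPT(\CL)$, since these $s_i$ only permute the entries within a fixed fundamental domain and do not change the associated lattice path $\CL(\sigma)$. Consequently, inspecting the formula for $T_i\nu_\sigma$ in Proposition~\ref{prop_DAHA_act} for $1\leq i\leq m-1$: in each of the three cases the result is a linear combination of $\nu_\sigma$ and $\nu_{s_i\sigma}$, and since $s_i\sigma\in\DPT(\CL)$ whenever $\sigma\in\DPT(\CL)$, we get $T_i\nu_\sigma\in W_\CL$. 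Similarly $X_i\nu_\sigma = w_\sigma(i)\nu_\sigma\in W_\CL$, and the $X_i$ are invertible (the weights $w_\sigma(i)$ are powers of $q$, hence nonzero), so $X_i^{\pm1}$ preserve $W_\CL$ as well. Therefore $W_\CL$ is a $\dot{H}_q(m)$-submodule of $W_{(K,N,a,b)}$.

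There is essentially no obstacle here — the statement is a direct bookkeeping consequence of the already-established facts that $\mathfrak{S}_m$ preserves $\DPT(\CL)$ (Remark~\ref{WeylGroupAction}) and that the $T_i$ for $i\neq 0 \bmod m$ act on $\nu_\sigma$ by spanning $\{\nu_\sigma,\nu_{s_i\sigma}\}$ (Proposition~\ref{prop_DAHA_act}). The only point worth stating carefully is why $s_i$ for $1\leq i\leq m-1$ preserves the lattice path $\CL(\sigma)$: this is because such an $s_i$ swaps two values $i,i+1$ lying strictly between $1$ and $m$, hence both inside $\Delta'(\sigma)$, and by Lemma~\ref{skew_lemma} the path $\CL(\sigma)$ depends only on which cells carry values $\geq 1$, which is unchanged. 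The generator $T_0$ (equivalently $\pi$ and the wrap-around) is precisely what can move $\CL$, which is why we restrict to $\dot{H}_q(m)$ rather than the full small DAHA.
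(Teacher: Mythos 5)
Your proof is correct and matches the paper's approach; the paper simply asserts that the result ``follows immediately from the expressions given in Proposition~\ref{prop_DAHA_act}, and Remark~\ref{WeylGroupAction},'' and your write-up spells out exactly these two ingredients, together with the finite-dimensionality observation from Corollary~\ref{cor_DPT_CL}.
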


\begin{remark}
By the computations in the proof of Lemma \ref{lemmaKNpositive}, we observe that $\pi^{m(K+N)}$ acts as the $m$-th power of the diagonal shift.
\end{remark}

We prove the following lemma to show that the action is well defined. 

\begin{lem}
  The product $w_\sigma(i)w^{-1}_\sigma(i+1) \neq 1$, for $0 \leq i \leq m-1$ and a DPT $\sigma$.
\end{lem}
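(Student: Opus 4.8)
The plan is to show that $w_\sigma(i) = w_\sigma(i+1)$ is impossible, which amounts to showing that the cells labeled $i$ and $i+1$ in $\sigma$ cannot lie on the same diagonal modulo $K+N$. Write $\sigma(x,y)=i$ and $\sigma(x',y')=i+1$. By Definition~\ref{def_content}, $w_\sigma(i)w_\sigma(i+1)^{-1} = q^{(x-y)-(x'-y')}$, and since $q$ has order exactly $K+N$, the product equals $1$ precisely when $(x-y)-(x'-y') \equiv 0 \pmod{K+N}$.

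First I would reduce to a convenient normalization: using Lemma~\ref{fundamental_lemma} (or Lemma~\ref{fund_lem_iff}), choose the representative of the cell labeled $i$ lying in $\Delta(\sigma)$, and similarly arrange the cell labeled $i+1$ to be close by. The key point is that $\sigma$ is \emph{standard}: values strictly increase in both the $x$ and $y$ directions. So if $(x,y)$ is labeled $i$ and $(x',y')$ is labeled $i+1$, then $(x',y')$ cannot be weakly northwest of $(x,y)$ (that would force $\sigma(x',y') \le \sigma(x,y)$, i.e. $i+1 \le i$), nor can $(x,y)$ be strictly southeast of... — more precisely, since there is no label strictly between $i$ and $i+1$, the cell $(x',y')$ must be an ``addable'' position relative to the region $\{\sigma \le i\}$, and standardness constrains its diagonal. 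Concretely, comparing with the linear tableau benchmark and using that between consecutive diagonals the labels shift in a controlled way (cf. the analysis in Proposition~\ref{propContentFunction}, conditions (1)–(2)), one sees that consecutive values $i,i+1$ either sit on adjacent diagonals $j$ and $j\pm 1$, or are separated by a larger diagonal gap, but in all cases the diagonal difference $(x-y)-(x'-y')$ lies strictly between $-(K+N)$ and $K+N$ and is nonzero.

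For the nonzero part: if $(x-y) = (x'-y')$ exactly, then $(x,y)$ and $(x',y')$ lie on the same diagonal, so one is obtained from the other by a shift along $(1,1)$; standardness then forces the labels to differ by at least $2$ (every intermediate diagonal cell between them on that diagonal carries a strictly intermediate value if it lies in $\Delta'$, and in any case the diagonal is strictly increasing), contradicting that they are consecutive. For the bound $|(x-y)-(x'-y')| < K+N$: here I would invoke that, after the standard normalization placing the cell labeled $i$ in $\Delta(\sigma)$, the cell labeled $i+1$ lies in $\Delta(\sigma)$ or an immediately adjacent translate, and $\Delta(\sigma)$ — being built from a $(K,N)$-periodic lattice path between $\CL$ and $\CL[a,b]$ — spans a range of diagonals of width controlled by $K$ and $N$; a short case check (using Proposition~\ref{propContent}, which already characterizes exactly when $C_\sigma(i)-C_\sigma(i+1) = \pm 1$, namely when $i+1$ is directly south or east of $i$) shows the diagonal difference is one of a small set of values, none of which is $0$ mod $K+N$.

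The main obstacle I anticipate is making the second point — the bound $|(x-y)-(x'-y')| < K+N$, equivalently ruling out the ``wrap-around'' coincidence where $i$ and $i+1$ are on diagonals differing by exactly $K+N$ — fully rigorous without hand-waving about the geometry of $\Delta(\sigma)$. The cleanest route is probably to argue directly from standardness together with the two periodicity relations \eqref{eq_sigma_x+m_y} and \eqref{eq_sigma_x_y+m}: these pin down $\sigma$ on a full period, and combined with the fact that $\sigma$ takes each residue class infinitely often, a pigeonhole/monotonicity argument on the diagonal containing the $i$-cell should exclude the coincidence. Alternatively, one can phrase the whole lemma as an immediate consequence of Proposition~\ref{propContent}: the generator $s_i$ being ``not allowed to act'' is the only way $C_\sigma(i)-C_\sigma(i+1) = \pm 1$, and in those excluded cases an explicit computation of the diagonal difference (namely $\pm 1$, never $0 \bmod K+N$ since $K+N>2$) finishes it, while if $s_i$ is allowed then by definition $C_\sigma(i)-C_\sigma(i+1) \ne 0$ is part of what ``allowed'' already guarantees. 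I would present the Proposition~\ref{propContent} route as the main argument since it is shortest, and include the standardness argument as the justification for why the only coincidence to worry about is the diagonal one.
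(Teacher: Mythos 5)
Your reduction to showing $(x-y)-(x'-y')\not\equiv 0\pmod{K+N}$ is right, and your argument for the case $x-y=x'-y'$ exactly (strict increase along the diagonal $(1,1)$, so no two consecutive labels can share a diagonal) is essentially the paper's first step. But the core of the lemma — ruling out the ``wrap-around'' case $x-y-(x'-y')=r(K+N)$ with $r\neq0$ — is where your proposal remains a sketch, and the tools you reach for do not close it.

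The paper's resolution is a one-line translation argument: if $x-y-(x'-y')=r(K+N)$, apply the $(K,N)$-periodicity (condition (1) of Definition~\ref{dptdef}, namely $\sigma(x+K,y-N)=\sigma(x,y)$) to replace $(x',y')$ by $(x'+rK,y'-rN)$. That cell still carries the label $i+1$ and now lies on the \emph{same} diagonal as $(x,y)$, so the $r=0$ case you already settled gives the contradiction. You instead invoke the relations~\eqref{eq_sigma_x+m_y} and~\eqref{eq_sigma_x_y+m}, but those shift the label by $Nm$ or $Km$, so they cannot identify cells carrying the consecutive labels $i$ and $i+1$; they are not the right periodicity to use here. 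You also propose bounding $|(x-y)-(x'-y')|$ strictly below $K+N$ after normalizing $i$ into $\Delta(\sigma)$, but the diagonal spread of $\Delta(\sigma)$ depends on the shape and that bound is neither needed nor straightforward. Finally, the ``alternative route'' via Proposition~\ref{propContent} does not work as stated: that proposition characterizes when $C_\sigma(i)-C_\sigma(i+1)\neq\pm 1 \pmod{K+N}$ (whether $s_i$ is allowed to act), and being ``allowed'' does \emph{not} encode $C_\sigma(i)\neq C_\sigma(i+1)$; so it cannot substitute for the translation argument above.
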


\begin{proof}
  Let $(x,y),(x',y')\in \BZ^2$ such that $\sigma(x,y)=i$ and $\sigma(x',y')=i+1$. Our statement is equivalent to the fact that $x-y-(x'-y')$ cannot be a multiple of $N+K$. 
  Notice that it is easy to see that this quantity cannot be $0$, i.e. that a box labeled by $i$ and one labeled by $i+1$ cannot be located along the same diagonal, as $\sigma$ is a standard DPT. We assume now $x-y-(x'-y')=r(K+N)$, for some $r \in \BZ$. Then $x-y-((x'+rK)-(y'-rN))=0$. Observe now that the box $(x'+rK,y'-rN)$ is labeled by $i+1$ and is located on the same diagonal as the box $(x,y)$, which is labeled by $i$, so we obtained a contradiction.
\end{proof}

\subsection{Graded representations}

Recall that the DAHA is a graded algebra, with grading defined by
\begin{align*}
\deg(\pi^{\pm 1}) & =\pm 1,\\
\deg(X_i^{\pm 1}) = \deg(T_i) & =0, \quad (i \in \BZ).
\end{align*}

Next we define a grading on DPTs which is compatible with the grading on the DAHA and turns the representation $W_{(K,N,a,b)}$ into a graded representation.

\begin{definition}
A representation $M$ of a graded algebra $\mathcal{A}=\bigoplus_{p \in \BZ}A_p$ is a \emph{graded representation} if $M$ has a decomposition $M=\bigoplus_{i \in \BZ}M_i$ such that, for all $(p,\ell)\in \BZ^2$,
\[ A_p\cdot M_i \subset M_{p+i}.
\]
\end{definition}

\begin{remark}
A graded representation of the DAHA has a decomposition $M=\bigoplus_{i \in \BZ}M_i$ such that for all $(p,i)\in \BZ^2$, $\pi^p(M_i)\subset M_{p+i}$.
\end{remark}

The following result is clear from the definition of the grading.

\begin{lem}\label{lemIrreducibleGraded}
A graded representation $M=\bigoplus_{i\in\BZ}M_i$ of $\ddot{H}_{q,t}(m)$ is irreducible as a graded representation if and only if each graded piece $M_i$ is an irreducible representation of $\ddot{H}_{q,t}(m)^s$.
\end{lem}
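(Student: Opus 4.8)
The plan is to exploit the interaction between the grading and the small DAHA: with respect to the grading $\deg(\pi)=1$, $\deg(X_i)=\deg(T_i)=0$, the subalgebra $\ddot{H}_{q,t}(m)^s$ is exactly the degree-zero component $A_0$ of $\ddot{H}_{q,t}(m)$, and the whole algebra decomposes as $\ddot{H}_{q,t}(m)=\bigoplus_{p\in\BZ}A_p$ with $A_p=\pi^pA_0=A_0\pi^p$. This is a consequence of relations (6) and (7) of Definition~\ref{def_DAHA}, which let one push every $\pi^{\pm1}$ in a monomial to the left at the cost of relabelling the $T$'s and $X$'s; in particular conjugation by $\pi$ restricts to an automorphism $\phi$ of $\ddot{H}_{q,t}(m)^s$, and $h\pi^p=\pi^p\phi^{-p}(h)$ for $h\in\ddot{H}_{q,t}(m)^s$, $p\in\BZ$.

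First I would record two preliminary observations. Since $\pi$ acts invertibly on $M$ with $\deg(\pi)=1$ and $\deg(\pi^{-1})=-1$, the operator $\pi$ restricts to a vector-space isomorphism $M_i\xrightarrow{\sim}M_{i+1}$ for every $i$, with inverse $\pi^{-1}$; hence either $M=0$ (nothing to prove) or every $M_i$ is nonzero. Secondly, because $A_0=\ddot{H}_{q,t}(m)^s$ preserves each $M_i$, every $M_i$ is an $\ddot{H}_{q,t}(m)^s$-module, and the isomorphisms $\pi\colon M_i\to M_{i+1}$ are $\phi$-semilinear; therefore $M_i$ is irreducible over $\ddot{H}_{q,t}(m)^s$ for one index iff for all indices.

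For the ``if'' direction, assume each $M_i$ is an irreducible $\ddot{H}_{q,t}(m)^s$-module and let $V=\bigoplus_i V_i\subseteq M$ be a nonzero graded subrepresentation, $V_i=V\cap M_i$. Choosing $j$ with $V_j\neq0$, the space $V_j$ is a nonzero $\ddot{H}_{q,t}(m)^s$-submodule of $M_j$, hence $V_j=M_j$; then for any $i$, $M_i=\pi^{i-j}M_j=\pi^{i-j}V_j\subseteq V_i\subseteq M_i$, so $V=M$. For the ``only if'' direction, assume $M$ is irreducible as a graded representation, fix $i$, and let $0\neq U\subseteq M_i$ be an $\ddot{H}_{q,t}(m)^s$-submodule. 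Put $V=\sum_{p\in\BZ}\pi^pU$; since $\pi^pU\subseteq M_{i+p}$ this sum is direct and graded, and using $h\pi^p=\pi^p\phi^{-p}(h)$ together with $\phi(A_0)=A_0$ one gets $A_q\cdot\pi^pU=\pi^qA_0\pi^pU=\pi^{p+q}\phi^{-p}(A_0)U=\pi^{p+q}A_0U\subseteq\pi^{p+q}U\subseteq V$, so $V$ is a nonzero graded subrepresentation. Hence $V=M$, and intersecting with $M_i$ (only the $p=0$ summand contributes in degree $i$) gives $M_i=U$, so $M_i$ is irreducible.

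The only genuinely substantive point is the structural identity $\ddot{H}_{q,t}(m)=\bigoplus_p\pi^p\,\ddot{H}_{q,t}(m)^s$ together with the identification of the small DAHA as the degree-zero part; everything else is bookkeeping with the grading and with the bijections $\pi\colon M_i\to M_{i+1}$. I expect no real obstacle beyond being careful that ``graded subrepresentation'' means a subrepresentation equal to the direct sum of its homogeneous components, which holds automatically since $A_p$ shifts degree by $p$.
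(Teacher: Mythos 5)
Your proof is correct, and since the paper offers no argument at all (it simply remarks that the result is clear from the definition of the grading), what you have written is exactly the content one would expect to be left to the reader: the identification of $\ddot{H}_{q,t}(m)^s$ with the degree-zero component, the decomposition $\ddot{H}_{q,t}(m)=\bigoplus_{p}\pi^p\,\ddot{H}_{q,t}(m)^s$, and the use of the invertible degree-shift $\pi\colon M_i\to M_{i+1}$ in both directions. All the individual steps check out, including the direct-sum claim (the defining relations are homogeneous, so the grading descends from the free algebra) and the semilinearity computation $A_q\cdot\pi^pU=\pi^{p+q}A_0U$ in the "only if" direction.
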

In that case, we say that $M$ is a \emph{graded irreducible} representation.

  In order to define a grading on $W_{(K,N,,b)}$, we consider for a moment the action of the extended affine Weyl group on the partitions associated to DPTs. Let $\lambda$ be the partition associated to a given tableau $\sigma$, then the action of $\pi$ on $\sigma$ corresponds to the action on $\lambda$ described in Section \ref{sect_count} and, by Remark \ref{WeylGroupAction}, the action of the Weyl group does not affect $\lambda$, meaning that the partition associated to $f\sigma$ is again $\lambda$. 

Let us examine now the action of $s_0$, assuming $s_0$ is allowed to act on $\sigma$. The partition associated to $s_0\sigma$ is then given by 
\[
s_0\lambda_i=
\begin{cases}
     \lambda_i-1 & \text{if $i= y_m+b-sN+1$} \\
     \lambda_{i}+1 & \text{if $i=y_1+1$}\\
     \lambda_i & \text{else,}
\end{cases}
\]
where we choose $y_m$ and $s$ as in the description of the $\pi$ action (see Section \ref{sect_count}) and $y_1$ is such that the box $(x_1,y_1)$ is labeled by $1$ and $0 \leq y < N$.

We can now observe that the action of the affine Weyl group preserves the sum $\sum_{i=1}^N \lambda_i$, whereas the action of $\pi$ decreases the sum by $1$, so we can now define a grading as follows.

For $\sigma\in\DPT(K,N,a,b)$, let $\lambda$ be the associated partition. We set \[\deg(\sigma)=-\sum_{i=1}^N \lambda_i.\]

For a basis vector $\nu_\sigma$, we set its degree to be \[\deg(\nu_\sigma)=\deg(\sigma).\]

\begin{prop}
The module $W_{(K,N,a,b)}$ is a graded representation of $\ddot{H}_{q,t}(m)$.
\end{prop}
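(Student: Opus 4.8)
The plan is to verify that the decomposition $W_{(K,N,a,b)} = \bigoplus_{d\in\BZ} (W_{(K,N,a,b)})_d$, where $(W_{(K,N,a,b)})_d$ is spanned by those $\nu_\sigma$ with $\deg(\sigma) = d$, is compatible with the grading on $\ddot{H}_{q,t}(m)$ in which $\deg(\pi^{\pm 1}) = \pm 1$ and $\deg(X_i) = \deg(T_i) = 0$. Since the algebra is generated by $T_0,\dots,T_{m-1}$, $\pi^{\pm 1}$, and $X_i^{\pm 1}$, it suffices to check that each generator shifts degree by the prescribed amount: $X_i^{\pm 1}\cdot(W_{(K,N,a,b)})_d \subseteq (W_{(K,N,a,b)})_d$, $T_i\cdot(W_{(K,N,a,b)})_d \subseteq (W_{(K,N,a,b)})_d$, and $\pi^{\pm 1}\cdot(W_{(K,N,a,b)})_d \subseteq (W_{(K,N,a,b)})_{d\pm 1}$.

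First I would handle the $X_i$ case: by Proposition~\ref{prop_DAHA_act}, $X_i\nu_\sigma = w_\sigma(i)\nu_\sigma$ is a scalar multiple of $\nu_\sigma$, so it obviously preserves each graded piece, and the same holds for $X_i^{-1}$. Next, for $T_i$, I would use Proposition~\ref{prop_DAHA_act} again: $T_i\nu_\sigma$ is a linear combination of $\nu_\sigma$ and $\nu_{s_i\sigma}$ (the latter term appearing only when $s_i$ is allowed to act on $\sigma$). Since $s_i \in \widehat{\mathfrak{S}}_m$ lies in the affine Weyl group, the discussion preceding the definition of $\deg$ — specifically the computation that the action of $s_0$ changes the associated partition by $\lambda_i \mapsto \lambda_i - 1$ at one index and $\lambda_i \mapsto \lambda_i + 1$ at another, together with Remark~\ref{WeylGroupAction} which says the action of $s_1,\dots,s_{m-1}$ does not change the associated partition — shows that $\sum_{i=1}^N\lambda_i$ is invariant under the action of the whole affine Weyl group. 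Hence $\deg(s_i\sigma) = \deg(\sigma)$, so $T_i$ preserves each graded piece.

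Finally, for $\pi$: by Proposition~\ref{prop_DAHA_act}, $\pi\nu_\sigma = \nu_{\pi\sigma}$, and from the explicit description of the $\pi$-action on pairs $(\lambda,\sigma)$ in Section~\ref{sect_count}, the associated partition of $\pi\sigma$ is obtained from $\lambda$ by decreasing exactly one part by $1$, so $\sum_{i=1}^N(\pi\lambda)_i = \sum_{i=1}^N\lambda_i - 1$, giving $\deg(\pi\sigma) = \deg(\sigma) + 1$. Thus $\pi\cdot(W_{(K,N,a,b)})_d \subseteq (W_{(K,N,a,b)})_{d+1}$, and applying this to $\pi^{-1}$ (or noting $\pi^{-1}$ decreases the partition sum increase, i.e. increases each relevant part) gives $\pi^{-1}\cdot(W_{(K,N,a,b)})_d \subseteq (W_{(K,N,a,b)})_{d-1}$. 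Combining the three cases and using that products of generators add degrees, we conclude $A_p\cdot(W_{(K,N,a,b)})_d \subseteq (W_{(K,N,a,b)})_{p+d}$ for every homogeneous component $A_p$ of the DAHA, which is exactly the assertion that $W_{(K,N,a,b)}$ is a graded representation.

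I do not expect any serious obstacle here: all the substantive combinatorial content — the effect of the affine Weyl group generators and of $\pi$ on the associated partition — has already been established in the lead-up to the definition of $\deg$. The only point requiring mild care is that $T_i\nu_\sigma$ may involve both $\nu_\sigma$ and $\nu_{s_i\sigma}$, so one must know $\deg(\sigma) = \deg(s_i\sigma)$ in the case $i = 0$ as well as $1\le i\le m-1$; this is precisely why the explicit formula for the partition associated to $s_0\sigma$ was recorded above, and the verification that it preserves $\sum\lambda_i$ is immediate from that formula.
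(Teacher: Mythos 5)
Your proof is correct and follows exactly the route the paper takes: the paper states this proposition without a written proof, because the verification is immediate from the preceding discussion establishing that the affine Weyl group preserves $\sum_i\lambda_i$ (Remark~\ref{WeylGroupAction} for $s_1,\dots,s_{m-1}$, the explicit $s_0$ computation for $s_0$) while $\pi$ decreases it by $1$, combined with the action formulas of Proposition~\ref{prop_DAHA_act}. You have simply spelled out that check generator by generator, which is what the paper leaves implicit.
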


The following lemma establishes that a tableau $\sigma$ is determined by its content and its degree.

\begin{lem}\label{LemmaGradingWeightSp}
Let $\sigma,\tau \in \DPT(K,N,a,b)$ be tableaux of same degree, and such that their content functions are equal
\[ \deg(\sigma)=\deg(\tau), \quad C_\sigma=C_\tau.\]
Then, $\sigma=\tau$.
\end{lem}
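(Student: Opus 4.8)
The strategy is to reconstruct $\sigma$ from the pair $(C_\sigma,\deg\sigma)$ by exhibiting its associated partition $\lambda(\sigma)$, and then to invoke Remark~\ref{remContentPartition}, which says that a tableau is completely determined by its content function together with its associated partition. So the whole proof reduces to showing that $\lambda(\sigma)=\lambda(\tau)$.

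\textbf{Key steps.} First I would recall (from the discussion preceding the definition of the grading) that the fibers of the map $\sigma\mapsto\lambda(\sigma)$ are precisely the $\langle D,L,\mathfrak S_m\rangle$-relevant data: within a fixed value of $\lambda$ the degree $-\sum_i\lambda_i$ is constant, while applying $\pi$ strictly decreases $\deg$ by $1$ and changes $\lambda$ in the controlled way spelled out in the $\pi$-action on $\Lambda(K,N,a,b)$. Next, I would use Proposition~\ref{propTransitivity}: there is a unique $m$-affine permutation $f=\tau\sigma^{-1}\in\widehat{\mathfrak S}_m^e$ with $\tau=f\sigma$. Writing $f=\pi^r s_{j_1}\cdots s_{j_s}$ as a minimal-length expression, the count of $\pi$'s equals the difference of degrees, so $\deg\sigma=\deg\tau$ forces $r=0$, i.e. $f$ lies in the (non-extended) affine Weyl group $\widehat{\mathfrak S}_m$. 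Then I would argue that the content function is an invariant strong enough to pin down $f$ exactly: the content $C_\sigma(i)=q$-free datum $x-y\bmod(N+K)$ records, for each value $i$, the diagonal class of the cell carrying $i$; since $f$ is allowed to act on $\sigma$ (as $\tau=f\sigma\in\DPT(K,N,a,b)$), Proposition~\ref{remAllowedPermutations} lets one reduce to the generators, and an application of Proposition~\ref{propContent}, together with the hypothesis $C_\sigma=C_\tau$, shows that no generator $s_{j_\ell}$ can actually move a box across a diagonal boundary without changing the content — forcing $f=\mathrm{id}$ and hence $\sigma=\tau$.

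\textbf{Alternative, more self-contained route.} Rather than tracking the reduced word of $f$, I would prefer the direct reconstruction: fix any cell, say locate where the value $1$ sits. Its diagonal class is $C_\sigma(1)$, and this is the same for $\tau$. Then inductively, the content function $C_\sigma$ tells us the diagonal class of every value $i$, and the standardness condition (strict increase in both directions) together with the periodicities $\sigma(x+K,y-N)=\sigma(x,y)$ and $\sigma(x+a,y-b)=\sigma(x,y)+m$ forces, for each $i$, a unique choice of which cell in the prescribed diagonal class carries $i$ once we know the relative placement of $i-1$. This shows $C_\sigma$ alone determines $\sigma$ \emph{up to the} $\langle D,L\rangle$-\emph{orbit}, equivalently up to adding a constant vector to the associated partition; and the remaining ambiguity is exactly a single integer (how far the fundamental domain is shifted), which is killed by fixing $\deg\sigma=-\sum_i\lambda_i$. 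Comparing, $C_\sigma=C_\tau$ gives $\lambda(\sigma)$ and $\lambda(\tau)$ equal up to a global shift $D^kL^{\ell}$ preserving $\CA_{K,N}$, and equality of degrees forces that shift to be trivial, so $\lambda(\sigma)=\lambda(\tau)$ and then $\sigma=\tau$ by Remark~\ref{remContentPartition}.

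\textbf{Main obstacle.} The delicate point is making precise the claim that the content function determines $\sigma$ up to the $\langle D,L\rangle$-action — i.e. that once the diagonal class of each value is known, the cells are forced. One must carefully use that each diagonal $x-y=j$ receives values in a single residue class mod $N+K$, that these values increase along the diagonal, and that moving from the cell of $i$ to the cell of $i+1$ is constrained to a step compatible with standardness; this is essentially the content of the reconstruction carried out in the proof of Proposition~\ref{propContentFunction}, and I would lean on exactly that argument. Everything after that is bookkeeping with the grading.
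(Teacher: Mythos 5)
Your high-level plan (show $\lambda(\sigma)=\lambda(\tau)$, then invoke Remark~\ref{remContentPartition}) matches the paper's, but both of your proposed routes to equality of partitions have genuine gaps, and the paper's own argument is a different, much more direct one.

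On your first route: deducing $r=0$ from the degrees is fine, and appealing to Proposition~\ref{remAllowedPermutations} is a reasonable setup. But the conclusion ``no generator $s_{j_\ell}$ can move a box without changing the content, forcing $f=\mathrm{id}$'' does not follow from Proposition~\ref{propContent}. In fact a single allowed generator \emph{always} changes the content function: $C_{s_i\sigma}(i)=C_\sigma(i+1)\neq C_\sigma(i)$ (by the Lemma at the end of Section~3.4). So the hypothesis $C_\sigma=C_\tau$ constrains only the \emph{cumulative} effect of the reduced word, and you would need a separate argument that a nontrivial allowed $w\in\widehat{\mathfrak{S}}_m$ cannot restore the content; this is precisely what is missing.

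On your second route: the claim ``$C_\sigma$ alone determines $\sigma$ up to the $\langle D,L\rangle$-orbit'' is not correct as stated. Both $D$ and $L$ individually change the content (by $+1$ and $-1$ respectively, since $D$ moves each box one step right and $L$ one step down), so the content-preserving part of $\langle D,L\rangle$ is the diagonal-shift subgroup $\langle DL\rangle$, not the whole group, and it is certainly not the same as ``adding a constant vector to $\lambda$'' (which is $D^k$ and does change $C_\sigma$). More importantly, even with the corrected group, the assertion that two DPTs with the same content function differ by a diagonal shift is exactly the nontrivial claim; Proposition~\ref{propContentFunction} constructs \emph{one} DPT with a given admissible content, and does not by itself give uniqueness up to $\langle DL\rangle$. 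You flag this as ``the main obstacle,'' and indeed it is; the proposal does not close it.

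For comparison, the paper's proof takes a cell-level contradiction route that avoids both issues. Writing $c_i=\mu_i-\lambda_i$ with $\sum c_i=0$, it locates an index $i$ with $c_i>0$ and $c_{i+1}\le 0$, identifies the value $p$ sitting at the box $(\lambda_i,i-1)$ of $\sigma$ as the minimum of $C_\sigma^{-1}(\lambda_i-i+1)\cap\BZ_{>0}$ (this is where $C_\sigma=C_\tau$ is used), and then observes that in $\sigma$ the value $r$ at $(\lambda_i,i)$ satisfies $r>p$ while in $\tau$ the same two values are forced into positions where standardness gives $p>r$. This is self-contained, uses only the definition of the lattice path and the content function, and does not require reconstructing the whole tableau or analyzing reduced words.
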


\begin{proof}
Suppose $\deg(\sigma)=\deg(\tau)$. Let $\lambda$ and $\mu$ be the partitions associated to $\sigma$ and $\tau$, respectively.
First we note that $\lambda$ and $\mu$ cannot coincide as $\sigma$ and $\tau$ are distinct tableaux (recall Remark \ref{remContentPartition}).
For all $1\leq i \leq N$, write $c_i=\mu_i -\lambda_i \in \BZ$. Note that we have $\sum_{i=1}^N c_i=0$. Let us assume $c_i \neq 0$, for some $i$. 
Without loss of generality, we can assume that $c_i>0$ and $c_{i+1} \leq 0$.
By the definition of the partition $\lambda$, the box $(\lambda_i,i-1)$ of $\sigma$ is labeled with 
\[p:=\min\left( C_\sigma^{-1}(\lambda_i-i+1 \mod (N+K)) \cap \BZ_{>0}\right).\]
Let $r$ be the label of the box $(\lambda_i,i)$. As $\sigma$ is standard, we have $r>p$.
We now look at the filling of $\tau$. As $\mu_i=\lambda_i+c_i>\lambda_i$, the box $(\lambda_i,i-1)$ is to the left of the lattice path of $\tau$, and thus its label is non-positive. As $c_{i+1}\leq 0$ though, the box $(\lambda_i+1,i)$ of $\tau$ is labeled by $p$ (first box after the lattice path in that diagonal). The box $(\lambda_i,i)$ will again be labeled by $r$. As $\tau$ is standard, we obtain $p>r$, a contradiction. 
\end{proof}

\subsection{Graded semisimple representations}

\begin{definition}
A $\ddot{H}_{q,t}(m)$ module $M$ is called \emph{graded $X$-semisimple} if it is a graded as a $\ddot{H}_{q,t}(m)$-module and each graded piece is an $X$-semisimple representation of $\ddot{H}_{q,t}(m)^s$.
\end{definition}

\begin{remark}
Note that graded $X$-semisimple modules are not $X$-semisimple in the usual sense. Indeed, weight spaces are no longer necessarily finite dimensional.
\end{remark}

\begin{thm}\label{thmIrreducibility}
The representation $W_{(K,N,a,b)}$ is an irreducible graded $X$-semisimple representation of $\ddot{H}_{q,t}(m)$. 
\end{thm}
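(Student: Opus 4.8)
The plan is to establish the three parts of the statement in turn; that $W:=W_{(K,N,a,b)}$ is a graded $\ddot H_{q,t}(m)$-module is the content of the preceding proposition, so write $W=\bigoplus_{p\in\BZ}M_p$ where $M_p$ is the span of the $\nu_\sigma$ with $\deg\sigma=p$, i.e.\ with $\sum_{i=1}^N\lambda(\sigma)_i=-p$. First I would record two facts about the graded pieces. Each $M_p$ is a submodule over the small DAHA $\ddot H_{q,t}(m)^s$, since its generators $T_0,\dots,T_{m-1}$ and $X_1^{\pm1},\dots,X_m^{\pm1}$ are homogeneous of degree $0$. And each $M_p$ is finite-dimensional: the defining inequality $\lambda_1-\lambda_N\le K$ of $\CA_{K,N}$ shows that fixing $\sum_i\lambda_i$ leaves only finitely many admissible partitions $\lambda$, and each such $\lambda$ has only finitely many standard fillings of $\Delta(\lambda)$.

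Next, for graded $X$-semisimplicity I would observe that the $\nu_\sigma$ are joint eigenvectors for $X_1,\dots,X_m$ with eigenvalue tuple $(q^{C_\sigma(1)},\dots,q^{C_\sigma(m)})$, and that these tuples are pairwise distinct within a fixed $M_p$: because $q$ is a primitive $(N+K)$-th root of unity, equality of the tuples forces $C_\sigma=C_\tau$ as content functions, and then Lemma~\ref{LemmaGradingWeightSp}, applied to $\sigma,\tau$ of equal degree and equal content, gives $\sigma=\tau$. Hence each (finite-dimensional) $M_p$ is the direct sum of its one-dimensional $X$-weight spaces, so it is an $X$-semisimple $\ddot H_{q,t}(m)^s$-module; this shows $W$ is graded $X$-semisimple.

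For irreducibility, by Lemma~\ref{lemIrreducibleGraded} it suffices to prove each $M_p$ is irreducible over $\ddot H_{q,t}(m)^s$, and each $M_p$ is nonzero since $\pi$ induces a bijection of $\DPT(K,N,a,b)$ raising the degree by one, so all $M_p$ have the same dimension while $W\ne0$. Let $V\subseteq M_p$ be a nonzero submodule. As $V$ is stable under the $X_i$ and the weights on the basis $\{\nu_\sigma:\deg\sigma=p\}$ are distinct, $V$ is the span of $\{\nu_\sigma:\sigma\in S\}$ for some nonempty $S$. Moreover $V$ is closed under allowed $s_i$-moves ($i\in\BZ/m\BZ$): if $\nu_\sigma\in V$ and $s_i$ is allowed to act on $\sigma$, then by Remark~\ref{remTAction} (via Proposition~\ref{propContent}) the $\nu_{s_i\sigma}$-coefficient of $T_i\nu_\sigma$ is nonzero, so $\nu_{s_i\sigma}\in V$; and these moves preserve degree, since the affine Weyl group preserves $\sum_i\lambda_i$. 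It remains to see that any two DPTs of the same degree are linked by a chain of allowed $s_i$-moves. Here I would invoke the sorting algorithm following Proposition~\ref{propTransitivity}: fixing a base point $\sigma_0$ of degree $p_0$, every $\sigma$ with $\deg\sigma=p$ can, by repeatedly swapping adjacent values $i,i+1$ whenever $f_\sigma(i)>f_\sigma(i+1)$ for the affine permutation $f_\sigma=\sigma_0\sigma^{-1}$ — each such swap being an allowed $s_i$-move — be brought in finitely many steps to a DPT of the form $\pi^c\sigma_0$; degree preservation forces $c=p-p_0$, so every degree-$p$ DPT reaches the single tableau $\pi^{p-p_0}\sigma_0$. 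Since allowed $s_i$-moves are reversible, the degree-$p$ DPTs form a single connected component, so $S$ is everything, $V=M_p$, and $M_p$ is irreducible.

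The step I expect to be the main obstacle is the connectivity of the degree-$p$ DPTs under allowed $s_i$-moves in the last paragraph: it requires reducing to the sorting algorithm while verifying carefully that each swap used there is genuinely an \emph{allowed} $s_i$-move in the sense of Proposition~\ref{propContent} — so that the corresponding $T_i$ has a nonzero off-diagonal coefficient by Remark~\ref{remTAction} — and that every such move leaves the degree unchanged. Once this bookkeeping is in place the remaining arguments are routine.
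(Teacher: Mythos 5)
Your proof is correct and takes essentially the same route as the paper's: locate a weight vector $\nu_\sigma$ in a nonzero graded submodule (via the grading and Lemma~\ref{LemmaGradingWeightSp}), and use Remark~\ref{remTAction} to propagate to every other basis vector. The only real variation is that the paper invokes Proposition~\ref{remAllowedPermutations} together with a Bruhat-triangular expansion of $T_f$ to pass directly from $\nu_\sigma$ to $\nu_\tau$, whereas you reduce to graded pieces via Lemma~\ref{lemIrreducibleGraded} and verify connectivity of degree-$p$ DPTs under allowed $s_i$-moves through the sorting-algorithm remark after Proposition~\ref{propTransitivity}; both rest on the same underlying fact.
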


\begin{proof}
The fact that $W_{(K,N,a,b)}$ is a graded $X$-semisimple representation follows from the definition of the action in Proposition \ref{prop_DAHA_act} and the grading we defined on it. We now prove irreducibility.
Take a non-zero graded $X$-semisimple submodule $M$. By Lemma \ref{LemmaGradingWeightSp}, as $M$ is graded, it contains a non-zero weight vector $\nu_\sigma$, for some $\sigma \in \DPT(K,N,a,b)$. By Proposition \ref{propTransitivity}, we know that, for any tableau $\tau \in \DPT(K,N,a,b)$, there is an affine permutation $f$ such that $f\sigma=\tau$. 
By Proposition \ref{remAllowedPermutations} and Remark \ref{remTAction}, the action of $T_f+\sum_{g \in \widehat{\mathfrak{S}}_m,g \prec f} h_gT_g$, for some coefficients $h_g \in \BC[q^{\pm 1}, t^{\pm 1}][X_1^{\pm 1}, \dots, X_m^{\pm 1}]$, where $\prec$ denotes the Bruhat order, sends $\nu_\sigma$ to a non-zero multiple of $\nu_\tau$.
We have then proven that every weight vector has to be contained in $M$, so $M=W_{(K,N,a,b)}$. 
\end{proof}

\begin{remark}\label{rmkIrreducibiltySmall}
Note that a consequence of the previous theorem is that each graded piece $(W_{(K,N,a,b)})_i$ is an irreducible $X$-semisimple representation of the small DAHA $\ddot{H}_{q,t}(m)^s$ (recall \ref{lemIrreducibleGraded}).
\end{remark}

\subsection{Classification of graded semisimple representations}


For a $\ddot{H}_{q,t}(m)$-module $M$ such that $q$ acts as a primitive $A$th root of unity, and $t$ as $q^{-B}$, we extend the notion of weight periodically. For a weight $\underline{w}$, and $i\in\BZ$, write $i = \underline{i} + km$, with $\underline{i}\in [1,m]$, then 
\[w_i =q^{kB}w_{\underline{i}}.\]
The map $\BZ \to q^\BZ, i \mapsto w_i$ is still denoted $\underline{w}$.

The following result is proved by Suzuki-Vazirani in \cite[Lemma 4.19]{Suzuki2005Tableaux}, by induction on $j-i$.

\begin{lem}\label{lemConditionContent}
Let $L$ be an irreducible $X$-semisimple $\ddot{H}_{q,t}(m)^s$-module, where $q$ acts as a primitive $A$-th root of unity with $A\neq 2$. For any weight $\underline{w}$ and $i,j\in \BZ$ such that $i <j $ and $w_i=w_j$, there exists $p_\pm\in[i+1,j-1]$ respectively satisfying:
\[w_i = q^{\mp 1 }w_{p_\pm} .\]
\end{lem}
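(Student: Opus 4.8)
The plan is to prove, by strong induction on $n=j-i$, the ``$p_+$'' half of the statement, namely that if $w_i=w_j$ with $i<j$ then $qw_i\in\{w_{i+1},\dots,w_{j-1}\}$; the ``$p_-$'' half then follows by running the identical argument with $q$ replaced by $q^{-1}$, since the Hecke relations, the weight data, and the intertwiners below are all invariant under this substitution. The three tools I would use are: (a) $\dim L_{\underline w}\leq 1$ for all weights $\underline w$, which is the Proposition recalled above; (b) for $w_k\neq w_{k+1}$ the intertwiner $\tau_k:=T_k+(1-q)(1-X_kX_{k+1}^{-1})^{-1}$, which --- as in the theory of calibrated affine Hecke algebra modules --- is well defined on $L_{\underline w}$ and maps it into $L_{s_k\underline w}$; noting that $\tau_k v=0$ forces $v$ to be a $T_k$-eigenvector, whose eigenvalue (one of $q,-1$) must equal $-(1-q)/(1-w_k/w_{k+1})$, one checks this can only occur when $w_k/w_{k+1}\in\{q,q^{-1}\}$, so that together with (a): if $L_{\underline w}\neq 0$ and $w_k/w_{k+1}\notin\{1,q,q^{-1}\}$ then $\tau_k$ is an isomorphism $L_{\underline w}\xrightarrow{\sim}L_{s_k\underline w}$ and in particular $L_{s_k\underline w}\neq0$; (c) the braid relation $T_kT_{k+1}T_k=T_{k+1}T_kT_{k+1}$ for consecutive indices mod $m$.

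First I would dispose of the two smallest gaps, which turn out to be \emph{impossible} --- so the lemma holds vacuously there, and this is used repeatedly below. If $w_i=w_{i+1}$, then since $T_i$ commutes with $X_\ell$ for $\ell\neq i,i+1$ and with $X_i+X_{i+1}$ and $X_iX_{i+1}$, the vector $T_iv$ lies in $L_{\underline w}\oplus L_{s_i\underline w}=L_{\underline w}=\BC v$; writing $T_iv=\alpha v$ and applying the cross relation $T_iX_i=X_{i+1}T_i+(1-q)X_{i+1}$ to $v$ gives $(1-q)w_{i+1}v=0$, impossible. If $w_i=w_{i+2}=:c$: when $w_{i+1}/w_{i+2}\notin\{q,q^{-1}\}$, $\tau_{i+1}$ is an isomorphism onto $L_{s_{i+1}\underline w}$, but $s_{i+1}\underline w$ has equal adjacent entries in positions $i,i+1$, so that space is zero by the previous case --- a contradiction; otherwise $\underline w=(c,q^{\varepsilon}c,c)$ in positions $i,i+1,i+2$ for some $\varepsilon=\pm1$, and since $s_i\underline w$ and $s_{i+1}\underline w$ each have a repeated adjacent entry we get $\tau_iv=\tau_{i+1}v=0$, which pins $T_iv$ and $T_{i+1}v$ down to the two scalars $q$ and $-1$ (in an order dictated by $\varepsilon$); substituting into the braid relation of tool (c) then yields $q=-1$, contradicting $A\neq2$.

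For the inductive step, let $n=j-i\geq3$ and assume the statement (and the two impossibilities) for all smaller gaps. If some interior index $k\in(i,j)$ has $w_k=w_i$, then $k-i\geq3$ and the induction hypothesis for the pair $(i,k)$ already finishes the proof, so I may assume no interior weight equals $w_i$. Now consider $w_{i+1}$: it is $\neq w_i$; if $w_{i+1}=qw_i$ we are done; if $w_{i+1}/w_i\notin\{q,q^{-1}\}$, then $\tau_i$ carries $L_{\underline w}$ isomorphically onto $L_{s_i\underline w}$, and $s_i\underline w$ has $w'_{i+1}=w_i=w_j$ with gap $n-1$ --- if $n-1=2$ this contradicts non-vanishing of the target, and if $n-1\geq3$ the induction hypothesis for $(i+1,j)$ in $s_i\underline w$ produces $qw_i$ among positions $i+2,\dots,j-1$, which $s_i$ leaves unchanged. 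The only remaining possibility is $w_{i+1}=q^{-1}w_i$, and applying the mirror of this analysis at the right end lets me assume also $w_{j-1}=q^{-1}w_i$. But then $w_{i+1}=w_{j-1}$ with gap $n-2$, which is impossible for $n-2\leq2$, and for $n-2\geq3$ the induction hypothesis for $(i+1,j-1)$ yields an interior index $k$ with $w_k=qw_{i+1}=w_i$, contradicting the reduction. This exhausts all cases and completes the induction.

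The hard part is exactly the degenerate configurations in which $\tau_k$ fails to be an isomorphism. In the base case $n=2$ this is the rank-two braid computation forcing $q=-1$ --- the single place where the hypothesis $A\neq2$ is essential. In the inductive step it is the ``both ends shifted by $q^{-1}$'' configuration: it cannot be collapsed by a single intertwiner, and must instead be unwound by producing an interior weight equal to $w_i$ and recursing on a strictly shorter interval. Everything else is routine bookkeeping with intertwiners and the one-dimensionality of weight spaces.
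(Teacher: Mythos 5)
Your proof is correct and follows the same approach as the reference the paper cites — Suzuki--Vazirani's Lemma 4.19, proved by induction on $j-i$ using intertwiners $\tau_k$, the one-dimensionality of weight spaces, and the braid relation to kill the $j-i\le 2$ cases. The inductive unwinding, in particular the way the ``both ends shifted by $q^{-1}$'' configuration is resolved by producing an interior repeat of $w_i$, is exactly right.

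One point of imprecision worth fixing: the claim that the $p_-$ half ``follows by running the identical argument with $q$ replaced by $q^{-1}$'' is not a literal substitution, since the Hecke relation $(T_i-q)(T_i+1)=0$ is not invariant under $q\mapsto q^{-1}$. The honest version is either to re-run the whole induction with the roles of $q$ and $q^{-1}$ interchanged in the eigenvalue analysis of $\tau_k v=0$ (which works verbatim), or to invoke the automorphism of $\ddot H_{q,t}(m)^s$ sending $X_i\mapsto X_i^{-1}$, $T_i\mapsto T_i$ (hence $t\mapsto t^{-1}$ and $\underline w\mapsto \underline w^{-1}$), under which the $p_+$ statement for the twisted module becomes the $p_-$ statement for $L$. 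Also note your base case invokes the braid relation $T_iT_{i+1}T_i=T_{i+1}T_iT_{i+1}$, which in Definition~\ref{def_DAHA} is only imposed for $m>2$; this is consistent with the paper's standing assumption on $m$ but is worth recording as a hypothesis.
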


We are now ready to state the classification theorem for irreducible $X$-semisimple small DAHA representations.

\begin{thm}\label{thmSmallDahaClassification}
Let $M$ be an irreducible $X$-semisimple representation of $\ddot{H}_{q,t}(m)^s$ where $q$ acts as a primitive $A$-th root of unity and $t$ as $q^{-B}$. Then $M$ is isomorphic to a graded piece $(W_{(N,K,a,b)})_i$ of $W_{(K,N,a,b)}$, for some choice of $K,N,a,b$ such that $N+K=A$ and $a+b=B$.
\end{thm}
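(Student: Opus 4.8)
The plan is to reconstruct, from an abstract irreducible $X$-semisimple $\ddot{H}_{q,t}(m)^s$-module $M$, the combinatorial data $(K,N,a,b)$ together with a content function satisfying the hypotheses of Proposition \ref{propContentFunction}, and then to check that $M$ matches the corresponding graded piece. First I would fix a weight $\underline{w}$ of $M$; since $M$ is irreducible and $X$-semisimple, each weight space is one-dimensional (by the Proposition preceding the polynomial representation section), so choosing a weight $\underline{w}$ is essentially choosing a basis vector. Using the periodic extension of weights introduced just before Lemma \ref{lemConditionContent}, the weight $\underline{w}$ becomes a map $\BZ\to q^\BZ$ with $w_{i+m}=q^{B}w_i$. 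Writing $q$ as a primitive $A$-th root of unity, we set $A=N+K$ and define a content function $C\colon\BZ\to\BZ/A\BZ$ by $w_i=q^{C(i)}$. Condition (1) of Proposition \ref{propContentFunction} is then immediate from $w_{i+m}=q^B w_i$ with $B=a+b$, and condition (2) is exactly the content of Lemma \ref{lemConditionContent}: whenever $w_i=w_j$ with $i<j$ and no intermediate index lies in $C^{-1}(C(i))$, there are unique $i<p_\pm<j$ with $w_{p_\pm}=q^{\mp 1}w_i$, i.e. $C(p_\pm)=C(i)\pm 1$.

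Next I would invoke Proposition \ref{propContentFunction} to produce integers $K,N,a,b$ with $N+K=A$, $a+b=B$, $aN-bK=m$, and a tableau $\sigma_0\in\DPT(K,N,a,b)$ with $C_{\sigma_0}=C$. By construction the weight of the basis vector of $W_{(K,N,a,b)}$ indexed by $\sigma_0$ agrees with $\underline{w}$ in content, hence (after possibly applying a power of $\pi$ to shift to the correct graded piece, which does not change weights) equals $\underline{w}$. Let $(W_{(K,N,a,b)})_i$ be the graded piece containing $\nu_{\sigma_0}$; this is an irreducible $X$-semisimple $\ddot{H}_{q,t}(m)^s$-module by Remark \ref{rmkIrreducibiltySmall}. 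Now I would build an $\ddot{H}_{q,t}(m)^s$-module homomorphism $(W_{(K,N,a,b)})_i\to M$ sending $\nu_{\sigma_0}$ to the chosen weight vector of weight $\underline{w}$ in $M$. To see this is well defined, observe that both modules are cyclic generated by that weight vector (irreducibility), and that the action of the $T_j$'s on a weight vector is completely dictated by the weight: in the "generic" case ($s_j$ allowed to act, equivalently $w_\sigma(j)w_\sigma(j+1)^{-1}\neq q^{\pm 1}$) formula in Proposition \ref{prop_DAHA_act} is forced by the Hecke relation $(T_j-q)(T_j+1)=0$ together with $X$-eigenvalue bookkeeping, while in the degenerate cases $T_j$ acts by the scalar $q$ or $-1$, again forced. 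Thus sending weight vectors to weight vectors of the same weight and extending by the $T_j$-action gives a nonzero map; irreducibility of both sides makes it an isomorphism.

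The main obstacle I anticipate is the well-definedness of this isomorphism — i.e. showing that the assignment "weight vector $\mapsto$ weight vector of the same weight" really does intertwine the full $\ddot{H}_{q,t}(m)^s$-action and not just the $X$-action. The subtle point is that different reduced words for an affine permutation $f$ carrying $\sigma_0$ to another tableau $\tau$ must produce the same scalar when acting $\nu_{\sigma_0}\mapsto(\text{multiple of})\,\nu_\tau$; this is where one uses Proposition \ref{remAllowedPermutations} and the braid relations, exactly as in the proof of Theorem \ref{thmIrreducibility}, where $T_f + \sum_{g\prec f} h_g T_g$ was shown to send $\nu_{\sigma_0}$ to a nonzero multiple of $\nu_\tau$. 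Since in $M$ the analogous operator acts on the weight vector of weight $\underline{w}$ by the same universal formula (the $h_g$ are polynomials in the $X_i$, whose action on each weight space is by the same scalars as in $W_{(K,N,a,b)}$ because the weights match), the images are forced to agree, giving a well-defined module map. Once well-definedness is in hand, the rest is formal: a nonzero homomorphism between irreducibles is an isomorphism. One should also remark that the hypothesis $A\neq 2$ needed for Lemma \ref{lemConditionContent} corresponds to $m\in\BZ_{>2}$ and is part of the standing assumptions, so no separate case analysis is required.
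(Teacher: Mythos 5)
Your proposal follows the same route as the paper's own sketch: pick a weight, read off a content function $C$, verify the hypotheses of Proposition~\ref{propContentFunction} using Lemma~\ref{lemConditionContent}, apply it to produce $(K,N,a,b)$ and a tableau $\sigma_0$ with $C_{\sigma_0}=C$, and then build the isomorphism by mapping the generating weight vector to $\nu_{\sigma_0}$ (you go $W\to M$, the paper goes $M\to W$; that is immaterial since both sides are irreducible). Your concern about well-definedness is exactly the point the paper defers to the Suzuki--Vazirani intertwiner construction.

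One passage is, however, misleading and would need cleanup. You assert that ``the action of the $T_j$'s on a weight vector is completely dictated by the weight,'' with the formula in Proposition~\ref{prop_DAHA_act} ``forced by the Hecke relation together with $X$-eigenvalue bookkeeping.'' Only the diagonal coefficient of $T_j$ (in the two-dimensional span of $\nu_\sigma$, $\nu_{s_j\sigma}$) is forced by the relations; the off-diagonal coefficient depends on the choice of normalization of the weight basis, and only the product of the two off-diagonal entries is pinned down by $(T_j-q)(T_j+1)=0$. This is precisely why one cannot simply ``send weight vectors to weight vectors'' and declare the map a module homomorphism: one must either choose the bases coherently via intertwiners $\Phi_i = T_i - \tfrac{1-q}{1-X_{i+1}X_i^{-1}}$ (whose composites obey braid relations and give canonical maps between weight spaces), or, as in \cite[Theorem 4.20]{Suzuki2005Tableaux}, use them to write an explicit formula for the image of every weight vector. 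You do gesture at this with the operators $T_f + \sum_{g\prec f} h_g T_g$, which is the right idea, but the opening claim that the $T_j$-action is ``completely forced'' contradicts the need for that machinery and should be removed.
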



The proof of this theorem mostly follows the proof of \cite[Theorem 4.20]{Suzuki2005Tableaux}. We point out that the main difference lies in the combinatorial results concerning content functions which are necessary for the proof (compare our Proposition \ref{propContentFunction} to \cite[Proposition 3.20]{Suzuki2005Tableaux}). We outline here the proof and refer to \cite{Suzuki2005Tableaux} for more details.

\begin{skproof}
We start by picking a weight $\underline{w}$ of $M$. For $i \in \BZ$, let $C(i)$ be such that $w_i=q^{C(i)}$. As $q$ is an $A$-th root of unity, we can define a function $C:\BZ \rightarrow \BZ/A \BZ$ sending $i$ to $C(i)$. We note that $C$ satisfies the hypothesis of Proposition \ref{propContentFunction}. Indeed, the first condition is clearly fulfilled and the second follows by Lemma \ref{lemConditionContent}. Therefore, applying Proposition \ref{propContentFunction}, we obtain a $\sigma \in \DPT(K,N,a,b)$ with $N+K=A, a+b=B$ and $C=C_\sigma$. We now want to show that $M$ is isomorphic to $(W_{(K,N,a,b)})_i$, where $i$ is the degree of $v_\sigma$ in $W_{(K,N,a,b)}$.

  We define $\psi:M \rightarrow (W_{(K,N,a,b)})_i$ as the map of representations sending $v_{\underline{w}}$ to $v_\sigma$, i.e. we set $\psi((\sum_f \alpha_f T_f) v_{\underline{w}})=\sum_f \alpha_f T_f v_\sigma$. By the irreducibility of $(W_{(K,N,a,b)})_i$ (recall Remark \ref{rmkIrreducibiltySmall}) we can conclude that $\psi$ is an isomorphism. 

\end{skproof}

\begin{remark}
The isomorphism $\psi$ can be defined more explicitly using so-called DAHA intertwiners. Namely, using these elements of $\ddot{H}_{q,t}(m)$, one can give an explicit formula for the image of each weight vector in $M$ (see the proof of \cite[Theorem 4.20]{Suzuki2005Tableaux})
\end{remark}

As a consequence of the previous result we obtain the following classification theorem for irreducible graded $X$-semisimple representations.

\begin{cor}
Let $M$ be an irreducible graded $X$-semisimple representation of $\ddot{H}_{q,t}(m)$ where $q$ acts as a primitive $A$-th root of unity and $t$ as $q^{-B}$. Then $M$ is isomorphic to $W_{(K,N,a,b)}$, for some $K,N,a,b$ such that $N+K=A$ and $a+b=B$.
\end{cor}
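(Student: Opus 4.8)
The plan is to deduce this corollary quickly from Theorem~\ref{thmSmallDahaClassification} together with the structural facts about graded representations already established (Lemma~\ref{lemIrreducibleGraded} and Remark~\ref{rmkIrreducibiltySmall}). Let $M = \bigoplus_{i\in\BZ}M_i$ be an irreducible graded $X$-semisimple representation of $\ddot{H}_{q,t}(m)$. By Lemma~\ref{lemIrreducibleGraded}, each nonzero graded piece $M_i$ is an irreducible representation of the small DAHA $\ddot{H}_{q,t}(m)^s$, and since $M$ is graded $X$-semisimple, each $M_i$ is in fact an irreducible $X$-semisimple $\ddot{H}_{q,t}(m)^s$-module. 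Fix one index $i_0$ with $M_{i_0}\neq 0$. By Theorem~\ref{thmSmallDahaClassification} there exist $K,N,a,b$ with $N+K=A$, $a+b=B$, and an index $j$ such that $M_{i_0}\cong (W_{(K,N,a,b)})_j$ as $\ddot{H}_{q,t}(m)^s$-modules.

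The key point is then to promote this isomorphism of a single graded piece to an isomorphism of the whole graded module. First I would fix the identification so that the degrees match: replacing the internal grading on $M$ by a shift if necessary (which does not change $M$ as a $\ddot{H}_{q,t}(m)$-module, only relabels the pieces), we may assume $i_0 = j$, so $M_{i_0}\cong (W_{(K,N,a,b)})_{i_0}$. Now build a map $\Psi\colon W_{(K,N,a,b)}\to M$ extending this isomorphism, using the action of $\pi^{\pm 1}$. Concretely, pick a nonzero weight vector $v\in (W_{(K,N,a,b)})_{i_0}$ and its image $v'\in M_{i_0}$ under the chosen isomorphism. Since $W_{(K,N,a,b)}$ is generated over $\ddot{H}_{q,t}(m)$ by $v$ (this is exactly the content of the irreducibility argument in Theorem~\ref{thmIrreducibility}, which shows every basis vector $\nu_\tau$ is reached from $v$ by a suitable combination of $T_f$'s and powers of $\pi$), and since $M$ is generated by $v'$ (by irreducibility of $M$ as a graded module), one defines $\Psi$ by $\Psi(h\cdot v) = h\cdot v'$ for $h\in\ddot{H}_{q,t}(m)$ and checks it is well defined.

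For well-definedness one must show that any relation $h\cdot v = 0$ in $W_{(K,N,a,b)}$ forces $h\cdot v' = 0$ in $M$. Here I would use the grading: decompose $h = \sum_p h_p$ with $h_p$ of degree $p$, i.e. $h_p = \pi^p h_p'$ with $h_p'\in\ddot{H}_{q,t}(m)^s$ (using relation~\ref{rel_piTi} to move all the $\pi$'s to the left). Then $h\cdot v = 0$ decomposes as $h_p\cdot v = 0$ for each $p$ separately, since $h_p\cdot v\in (W_{(K,N,a,b)})_{i_0+p}$ lie in distinct graded pieces. Now $h_p\cdot v = \pi^p(h_p'\cdot v)$ and $\pi$ acts invertibly (it permutes the basis $\nu_\sigma$), so $h_p\cdot v = 0$ iff $h_p'\cdot v = 0$, a statement entirely inside the small-DAHA module $(W_{(K,N,a,b)})_{i_0}$; by the isomorphism with $M_{i_0}$ this gives $h_p'\cdot v' = 0$, hence $h_p\cdot v' = \pi^p(h_p'\cdot v') = 0$, and summing over $p$ gives $h\cdot v' = 0$. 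Thus $\Psi$ is a well-defined nonzero homomorphism of $\ddot{H}_{q,t}(m)$-modules; it is surjective since $M$ is irreducible (hence generated by $v'$), and its kernel is a proper graded submodule of $W_{(K,N,a,b)}$, which is zero by Theorem~\ref{thmIrreducibility}. Hence $\Psi$ is an isomorphism.

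The main obstacle I anticipate is bookkeeping the grading carefully: making sure the degree shift is harmless, and that the graded decomposition of $\ddot{H}_{q,t}(m)$ interacts correctly with the module decomposition so that $h\cdot v=0$ really does decompose piece by piece. Once that is set up, everything else reduces to the irreducibility of $W_{(K,N,a,b)}$ (Theorem~\ref{thmIrreducibility}) and of its graded pieces (Remark~\ref{rmkIrreducibiltySmall}), plus the single-piece classification Theorem~\ref{thmSmallDahaClassification}; no new combinatorics is needed.
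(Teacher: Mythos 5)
Your proposal is correct and takes essentially the same approach as the paper: apply Theorem~\ref{thmSmallDahaClassification} to a single graded piece and then extend the resulting $\ddot{H}_{q,t}(m)^s$-isomorphism to a full $\ddot{H}_{q,t}(m)$-isomorphism using the invertible action of $\pi$ and irreducibility of both sides. The only difference is cosmetic (you build the map $W_{(K,N,a,b)}\to M$ whereas the paper's sketch goes $M_0\to W_{(K,N,a,b)}$), and you supply the well-definedness argument for the extension, which the paper leaves implicit.
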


\begin{proof}
We take the graded piece $M_0$ of $M$ in degree $0$. By Theorem \ref{thmSmallDahaClassification}, we have an isomorphism $\psi:M_0 \rightarrow W_{(K,N,a,b)}$ of $X$-semisimple $\ddot{H}_{q,t}(m)^s$-modules. We extend $\psi$ to a $\ddot{H}_{q,t}(m)$-homomorphism. By the irreducibility of $W_{(K,N,a,b)}$ we obtain that $\psi$ is an isomorphism. 
\end{proof}


\subsection{A faithful DAHA representation}
Here we consider the $\ddot{H}_{q,t}$ representation $\CW=\oplus_{aN-bK=m}W_{(K,N,a,b)}$, where in the direct sum, for fixed $K,N$, we take only one pair of equivalent $a,b$, both positive.

\begin{prop}\label{prop:faithful}
The representation $\CW$ is a faithful $\ddot{H}_{q,t}(m)$ representation.
\end{prop}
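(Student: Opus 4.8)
The plan is to show that any element $h \in \ddot{H}_{q,t}(m)$ acting as zero on $\CW = \bigoplus_{aN-bK=m} W_{(K,N,a,b)}$ must already be zero in the DAHA. The key leverage is that as $(K,N)$ ranges over all pairs with $aN - bK = m$ for some positive $a,b$, the root of unity $q$ (of order $N+K$) and the parameter $t = q^{-a-b}$ range over infinitely many values; in particular, for each fixed pair of target values $(q_0, t_0)$ with $q_0$ \emph{not} a root of unity we would like to ``approximate'' the generic situation. More precisely, I would compare $\CW$ against the polynomial representation $\CP$, which is known (as recalled in the excerpt) to be faithful and to remain faithful when $q,t$ are specialized away from roots of unity.

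First I would recall from Remark~\ref{remMapToPoly} that for the values of $K,N,a,b$ appearing in Example~\ref{exampleLineTableau} there is a module map $\CP \to W_{(K,N,a,b)}$ sending $1$ to a ``line tableau'' $\pi\sigma$. The crucial point to establish is that this map, or rather the collection of such maps and the structure of $W_{(K,N,a,b)}$, detects enough of the DAHA. Concretely: suppose $h \in \ddot{H}_{q,t}(m)$ annihilates $\CW$. Write $h$ in the PBW-type basis $h = \sum_{f \in S} T_f \cdot p_f(X_1,\dots,X_m) \cdot (\text{power of }\pi)$ using the grading $\deg\pi = 1$; since $\CW$ is graded and $\pi$ acts by a degree-shift isomorphism on each $W_{(K,N,a,b)}$, it suffices to treat each graded homogeneous component of $h$ separately, and after multiplying by a power of $\pi$ we may assume $h$ lies in the small DAHA $\ddot{H}_{q,t}(m)^s$, i.e. $h = \sum_f T_f\, p_f(X)$. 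Now evaluating $h\,\nu_\sigma$ for $\sigma$ a line tableau and using Example~\ref{exAllowedOnLineTableau} (for $N,\alpha$ large, all $f$ with $f(1)<\dots<f(m)$ are allowed to act), the vectors $T_f \nu_\sigma$ with $f \in S$ are linearly independent — this mirrors the fact that $T_f(1)$ is a basis of $\CP$. Since the $X_i$ act diagonally with distinct weights on these basis vectors (for suitable large parameters the weights $w_\sigma(i)$ are ``as generic as a root of unity allows''), $h\,\nu_\sigma = 0$ forces each $p_f$ to vanish at the weight of $\nu_\sigma$.

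The heart of the argument is then: as $(K,N,a,b)$ vary (with $N,\alpha$ taken arbitrarily large as in Example~\ref{exAllowedOnLineTableau}), the weights $(w_\sigma(1),\dots,w_\sigma(m))$ of the line tableaux $\sigma$ form a Zariski-dense subset of the torus $(\BC^\times)^m$ — indeed $w_\sigma(i) = q^{c_i}$ where the exponents $c_i$ run through an arithmetic-progression-like pattern determined by $(K,N,a,b)$, and letting the order $N+K$ of $q$ grow without bound while the relative pattern stabilizes, these points accumulate densely. A Laurent polynomial $p_f$ vanishing on a Zariski-dense set of $(\BC^\times)^m$ is identically zero. Hence all $p_f = 0$, so $h = 0$. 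This would complete the proof.

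I expect the main obstacle to be making the density/genericity step fully rigorous: one must check that, in the regime $N, \alpha \to \infty$ of Example~\ref{exAllowedOnLineTableau}, the full basis $\{T_f \nu_\sigma : f \in S\}$ really is linearly independent \emph{simultaneously with} all the relevant $f$ being allowed to act (so that the $T_f$-action formula is the ``generic'' one with no collapsing), and that the joint weight data sweeps out a dense subset of the torus rather than landing in a proper subvariety. A cleaner packaging, which I would pursue if the direct density argument gets technical, is to construct an explicit $\ddot{H}_{q,t}(m)$-module map from the polynomial representation $\CP$ (base-changed appropriately, treating $q,t$ as formal parameters and then specializing) into an inverse limit / product of the $W_{(K,N,a,b)}$, and deduce faithfulness of $\CW$ from the known faithfulness of $\CP$; the subtlety there is that $\CP$ is faithful over $\BC(q,t)$ whereas each $W_{(K,N,a,b)}$ forces a root-of-unity specialization, so one genuinely needs the union over infinitely many such specializations to recover the generic faithfulness — which is exactly the density statement above, just phrased algebraically.
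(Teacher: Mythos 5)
Your main argument has a genuine gap that you yourself flag but underestimate. You claim that as $(K,N,a,b)$ varies, the weight vectors $\underline{w}_\sigma = (w_\sigma(1),\dots,w_\sigma(m))$ of the line tableaux sweep out a Zariski-dense subset of $(\BC^\times)^m$. This is false. For the line tableau of Example~\ref{exampleLineTableau} (or its $\pi$-shift), the content function is always the same arithmetic pattern $C_\sigma(i)=i$ (up to an overall shift), so $\underline{w}_\sigma = (q^{c},q^{c+1},\dots,q^{c+m-1})$ for some $c$. No matter which root of unity $q$ arises, these points lie on the one-dimensional curve $\{(z,z\cdot r,\dots,z\cdot r^{m-1})\}\subset(\BC^\times)^m$ parametrized by $(z,r)$; they can never be Zariski-dense in $(\BC^\times)^m$ once $m\geq 2$. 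Consequently, vanishing of a Laurent polynomial $p_f(X_1,\dots,X_m)$ on this collection of weights does \emph{not} force $p_f=0$ — e.g.\ $X_2 X_1^{-1} - X_3 X_2^{-1}$ vanishes identically on every such weight but is manifestly nonzero. Your direct PBW expansion $h = \sum_f T_f\, p_f(X)$ therefore leaks information: the weights you evaluate at are too constrained to determine the $p_f$, and the argument cannot close.

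The paper's actual proof sidesteps this by not expanding $\xi$ in a PBW basis at all. Instead, it applies $\xi$ to an element $p(Y)$ of the polynomial representation $\CP$ (using its known faithfulness) and expands the result $\xi(p(Y))$ in the weight basis $\{T_f(1)\}_{f\in S}$ of $\CP$, where the coefficients $\alpha_f$ are rational functions of $q$ and $t$ \emph{only}, with no $X$-dependence — the $X$-evaluation has already been absorbed by applying $\xi$ to a concrete weight vector. Pushing forward along the module map $\CP\to W_{(K,N,a,b)}$ of Remark~\ref{remMapToPoly} and using Remark~\ref{remTAction} (so that the $T_f\nu_\sigma$ are triangular with respect to the $\nu_{f\sigma}$ when $N,\alpha$ are large, hence independent), one deduces $\alpha_f(q,q^{-a-b})=0$ for the available parameters. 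Now the density one needs is density of the pairs $(q,q^{-a-b})$ in $\BC^2$, a far weaker statement, which the paper establishes by a concrete construction with $\beta=m-1$ and varying $u,v,N,\alpha$ subject to $uN-v\alpha=1$: for fixed $(u,v)$ the pairs are dense on the curve $\{(z,z^{-m(u+v)})\}$, and a polynomial in two variables vanishing on all such curves must be identically zero. Your ``cleaner packaging'' alternative is essentially this argument, but note that the density statement it requires is the two-dimensional one in $(q,t)$, not the $(\BC^\times)^m$ statement from your main approach — the two are not ``the same density statement phrased algebraically.'' To repair your proof you should abandon the direct PBW route and carry out the $\CP$-based argument, including an explicit verification that the set of realizable $(q,t)$ pairs is Zariski-dense in $\BC^2$.
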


\begin{proof}
We need to prove that any nonzero element $\xi$ of $\ddot{H}_{q,t}(m)$ does not act as zero on $W$.
We know that the polynomial representation $\mathcal{P}$ of $\ddot{H}_{q,t}(m)$ is faithful, so $\xi(p(Y)) \neq 0$, for some $p(Y) \in \mathcal{P}$. Moreover, the element $\xi(p(Y))$, possibly up to multiplication by a denominator, can be written as a sum $\sum_{f \in S} \alpha_f T_f(1)$, where $S$ is a finite subset of $\{f \in \widehat{\mathfrak{S}}_m \mid f(1)< \dots <f(m)\}$ and each $\alpha_f$ is a nonzero polynomial in $q$ and $t$. 

  We now recall that for values of $K,N,a,b$ as in Example \ref{exampleLineTableau}, by Remark \ref{remMapToPoly}, there exists a map from $\mathcal{P}$ to $W_{(K,N,a,b)}$ sending $1$ to the tableau $\sigma_{(K,N,a,b)}$ whose fundamental domain is a line with labels $1, \dots, m$. We want to show that for some choice of $K,N,a,b$ we have $\sum_{f \in S} (\alpha_f T_f) (\sigma_{(K,N,a,b)}) \neq 0$, which would then conclude our proof. Assume this is not the case and $\sum_{f \in S} (\alpha_f T_f)(\sigma_{(K,N,a,b)})$ is zero for all values of $N,K,a,b$, so, in particular, for those values of $N,K,a,b$ for which each $f$ is allowed to act on $\sigma_{(K,N,a,b)}$. Recall that for this to be the case, by Example \ref{exAllowedOnLineTableau}, we just need to take sufficiently large values of $N$, $\alpha$.
From Remark \ref{remTAction} we deduce that $\alpha_f(q,q^{-a-b})=0$, for $q$ a primitive $N+K$-th root of unity, for all these choices of $N,K,a,b$. We now want to prove that the set of pairs $(q,q^{-a-b})$ as above is Zariski dense in $\BC^2$. This would conclude the proof as it would imply that each $\alpha_f$ is the zero polynomial, contradicting $\sum_{f \in S} \alpha_f T_f(1) \neq 0$.

The construction in Example \ref{exampleLineTableau} depends on numbers $N,\alpha,u,v,\beta$. We set $\beta=m-1$. Remaining numbers need to satisfy $u N - v \alpha = 1$. Computing $K, a, b$ we obtain
\[
K+N = m(N + \alpha),\qquad a+b = m(u+v).
\]
For fixed values of $u,v>0$ satisfying $\gcd(u,v)=1$, the values $a$ and $b$ are fixed and the set $\{(q,q^{-a-b}) \}$, as $N$ and $\alpha$ vary, is dense in the curve $\{(z,z^{-m(u+v)}), z \in \BC \}$. Indeed, adding $(u,v)$ to $(\alpha,N)$ makes $\alpha$, $N$, $K+N$ arbitrarily large, so $q$ takes infinitely many values.

Next we show that a polynomial vanishing on all these curves has to be identically zero on $\BC^2$.
We take a polynomial $p(x,y)$ and write 
\[
p(x,y)=x^np_n(y)+ \dots + p_0(y).
\]
Fix a complex number $\eta \neq 0$ which is not a root of unity. We know that for all the solutions to the equations $z^{-m(u+v)}=\eta$, as $u$ and $v$ vary over pairs of positive relatively prime integers, the polynomial 
\[
p_\eta(x)=x^np_n(\eta)+ \dots+ p_0(\eta)
\]
takes the value $0$. Therefore $p_\eta$ is a polynomial in one variable with infinitely many zeroes, so it is the zero polynomial, hence $p_i(\eta)=0, i=0, \dots,n$. As this has to be true for any choice of $\eta$, we conclude that $p_i(y)$ is the zero polynomial for $i=0, \dots,n$, hence $p(x,y)$ is the zero polynomial.
\end{proof}

\section{From DPT to quantum groups intertwiners}\label{dpt_intertwiners}

\subsection{Fusion ring for quantum groups}\label{quantum_fusion}

Let us fix $v$ a square root of $q$. Note that if $K+N$ is odd, then $v$ is also a primitive $K+N$ root of unity, and if $K+N$ is even, then $v$ is a $2(K+N)$ root of unity.

There are several ways to consider representations of the quantum group at root of unity $U_v(\mathfrak{gl}_N)$. We will focus here of the \emph{Fusion category} $\CC_\CF$ (at \emph{level} $K$), which is the quotient of the category of tilting modules by negligible modules (see for example \cite{Kirillov1996Inner} for a detailed review of the subject). In particular, the fusion category $\CC_\CF$ is a semi-simple ribbon category.

The Grothendieck ring of the fusion category is the \emph{Fusion ring} $\CF_v=\CF_v(\mathfrak{gl}_N,K)$ at level $K$. We will consider the objects in the fusion category by their classes in the fusion ring.
Let us recall the combinatorical definition of the fusion ring $\CF_v$, as in \cite{Andersen2014Fusion}. 

\begin{remark}
If $K+N$ is odd, we apply directly the construction of \cite{Andersen2014Fusion}. If it is even, one has to divide the order of the quantum parameter by 2 (see \cite[Sect. 3.2.1]{Andersen2014Fusion}), and thus also apply the construction to $K+N$.
\end{remark}

Let $X=\BZ^N$ be the weight lattice, $\{\varepsilon_1,\varepsilon_2\ldots ,\varepsilon_N\}$ its standard basis, and $\left\langle~,~\right\rangle$ the symmetric form on $\BR^N$.

A weight $\lambda\in X$ is \emph{dominant} if $\lambda_1 \geq \lambda_2\geq \cdots \geq \lambda_N$, where $\lambda=\sum_{i=1}^N \lambda_i\varepsilon_i$. Let $X^+$ be the set of dominant weights. The \emph{fundamental} weights are the $\omega_i=\varepsilon_1 + \varepsilon_2 + \cdots + \varepsilon_i$, for $1\leq i \leq N$. We introduce a particular weight
\[\rho = \frac{1}{2} \left( (N-1)\varepsilon_1 + (N-3)\varepsilon_2  + \cdots + (-N+1)\varepsilon_N\right).\]
Note that $\rho$ might not be in the weight lattice, but $2\rho \in X^+$.

Define the \emph{fundamental alcove}:
\[ \CA_{K,N}:=\left\lbrace \lambda \in X^+\mid \lambda_1 - \lambda_N \leq K\right\rbrace.
\]

Then the classes of the irreducible representations in the fusion ring $\CF_v$ are indexed by the dominant weights in the fundamental alcove. For $\lambda\in \CA_{K,N}$, let $[\lambda] \in \CF_v$ denote the class of the corresponding irreducible representation.

A weight $\lambda = (\lambda_1,\lambda_2,\ldots,\lambda_N) \in X$ is represented by $N$ rows of boxes, where for all $i$, the $i$th row is infinite to the left and stops at $\lambda_i$. Then $\lambda\in \CA_{K,N}$ if and only if the rows are non-increasing and the first row has at most $K$ more boxes than the last one.

\ytableausetup{boxsize=1.2em}
\begin{figure}[!ht]\label{figweight}
\begin{tikzpicture}[inner sep=0in,outer sep=0in]
\node[anchor=west] (O) at (0,0)  {\ydiagram{12,9,9,7}};
\draw[very thick, red] (9.9em, 3em)--(9.9em, -2.5em);
\node at (-1em,1.8em) {$\cdots$};
\node at (-1em,0.6em) {$\cdots$};
\node at (-1em,-0.6em) {$\cdots$};
\node at (-1em,-1.8em) {$\cdots$};
\node[red] at (9.9em,-3em) {0};
\end{tikzpicture}
\caption{For $N=4$, diagram corresponding to the weight $\lambda=(4,1,1,-1)$.}
\end{figure}
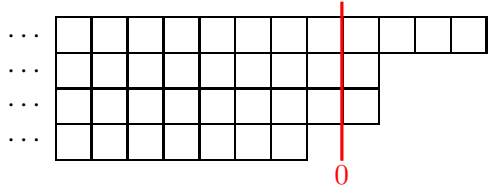

We will consider three representations in particular:
\begin{itemize}
\item let $V$ be the class of the \emph{standard} representation $[\omega_1]$,
\item let $L$ be the class of the \emph{line} representation $[K\omega_1]$,
\item let $D$ be the class of the \emph{determinant} representation $[\omega_N]$.
\end{itemize}

The diagrams corresponding to these representations are the following (where we omitted the infinite rows of boxes to the left):
\begin{center}
\begin{tikzpicture}
\node[anchor=west] (V) at (0,0) {\ydiagram{1}};
\node at (-0.5,0) {$V=$};
\node[anchor=west] (L) at (0,-1) {\ydiagram{8}};
\node at (-0.5,-1) {$L=$};
\draw [decoration={brace,amplitude=0.5em, mirror},decorate, thick] (0.1,-1.35) -- (4,-1.35);
\node at (2,-1.8) {$K$};
\node[anchor=west] (D) at (7,-0.3) {\ydiagram{1,1,1,1}};
\node at (6.5,-0.3) {$D=$};
\draw [decoration={brace,amplitude=0.5em},decorate, thick]
 (7.75,0.7) -- (7.75,-1.3);
 \node at (8.2,-0.3) {$N$};
\end{tikzpicture}
\end{center}

Recall that the multiplication in the fusion ring is given by the \emph{Pieri rule}, for which we can give explicit formulas for the three representations above (see for example \cite{Goodmann1990Littlewood}, or \cite{Morse2012combinatorial}). 

\begin{prop}\label{propVLD}
For all $\lambda = (\lambda_1,\lambda_2,\ldots,\lambda_N)\in \CA_{K,N}$, 
\begin{align*}
V\otimes [\lambda] & = \sum_{\substack{1\leq j\leq N\\ \lambda+\varepsilon_j\in\CA_{K,N}}} [\lambda+\varepsilon_j],\\
L \otimes [\lambda] & = [(K+\lambda_N, \lambda_1,\lambda_2, \ldots, \lambda_{N-1})],\\
D \otimes [\lambda] & = [\lambda + \omega_N].
\end{align*}
\end{prop}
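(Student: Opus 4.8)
The plan is to deduce all three formulas from the standard recipe computing the fusion product as the classical tensor product corrected by the affine Weyl ``dot'' action, as in \cite{Andersen2014Fusion} (see also \cite{Goodmann1990Littlewood, Morse2012combinatorial}). Concretely: to compute $[\mu]\otimes[\lambda]$ in $\CF_v$, first decompose the ordinary $U(\mathfrak{gl}_N)$-tensor product into irreducibles $\bigoplus_\nu[\nu]$ (with $\nu\in X^+$); then discard every $\nu$ such that $\nu+\rho$ lies on an alcove wall, and replace every remaining $\nu$ with $\nu+\rho\notin\CA_{K,N}+\rho$ by $(-1)^{\ell(w)}[w\cdot\nu]$, where $w\in\widehat W$ is the unique element of the affine Weyl group (acting with $\CA_{K,N}$ as fundamental domain) carrying $\nu+\rho$ back into $\CA_{K,N}+\rho$. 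The only walls relevant here are the finite ones $(\nu+\rho)_i=(\nu+\rho)_{i+1}$ and the single affine wall $(\nu+\rho)_1-(\nu+\rho)_N=K+N$.

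For $D=[\omega_N]$ the determinant representation is one-dimensional, so the ordinary tensor product is already the single term $[\lambda+\omega_N]$; since $\lambda+\omega_N$ has the same consecutive differences as $\lambda$ it lies in $\CA_{K,N}$, and no correction occurs. For $V=[\omega_1]$ the classical Pieri rule gives $\bigoplus_{j:\,\lambda+\varepsilon_j\in X^+}[\lambda+\varepsilon_j]$; the only summand that can leave $\CA_{K,N}$ is $j=1$, and this happens exactly when $\lambda_1-\lambda_N=K$, in which case $(\lambda+\varepsilon_1+\rho)_1-(\lambda+\varepsilon_1+\rho)_N=K+N$, so $\lambda+\varepsilon_1$ lies on the affine wall and is simply dropped (no reflected term is produced). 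This gives the stated formula for $V$.

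For $L=[K\omega_1]$ I would argue that $L$ is the standard simple current of the fusion ring, i.e. an invertible object of $\CF_v$; this can be checked either by verifying $L\otimes L^{*}\cong[0]$ with the reflection rule above, or from the quantum dimension. An invertible object permutes the set of simples $\{[\lambda]\mid\lambda\in\CA_{K,N}\}$, so $L\otimes[\lambda]$ is a single class, and it remains to identify the permutation: tracking the action on the $\rho$-shifted highest weight shows it is the rotation of the affine Dynkin diagram, which in partition coordinates reads $\lambda\mapsto(K+\lambda_N,\lambda_1,\dots,\lambda_{N-1})$; that no determinant twist is needed is pinned down by the case $\lambda=0$, where $L\otimes[0]=L=[(K,0,\dots,0)]$ matches $(K+0,0,\dots,0)$. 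A purely combinatorial alternative is to expand $L\otimes[\lambda]$ over horizontal strips of size $K$ and construct a sign-reversing involution pairing off all surviving terms except one.

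I expect the $L$ case to be the main obstacle: unlike the $D$ and $V$ cases, where the classical answer is a single term or collapses trivially, the classical expansion of $[K\omega_1]\otimes[\lambda]$ has many constituents, and the whole content of the statement is that fusion truncation collapses them to one class. Whichever route one follows, the remaining effort is bookkeeping — either establishing invertibility of $[K\omega_1]$ and the induced permutation (taking care of the $\mathfrak{gl}_N$ versus $\mathfrak{sl}_N$ normalization), or writing down the cancelling involution on horizontal strips.
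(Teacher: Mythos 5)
The paper does not actually give a proof of Proposition~\ref{propVLD}: it is stated as a known fact with a pointer to \cite{Goodmann1990Littlewood,Morse2012combinatorial} (and implicitly to \cite{Andersen2014Fusion}), so there is no in-house argument to compare against. Your Kac--Walton\slash quantum Racah route is the standard way the cited references establish these rules, and your recipe (decompose classically, drop $\rho$-shifted weights on walls, reflect the rest with sign) is stated correctly, with the alcove bounded by the finite walls $(\nu+\rho)_i=(\nu+\rho)_{i+1}$ and the single affine wall $(\nu+\rho)_1-(\nu+\rho)_N=K+N$.

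Your $D$ and $V$ cases are complete and correct: for $D$ the classical product is a single term and stays in the alcove since $\omega_N=(1,\ldots,1)$ preserves all consecutive differences; for $V$ you correctly identify that only $j=1$ can exit $\CA_{K,N}$, and that when $\lambda_1-\lambda_N=K$ the weight $\lambda+\varepsilon_1$ lands exactly on the affine wall (since $\rho_1-\rho_N=N-1$ gives $(\lambda+\varepsilon_1+\rho)_1-(\lambda+\varepsilon_1+\rho)_N=K+N$), hence is simply discarded with no reflected partner.

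The $L$ case is, as you say, the real content, and your simple-current route can be closed cleanly without tracking the whole cancelling involution. Once $L=[K\omega_1]$ is known to be invertible in the fusion ring (which the paper itself records in Remark~\ref{rem_shift} by exhibiting $L^{-1}$, and which can also be read off from quantum dimension $1$), the product $L\otimes[\lambda]$ is a single simple, because $L^{-1}\otimes(L\otimes[\lambda])\cong[\lambda]$ forbids a decomposition. To identify which one, it is enough to find a single constituent of the classical product that already lies in $\CA_{K,N}$: one checks directly that $\mu=(K+\lambda_N,\lambda_1,\ldots,\lambda_{N-1})$ is a horizontal strip over $\lambda$ of size $K$ precisely because the alcove condition $\lambda_1-\lambda_N\le K$ gives $\mu_1\ge\lambda_1$ and the interlacing $\mu_{i+1}=\lambda_i\ge\lambda_{i+1}$ is automatic, and that $\mu\in\CA_{K,N}$ since $\mu_1-\mu_N=(K+\lambda_N)-\lambda_{N-1}\le K$. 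Such a term survives truncation, and by uniqueness of the simple summand it must be all of $L\otimes[\lambda]$. This avoids both the explicit affine reflections on the highest-weight constituent $K\omega_1+\lambda$ and the sign-reversing involution, which would be substantially more bookkeeping.

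So the proposal is sound; the only gap is that you stop short of closing the $L$ case, and the remark above is the missing step that does so.
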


\begin{remark}\label{rem_shift}
Multiplying by $D$ shifts the whole diagram to the right, and multiplying by $L$ adds a line from below. 
Moreover, both $L$ and $D$ are invertible elements of the fusion ring, whose inverses are given by:
\begin{align*}
L^{-1} & = [(0,0,\ldots, 0 , -K)],\\
D^{-1} & = [-\omega_N].
\end{align*}
\end{remark}

\subsection{Ribbon diagrams and AHA action on intertwining spaces}\label{sect_ribbon_Uq}


As a ribbon category, the fusion category $\CC_\CF$ is equipped with a braiding, which is a functorial isomorphism:
\[\check{R}_{U,W} : U\otimes W \to W\otimes U, \qquad (U,W,\in\CF_v).\]

Note that the braiding here comes from the universal $R$-matrix of the quantum group, and so we also call it the \emph{$R$-matrix}.


T

Additionally, the ribbon category structure endows the fusion category with a \emph{ribbon element} $\theta$. For all $U\in \CF_v$, $\theta_U:U\to U$, is a functorial isomorphism satisfying,
\begin{equation}\label{eq_ThetaUW}
\theta_{U\otimes W} = \check{R}_{W,U}\check{R}_{U,W}\left( \theta_U\otimes \theta_W\right)\qquad (U,W\in \CF_v).
\end{equation}

The ribbon category has a nice pictorial representation, where morphisms can be drawn as directed tangles (see Figure~\ref{fig_R_theta}). In particular, the relation~\eqref{eq_ThetaUW} can observed in terms of tangles, by replacing $U$ by a ribbon, and observing that $\theta_U$ is a double twist of the ribbon, in the same direction as $\check{R}$.

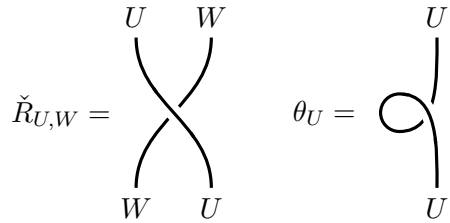
\begin{figure}[!h]
\begin{tikzpicture}
\begin{knot}[clip width =5]
\strand[very thick] (0,0) to[out=down, in =up] (1,-2);
\strand[very thick] (1,0) to[out=down, in =up] (0,-2);
\end{knot}
\node[above] at (0,0) {$U$};
\node[above] at (1,0) {$W$};
\node[below] at (1,-2) {$U$};
\node[below] at (0,-2) {$W$};
\node at (-1,-1) {$\check{R}_{U,W}=$};
\begin{knot}[clip width =5, consider self intersections, flip crossing=1]
\strand[very thick] (4,0) to[out=down, in =right] (3.5, -1.25) to[out = left, in =down] (3.25,-1) to[out=up, in=left] (3.5,-0.75) to[out=right, in=up] (4,-2);
\end{knot}
\node at (2.5,-1) {$\theta_U=$};
\node[above] at (4,0) {$U$};
\node[below] at (4,-2) {$U$};
\end{tikzpicture}
\caption{Tangle representations of $\check{R}_{U,W}$ and $\theta_U$}
\end{figure}\label{fig_R_theta}

\begin{lem}\cite{Kirillov1996Inner}
For all $\lambda\in\CA_{K,N}$,
\[\theta_{[\lambda]} = v^{\left\langle \lambda, \lambda +2\rho\right\rangle}\id_{[\lambda]}.\]
\end{lem}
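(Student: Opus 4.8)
The plan is to prove the eigenvalue formula $\theta_{[\lambda]} = v^{\langle\lambda,\lambda+2\rho\rangle}\id_{[\lambda]}$ by reducing the root-of-unity statement to the generic (quantum group at a non-root of unity) statement, where the action of the ribbon element on a simple highest weight module $V(\lambda)$ is classically known. First I would recall that the ribbon element $\theta$ is central, hence by Schur's lemma acts as a scalar on each simple object $[\lambda]$ of the semisimple category $\CC_\CF$. The task is therefore to identify that scalar. The ribbon element of $U_v(\mathfrak{gl}_N)$ is constructed from the $R$-matrix together with the group-like element $u K_{2\rho}^{-1}$ (or its analogue), and on a highest weight vector of weight $\lambda$ this acts by the quadratic form exponential $v^{-\langle\lambda,\lambda+2\rho\rangle}$ or its inverse depending on sign conventions; the relevant computation is the standard one found in, e.g., the references \cite{Kirillov1996Inner} or textbooks on quantum groups, and does not itself depend on whether $v$ is a root of unity.

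The key steps, in order, would be: (i) observe that the fusion category is obtained as a subquotient of the category of tilting modules of $U_v(\mathfrak{gl}_N)$ at the root of unity $v$, and that the ribbon structure on $\CC_\CF$ is inherited from the ribbon structure on the ambient category; (ii) for $\lambda\in\CA_{K,N}$, the Weyl/tilting module $V(\lambda)$ of highest weight $\lambda$ is still well-defined, and its image $[\lambda]$ in $\CC_\CF$ is simple and nonzero precisely because $\lambda$ lies in the fundamental alcove; (iii) compute $\theta$ on the highest weight vector $v_\lambda\in V(\lambda)$: since $\check R_{V(\lambda),V(\lambda)}$ acts on $v_\lambda\otimes v_\lambda$ by $v^{\langle\lambda,\lambda\rangle}$ (only the diagonal Cartan part of the $R$-matrix contributes on the highest weight line, the $E$-terms killing it) and the pivotal/grouplike correction contributes $v^{\langle\lambda,2\rho\rangle}$, one obtains $\theta_{V(\lambda)}v_\lambda = v^{\langle\lambda,\lambda+2\rho\rangle}v_\lambda$; (iv) since $\theta_{V(\lambda)}$ is a scalar and $\theta$ descends to $\CC_\CF$ compatibly with the quotient functor, conclude $\theta_{[\lambda]} = v^{\langle\lambda,\lambda+2\rho\rangle}\id_{[\lambda]}$.

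I expect the main obstacle to be purely one of bookkeeping rather than substance: pinning down the precise normalization of $v$ (recall the excerpt distinguishes the cases $K+N$ odd versus even, where $v$ is a $(K+N)$-th versus $2(K+N)$-th root of unity), and making sure the sign in the exponent matches the convention for $\check R$ fixed by Figure~\ref{fig_R_theta} and the relation \eqref{eq_ThetaUW}. Concretely, one should double-check on a rank-one example — say $N=1$ or the first fundamental representation $V=[\omega_1]$ — that the twist $\theta_V$ equals $v^{\langle\omega_1,\omega_1+2\rho\rangle}$, using that $\langle\omega_1,\omega_1\rangle = 1$ and $\langle\omega_1,2\rho\rangle = N-1$, so that $\theta_V = v^{N}$, and verify this against \eqref{eq_ThetaUW} applied to $V\otimes V$. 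Once the normalization is fixed consistently, the general case follows immediately from the highest-weight computation, since \eqref{eq_ThetaUW} together with the Pieri rule (Proposition~\ref{propVLD}) even gives an inductive alternative: one can verify the formula on $V,L,D$ and propagate it through tensor products, using that $\langle\cdot,\cdot+2\rho\rangle$ transforms correctly under $\lambda\mapsto\lambda+\varepsilon_j$ and under the $D,L$-shifts, which is a short quadratic-form identity.

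Alternatively, if one prefers to stay entirely inside the combinatorial/categorical framework of the excerpt without invoking the explicit $R$-matrix, the cleanest route is: take the formula $\theta_{[\lambda]}=v^{\langle\lambda,\lambda+2\rho\rangle}$ as the definition-compatible statement to check, verify it on the generators $V,L,D$ of the fusion ring directly (here $[\omega_1],[K\omega_1],[\omega_N]$), and then use \eqref{eq_ThetaUW} plus Proposition~\ref{propVLD} to show that if the formula holds for $[\lambda]$ and for $V$ then it holds for every summand $[\lambda+\varepsilon_j]$ appearing in $V\otimes[\lambda]$, noting that the "extra" braiding factor $\check R_{V,[\lambda]}\check R_{[\lambda],V}$ acting on the $[\lambda+\varepsilon_j]$-isotypic component contributes exactly $v^{\langle\lambda+\varepsilon_j,\lambda+\varepsilon_j+2\rho\rangle - \langle\lambda,\lambda+2\rho\rangle - \langle\omega_1,\omega_1+2\rho\rangle}$ by the balancing axiom; an induction on $\sum_i\lambda_i$ using that every $[\lambda]$ is reachable from a known case by tensoring with $V$, $L^{\pm1}$, $D^{\pm1}$ then finishes the proof.
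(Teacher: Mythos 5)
The paper gives no proof of this lemma --- it is simply cited to \cite{Kirillov1996Inner} --- so the comparison is against what you would find in that reference or any standard quantum groups text. Your main route is correct and is the standard argument: $\theta$ is a central element, so by Schur's lemma it acts as a scalar on each simple $[\lambda]$; writing the ribbon element of $U_v(\mathfrak{gl}_N)$ as Drinfeld's element times the appropriate grouplike and evaluating on a highest weight vector, the quasi-$R$-matrix contributes only through its Cartan part $v^{\sum e_i\otimes e_i}$ (giving $v^{\langle\lambda,\lambda\rangle}$) and the grouplike contributes $v^{\langle\lambda,2\rho\rangle}$; the sign is fixed by the paper's conventions \eqref{eq_ThetaUW} and Figure~\ref{fig_R_theta}. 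Your sanity checks on $V$ (giving $\theta_V=v^N$) and your remark about the normalization of $v$ in the $K+N$ odd versus even cases are the right things to pin down. One small imprecision: ``$\check R_{V(\lambda),V(\lambda)}$ acts on $v_\lambda\otimes v_\lambda$ by $v^{\langle\lambda,\lambda\rangle}$'' is true, but the twist is a one-strand operator, not the braiding applied to $v_\lambda\otimes v_\lambda$; the highest-weight eigenvalue of the Drinfeld element $u$ (or $u^{-1}$) is what you actually need, and only after the fact does \eqref{eq_ThetaUW} tie it to $\check R^2$. This is the ``bookkeeping'' you flagged, so it is not a gap in the idea, only in the phrasing.

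The alternative inductive route, however, is circular as stated. You propose to propagate the formula from $[\lambda]$ to $[\lambda+\varepsilon_j]$ using \eqref{eq_ThetaUW}, and you justify the eigenvalue of $\check R_{V,[\lambda]}\check R_{[\lambda],V}$ on the $[\lambda+\varepsilon_j]$-isotypic component ``by the balancing axiom''. But the balancing axiom \emph{is} \eqref{eq_ThetaUW}, and extracting that eigenvalue from it requires already knowing $\theta_{[\lambda+\varepsilon_j]}$ --- which is the very thing the induction step is supposed to produce (this is precisely Corollary~\ref{cor_lambda_mu_nu} in the paper, which is \emph{derived from} the lemma, not the other way around). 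To make the inductive route genuinely independent you would have to compute the eigenvalue of $\check R_{V,[\lambda]}\check R_{[\lambda],V}$ directly from the explicit form of the $R$-matrix --- equivalently, from the action of the quantum Casimir $\Omega = (R_{21}R)^{-1}\cdot(\text{grouplike})$ on the $[\mu]$-isotypic component, which yields $v^{\langle\mu,\mu+2\rho\rangle - \langle\lambda,\lambda+2\rho\rangle - \langle\omega_1,\omega_1+2\rho\rangle}$ by the same highest-weight calculation you are trying to avoid. So the inductive scheme does not actually save you the ribbon-element computation; it only relocates it. Stick with the first route.
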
\label{lem_theta_lambda}

For any $U\in\CF_v$, write the decomposition of $U$ on the basis of the fusion ring:
\[U= \sum_{\lambda\in\CA_{K,N}} c_{U,\lambda}[\lambda].\]
Let $U_\lambda:= [\lambda]^{\oplus c_{U,\lambda}}$ denote the \emph{$\lambda$-isotypic component} of $U$.

\begin{cor}\label{cor_lambda_mu_nu}
For $\lambda, \mu\in \CA_{K,N}$, if one writes the decomposition
\[[\lambda]\otimes [\mu] = \sum_{\nu\in \CA_{K,N}} [\nu]^{\oplus c_{\lambda, \mu}^\nu },\]
then $\check{R}_{[\mu],[\lambda]}\check{R}_{[\lambda],[\mu]}$ acts on the isotypic component $([\lambda]\otimes [\mu])_\nu=[\nu]^{\oplus c_{\lambda, \mu}^\nu }$ by the constant
\[ v^{\left\langle \nu,\nu+2\rho \right\rangle - \left\langle \lambda,\lambda+2\rho \right\rangle-\left\langle \mu,\mu+2\rho \right\rangle }. \]
\end{cor}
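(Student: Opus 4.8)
The plan is to deduce this corollary directly from the preceding Lemma~\ref{lem_theta_lambda} together with the naturality of the ribbon structure, in particular Equation~\eqref{eq_ThetaUW}. First I would apply \eqref{eq_ThetaUW} with $U=[\lambda]$ and $W=[\mu]$, which gives
\[
\theta_{[\lambda]\otimes[\mu]} = \check{R}_{[\mu],[\lambda]}\,\check{R}_{[\lambda],[\mu]}\,\bigl(\theta_{[\lambda]}\otimes\theta_{[\mu]}\bigr).
\]
By Lemma~\ref{lem_theta_lambda}, $\theta_{[\lambda]}$ acts as the scalar $v^{\langle\lambda,\lambda+2\rho\rangle}$ and $\theta_{[\mu]}$ as $v^{\langle\mu,\mu+2\rho\rangle}$, so $\theta_{[\lambda]}\otimes\theta_{[\mu]}$ is the scalar $v^{\langle\lambda,\lambda+2\rho\rangle+\langle\mu,\mu+2\rho\rangle}\id_{[\lambda]\otimes[\mu]}$ on all of $[\lambda]\otimes[\mu]$.

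Next I would use the functoriality of $\theta$: on the summand $[\nu]^{\oplus c_{\lambda,\mu}^\nu}$ appearing in the decomposition of $[\lambda]\otimes[\mu]$, the operator $\theta_{[\lambda]\otimes[\mu]}$ must restrict to $\theta_{[\nu]}^{\oplus c_{\lambda,\mu}^\nu}$, because $\theta$ is a natural transformation of the identity functor and hence commutes with the inclusion of each isotypic component; again by Lemma~\ref{lem_theta_lambda} this is the scalar $v^{\langle\nu,\nu+2\rho\rangle}$. Substituting these scalar evaluations into the displayed identity above, restricted to the isotypic component $([\lambda]\otimes[\mu])_\nu$, gives
\[
v^{\langle\nu,\nu+2\rho\rangle}\id = \check{R}_{[\mu],[\lambda]}\,\check{R}_{[\lambda],[\mu]}\cdot v^{\langle\lambda,\lambda+2\rho\rangle+\langle\mu,\mu+2\rho\rangle}\id,
\]
and dividing by the nonzero scalar $v^{\langle\lambda,\lambda+2\rho\rangle+\langle\mu,\mu+2\rho\rangle}$ yields exactly the claimed constant $v^{\langle\nu,\nu+2\rho\rangle-\langle\lambda,\lambda+2\rho\rangle-\langle\mu,\mu+2\rho\rangle}$ for the action of $\check{R}_{[\mu],[\lambda]}\check{R}_{[\lambda],[\mu]}$ on $([\lambda]\otimes[\mu])_\nu$.

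The only genuinely substantive point — the part I would be most careful about — is justifying that $\check{R}_{[\mu],[\lambda]}\check{R}_{[\lambda],[\mu]}$ preserves each isotypic component $([\lambda]\otimes[\mu])_\nu$ in the first place, so that "acts by the constant" makes sense. This follows because $\check{R}_{[\mu],[\lambda]}\check{R}_{[\lambda],[\mu]}$ is an endomorphism of $[\lambda]\otimes[\mu]$ in the semisimple category $\CC_\CF$, hence is block-diagonal with respect to the isotypic decomposition; combined with the identity above it is forced to be scalar on each block. Everything else is a direct substitution, so no lengthy computation is needed; I would simply present the chain of equalities and remark that the scalars are invertible since $v$ is a root of unity and in particular nonzero.
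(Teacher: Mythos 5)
Your proof is correct and follows the same route as the paper's: apply the ribbon identity \eqref{eq_ThetaUW} to write $\check{R}_{[\mu],[\lambda]}\check{R}_{[\lambda],[\mu]} = \theta_{[\lambda]\otimes[\mu]}(\theta_{[\lambda]}\otimes\theta_{[\mu]})^{-1}$, then evaluate the scalars via Lemma~\ref{lem_theta_lambda}. You have usefully made explicit the naturality argument showing that $\theta_{[\lambda]\otimes[\mu]}$ acts by $v^{\langle\nu,\nu+2\rho\rangle}$ on each isotypic component, which the paper leaves implicit.
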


\begin{proof}
From relation~\eqref{eq_ThetaUW}, 
\[\check{R}_{[\mu],[\lambda]}\check{R}_{[\lambda],[\mu]} = \theta_{[\lambda]\otimes [\mu]}\left( \theta_{[\lambda]}\otimes \theta_{[\mu]}\right)^{-1}.\]
The result is then obtained using Lemma~\ref{lem_theta_lambda}.
\end{proof}

For $U,W\in \CF_v$, we write
\[\Hom_{\CC_\CF}(U,W)\]
for the set of morphisms from $U$ to $W$ in the fusion category $\CC_\CF$.

In particular, for $m\in \BZ_{\geq 0}$ and $U,W\in\CF_v$, we define the \emph{intertwining space} between $U$ and $W$ to be:
\[ M_{U,W}^{(m)}:=\Hom_{\CC_\CF}(U, V^{\otimes m}\otimes W).
\]
These morphisms can be represented using tangles. From top to bottom, the strand going in is labeled by $U$, and going out one has $m$ strands for the $m$ copies of $V$, numbered from right to left, and one strand labeled by $W$.

\begin{center}
\begin{tikzpicture}
\draw[fill=lightgray, very thick] (0,0) rectangle (4,1);
\draw[line width = 2pt, forest] (1.5,1) --  (1.5,2);
\draw[line width = 2pt, purple] (3.5,0) --  (3.5,-1);
\draw[line width = 2pt] (2.5,0) --  (2.5,-1);
\draw[line width = 2pt] (2,0) --  (2,-1);
\draw[line width = 2pt] (1.5,0) --  (1.5,-1);
\draw[line width = 2pt] (0.5,0) --  (0.5,-1);
\node at (1,-0.5) {$\cdots$};
\node[forest] at (1,1.5) {$U$};
\node[purple] at (4,-0.5) {$W$};
\node[below] at (2.5, -1) {$1$};
\node[below] at (2, -1) {$2$};
\node[below] at (1.5, -1) {$3$};
\node[below] at (0.5, -1) {$m$};
\node[scale=1.5] at (2,0.5) {$f$};
\end{tikzpicture}
\end{center}

As in \cite{Orellana2007Affine}, one can define an action of the Affine Hecke Algebra on the intertwining space $M_{U,W}^{(m)}$ by postcomposition. From Remark~\ref{rem_AHA}, the AHA $\dot{H}_q(m)$ is the subalgebra of $\ddot{H}_{q,t}(m)$ generated by $T_1,T_2,\ldots, T_{m-1}$ and $X_1^{\pm 1}$.

Let us define, for all $1\leq i \leq m-1$, 
\begin{align*}
R_i & := \Id_{V^{\otimes (m-i-1)}}\otimes \check{R}_{V,V}\otimes \Id_{V^{\otimes (i-1)}} \otimes \Id_W, \\
R_0^2 & := \Id_{V^{\otimes (m-1)}}\otimes \left(\check{R}_{W,V}\check{R}_{V,W}\right).
\end{align*}

\begin{prop}\label{prop_AHA_ribbon}
The map
\begin{align*}
T_i & \mapsto v\cdot R_i, \quad \textnormal{for}\quad 1\leq i\leq m-1,\\
X_1 & \mapsto R_0^2,
\end{align*}
defines an action of the Affine Hecke Algebra  $\dot{H}_q(m)$ on $M_{U,W}^{(m)}$.
\end{prop}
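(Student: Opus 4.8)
The plan is to verify that the proposed assignments respect the defining relations of $\dot{H}_q(m)$ (viewed as the sub-case of Definition~\ref{def_DAHA} spanned by $T_1,\dots,T_{m-1}$ and $X_1^{\pm1}$). The relations $T_iX_iT_i=qX_{i+1}$ force the images of $X_2,\dots,X_m$ once $T_1,\dots,T_{m-1}$ and $X_1$ are fixed; since $v^2=q$ this gives $X_{i+1}\mapsto q^{-1}T_iX_iT_i=R_i\cdots R_1R_0^2R_1\cdots R_i$. It therefore suffices to check: (a) the Hecke relation $(vR_i-q)(vR_i+1)=0$; (b) the braid relations $R_iR_{i+1}R_i=R_{i+1}R_iR_{i+1}$ and $R_iR_j=R_jR_i$ for $|i-j|\geq 2$; (c) that $X_1=R_0^2$ commutes with $T_j=vR_j$ for $j\geq 2$; and (d) the affine braid relation $X_1T_1X_1T_1=T_1X_1T_1X_1$, equivalently $R_0^2(R_1R_0^2R_1)=(R_1R_0^2R_1)R_0^2$. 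Granting (a)--(d), it is a standard fact (the Bernstein presentation of the affine Hecke algebra, cf.\ \cite{Orellana2007Affine}) that an action of the finite Hecke algebra on $T_1,\dots,T_{m-1}$ together with an invertible $X_1$ commuting with $T_2,\dots,T_{m-1}$ and satisfying $X_1(T_1X_1T_1)=(T_1X_1T_1)X_1$ extends uniquely to an action of $\dot{H}_q(m)$, so the remaining relations $X_iX_j=X_jX_i$ and $T_iX_j=X_jT_i$ ($j\neq i,i+1$) then come for free.

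For (a) I would apply Corollary~\ref{cor_lambda_mu_nu} with $\lambda=\mu=\omega_1$ and the fusion decomposition $V\otimes V=[2\omega_1]\oplus[\omega_2]$ in $\CC_\CF$ (if $K=1$ the summand $[2\omega_1]$ drops out, if $N=1$ the summand $[\omega_2]$ does, and the quadratic relation degenerates accordingly). Using $\langle\omega_1,\omega_1+2\rho\rangle=N$, $\langle 2\omega_1,2\omega_1+2\rho\rangle=2N+2$ and $\langle\omega_2,\omega_2+2\rho\rangle=2N-2$, the corollary yields that $\check{R}_{V,V}^2$ (hence $R_i^2$) acts by $v^2$ on the $[2\omega_1]$-isotypic component and by $v^{-2}$ on the $[\omega_2]$-isotypic component; together with the well-known sign of the braiding on the symmetric versus antisymmetric square (equivalently the skein relation $\check{R}_{V,V}-\check{R}_{V,V}^{-1}=(v-v^{-1})\id$), this forces $\check{R}_{V,V}$, and hence each $R_i$, to have eigenvalues $v$ and $-v^{-1}$, so $T_i=vR_i$ has eigenvalues $q$ and $-1$.

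Parts (b) and (c) should be routine local facts about the braiding of the ribbon category $\CC_\CF$: $R_i$ applies $\check{R}_{V,V}$ to the $V$-factors in adjacent positions $i,i+1$, so $R_iR_{i+1}R_i=R_{i+1}R_iR_{i+1}$ is the Yang--Baxter equation on positions $i,i+1,i+2$ (a consequence of the hexagon axioms), while $R_i,R_j$ for $|i-j|\geq 2$ and likewise $R_0^2=\check{R}_{W,V}\check{R}_{V,W}$ and $R_j$ for $j\geq 2$ act on disjoint tensor factors and so commute.

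The main obstacle is (d). Pictorially, $R_0^2$ is the full positive twist of the rightmost $V$-strand around the $W$-strand (all other strands straight), and $R_1R_0^2R_1$ is the corresponding full twist of the second $V$-strand around $W$, routed so as to also encircle the first $V$-strand (the flanking copies of $R_1$ braid and then unbraid the two rightmost $V$-strands). The identity $R_0^2(R_1R_0^2R_1)=(R_1R_0^2R_1)R_0^2$ is then the topological assertion that these two loops around the ``pole'' $W$ commute. I would establish it by a diagrammatic computation, sliding one (cabled) loop past the other using naturality and functoriality of $\check{R}$ together with the hexagon and Yang--Baxter identities among $V,V,W$; this is exactly the affine braid relation underpinning the affine Hecke algebra action, handled as in \cite{Orellana2007Affine}. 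Once (d) is in place, the verification that $\dot{H}_q(m)$ acts on $M_{U,W}^{(m)}$ is complete.
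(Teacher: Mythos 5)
Your proposal is correct and its overall structure matches the paper's proof: both verify the Hecke relation, the braid relations, the commutation $T_jX_1=X_1T_j$ for $j\geq 2$, and the affine braid relation $X_1X_2=X_2X_1$ via a diagrammatic computation, then implicitly or explicitly invoke the Bernstein-type presentation of $\dot H_q(m)$ to get the remaining relations among the $X_i$ for free. The one genuine point of divergence is the Hecke relation. The paper simply writes down the explicit $R$-matrix of $U_v(\mathfrak{gl}_N)$ on $V\otimes V$, observes $(\check R_{V,V}-v)(\check R_{V,V}+v^{-1})=0$ there, and remarks that the relation descends to the fusion quotient. You instead apply Corollary~\ref{cor_lambda_mu_nu} to the decomposition $V\otimes V=[2\omega_1]\oplus[\omega_2]$ and the computations $\langle\omega_1,\omega_1+2\rho\rangle=N$, $\langle 2\omega_1,2\omega_1+2\rho\rangle=2N+2$, $\langle\omega_2,\omega_2+2\rho\rangle=2N-2$ (all correct) to pin down the eigenvalues of $\check R_{V,V}^2$, namely $v^2$ and $v^{-2}$. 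This only determines $\check R_{V,V}$ up to a sign on each summand, and you acknowledge this by appealing to the well-known sign on the symmetric versus antisymmetric square; but that sign is in the end most cleanly established by the very explicit $R$-matrix formula the paper uses (or by continuity from the $q=1$ classical case). So your twist-based argument is a legitimate alternative, and its payoff is that it stays entirely inside the intrinsic ribbon structure of $\CC_\CF$ (no need to descend from $U_v(\mathfrak{gl}_N)$-modules), at the modest cost of having to supply the sign separately. Your remark about the degenerate cases $K=1$ or $N=1$ (where one summand drops out and $\check R_{V,V}$ becomes a scalar) is a nice observation not present in the paper. For part (d), your verbal description of sliding the cabled loop around the $W$-pole is exactly the diagrammatic argument the paper draws out, and it is correct.
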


The action of $\dot{H}_q(m)$ can be represented in terms of tangles. The $T_i$s add a braiding of strands $i$ and $i+1$ at the bottom, and $X_1$ is a double braiding of strands $1$ and $W$. 

\begin{center}
\begin{tikzpicture}[scale=0.6]
\draw[fill=lightgray, very thick] (0,0) rectangle (8,2);
\node[scale=1.5] at (4,01) {$f$};
\draw[line width =2pt, purple] (7,0) --  (7,-2);
\draw[line width =2pt] (5.5,0) --  (5.5,-2);
\draw[line width =2pt] (5,0) --  (5,-2);
\draw[line width =2pt] (1,0) --  (1,-2);
\draw[line width =2pt] (2.5,0) --  (2.5,-1);
\draw[line width =2pt] (3.5,0) --  (3.5,-1);
\begin{knot}[clip width =4]
\strand[line width =2pt] (2.5,-1) to[out=down, in=up] (3.5,-2);
\strand[line width =2pt] (3.5,-1) to[out=down, in=up] (2.5,-2);
\end{knot}
\node at (1.75,-1) {$\cdots$};
\node at (4.25,-1) {$\cdots$};
\node[purple] at (7.6,-1) {$W$};
\node[below] at (5.5, -2) {$1$};
\node[below] at (5, -2) {$2$};
\node[below] at (3.5, -2) {$i$};
\node[below] at (2.5, -2) {$i+1$};
\node[below] at (1, -2) {$m$};
\node[scale=1.2] at (-2,0) {$T_i\cdot f=$};
\node[below] at (8.4,0) {,};
\end{tikzpicture}
\begin{tikzpicture}[scale=0.6]
\draw[fill=lightgray, very thick] (0,0) rectangle (8,2);
\node[scale=1.5] at (4,1) {$f$};
\draw[line width =2pt, purple] (7,0) --  (7,-0.4);
\draw[line width =2pt] (5,0) --  (5,-0.4);
\draw[line width =2pt] (4.5,0) --  (4.5,-2);
\draw[line width =2pt] (1,0) --  (1,-2);
\draw[line width =2pt] (4,0) --  (4,-2);
\draw[line width =2pt] (1.5,0) --  (1.5,-2);
\begin{knot}[clip width =4, flip crossing=2, ignore endpoint intersections=false]
\strand[line width =2pt] (5,-0.4) to[out=down, in=up] (7.5,-1.2) to[out= down, in =up] (5,-2);
\strand[line width =2pt, purple] (7,-0.4) to (7,-2);
\end{knot}
\node at (2.75,-1) {$\cdots$};
\node[purple] at (8,-0.6) {$W$};
\node[below] at (5, -2) {$1$};
\node[below] at (4.5, -2) {$2$};
\node[below] at (4, -2) {$3$};
\node[below] at (1, -2) {$m$};
\node[scale=1.2] at (-2,0) {$X_1\cdot f=$};
\end{tikzpicture}
\end{center}

\begin{proof}
One has check that the relations \eqref{rel_Hecke}, \eqref{rel_TTT},\eqref{rel_TiTj}, \eqref{rel_TiXj} and \eqref{rel_XiXj} from defintion~\ref{def_DAHA} are satisfied, for the generators of $\dot{H}_q(m)$. We check these relations using the tangle diagrammatics. For example, it is easy to see that relation \eqref{rel_TiTj}.

The $R$-matrix satisfied the Yang-Baxter equation, which translates into the braid relation:
\begin{equation}\label{YB}
\begin{tikzpicture}[scale=2]
\begin{knot}[clip width =4]
\strand[line width =2pt] (0,0) to[out=down, in=up] (0.5,-0.5) to[out=down, in=up] (1,-1) to[out=down, in=up] (1,-1.5);
\strand[line width =2pt] (0.5,0) to[out=down, in=up] (0,-0.5) to[out=down, in=up] (0,-1) to[out=down, in=up] (0.5,-1.5);
\strand[line width =2pt] (1,0) to[out=down, in=up] (1,-0.5) to[out=down, in=up] (0.5,-1) to[out=down, in=up] (0,-1.5);
\end{knot}
\node[scale=2] at (1.5,-0.75) {=};
\begin{scope}[shift={(2,0)}]
\begin{knot}[clip width =4]
\strand[line width =2pt] (0,0) to[out=down, in=up] (0,-0.5) to[out=down, in=up] (0.5,-1) to[out=down, in=up] (1,-1.5);
\strand[line width =2pt] (0.5,0) to[out=down, in=up] (1,-0.5) to[out=down, in=up] (1,-1) to[out=down, in=up] (0.5,-1.5);
\strand[line width =2pt] (1,0) to[out=down, in=up] (0.5,-0.5) to[out=down, in=up] (0,-1) to[out=down, in=up] (0,-1.5);
\end{knot}
\end{scope}
\end{tikzpicture}
\end{equation}

It implies the braid relation~\eqref{rel_TTT} of the DAHA presentation.

For relation~\eqref{rel_TiXj}, one has to check that, 
\[R_i R_0^2  = R_0^2 R_i, \qquad (2\leq i \leq m-1).\]
Which is clear using the tangle representation:

\begin{center}
\begin{tikzpicture}[scale=1.3]
\draw[line width =2pt, purple] (3.5,0) --  (3.5,-1);
\draw[line width =2pt] (2.75,0) --  (2.75,-1);
\draw[line width =2pt] (2.5,0) --  (2.5,-2);
\draw[line width =2pt] (0.5,0) --  (0.5,-2);
\draw[line width =2pt] (1.25,-1) --  (1.25,-2);
\draw[line width =2pt] (1.75,-1) --  (1.75,-2);
\begin{knot}[clip width =4]
\strand[line width =2pt] (1.25,0) to[out=down, in=up] (1.75,-1);
\strand[line width =2pt] (1.75,0) to[out=down, in=up] (1.25,-1);
\end{knot}
\begin{knot}[clip width =4, flip crossing=2, ignore endpoint intersections=false]
\strand[line width =2pt] (2.75,-1) to[out=down, in=up] (3.75,-1.5) to[out= down, in =up] (2.75,-2);
\strand[line width =2pt, purple] (3.5,-1) to (3.5,-2);
\end{knot}
\node at (0.825,-1) {$\cdots$};
\node at (2.075,-1) {$\cdots$};
\node[scale=1.5] at (4.5,-1) {=};

\begin{scope}[shift={(5,0)}]
\draw[line width =2pt, purple] (3.5,-1) --  (3.5,-2);
\draw[line width =2pt] (2.75,-1) --  (2.75,-2);
\draw[line width =2pt] (2.5,0) --  (2.5,-2);
\draw[line width =2pt] (0.5,0) --  (0.5,-2);
\draw[line width =2pt] (1.25,0) --  (1.25,-1);
\draw[line width =2pt] (1.75,0) --  (1.75,-1);
\begin{knot}[clip width =4]
\strand[line width =2pt] (1.25,-1) to[out=down, in=up] (1.75,-2);
\strand[line width =2pt] (1.75,-1) to[out=down, in=up] (1.25,-2);
\end{knot}
\begin{knot}[clip width =4, flip crossing=2, ignore endpoint intersections=false]
\strand[line width =2pt] (2.75,0) to[out=down, in=up] (3.75,-0.5) to[out= down, in =up] (2.75,-1);
\strand[line width =2pt, purple] (3.5,0) to (3.5,-1);
\node at (0.825,-1) {$\cdots$};
\node at (2.075,-1) {$\cdots$};
\end{knot}
\end{scope}
\end{tikzpicture}
\end{center}

For relation~\eqref{rel_XiXj}, we must check the following
\[R_1R_0^2R_1R_0^2 = R_0^2R_1R_0^2R_1 .\]
This relation can be observed by applying several times the braid relation:

\begin{center}
\begin{tikzpicture}
\begin{knot}[clip width =4,ignore endpoint intersections=false]
\strand[line width=2pt] (0,0) to[out=down, in=up] (0.5,-1) to[out=down, in=up] (1.7,-1.5) to[out=down, in=up] (0.5,-2) to[out=down, in=up] (0,-3) to[out=down, in=up] (0,-4.2);
\strand[line width=2pt] (0.5,0) to[out=down, in=up] (0,-1) to[out=down, in=up] (0,-2) to[out=down, in=up] (0.5,-3) to[out=down, in=up] (1.7,-3.5) to[out=down, in=up] (0.5,-4) to[out=down, in=up] (0.5,-4.2);
\strand[line width=2pt, purple] (1.2,0) to (1.2,-4.2);
\flipcrossings{2,4,6}
\end{knot}
\node[above] at (0,0) {$2$};
\node[above] at (0.5,0) {$1$};
\node[above, purple] at (1.2,0) {$W$};
\node at (2.1,-2.1) {=};
\draw[red, dashed] (-0.2,-1.5)rectangle (1.9,-3.5); 
\begin{scope}[shift={(2.5,0)}]
\begin{knot}[clip width =4,ignore endpoint intersections=false]
\strand[line width=2pt] (0,0) to[out=down, in=up] (0.5,-1) to[out=down, in=up] (2,-1.5) to[out=down, in=up] (2,-2) to[out=down, in=up] (1.5,-3) to[out=down, in=up] (0,-4) to[out=down, in=up] (0,-4.2);
\strand[line width=2pt] (0.5,0) to[out=down, in=up] (0,-1) to[out=down, in=up] (0,-1.5) to[out=down, in=up] (1.5,-2) to[out=down, in=up] (2,-3) to[out=down, in=up] (2,-3.5) to[out=down, in=up] (0.5,-4) to[out=down, in=up] (0.5,-4.2);
\strand[line width=2pt, purple] (1,0) to (1,-4.2);
\flipcrossings{2,4,6}
\end{knot}
\node[above] at (0,0) {$2$};
\node[above] at (0.5,0) {$1$};
\node[above, purple] at (1,0) {$W$};
\node at (2.45,-2.1) {=};
\draw[red, dashed] (-0.2,0)rectangle (2.2,-1.9); 
\draw[red, dashed] (-0.2,-2.1)rectangle (2.2,-4); 
\end{scope}
\begin{scope}
[shift={(5.4,0)}]
\begin{knot}[clip width =4,ignore endpoint intersections=false]
\strand[line width=2pt] (0,0) to[out=down, in=up] (0,-0.7) to[out=down, in=up] (1.5,-1.2) to[out=down, in=up] (2,-2.2) to[out=down, in=up] (2,-2.7) to[out=down, in=up] (0.5,-3.2) to[out=down, in=up] (0,-4.2);
\strand[line width=2pt] (0.5,0) to[out=down, in=up] (0.5,-0.2) to[out=down, in=up] (2,-0.7) to[out=down, in=up] (2,-1.2) to[out=down, in=up] (1.5,-2.2) to[out=down, in=up] (0,-3.2) to[out=down, in=up] (0.5,-4.2);
\strand[line width=2pt, purple] (1,0) to (1,-4.2);
\flipcrossings{2,4,6}
\end{knot}
\node[above] at (0,0) {$2$};
\node[above] at (0.5,0) {$1$};
\node[above, purple] at (1,0) {$W$};
\node at (2.5,-2.1) {=};
\draw[red, dashed] (-0.2,-0.7)rectangle (2.2,-2.8); 
\end{scope}
\begin{scope}
[shift={(8.3,0)}]
\begin{knot}[clip width =4,ignore endpoint intersections=false]
\strand[line width=2pt] (0,0) to[out=down, in=up] (0,-1.2) to[out=down, in=up] (0.5,-2.2) to[out=down, in=up] (1.7,-2.7) to[out=down, in=up] (0.5,-3.2) to[out=down, in=up] (0,-4.2);
\strand[line width=2pt] (0.5,0) to[out=down, in=up] (0.5,-0.2) to[out=down, in=up] (1.7,-0.7) to[out=down, in=up] (0.5,-1.2) to[out=down, in=up] (0,-2.2) to[out=down, in=up] (0,-3.2) to[out=down, in=up] (0.5,-4.2);
\strand[line width=2pt, purple] (1.2,0) to (1.2,-4.2);
\flipcrossings{2,4,6}
\end{knot}
\node[above] at (0,0) {$2$};
\node[above] at (0.5,0) {$1$};
\node[above, purple] at (1.2,0) {$W$};
\end{scope}
\end{tikzpicture}
\end{center}

Finally, one has to check the Hecke relation~\eqref{rel_Hecke} of Definiton~\ref{def_DAHA}. As $v^2=q$, these relations are, for all $1\leq i\leq m-1$, 
\begin{equation}\label{eq_Hecke_TR}
(T_i-v^2)(T_i+1)=0 \quad \Leftrightarrow \quad  (R_i-v)(R_i+v^{-1})=0.
\end{equation}

However, the action of the $R$-matrix of the quantum group $U_v(\mathfrak{gl}_N)$ on the tensor product $V\otimes V$ of two standard representations $V\simeq \BC^N$ is well-known. If $\{E_{ij}\}_{1\leq i,j\leq N}$ denotes the standard basis of $\End(V)$, then one has:
\[\check{R}_{V,V} = v \sum_{i=1}^N E_{ii}\otimes E_{ii} + \sum_{i\neq j} E_{ij}\otimes E_{ji} +(v-v^{-1})\sum_{i>j} E_{ii}\otimes E_{jj}.\]
One can see that $\check{R}_{V,V}$ satisfies $(\check{R}_{V,V} - v\Id_{V\otimes V})(\check{R}_{V,V} + v^{-1}\Id_{V\otimes V})=0$. Thus this relation is still valid in the fusion category, and the $R_i$s satisfy the Hecke relation~\eqref{eq_Hecke_TR}.
\end{proof}

\begin{remark}\label{rem_lambda_m_mu}
If $U,W$ are irreducible representations $W=[\lambda]$ and $U=[\mu]$, with $\lambda,\mu\in\CA_{K,N}$, we deduce from Proposition~\ref{propVLD} that for the intertwining space $M_{[\mu],[\lambda]}^{(m)}$ to be non-trivial, there must exist $(i_1,i_2,\ldots,i_m)\in\llbracket 1,N\rrbracket^m$ such that
\[ \mu = \lambda + \sum_{\ell=1}^m\varepsilon_{i_\ell}.\]
\end{remark}

\subsection{Lattice paths and standard skew tableaux}

Recall from Section~\ref{sect_DPT_part} that the weights in the fundamental alcove (prevously called partitions) are in bijection with the $(K,N)$-periodic lattice paths (Proposition~\ref{prop_latt_part}).

For $\lambda\in\CA_{K,N}$, let $\CL_\lambda$ denote the associated $(K,N)$-periodic lattice path. It is defined as follows, for all $0\leq i \leq N-1$, $r\in\BZ$,
\[\CL_\lambda(i + rN) = \lambda_{i+1} -r K.\]

\begin{example}\label{ex_lambda_L}
In the example of Figure~\ref{figweight}, for $(K,N)=(7,4)$  and the weight $\lambda = (4,1,1,-1)$, the corresponding lattice path is the following.
\begin{center}
\begin{tikzpicture}[scale=0.8]
\draw[step=0.5,gray, thin] (-1.3,-2.2) grid (6.2,2.2);
\draw[very thick, gray] (-1.3,0) -- (6.2,0);
\draw[very thick, gray] (0,-2.2) -- (0,2.2);
\draw[red, thick] (0,0) circle (0.8mm);
\draw[very thick] (2,0) -- (2,-0.5);
\draw[very thick] (0.5,-0.5) -- (2,-0.5);
\draw[very thick] (0.5,-0.5) -- (0.5,-1.5);
\draw[very thick] (-0.5,-1.5) -- (0.5,-1.5);
\draw[very thick] (-0.5,-1.5) -- (-0.5,-2);
\draw[very thick] (-1,-2) -- (-0.5,-2);
\draw[very thick] (-1,-2) -- (-1,-2.2);
\draw[thick, red, dashed] (-0.5,0) -- (3,0);
\draw[thick,red , dashed] (-0.5,-2) -- (-0.5,0);
\draw[thick, red, dashed] (3,-2) -- (3,0);
\draw[thick,red , dashed] (-0.5,-2) -- (3,-2);
\draw[very thick] (2,0) -- (3,0);
\draw[very thick] (3,0.5) -- (3,0);
\draw[very thick] (3,0.5) -- (4,0.5);
\draw[very thick] (4,1.5) -- (4,0.5);
\draw[very thick] (4,1.5) -- (5.5,1.5);
\draw[very thick] (5.5,2) -- (5.5,1.5);
\draw[very thick] (5.5,2) -- (6.2,2);
\node at (4.5,0.5) {$\CL_\lambda$};
\end{tikzpicture}
\end{center}
The red lines delimit the $(K,N)$-rectangle of the pattern which is repeated.
\end{example}

From now on, we consider the $(K,N)$-periodic lattice paths $(\CL_\lambda)_{\lambda_\in\CA_{K,N}}$ as the basis for the fusion ring $\CF_v$. This allows us to consider a ring structure on the set of $(K,N)$-periodic lattice paths, given by the fusion ring structure.
In particular, one can translate the Pieri rule in terms of lattice paths.

\begin{lem}
For all $\lambda \in \CA_{K,N}$,
\begin{equation}
\CL_V \cdot \CL_\lambda  = \sum_{\substack{1\leq j\leq N\\ \CL_\lambda(j-1)<\CL_\lambda(j)}} \CL_{\lambda+ \varepsilon_j}.
\end{equation}
\end{lem}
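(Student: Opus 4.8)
The plan is to read this identity as the Pieri rule of Proposition~\ref{propVLD} transported, through the bijection $\lambda\leftrightarrow\CL_\lambda$ of Proposition~\ref{prop_latt_part}, from the fundamental alcove to the set of $(K,N)$-periodic lattice paths. Since the ring structure on lattice paths is by definition the one carried over from $\CF_v$ along this bijection, and $V\otimes[\lambda]=\sum_{j\,:\,\lambda+\varepsilon_j\in\CA_{K,N}}[\lambda+\varepsilon_j]$, all that has to be proved is the purely combinatorial statement that, for $j\in\{1,\dots,N\}$, the weight $\lambda+\varepsilon_j$ belongs to $\CA_{K,N}$ precisely when the inequality on $\CL_\lambda$ indexing the sum holds; and that, in that case, the lattice path attached to $\lambda+\varepsilon_j$ is the path $\CL_{\lambda+\varepsilon_j}$ on the right. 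The latter point is immediate from the formula $\CL_\mu(i+rN)=\mu_{i+1}-rK$: passing from $\lambda$ to $\lambda+\varepsilon_j$ adds $1$ to $\CL_\lambda(y)$ at every $y$ with $y\equiv j-1\pmod N$ and changes nothing else.

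For the combinatorial equivalence I would first spell out the dictionary coming from $\CL_\mu(i+rN)=\mu_{i+1}-rK$, namely $\CL_\lambda(j-1)=\lambda_j$ for $1\le j\le N$ together with the boundary value $\CL_\lambda(-1)=\lambda_N+K$ obtained from $i=N-1,\ r=-1$. Then I would check the membership $\lambda+\varepsilon_j\in\CA_{K,N}$ case by case. For $2\le j\le N$ the tuple $\lambda+\varepsilon_j$ is weakly decreasing iff $\lambda_{j-1}\ge\lambda_j+1$, i.e.\ $\lambda_{j-1}>\lambda_j$, while its top-minus-bottom width $\lambda_1-\lambda_N$ is unchanged (for $2\le j\le N-1$) or strictly smaller (for $j=N$); hence in this range $\lambda+\varepsilon_j\in\CA_{K,N}$ is equivalent to $\lambda_{j-1}>\lambda_j$, which the dictionary converts into the asserted strict inequality between two consecutive values of $\CL_\lambda$. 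For $j=1$ the tuple $\lambda+\varepsilon_1$ is automatically weakly decreasing, so $\lambda+\varepsilon_1\in\CA_{K,N}$ reduces to $\lambda_1+1-\lambda_N\le K$, i.e.\ $\lambda_1<\lambda_N+K=\CL_\lambda(-1)$; this is the wrap-around instance of the same inequality, now read between $\CL_\lambda(-1)$ and $\CL_\lambda(0)$ by way of the periodicity relation. Along the way I would also observe that these are exactly the comparisons needed to see that the $+1$-perturbation defining $\CL_{\lambda+\varepsilon_j}$ is still a weakly decreasing $(K,N)$-periodic path at every integer, which by periodicity reduces to the single position $y=j-1$.

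I expect the one point needing genuine care to be this wrap-around: the alcove width constraint $\lambda_1-\lambda_N\le K$ is not a descent internal to the finite string $(\lambda_1,\dots,\lambda_N)$, and it is only the periodicity relation $\CL_\lambda(y+N)=\CL_\lambda(y)-K$ — equivalently, the convention $\CL_\lambda(-1)=\lambda_N+K$ — that exhibits it as the $j=1$ instance of the condition indexing the sum. Everything else is bookkeeping with the explicit formula for $\CL_\lambda$, after which the lemma is obtained by combining Proposition~\ref{propVLD} with the transport of the fusion-ring structure along Proposition~\ref{prop_latt_part}.
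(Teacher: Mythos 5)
Your argument is correct and follows essentially the same route as the paper's proof: transport the Pieri rule through the bijection $\lambda\leftrightarrow\CL_\lambda$, then determine in which rows $j$ one may add a box while keeping $\CL_\lambda$ weakly decreasing and $(K,N)$-periodic. The one thing you should not absorb silently is that the condition you correctly derive --- $\lambda_{j-1}>\lambda_j$ for $2\le j\le N$ and $\lambda_1<\lambda_N+K$ for $j=1$, i.e.\ $\CL_\lambda(j-2)>\CL_\lambda(j-1)$ uniformly --- is \emph{not} the condition $\CL_\lambda(j-1)<\CL_\lambda(j)$ printed in the statement. Since $\CL_\lambda$ is weakly decreasing, the printed inequality never holds; for $\lambda=(4,1,1,-1)$ with $(K,N)=(7,4)$ (the paper's own example) it picks out no indices at all, whereas the correct answer is $j\in\{1,2,4\}$. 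The indexing in the lemma (and in the paper's one-line proof, which repeats it) is off by one. Your computation gives the right condition; say so explicitly instead of referring to it as ``the asserted strict inequality,'' which hides the discrepancy.
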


\begin{proof}
The lattice path $\CL_{\lambda+\varepsilon_j}$ is obtained from the lattice path $\CL_\lambda$ by replacing each $\CL_\lambda(j-1 + rN)$, for $r\in\BZ$, by $\CL_\lambda(j-1 + rN) + 1$ (adding one box in lines $j-1 + rN$). Thus, it is still a $(K,N)$-periodic lattice paths if and only if $\CL_\lambda(j-1)<\CL_\lambda(j)$.
\end{proof}






\begin{remark}

By multiplying by $V$, one obtains a sum of periodic lattice paths where one box has been added to a corner.
\begin{center}
\begin{tikzpicture}[scale=0.8]
\draw[very thick] (0,0) -- (1,0);
\draw[very thick] (0,0) -- (0,-1);
\node at (1.5,-0.5) {$\longrightarrow$};
\draw[very thick] (2.5,0) -- (3.5,0);
\draw[very thick] (2.5,0) -- (2.5,-1);
\draw[thick, fill=lightgray] (2.5,0) rectangle (3,-0.5);
\end{tikzpicture}
\end{center}
\end{remark}


We will use this representation to construct an explicit basis of the intertwining space $M_{[\mu],[\lambda]}^{(m)}$ using \emph{standard $(K,N)$-periodic skew tableaux}. In specific cases, we will recover doubly periodic tableaux.

\begin{definition}
For $\CL$, $\CM$ two $(K,N)$-periodic lattice paths such that $\CL\leq \CM$, a \emph{standard $(K,N)$-periodic skew tableau $\sigma$ of shape $\CM\setminus \CL$} is a standard filling of
\[\CM\setminus \CL :=\left\lbrace (x,y)\in\BZ^2 \mid \CL(y) \leq x < \CM(y) \right\rbrace,\]
such that:
\begin{enumerate}
\item $\sigma$ is $(K,N)$-periodic: for all $(x,y)\in\CM\setminus \CL$, then by definition $(x+K,y-N)\in  \CM\setminus \CL $, and
\[ \sigma(x+K,y-N) = \sigma(x,y). \]
\item if $m$ is the number of boxes of $\CM\setminus \CL$ in each $N$ consecutive lines (or $K$ consecutive columns), then all numbers $1,2 \ldots, m$ appear in $\sigma$.
\end{enumerate}
\end{definition}

Let $ST_{K,N}(\CM\setminus\CL)$ denote the set of $(K,N)$-periodic skew tableaux of shape $\CM\setminus\CL$.

\begin{example}\label{ex_CL_CM}
If $(K,N)=(3,2)$, $\CL=\CL_{(0,0)}$ and $\CM=\CL_{(4,1)}$, then $m=5$.
Here is an example of a $(3,2)$-periodic skew tableau of shape $\CM\setminus \CL$.

\begin{center}
\begin{tikzpicture}[scale=0.8]
\draw[step=0.5,gray, thin] (-1.8,-2.2) grid (6.2,2.2);
\draw[very thick, gray] (-1.8,0) -- (6.2,0);
\draw[very thick, gray] (0,-2.2) -- (0,2.2);
\draw[red, thick] (0,0) circle (0.8mm);
\draw[very thick] (0,0) -- (0,-1);
\draw[very thick] (-1.5,-1) -- (0,-1);
\draw[very thick] (-1.5,-1) -- (-1.5,-2);
\draw[very thick] (-1.8,-2) -- (-1.5,-2);
\draw[very thick] (0,0) -- (1.5,0);
\draw[very thick] (1.5,1) -- (1.5,0);
\draw[very thick] (1.5,1) -- (3,1);
\draw[very thick] (3,2) -- (3,1);
\draw[very thick] (3,2) -- (4.5,2);
\draw[very thick] (4.5,2) -- (4.5,2.2);
\node at (1.5,1.5) {$\CL$};
\draw[very thick, purple] (0.5,-0.5) -- (0.5,-1.5);
\draw[very thick, purple] (-1,-1.5) -- (0.5,-1.5);
\draw[very thick, purple] (-1,-1.5) -- (-1,-2.2);
\draw[very thick, purple] (0.5,-0.5) -- (2,-0.5);
\draw[very thick, purple] (2,-0.5) -- (2,0.5);
\draw[very thick, purple] (2,0.5) -- (3.5,0.5);
\draw[very thick, purple] (3.5,1.5) -- (3.5,0.5);
\draw[very thick, purple] (3.5,1.5) -- (5,1.5);
\draw[very thick, purple] (5,1.5) -- (5,2.2);
\node[purple] at (4,0.5) {$\CM$};
\draw[very thick, dashed] (-1.8,-1) -- (6.2,-1);
\node at (4.5,-1.5) {$y=N$};
\node[gray] at (-1.25,-1.25) {1};
\node[gray] at (-0.75,-1.25) {2};
\node[gray] at (-0.25,-1.25) {3};
\node[gray] at (0.25,-1.25) {5};
\node[gray] at (-1.25,-1.75) {4};
\node at (0.25,-0.25) {1};
\node at (0.75,-0.25) {2};
\node at (1.25,-0.25) {3};
\node at (1.75,-0.25) {5};
\node at (0.25,-0.75) {4};
\node[gray] at (1.75,0.75) {1};
\node[gray] at (2.25,0.75) {2};
\node[gray] at (2.75,0.75) {3};
\node[gray] at (3.25,0.75) {5};
\node[gray] at (1.75,0.25) {4};
\node[gray] at (3.25,1.75) {1};
\node[gray] at (3.75,1.75) {2};
\node[gray] at (4.25,1.75) {3};
\node[gray] at (4.75,1.75) {5};
\node[gray] at (3.25,1.25) {4};
\end{tikzpicture}
\end{center}
Notice here that we are actually in the case $\CM=\CL[4,1]$, similar to Figure~\ref{labeling_convention} appearing in Section~\ref{sectDPTlattice}. As in that section, we see that we must consider infinite skew tableaux, as not all standard filling of a fundamental domain (the boxes apearing in $N$ consecutive lines for example) extend to standard filling of the entire skew diagram.

For example, the following filling of the skew shape $(4,1)\setminus (0,0) =(4,1)$ does not extend : $\ytableaushort{1234,5}$.
\end{example}

Let $\lambda,\mu\in\CA_{K,N}$, if $\lambda_i\leq \mu_i$, for all $1\leq i\leq N$, then fix $m$
\begin{equation}\label{eq_m}
m= \sum_{i=1}^N\left( \mu_i -\lambda_i\right),
\end{equation}
the number of boxes $\lambda$ has more than $\mu$. 

For any $(K,N)$-periodic skew tableau $\sigma$ of shape $\CL_\mu\setminus \CL_\lambda$, one can associate a chain of $m+1$ dominant weights $\mathbf{u} = (u_0,u_1,\ldots,u_m)$ as follows. Let $u_0=\lambda$, then for each $0\leq \ell\leq m-1$, let $u_{\ell+1} = u_{\ell} + \varepsilon_{i_\ell}$, where $i_\ell$ is the column between 1 and $N$ where the value $\ell$ appears in $\sigma$. Then, from Remark~\ref{rem_lambda_m_mu},
\[u_m = \mu = \lambda + \sum_{\ell=1}^N \varepsilon_{i_\ell}.\]

\begin{example}
If we continue Example~\ref{ex_CL_CM}, with $(K,N)=(3,2)$, $\mu=(4,1)$ and $\lambda=(0,0)$, then the chain $\textbf{u}$ corresponding to this choice of tableau $\sigma$ is the following:
\begin{align*}
u_0=\lambda=(0,0), \quad u_1=(1,0), \quad u_2=(2,0), \\
u_3=(3,0), \quad u_4=(3,1), \quad u_5=(4,1)=\mu.
\end{align*}

\end{example}


\subsection{Quantum groups intertwiners}

We now have the tools to relate the doubly periodic tableaux of Section~\ref{sect_DPT} to intertwining spaces for the quantum group $U_v(\mathfrak{gl}_N)$ at a root of unity. We show here that 
\[ \dim M_{[\mu],[\lambda]}^{(m)} = \# ST_{K,N}(\CL_\mu\setminus\CL_\lambda).\]

In this section, we apply a method similar to that of \cite{Jordan2019Rectangular}, which we fully describe in our context. We show that to each standard $(K,N)$-periodic skew tableau $\sigma$ of shape $\CL_\mu\setminus\CL_\lambda$, one can associate a line $L_\CT$ in $M_{[\mu],[\lambda]}^{(m)}$ as follows.

Recall from the Pieri rule that
\[V\otimes [\lambda] = \sum_{\lambda + \varepsilon_j\in\CA_{K,N}}[\lambda + \varepsilon_j].\]

Hence, for all $j$ such that $\lambda + \varepsilon_j\in\CA_{K,N}$, the $(\lambda + \varepsilon_j)$-isotypic component of $V\otimes [\lambda]$ is of dimension 1 and $\dim \Hom([\lambda + \varepsilon_j], V\otimes [\lambda]) =1$.

Similarly, for $i,j$ such that $\lambda+ \varepsilon_i \in\CA_{K,N}$ and $\lambda + \varepsilon_i + \varepsilon_j \in\CA_{K,N}$, 
\[ \Hom\left([\lambda + \varepsilon_i + \varepsilon_j], \left(V\otimes(V\otimes[\lambda])_{\lambda + \varepsilon_i}\right)\cap\left((V^{\otimes 2}\otimes [\lambda])_{\lambda + \varepsilon_i + \varepsilon_j}\right)\right),\]
is a one-dimensional subspace of $\Hom([\lambda + \varepsilon_i + \varepsilon_j], V^{\otimes 2}\otimes [\lambda])$. It records that we have added first $\varepsilon_i$ and then $\varepsilon_j$. 
Conversely, $\Hom([\lambda + \varepsilon_i + \varepsilon_j], V^{\otimes 2}\otimes [\lambda])$ is of dimension 1 or 2, depending on whether $i=j$, and on whether it is possible to also add first $\varepsilon_j$ then $\varepsilon_i$.

Now, let $\sigma\in ST_{K,N}(\CL_\mu\setminus\CL_\lambda)$, and let $\mathbf{u}_\sigma =(u_0,\ldots, u_m)$ be the associated chain of dominant weights. Then
\begin{equation}\label{def_L_sigma}
L_\sigma:=\Hom\left([\mu], \bigcap_{i=0}^mV^{\otimes m-i}\otimes\left(V^{\otimes i}\otimes [\lambda] \right)_{u_i}\right)
\end{equation}
is a one dimensional subspace of $M_{[\mu],[\lambda]}^{(m)}$.

We obtain the following.

\begin{thm}\label{thm_M_ST}
There is an isomorphism of vectors spaces
\[M_{[\mu],[\lambda]}^{(m)}\simeq \bigoplus_{\sigma\in ST_{K,N}(\CL_\mu\setminus\CL_\lambda)} L_\sigma.\]
In particular, the dimension of $M_{[\mu],[\lambda]}^{(m)}$ is equal to the number of standard $(K,N)$-periodic fillings of the infinite skew tableau $\CL_\mu\setminus\CL_\lambda$.
\end{thm}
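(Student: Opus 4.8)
The plan is to prove this by induction on $m$, building the standard skew tableaux one box at a time and tracking how the intertwining space decomposes through the tensor factors of $V$. First I would set up the base case $m=0$: here $M_{[\mu],[\lambda]}^{(0)} = \Hom_{\CC_\CF}([\mu],[\lambda])$, which is one-dimensional if $\mu = \lambda$ and zero otherwise, matching the (empty or nonexistent) set of skew tableaux of shape $\CL_\mu \setminus \CL_\lambda$. For the inductive step, the key observation is that the fusion category is semisimple, so for any object $U$ there is a canonical decomposition $U = \bigoplus_{\nu \in \CA_{K,N}} U_\nu$ into isotypic components, and $\Hom_{\CC_\CF}([\mu], U) \simeq \Hom_{\CC_\CF}([\mu], U_\mu)$. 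Applying this with $U = V^{\otimes m} \otimes [\lambda]$ and peeling off the outermost copy of $V$, I would write
\[
V^{\otimes m} \otimes [\lambda] = V \otimes \left( V^{\otimes (m-1)} \otimes [\lambda]\right) = V \otimes \bigoplus_{\kappa \in \CA_{K,N}} \left(V^{\otimes(m-1)}\otimes[\lambda]\right)_\kappa,
\]
and then use the Pieri rule (Proposition \ref{propVLD}) together with the fact, recorded just before the theorem, that each isotypic multiplicity $\dim\Hom([\kappa+\varepsilon_j], V\otimes[\kappa])$ is exactly $1$ when $\kappa + \varepsilon_j \in \CA_{K,N}$ (and $0$ otherwise). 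This gives
\[
M_{[\mu],[\lambda]}^{(m)} \simeq \bigoplus_{\substack{j:\ \mu - \varepsilon_j \in \CA_{K,N}}} M_{[\mu-\varepsilon_j],[\lambda]}^{(m-1)},
\]
where the direct sum is over those columns $j$ for which $\mu - \varepsilon_j$ is still dominant and in the alcove.

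Next I would match this recursion on the combinatorial side. Translating weights into $(K,N)$-periodic lattice paths via Proposition \ref{prop_latt_part}, the condition ``$\mu - \varepsilon_j \in \CA_{K,N}$ and $\mu = (\mu-\varepsilon_j)+\varepsilon_j$'' is precisely the condition that one can remove a corner box in column $j$ from the periodic diagram bounded by $\CL_\mu$, i.e. that $\CL_{\mu-\varepsilon_j} \leq \CL_\mu$ with $\CL_{\mu-\varepsilon_j}$ still periodic. Given a standard $(K,N)$-periodic skew tableau $\sigma$ of shape $\CL_\mu \setminus \CL_\lambda$, the box (periodic class of boxes) containing the value $m$ must sit in such a removable corner position; deleting it and leaving $1,\dots,m-1$ in place yields a standard $(K,N)$-periodic skew tableau of shape $\CL_{\mu - \varepsilon_j} \setminus \CL_\lambda$, where $j$ is the column of that box. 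Conversely, any such tableau of the smaller shape extends uniquely by adjoining the box labelled $m$ in column $j$. Hence
\[
ST_{K,N}(\CL_\mu \setminus \CL_\lambda) = \bigsqcup_{\substack{j:\ \mu-\varepsilon_j \in \CA_{K,N}}} ST_{K,N}(\CL_{\mu-\varepsilon_j}\setminus\CL_\lambda),
\]
which is exactly parallel to the Hom-space recursion above. By the inductive hypothesis $\dim M_{[\mu-\varepsilon_j],[\lambda]}^{(m-1)} = \#ST_{K,N}(\CL_{\mu-\varepsilon_j}\setminus\CL_\lambda)$, summing over $j$ gives the dimension count.

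Finally I would check that the explicit lines $L_\sigma$ defined in \eqref{def_L_sigma} realize the isomorphism, not merely the dimension equality. Each $L_\sigma$ is one-dimensional by the discussion preceding the theorem (the successive intersections with isotypic components cut down the multiplicity to $1$ at each stage since the chain $\mathbf{u}_\sigma$ is increasing by a single $\varepsilon_{i_\ell}$ each step). To see the $L_\sigma$ are linearly independent and span, I would argue that $L_\sigma$ is contained in the summand $M_{[u_{m-1}],[\lambda]}^{(m-1)} \hookrightarrow M_{[\mu],[\lambda]}^{(m)}$ corresponding to $j = i_{m-1}$ (the column of the maximal entry), with $\sigma$ restricting to the tableau $\sigma'$ of the smaller shape and $L_{\sigma'}$ being its associated line; so the claim reduces, again by induction on $m$, to the statement that the canonical maps $\bigoplus_j M_{[\mu-\varepsilon_j],[\lambda]}^{(m-1)} \to M_{[\mu],[\lambda]}^{(m)}$ assemble into an isomorphism, which is exactly the semisimple isotypic decomposition applied to the outermost $V$. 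The main obstacle I anticipate is bookkeeping: carefully verifying that the semisimple decomposition of $V \otimes(-)$ on intertwining spaces is compatible with the nested-intersection definition of $L_\sigma$ in \eqref{def_L_sigma} — in particular that intersecting with $\left(V^{\otimes i}\otimes[\lambda]\right)_{u_i}$ for $i < m$ commutes with projecting onto the $u_{m-1}$-isotypic component in the outermost slot — so that the inductive identification of $L_\sigma$ with $L_{\sigma'}$ genuinely holds and no lines collapse or coincide. Everything else is a direct transcription of the Pieri rule on one side and corner-removal on lattice paths on the other.
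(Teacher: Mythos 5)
Your proof is correct and takes essentially the same approach as the paper. The paper's argument is really just the discussion that precedes the theorem statement — it builds up the tensor product $V^{\otimes m}\otimes[\lambda]$ one factor of $V$ at a time, using semisimplicity of $\CC_\CF$ and the Pieri rule to cut each stage down to multiplicity one, and then asserts the theorem as following from the fact that the $L_\sigma$ exhaust the isotypic decomposition. Your induction on $m$, peeling off the outermost $V$ and matching the recursion $M^{(m)}_{[\mu],[\lambda]}\simeq\bigoplus_j M^{(m-1)}_{[\mu-\varepsilon_j],[\lambda]}$ with removal of the maximal-labeled corner box, is the same idea run in reverse and spelled out more carefully; the compatibility worry you raise at the end (that intersecting with $V^{\otimes(m-1-i)}\otimes(V^{\otimes i}\otimes[\lambda])_{u_i}$ commutes with projecting to the $u_{m-1}$-isotypic component of the factor $V^{\otimes(m-1)}\otimes[\lambda]$) is harmless, since in a semisimple $\Bbbk$-linear category tensoring by $V$ preserves the subobject lattice of a fixed object, so $(V\otimes A)\cap(V\otimes B)=V\otimes(A\cap B)$ for subobjects $A,B$ of a common ambient object.
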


We recall the content function of Definiton~\ref{def_content}, and extend it to standard $(K,N)$-periodic skew tableaux. The \emph{content function} $C_\sigma$ of a standard $(K,N)$-periodic skew tableaux $\sigma$ is the function $\BZ \to \BZ/(N+K)\BZ$ such that
\[C_\sigma(i) = x-y, \quad \text{for } (x,y) \in\BZ^2 \text{ such that } \sigma(x,y)=i.\]

\begin{prop}\label{prop_Xi_L}
For all $\sigma \in ST_{K,N}(\CL_\mu\setminus\CL_\lambda)$, the generators $X_i$ of the affine Hecke algebra $\dot{H}_q(m)$ acts on $L_\sigma$ as
\begin{equation}
X_i\cdot L_\sigma = v^{2C_\sigma(i)}, \quad 1\leq i\leq m-1.
\end{equation}
\end{prop}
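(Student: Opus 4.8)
The plan is to reduce the computation to a single application of Corollary~\ref{cor_lambda_mu_nu}, using naturality of the braiding. Recall that in $\dot H_q(m)$ one has $X_{i+1}=q^{-1}T_iX_iT_i$, hence $X_i=q^{-(i-1)}(T_{i-1}\cdots T_1)X_1(T_1\cdots T_{i-1})$. Since $q=v^2$ and $T_i$, $X_1$ act on $M^{(m)}_{[\mu],[\lambda]}$ by $v R_i$ and $R_0^2$ respectively, the powers of $v$ cancel and $X_i$ acts by $(R_{i-1}\cdots R_1)\,R_0^2\,(R_1\cdots R_{i-1})$; diagrammatically this operator pulls the $i$-th strand past strands $1,\dots,i-1$, loops it once around the bottom strand $W=[\lambda]$, and pulls it back.

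Next I would exploit the structure of $L_\sigma$. Write $\mathbf u_\sigma=(u_0,\dots,u_m)$ with $u_0=\lambda$, $u_m=\mu$, and $u_\ell=u_{\ell-1}+\varepsilon_{c_\ell}$, where $c_\ell\in\{1,\dots,N\}$ is the column of the cell labelled $\ell$ in $\sigma$. By definition, $L_\sigma$ lies inside $V^{\otimes(m-i+1)}\otimes\bigl(V^{\otimes(i-1)}\otimes[\lambda]\bigr)_{u_{i-1}}$ and also inside $V^{\otimes(m-i)}\otimes\bigl(V^{\otimes i}\otimes[\lambda]\bigr)_{u_i}$. Thus, after projecting onto the $u_{i-1}$-isotypic component, strands $i-1,\dots,1,W$ combine to a copy of $[u_{i-1}]$, and strand $i$ together with that copy lands in the $u_i$-isotypic summand of $V\otimes[u_{i-1}]$, which by the Pieri rule (Proposition~\ref{propVLD}) is a copy of $[u_i]$ of multiplicity one. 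By functoriality of the braiding in $\CC_\CF$, the partial braids $R_1\cdots R_{i-1}$ and $R_{i-1}\cdots R_1$ only transport strand $i$ to the slot next to $W$ and back, permuting the remaining strands compatibly with all the isotypic decompositions; hence $X_i$ preserves $L_\sigma$ and acts on it exactly as $\check{R}_{[u_{i-1}],V}\check{R}_{V,[u_{i-1}]}$ acts on the $[u_i]$-isotypic summand of $V\otimes[u_{i-1}]$. Making this absorption precise, i.e. verifying that the effective partner seen by strand $i$ is $[u_{i-1}]$ rather than the whole tensor product, is the step I expect to require the most care; for $i=1$ no absorption is needed since $R_0^2=\Id\otimes\check{R}_{[\lambda],V}\check{R}_{V,[\lambda]}$ directly, which serves as a consistency check.

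Granting the absorption, Corollary~\ref{cor_lambda_mu_nu} (with the corollary's $\lambda,\mu,\nu$ taken to be $\omega_1$, $u_{i-1}$, $u_i$, so that $V=[\omega_1]$) shows that $X_i$ acts on $L_\sigma$ by the scalar
\[
v^{\langle u_i,\,u_i+2\rho\rangle-\langle\omega_1,\,\omega_1+2\rho\rangle-\langle u_{i-1},\,u_{i-1}+2\rho\rangle}.
\]
It then remains to simplify the exponent. From $u_i=u_{i-1}+\varepsilon_{c_i}$ one gets $\langle u_i,u_i+2\rho\rangle-\langle u_{i-1},u_{i-1}+2\rho\rangle=2(u_{i-1})_{c_i}+1+2\rho_{c_i}$, and $\rho_c=\tfrac12(N+1-2c)$ gives $2\rho_{c_i}=N+1-2c_i$; also $\langle\omega_1,\omega_1+2\rho\rangle=1+2\rho_1=N$. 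Hence the exponent equals $2\bigl((u_{i-1})_{c_i}-(c_i-1)\bigr)$. Finally I identify this with twice the content of the cell labelled $i$: in its fundamental-period representative this cell sits in row $c_i$, i.e. at $y=c_i-1$, and at $x$-coordinate $(u_{i-1})_{c_i}$, the first cell of that row not yet occupied by the weight $u_{i-1}$; therefore $C_\sigma(i)\equiv(u_{i-1})_{c_i}-(c_i-1)\pmod{N+K}$. Since $q$ is a primitive $(N+K)$-th root of unity, $v^{2(N+K)}=q^{N+K}=1$, so the scalar is $v^{2C_\sigma(i)}$, as claimed.
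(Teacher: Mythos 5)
Your proof is correct and follows essentially the same approach as the paper. Both start by writing $X_i = q^{-(i-1)}(T_{i-1}\cdots T_1)X_1(T_1\cdots T_{i-1})$, identify the action as a double braiding of strand $i$ around the block of strands $i-1,\dots,1,[\lambda]$, reduce to $\theta$-eigenvalues on the isotypic components of $L_\sigma$ via the ribbon relation (your Corollary~\ref{cor_lambda_mu_nu} is precisely that relation packaged for a pair of irreducibles, together with Lemma~\ref{lem_theta_lambda}), and finish with the same arithmetic identifying the exponent with twice the content of the cell labelled $i$. The one cosmetic difference is the order of operations: the paper first computes the action of $X_i$ on the whole space as $\Id\otimes\theta_{V^{\otimes i}\otimes[\lambda]}(\theta_V\otimes\theta_{V^{\otimes(i-1)}\otimes[\lambda]})^{-1}$ and then restricts to $L_\sigma$ where all three $\theta$'s become scalars $v^{\langle u_i,u_i+2\rho\rangle}$, $v^{\langle\varepsilon_1,\varepsilon_1+2\rho\rangle}$, $v^{\langle u_{i-1},u_{i-1}+2\rho\rangle}$; you restrict to the $u_{i-1}$-isotypic slot first (your "absorption" step, justified by naturality of $\check R$ applied to the inclusion $[u_{i-1}]\hookrightarrow V^{\otimes(i-1)}\otimes[\lambda]$) and then invoke Corollary~\ref{cor_lambda_mu_nu} with partners $V$ and $[u_{i-1}]$. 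These yield the identical exponent, and your content computation matches the paper's up to a convention shift in the cell coordinates.
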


\begin{proof}
For all $1\leq i\leq m-1$, we write
\[X_i = q^{-i+1}T_{i-1}\cdots T_2T_1 X_1T_1T_2\cdots T_{i-1}. \]
Thus each $X_i$ acts on $M_{[\mu],[\lambda]}^{(m)}$ by $R_{i-1}\cdots R_2R_1 R_0^2R_1R_2\cdots R_{i-1}$. Using the properties of the $\mathcal{R}$-matrix, and \eqref{eq_ThetaUW}, one has
\begin{align*}
X_i \, \mapsto \,& \Id_{V^{\otimes m-i}}\otimes \left( \check{R}_{V^{\otimes i-1}\otimes[\lambda],V}\check{R}_{V,V^{\otimes i-1}\otimes[\lambda]} \right),\\
= \, & \Id_{V^{\otimes m-i}}\otimes \left( \theta_{V^{\otimes i}\otimes [\lambda]}\left(\theta_V\otimes \theta_{V^{\otimes i-1}\otimes [\lambda]}\right)^{-1}\right).
\end{align*}
Hence, using the definition \eqref{def_L_sigma} of $L_\sigma$, as well as Lemma~\ref{lem_theta_lambda}
\[X_i\cdot L_\sigma = v^{\left\langle u_i, u_i + 2\rho \right\rangle -\left\langle u_{i-1}, u_{i-1} + 2\rho \right\rangle -\left\langle \varepsilon_1, \varepsilon_1 + 2\rho \right\rangle},\]
where $u_i$ is the $i$th element of $\mathbf{u}_\sigma = (u_0,u_1,\ldots,u_m)$, the chain of dominant weights associated to $\sigma$.
Writting $u_i= u_{i-1} + \varepsilon_{j_i}$ (the box numbered $i$ is added to the $j_i$th line of $u_{i-1}$), we compute
\begin{multline*}
\left\langle u_i, u_i + 2\rho \right\rangle -\left\langle u_{i-1}, u_{i-1} + 2\rho \right\rangle -\left\langle \varepsilon_1, \varepsilon_1 + 2\rho \right\rangle \\= 2\left( \left\langle  u_{i-1}, \varepsilon_{j_i} \right\rangle  + \left\langle  \rho, \varepsilon_{j_i} \right\rangle - \left\langle  \rho, \varepsilon_{1} \right\rangle\right)
= 2\left((u_{i-1})_{j_i}+1 - j_i)\right).
\end{multline*}
As $(u_{i-1})_{j_i}$ is the size of the $j_i$th row of $u_{i-1}$, $(u_{i-1})_{j_i}+1$ is the $x$ coordinate of the added box $i$ in $u_i$. 

Thus the coordinates of the box numbered $i$ in $\sigma$ are $((u_{i-1})_{j_i}+1,j_i)$ and we have the result.
\end{proof}

As in Section \ref{sect_DPT}, let us fix $(a,b)\in \BZ^2$ such that $m=aN-bK >0$.

For all dominant weights $\lambda\in \CA_{K,N}$, consider 
\[ [\mu] = D^{a}\otimes L^{-b}\otimes [\lambda].\]
Using Lemma~\ref{lem_latt_DL}, we know that $\CL_\mu =\CL_\lambda[a,b]$.
We recover here the doubly periodic tableaux of Section~\ref{sect_DPT}. More precisely, using Corollary~\ref{cor_DPT_CL}, we have the isomorphism
\[ST_{K,N}(\CL_\lambda[a,b]\setminus \CL_\lambda) \simeq \DPT(\CL_\lambda).\]

Moreover, from Lemma~\ref{lem_Delta_m_cells}, $m$ is equal to the number of boxes is the skew diagram $\lambda[a,b]\setminus\lambda$, and it satisfies \eqref{eq_m}. 
Thus Theorem~\ref{thm_M_ST} gives in this case the following.

\begin{cor}\label{cor_Hom_DPT}
For all dominant weights $\lambda\in \CA_{K,N}$, and $(a,b)\in \BZ^2$ such that $m=aN-bK >0$, 
\begin{equation}
\dim \left(\Hom_{\CC_\CF}\left(D^{a}\otimes L^{-b}\otimes [\lambda], V^{\otimes m}\otimes [\lambda]  \right) \right) = \# \DPT(\CL_\lambda).
\end{equation}
\end{cor}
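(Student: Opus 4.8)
The proof is an almost immediate application of the machinery assembled in this section, so the plan is to chain together three isomorphisms/bijections that have already been established and check that they compose correctly in this special case.

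First I would invoke Theorem~\ref{thm_M_ST}, which for arbitrary $\lambda,\mu\in\CA_{K,N}$ with $\lambda_i\le\mu_i$ gives $\dim M^{(m)}_{[\mu],[\lambda]} = \#ST_{K,N}(\CL_\mu\setminus\CL_\lambda)$, where $m=\sum_i(\mu_i-\lambda_i)$. The point is to specialize this to $[\mu]=D^a\otimes L^{-b}\otimes[\lambda]$. By Lemma~\ref{lem_latt_DL} (together with the identification $\lambda[a,b]=D^aL^{-b}\lambda$ from \eqref{shifted_partition}) we have $\CL_\mu = \CL_\lambda[a,b]$, so the relevant skew shape is exactly $\CL_\lambda[a,b]\setminus\CL_\lambda$. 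One must also check that the integer $m$ appearing in Theorem~\ref{thm_M_ST} — namely $\sum_i(\mu_i-\lambda_i)$ — agrees with $m=aN-bK$: this is precisely the content of Lemma~\ref{lem_Delta_m_cells}, which says $\Delta(\CL_\lambda)=\lambda[a,b]\setminus\lambda$ has exactly $aN-bK$ cells (and the hypothesis $m>0$ guarantees $\lambda\le\lambda[a,b]$ componentwise via Remark~\ref{leq_equiv}, so the skew shape and its periodic extension make sense).

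Second, I would apply the bijection $ST_{K,N}(\CL_\lambda[a,b]\setminus\CL_\lambda)\simeq\DPT(\CL_\lambda)$. This is essentially Corollary~\ref{cor_DPT_CL} combined with the bijection $\DPT(K,N,a,b)\simeq\Omega(K,N,a,b)$ of Theorem~\ref{dpt_bijection}: a $(K,N)$-periodic skew tableau of shape $\CL_\lambda[a,b]\setminus\CL_\lambda$ is by definition a standard $(K,N)$-periodic filling of $\Delta'(\CL_\lambda)$ with the numbers $1,\dots,m$, and restricting to $\Delta(\CL_\lambda)$ gives exactly a standard filling of $\Delta(\CL_\lambda)$ whose periodic extension $\sigma'$ is standard, which by Corollary~\ref{cor_DPT_CL} is the same as an element of $\DPT(\CL_\lambda)$. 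I should be a little careful that the notion of ``standard $(K,N)$-periodic skew tableau'' used in Theorem~\ref{thm_M_ST} coincides with the ``$\sigma'$ standard'' condition; this is just unwinding the definitions of $ST_{K,N}$ and of $\Omega(K,N,a,b)$.

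Putting these together yields $\dim\left(\Hom_{\CC_\CF}(D^a\otimes L^{-b}\otimes[\lambda],\,V^{\otimes m}\otimes[\lambda])\right) = \dim M^{(m)}_{[\mu],[\lambda]} = \#ST_{K,N}(\CL_\lambda[a,b]\setminus\CL_\lambda) = \#\DPT(\CL_\lambda)$, which is the claim. Honestly there is no real obstacle here — the work was all done in Theorem~\ref{thm_M_ST} — so the only thing that needs care is the bookkeeping: confirming the three compatibilities (that $\CL_\mu=\CL_\lambda[a,b]$, that the two meanings of $m$ coincide, and that ``$(K,N)$-periodic skew tableau'' matches ``$\sigma'$ standard''). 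I would state these as the three displayed equalities above and cite Lemma~\ref{lem_latt_DL}, Lemma~\ref{lem_Delta_m_cells}, and Corollary~\ref{cor_DPT_CL} respectively, with the remark that $m>0$ forces $\lambda\le\lambda[a,b]$ so that the skew shapes involved are genuinely defined.
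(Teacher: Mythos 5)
Your proof follows exactly the same route as the paper: invoke Theorem~\ref{thm_M_ST} with $[\mu]=D^a\otimes L^{-b}\otimes[\lambda]$, use Lemma~\ref{lem_latt_DL} to identify $\CL_\mu=\CL_\lambda[a,b]$, Lemma~\ref{lem_Delta_m_cells} for the box count, and Corollary~\ref{cor_DPT_CL} to match $(K,N)$-periodic skew tableaux with $\DPT(\CL_\lambda)$.

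One parenthetical remark you make twice is not correct, though: $m=aN-bK>0$ does \emph{not} by itself force $\lambda\leq\lambda[a,b]$ componentwise, and Remark~\ref{leq_equiv} makes no such claim --- it only gives an equivalent reformulation of the inequality $\CL\leq\CL[a,b]$, not a criterion derived from the sign of $m$. For example with $(K,N,a,b)=(3,2,2,1)$ one has $m=1>0$, yet for $\lambda=(0,0)$ one computes $\lambda[2,1]=(2,-1)\not\geq(0,0)$. In that degenerate case the corollary still holds because both sides vanish: $\DPT(\CL_\lambda)=\emptyset$ since no DPT can have $\CL(\sigma)=\CL_\lambda$ when $\CL_\lambda\not<\CL_\lambda[a,b]$ (Lemma~\ref{skew_lemma}), and the Hom-space is zero by Remark~\ref{rem_lambda_m_mu} since $\mu=\lambda[a,b]$ cannot be written as $\lambda+\sum\varepsilon_{i_\ell}$. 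So the overall conclusion is fine, but you should drop the false attribution to Remark~\ref{leq_equiv} and instead either restrict to the case $\lambda\leq\lambda[a,b]$ and dispose of the remaining case as above, or leave it implicit as the paper does.
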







\begin{example}
Let us continue Example~\ref{ex_lambda_L}, for which $(K,N)=(7,4)$, and the considered weight is $\lambda=(4,1,1,-1)$. Fix $(a,b)= (3,1)$, such that $m=aN-bK = 5>0$.
We have drawn below $\CL_\lambda$, $\CL_\lambda[3,1]$ and $\Delta' = \CL_\lambda[3,1]\setminus \CL_\lambda$.

\begin{center}
\begin{tikzpicture}[scale=0.8]
\draw[step=0.5,gray, thin] (-1.3,-2.2) grid (6.2,2.2);
\draw[very thick, gray] (-1.3,1.5) -- (6.2,1.5);
\draw[very thick, gray] (2,-2.2) -- (2,2.2);
\draw[red, thick] (2,1.5) circle (0.8mm);
\draw[very thick, purple] (2,0) -- (2,-0.5);
\draw[very thick, purple] (0.5,-0.5) -- (2,-0.5);
\draw[very thick, purple] (0.5,-0.5) -- (0.5,-1.5);
\draw[very thick, purple] (-0.5,-1.5) -- (0.5,-1.5);
\draw[very thick, purple] (-0.5,-1.5) -- (-0.5,-2);
\draw[very thick, purple] (-1,-2) -- (-0.5,-2);
\draw[very thick, purple] (-1,-2) -- (-1,-2.2);
\draw[very thick, purple] (2,0) -- (3,0);
\draw[very thick, purple] (3,0.5) -- (3,0);
\draw[very thick, purple] (3,0.5) -- (4,0.5);
\draw[very thick, purple] (4,1.5) -- (4,0.5);
\draw[very thick, purple] (4,1.5) -- (5.5,1.5);
\draw[very thick, purple] (5.5,2) -- (5.5,1.5);
\draw[very thick, purple] (5.5,2) -- (6.2,2);
\node[purple] at (5,0.5) {$\CL_\lambda[3,1]$};
\draw[very thick] (-1,-1) -- (-1, -2);
\draw[very thick] (-1,-1) -- (0.5, -1);
\draw[very thick] (0.5,-0.5) -- (0.5, -1);
\draw[very thick] (0.5,-0.5) -- (1.5, -0.5);
\draw[very thick] (1.5,0) -- (1.5, -0.5);
\draw[very thick] (1.5,0) -- (2.5, 0);
\draw[very thick] (2.5,1) -- (2.5, 0);
\draw[very thick] (2.5,1) -- (4, 1);
\draw[very thick] (4,1.5) -- (4, 1);
\draw[very thick] (5,1.5) -- (4, 1.5);
\draw[very thick] (5,1.5) -- (5, 2);
\draw[very thick] (6,2) -- (5, 2);
\node at (-0.5,-0.5) {$\CL_\lambda$};
\fill[gray,nearly transparent] (-1,-2) -- (-1,-1) -- (0.5,-1) -- (0.5,-1.5) -- (-0.5, -1.5) -- (-0.5, -2) -- (-1,-2);
\fill[gray,nearly transparent] (2.5,0) -- (2.5,1) -- (4,1) -- (4,0.5) -- (3, 0.5) -- (3,0) -- (2.5,0);
\fill[gray,nearly transparent] (1.5,0) rectangle (2,-0.5);
\fill[gray,nearly transparent] (5,2) rectangle (5.5,1.5);
\node (D) at (2.6,-1.2) {$\Delta'$};
\draw[very thick,gray] (1.75,-0.25) -- (D);
\draw[very thick,gray] (2.75,0.5) -- (D);
\draw[very thick,gray] (0.25,-1.25) -- (D);
\end{tikzpicture}
\end{center}
The number of standard $(7,4)$-periodic fillings of the infinite skew diagram $\Delta'$ is 15, the number of standard skew tableaux of shape $\ytableausetup{boxsize=0.9em}\ydiagram{2+3,2+1,1}$ :
\begin{center}
\ytableausetup{boxsize=0.9em}
    \ytableaushort{
    \none\none123,\none\none4,5}
    \, ,
    \ytableaushort{
    \none\none124,\none\none3,5}
    \, ,
    \ytableaushort{
    \none\none134,\none\none2,5}
    \, ,
    \ytableaushort{
    \none\none123,\none\none5,4}
    \, ,
    \ytableaushort{
    \none\none125,\none\none3,4} \, ,
    \\
    \ytableaushort{
    \none\none135,\none\none2,4}
    \, ,
    \ytableaushort{
    \none\none124,\none\none5,3}
    \, ,
    \ytableaushort{
    \none\none125,\none\none4,3}
    \, ,
    \ytableaushort{
    \none\none145,\none\none2,3}
    \, ,
    \ytableaushort{
    \none\none134,\none\none5,2} \, ,
    \\
    \ytableaushort{
    \none\none135,\none\none4,2}
    \, ,
    \ytableaushort{
    \none\none145,\none\none3,2} 
    \, ,
    \ytableaushort{
    \none\none234,\none\none5,1}
    \, ,
    \ytableaushort{
    \none\none235,\none\none4,1}
    \, ,
    \ytableaushort{
    \none\none245,\none\none3,1} \, .
\end{center}
And 15 is also the dimension of the space
\[\Hom_{\CC_\CF}\left(D^{3}\otimes L^{-1}\otimes [\lambda], V^{\otimes 5}\otimes  [\lambda]  \right).\]
\end{example}


For all dominant weights $\lambda\in \CA_{K,N}$, we introduce the notation
\[M_\lambda:=\Hom_{\CC_\CF}\left(D^{a}\otimes L^{-b}\otimes [\lambda], V^{\otimes m}\otimes [\lambda]  \right).\]
Then, Corollary~\ref{cor_Hom_DPT} gives an isomorphism of vector spaces
\begin{equation}\label{iso_M_W}
M_\lambda \simeq W_{\CL_\lambda}.
\end{equation}

\begin{prop}
The isomorphism \eqref{iso_M_W} is an isomorphism of $\dot{H}_q(m)$-modules.
\end{prop}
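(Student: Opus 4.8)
Here is my plan for proving that the isomorphism $M_\lambda \simeq W_{\CL_\lambda}$ of Equation \eqref{iso_M_W} is an isomorphism of $\dot{H}_q(m)$-modules.

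\medskip

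The plan is to compare the two actions of $\dot{H}_q(m)$ on corresponding natural bases. On the $W_{\CL_\lambda}$ side the basis is $\{\nu_\sigma\}_{\sigma\in\DPT(\CL_\lambda)}$, with the action of $T_i$ and $X_i$ given explicitly in Proposition~\ref{prop_DAHA_act}; on the $M_\lambda$ side, via the identification $\DPT(\CL_\lambda)\simeq ST_{K,N}(\CL_\lambda[a,b]\setminus\CL_\lambda)$ (Corollary~\ref{cor_DPT_CL}) and Theorem~\ref{thm_M_ST}, the basis is $\{L_\sigma\}$ of one-dimensional subspaces, and the action is given by the $R$-matrix formulas of Proposition~\ref{prop_AHA_ribbon}. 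First I would fix, for each $\sigma$, a nonzero vector $f_\sigma\in L_\sigma$ and declare the isomorphism \eqref{iso_M_W} to send $\nu_\sigma\mapsto f_\sigma$ (up to a suitably chosen normalization to be pinned down below), so that it is clearly a vector space isomorphism by Corollary~\ref{cor_Hom_DPT}; the content is that it intertwines the two algebra actions.

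\medskip

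The $X_i$ part is essentially already done: by Proposition~\ref{prop_Xi_L}, $X_i$ acts on $L_\sigma$ by the scalar $v^{2C_\sigma(i)} = q^{C_\sigma(i)} = w_\sigma(i)$, which is exactly the eigenvalue of $X_i$ on $\nu_\sigma$ in $W_{\CL_\lambda}$ (recall that $q$ is a primitive $(K+N)$-th root of unity and $w_\sigma(i)=q^{C_\sigma(i)}$). So the $X_1$-action matches. The real work is the $T_i$-action for $1\leq i\leq m-1$. Here I would argue as follows: both $W_{\CL_\lambda}$ and $M_\lambda$ are finite-dimensional $\dot H_q(m)$-modules in which $X_1$ (hence, using relation~\eqref{rel_TiXi}, all the $X_i$) acts semisimply with the same eigenvalues, so both decompose into the same joint $X$-weight spaces, each of dimension $\leq 1$ on each weight by the Proposition on irreducible $X$-semisimple modules (or directly: distinct $\sigma$ with the same content function but fixed lattice path coincide by Lemma~\ref{LemmaGradingWeightSp} applied within $\DPT(\CL_\lambda)$, since they have the same associated partition). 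On a one-dimensional weight space $\BC\nu_\sigma$, $T_i$ either preserves it (when $s_i$ is not allowed to act, i.e.\ $i$ is directly north or west of $i+1$) or maps it into $\BC\nu_\sigma\oplus\BC\nu_{s_i\sigma}$. Since the action is determined by the single generator $T_i$ applied to basis vectors, it suffices to check that the matrix of $T_i$ in the basis $\{f_\sigma\}$ agrees with that of $v R_i$ — and by the intertwiner relation $T_i X_j T_i^{-1}$ together with the quadratic Hecke relation $(T_i-q)(T_i+1)=0$ (which $v R_i$ satisfies by Proposition~\ref{prop_AHA_ribbon}), the diagonal entries of $T_i$ on the weight-$\underline w$ space are forced to be $q$ when $s_i$ fixes $\sigma$ and $w_{i+1}=q w_i$ (giving $q\nu_\sigma$), $-1$ when $w_i = q w_{i+1}$ (giving $-\nu_\sigma$), and in the generic two-dimensional situation the $2\times 2$ matrix of $T_i$ on $\BC f_\sigma\oplus\BC f_{s_i\sigma}$ is determined up to conjugation by a diagonal matrix (rescaling of the $f_\sigma$) by its eigenvalues $q,-1$ and by the requirement that it conjugates $\mathrm{diag}(w_\sigma(i),w_\sigma(i+1))$ to $\mathrm{diag}(w_\sigma(i+1),w_\sigma(i))$. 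This is exactly the standard calibration-graph argument (as in \cite{Ram2003Affine}): in such a two-dimensional block there is, up to the choice of normalization of $f_{s_i\sigma}$, a unique operator with eigenvalues $q,-1$ compatible with the $X$-action, so after rescaling the vectors $f_\sigma$ compatibly along the ``calibration graph'' on $\DPT(\CL_\lambda)$ (whose edges are the allowed $s_i$-moves, and which is connected by Remark~\ref{WeylGroupAction} together with Proposition~\ref{remAllowedPermutations}/the sorting algorithm), the matrix of $v R_i$ must coincide with the Suzuki--Vazirani-type formula in Proposition~\ref{prop_DAHA_act}.

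\medskip

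The main obstacle, as I see it, is the bookkeeping needed to pin down the normalizations $f_\sigma$ so that the off-diagonal entries match on the nose rather than just up to scalars, and to verify connectivity of the calibration graph on $\DPT(\CL_\lambda)$ so that a consistent normalization exists — i.e.\ that one cannot get a sign/scalar discrepancy around a cycle. The cleanest way around this is probably the following: establish the isomorphism of $\dot H_q(m)$-modules abstractly rather than basis-wise. Namely, observe that since $\dot H_q(m)$ is generated by $X_1$ and $T_1,\dots,T_{m-1}$ with $X_1$ acting $X$-semisimply with one-dimensional weight spaces, both modules are \emph{calibrated} $\dot H_q(m)$-modules in the sense of \cite{Ram2003Affine}, and a calibrated AHA-module is determined up to isomorphism by its set of $X$-weights (equivalently, by the underlying ``skew shape'' data), provided each weight occurs with multiplicity one and the placed-edge condition of Proposition~\ref{propContent} holds — which it does here. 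Concretely, $W_{\CL_\lambda}$ is, by construction, the calibrated module attached to the periodic skew shape $\CL_\lambda[a,b]\setminus\CL_\lambda$; and the computation of the $X_i$-eigenvalues on $L_\sigma$ in Proposition~\ref{prop_Xi_L} together with Theorem~\ref{thm_M_ST} exhibits $M_\lambda$ as a module with the identical weight multiset and the identical incidence combinatorics. Hence by the classification/rigidity of calibrated modules the two are isomorphic as $\dot H_q(m)$-modules, and since our chosen vector-space isomorphism \eqref{iso_M_W} already identifies the $X$-weight spaces, it can be rescaled on each one-dimensional weight space to become an $\dot H_q(m)$-module map — which then automatically identifies the whole module structure. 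I would write the proof in this second style, invoking \cite{Ram2003Affine} for the rigidity statement and citing Proposition~\ref{prop_Xi_L}, Theorem~\ref{thm_M_ST}, and Proposition~\ref{prop_DAHA_act} for the matching of weight data, and only spelling out the $2\times 2$ generic block explicitly as a sanity check.
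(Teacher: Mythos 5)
Your preferred abstract route is essentially the paper's own proof: the paper observes, via Proposition~\ref{prop_Xi_L}, that $M_\lambda$ and $W_{\CL_\lambda}$ are both $X$-semisimple $\dot{H}_q(m)$-modules with identical weights and weight spaces, and concludes directly that they are isomorphic. You are simply making explicit the rigidity argument for calibrated modules that the paper leaves implicit; your first, basis-by-basis plan with explicit matching of the $T_i$-matrices is not needed, as you yourself anticipate.
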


\begin{proof}
The $\dot{H}_q(m)$-module structure of $W_{\CL_\lambda}$ is given in Proposition~\ref{prop_AHA_act}.
Using Proposition~\ref{prop_Xi_L}, we know that the $\dot{H}_q(m)$-modules $M_\lambda $ and $W_{\CL_\lambda}$ are both $X$-semisimple and have exactly the same weights and weight spaces.  Thus they are isomorphic as $\dot{H}_q(m)$-modules. 
\end{proof}

  \subsection{DAHA action}

From the results of Section~\ref{sect_DAHA_DPT}, we know that the direct sum
\begin{equation}\label{W_equal_sum_M}
W_{(K,N,a,b)} = \bigoplus_{\lambda\in\CA_{K,N}} W_{\CL_\lambda} \simeq \bigoplus_{\lambda\in\CA_{K,N}} M_\lambda,
\end{equation}
is a module for the Double Affine Hecke Algebra $\ddot{H}_{q,t}(m)$. 

The action of the generators of $\ddot{H}_{q,t}(m)$ can be given explicitly on the intertwiner spaces, using the ribbon category structure of the fusion category. However, in order to state this, one needs some additional framework in ribbon calculus, detailed in the next section.

\section{Ribbon calculus}\label{sect_ribbon}

\subsection{Preliminaries}

We recall here standard facts about ribbon calculus, see for example \cite{Turaev2016Quantum} for detailed exposition.

Let $M$ be an oriented compact $3$-manifold with boundary. Assume we are given a collection of framed oriented marked points\footnote{A framing of a point is the choice of a tangent vector. An orientation of a point is the choice whether the point is \emph{input} or \emph{output}.} $p_1, p_2,\ldots$ on $\partial M$.

\begin{definition}
Let $\CC$ be a ribbon category and. A \emph{ribbon graph} in $M$ is a collection of bands and coupons embedded in $M$. Each band is a framed oriented arc or a loop. Each coupon is a small bigon with one side the input side and the other side the output side. Bands are not allowed to meet. Bands can meet the boundaries of the coupons and $\partial M$ only at their boundaries. Incoming resp. outgoing bands are allowed to meet a coupon only at the input resp. output side, and the orientations and the framings are required to match. Bands are only allowed to meet $\partial M$ at the marked points, the orientations and the framings are required to match, and the intersection has to be transversal.
\end{definition}

\begin{definition}
A \emph{coloring of the boundary} $\partial M$ is the choice of an object of $\CC$ for each marked point.
\end{definition}

\begin{definition}
A \emph{coloring} of a ribbon graph is a labeling of each band by an object of $\CC$, and each coupon by a morphism of $\CC$ which goes from the (ordered) tensor product of the objects associated with the incoming bands to the tensor product of the objects associated with the outgoing bands. Given a coloring of the boundary, a coloring of a ribbon graph is \emph{compatible} if the object associated to a band that meets the boundary is the same object as associated to the corresponding marked point.
\end{definition}

Ribbon calculus associates a morphism in $\CC$ to any ribbon graph in the $3$-ball and it is a standard fact that this association is well-defined.

\begin{definition}
Let us fix a coloring of the boundary. The associated \emph{ribbon skein module} is the vector space of formal linear combinations of compatible colored ribbon graphs in $M$ modulo the equivalence relation generated by the following moves:
\begin{enumerate}
\item A ribbon graph can be replaced by an isotopic ribbon graph, where the isotopy is required to be relative to $\partial M$.
\item For each ribbon graph $\Gamma$ with a coupon $c$ if the morphism in $c$ is a linear combination of morphisms $\alpha_1 f_1 + \alpha_2 f_2 +\cdots$, then we are allowed to replace $\Gamma$ by the corresponding linear combination of ribbon graphs $\alpha_1 \Gamma_1+\alpha_2\Gamma_2+\cdots$, where $\Gamma_i$ is the same as $\Gamma$, but has $f_i$ in $c$.
\item If the morphism at a coupon $c$ of a ribbon graph $\Gamma$ can be represented by a colored ribbon graph $\Gamma_0$ in the $3$-ball using standard ribbon calculus, then we can replace a small neighborhood of $c$ by $\Gamma_0$.
\end{enumerate}
\end{definition}

The following must be well-known to experts, but seems to be missing from the literature. See however Brochier \cite{Brochier2018Integrating}, where a similar idea has been worked out from the factorization homology point of view.

\begin{lem}\label{lem_torus_skein_module}
Let $M$ be the solid torus 
\[
M = S^1\times I\times I = I\times I \times I/(0,x,y)\sim (1,x,y) \;(x,y\in I)
\]
with a single marked point $p=(0,0,0)$. Choose a coloring of $\partial M$ by associating to $p$ an object $U$ of $\CC$. The corresponding ribbon skein module can be explicitly presented as follows:
\[
\bigoplus_{V\in\CC} \Hom(V, U\otimes V)\;/\; \{(\Id_U\otimes \varphi) \circ \psi - \psi\circ \varphi \;|\; \varphi\in\Hom(V,V'),\;\psi\in\Hom(V', U\otimes V)\}.
\]
\end{lem}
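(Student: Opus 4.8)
The plan is to realize the ribbon skein module of the marked solid torus very concretely by cutting the torus open. First I would cut $M$ along the disk $\{0\}\times I\times I$ (which contains the marked point $p$) to obtain a solid cylinder $I\times I\times I$ with two marked points on the two end disks, namely the copies $p_{\mathrm{in}}=(0,0,0)$ and $p_{\mathrm{out}}=(1,0,0)$ of $p$. Any compatible colored ribbon graph $\Gamma$ in $M$ can be isotoped so that it meets the cutting disk transversally in finitely many points; after a further isotopy I can pull $\Gamma$ apart at the cut and collect all strands passing through the disk so that, reading the cutting disk from the correct end, the graph becomes a ribbon graph in the $3$-ball $I\times I\times I$. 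Grouping all the strands crossing the cut into a single band colored by some object $V$ (using a coupon to split/merge into the actual colors of $\Gamma$), and using standard ribbon calculus in the $3$-ball to collapse everything inside into a single coupon, one sees that $\Gamma$ is equivalent (in the skein module) to the "standard form": a single loop-avoiding picture determined by an object $V$ and a single morphism in $\Hom(V,U\otimes V)$, where the $U$-strand runs from $p_{\mathrm{out}}$ around once and back to $p_{\mathrm{in}}$. This shows the skein module is spanned by the images of $\bigoplus_{V\in\CC}\Hom(V,U\otimes V)$.

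Second I would identify the relations. Linearity of the coupon (skein relation (2)) makes the assignment $(\V,f)\mapsto [\Gamma_{V,f}]$ linear in $f$, so we get a well-defined linear map from $\bigoplus_V \Hom(V,U\otimes V)$ onto the skein module. The only nontrivial kernel relations come from the ambiguity in how we chose the object $V$ labeling the cut: if $V'$ is another choice related to $V$ by a morphism $\varphi\in\Hom(V,V')$, then sliding $\varphi$ around the $S^1$-direction past the cutting disk — i.e. using isotopy (move (1)) to push the coupon carrying $\varphi$ through the identified ends $(0,x,y)\sim(1,x,y)$ — shows $\Gamma_{V',(\Id_U\otimes\varphi)\circ\psi}$ is isotopic to $\Gamma_{V,\psi\circ\varphi}$ for any $\psi\in\Hom(V',U\otimes V)$. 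Together with move (3) (which lets us absorb any purely $3$-ball subgraph into a coupon, so no further independent relations arise) this gives exactly the quotient in the statement. I would then check the map is well-defined on the quotient (the displayed relations clearly lie in its kernel by the isotopy just described) and that it is injective, by exhibiting an inverse: given a standard-form graph one recovers $(V,f)$ up to the stated equivalence by reading off the cut object and the resulting morphism, and any isotopy of ribbon graphs in $M$ either is supported in the $3$-ball (hence absorbed) or slides a coupon across the cut (hence accounted for by the relation).

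The main obstacle I expect is the careful verification that \emph{every} ribbon graph in $M$, including those with components that wrap the $S^1$ factor several times or link each other, can be brought to the standard form, and that two standard forms are related precisely by the "slide past the cut" relation and nothing more. This is essentially a normal-form / Reidemeister-type argument: one must argue that after choosing a generic cutting disk, isotopies of $\Gamma$ in $M$ decompose into isotopies within the cut-open $3$-ball (handled by classical well-definedness of ribbon calculus, move (3)) together with finitely many "slides across the cut," each of which produces exactly a relation of the listed form. Making this decomposition precise — essentially that the skein module of $M$ is the coend $\int^{V}\Hom(V,U\otimes V)$, computed by the cylinder-with-two-marked-points cobordism glued to itself — is the technical heart; everything else (linearity, surjectivity onto the span, the explicit inverse) is routine. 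I would point the reader to \cite{Brochier2018Integrating} for the factorization-homology incarnation of the same computation as a sanity check.
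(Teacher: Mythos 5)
Your proposal takes essentially the same approach as the paper: cut the solid torus along a meridian disk, collapse the cut-open graph to a single coupon via ribbon calculus in the $3$-ball so that it is recorded by an object $V$ and a morphism in $\Hom(V,U\otimes V)$, and observe that the only new relation comes from sliding a coupon across the cut. The paper handles the ``technical heart'' you flag (that arbitrary isotopies generate only these relations) via a cleaner device: it defines a \emph{slicing} map $\slice_{s_0}$ for each choice of cut position $s_0$, proves $\slice_{s_1}\Gamma=\slice_{s_2}\Gamma$ by exhibiting the two decompositions $\psi\circ\varphi$ and $(\Id_U\otimes\varphi)\circ\psi$, and then shows $\slice$ respects moves (1)--(3) simply by choosing $s_0$ so that the cutting neighborhood misses the small ball supporting the move --- thereby sidestepping the need to explicitly decompose isotopies into ``ball-supported'' pieces and ``slides across the cut.''
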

\begin{proof}
Denote the skein module by $S$ and the quotient above by $S'$.

To construct the inverse map we introduce the operation of slicing a colored ribbon graph in $M$, represented in Figure~\ref{fig_slice}. For any $s_0\in (0,1)$ we call a colored ribbon graph $\Gamma$ generic $D_{s_0}:=\{s_0\}\times I \times I$ does not meet the coupons of $\Gamma$ and meets the bands transversally. Fix any $s_0\in(0,1)$. We can always isotope $\Gamma$ in a small neighborhood of $D_{s_0}$ to make it generic. We say a band of $\Gamma$ is correctly oriented if crossing $D_{s_0}$ it goes in the positive $s$-direction. Using (3) we modify $\Gamma$ in a small neighborhood of $D_{s_0}$ by splitting each incorrectly oriented band into three to make sure that all bands intersecting $D_{s_0}$ are correctly oriented.


Consider the map $\pi_{s_0}:I\times I \times I\to M$ given by
\[
\pi_{s_0}(s,x,y) = (s+s_0 \pmod \BZ,\quad x,\quad y).
\]
Pulling back $\Gamma$ via $\pi$ produces a colored ribbon graph in the cube $\pi^*\Gamma\in I\times I\times I$. We choose a homeomorphism of the boundary of the cube to the boundary of the standard cube to make the face $\{0\}\times I^2$ the input face, and the union $I\times \{0\}\times I \cup \{1\}\times I^2$ the output face. Further applying a homeomorphism on the input and output faces we can make sure that the objects appear in order $V_1, V_2,\cdots$ on the input face, and in order $U, V_1, V_2,\cdots$ on the output face. Let $V=V_1\otimes V_2\otimes \cdots$. By ribbon calculus, $\pi^* \Gamma$ produces a morphism
\[
V \to U\otimes V.
\]
We denote the result by $\slice_{s_0} \Gamma\in S'$ and call it the slice of $\Gamma$ at $s_0$.

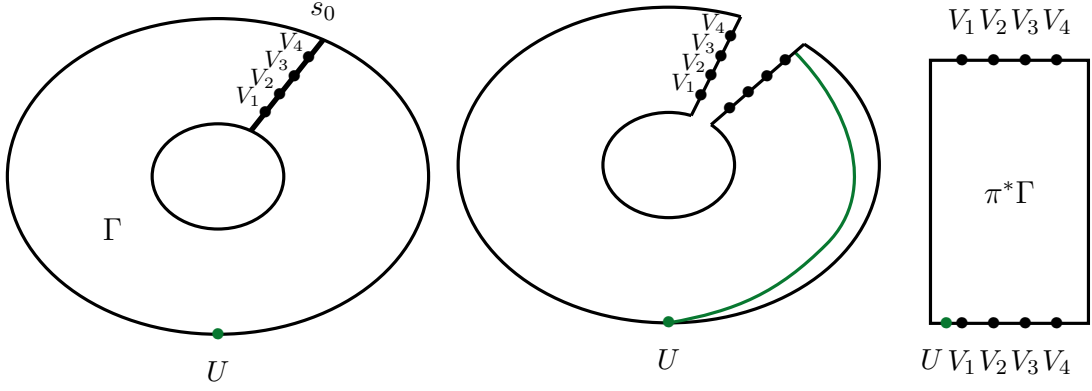
\begin{figure}\caption{Slicing of $\Gamma$ at $s_0$.}
\begin{tikzpicture}[scale=0.8]
\node at (0,0) {\begin{tikzpicture}[scale=0.7]
\draw[very thick] (0,0) ellipse (4 and 3);
\draw[very thick] (0,0) ellipse (1.25 and 1);
\node at (-2,-1) {\large$\Gamma$};
\node[label=below:{$U$},forest] (U) at (270:4 and 3) {$\bullet$};
\node[label={$s_0$}] (s) at (60:4 and 3) {};
\coordinate (s') at (60:1.25 and 1);
\node [label={[xshift=-0.6em, yshift=-0.7em]\footnotesize$V_1$}] at ($ (s')!1.0/5!(s) $) {$\bullet$};
\node [label={[xshift=-0.6em, yshift=-0.7em]\footnotesize$V_2$}] at ($ (s')!2.0/5!(s) $) {$\bullet$};
\node [label={[xshift=-0.6em, yshift=-0.7em]\footnotesize$V_3$}] at ($ (s')!3.0/5!(s) $) {$\bullet$};
\node [label={[xshift=-0.6em, yshift=-0.7em]\footnotesize$V_4$}] at ($ (s')!4.0/5!(s) $) {$\bullet$};
\coordinate (U') at ($ (s')!1.0/10!(s) $);
\draw[line width=2pt] (60:4 and 3) to (60:1.25 and 1);
\end{tikzpicture}};

\node at (7.5,0) {\begin{tikzpicture}[scale=0.7]
\draw[very thick] (0,0) ellipse (4 and 3);
\draw[very thick] (0,0) ellipse (1.25 and 1);
\draw[line width = 3pt,white] (50:4cm and 3cm) arc   (50:70:4cm and 3cm);
\draw[line width = 3pt,white] (50:1.25cm and 1cm) arc   (50:70:1.25cm and 1cm);
\node[label=below:{$U$},forest] (U) at (270:4 and 3) {$\bullet$};
\coordinate (s1) at (50:4 and 3);
\coordinate (s1') at (50:1.25 and 1);
\coordinate (s2) at (70:4 and 3);
\coordinate (s2') at (70:1.25 and 1);
\node [label={[xshift=-0.6em, yshift=-0.8em]\footnotesize$V_1$}] at ($ (s2')!1.0/5!(s2) $) {$\bullet$};
\node [label={[xshift=-0.6em, yshift=-0.8em]\footnotesize$V_2$}] at ($ (s2')!2.0/5!(s2) $) {$\bullet$};
\node [label={[xshift=-0.6em, yshift=-0.8em]\footnotesize$V_3$}] at ($ (s2')!3.0/5!(s2) $) {$\bullet$};
\node [label={[xshift=-0.6em, yshift=-0.8em]\footnotesize$V_4$}] at ($ (s2')!4.0/5!(s2) $) {$\bullet$};
\node  at ($ (s1)!1.0/5!(s1') $) {$\bullet$};
\node  at ($ (s1)!2.0/5!(s1') $) {$\bullet$};
\node  at ($ (s1)!3.0/5!(s1') $) {$\bullet$};
\node at ($ (s1)!4.0/5!(s1') $) {$\bullet$};
\coordinate (U') at ($ (s1)!1.0/10!(s1') $);
\draw[very thick, forest] (270:4 and 3) to[out = 10, in =225] (3,-1.5)  to[out = 45, in =315] (U');
\draw[very thick] (50:4cm and 3cm) to (50:1.25cm and 1cm);
\draw[very thick] (70:4cm and 3cm) to (70:1.25cm and 1cm);
\end{tikzpicture}};

\node at (13,0) {\begin{tikzpicture}[scale=0.7]
\coordinate (A) at (0,0);
\coordinate (B) at (3,0);
\coordinate (C) at (0,-5);
\coordinate (D) at (3,-5);
\node at (1.5,-2.5) {\large$\pi^* \Gamma$};
\draw[very thick] (A) rectangle (D);
\node [label={$V_1$}] at ($ (A)!1.0/5!(B) $) {$\bullet$};
\node [label={$V_2$}] at ($ (A)!2.0/5!(B) $) {$\bullet$};
\node [label={$V_3$}] at ($ (A)!3.0/5!(B) $) {$\bullet$};
\node [label={$V_4$}] at ($ (A)!4.0/5!(B) $) {$\bullet$};
\node [label=below:{$V_1$}] at ($ (C)!1.0/5!(D) $) {$\bullet$};
\node [label=below:{$V_2$}] at ($ (C)!2.0/5!(D) $) {$\bullet$};
\node [label=below:{$V_3$}] at ($ (C)!3.0/5!(D) $) {$\bullet$};
\node [label=below:{$V_4$}] at ($ (C)!4.0/5!(D) $) {$\bullet$};
\node [label={[xshift=-0.5em, yshift=-2.5em]$U$},forest] at ($ (C)!1.0/10!(D) $) {$\bullet$};
\end{tikzpicture}};
\end{tikzpicture}
\end{figure}\label{fig_slice}

Given two points $0<s_1<s_2<1$ and a colored ribbon graph $\Gamma$ we can perform a slice at $s_1$ and a slice at $s_2$. The slicing operation consists of two steps: first we do a local modification, and the take the pullback. Denote the result of local modification of $\Gamma$ during slicing at $s_i$ by $\Gamma_i$, and the result of applying both modifications by $\Gamma'$. We have
\[
\slice_{s_1} \Gamma = \slice_{s_1} \Gamma_1 = \slice_{s_1} \Gamma'
\]
because local modifications in a neighborhood of $D_{s_2}$ can be repeated for $\pi_{s_1}^{\Gamma_1}$ provided the neighborhood of $D_{s_2}$ does not intersect the neighborhood of $D_{s_1}$. Now the disc $\pi_{s_1}^* D_{s_2}$ splits the cube into two, and correspondingly the morphism $\slice_{s_1} \Gamma$ becomes decomposed into a product of the form
\[
\slice_{s_1} \Gamma' = \psi\circ \varphi.
\]
On the other hand, for $\slice_{s_2} \Gamma = \slice_{s_2} \Gamma'$ we obtain
\[
\slice_{s_2} \Gamma' = (\Id_U\otimes \varphi) \circ \psi.
\]
Thus we have $\slice_{s_1} \Gamma = \slice_{s_2} \Gamma$ (see Figure~\ref{fig_comp_slices}), so slicing does not depend on the choice of $s_0\in (0,1)$.

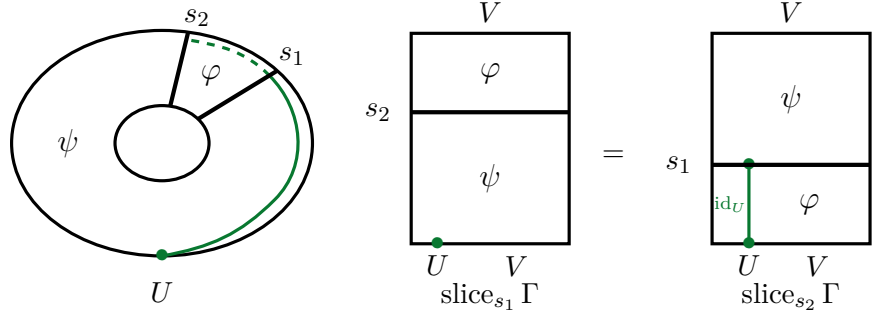
\begin{figure}
\begin{tikzpicture}
\node at (0,0) {
\begin{tikzpicture}[scale=0.5]
\draw[very thick] (0,0) ellipse (4 and 3);
\draw[very thick] (0,0) ellipse (1.25 and 1);
\node[label={[xshift=0.5em,yshift=-0.5em]$s_1$}] (s1) at (40:4 and 3) {};
\node[label={[xshift=0.3em,yshift=-0.5em]$s_2$}] (s2) at (80:4 and 3) {};
\node[label=below:{$U$},forest] (U) at (270:4 and 3) {$\bullet$};
\coordinate (s1') at (40:1.25 and 1);
\coordinate (s2') at (80:1.25 and 1);
\coordinate (U1) at ($ (s1)!1.0/10!(s1') $);
\coordinate (U2) at ($ (s2)!1.0/10!(s2') $);
\draw[very thick, forest] (270:4 and 3) to[out = 10, in =225] (3,-1.5)  to[out = 45, in =315] (U1);
\draw[very thick, forest, dashed] (U1) to[out = 135, in=350] (U2);
\draw[ultra thick] (40:4cm and 3cm) to (40:1.25cm and 1cm);
\draw[ultra thick] (80:4cm and 3cm) to (80:1.25cm and 1cm);
\node at (1.3,1.8) {\large$\varphi$};
\node at (-2.5,0) {\large$\psi$};
\end{tikzpicture}};

\node at (4,0) {\begin{tikzpicture}[scale=0.7]
\coordinate (A) at (0,0);
\coordinate (B) at (3,0);
\coordinate (C) at (0,-4);
\coordinate (D) at (3,-4);
\draw[very thick] (A) rectangle (D);
\node[label=left:{$s_2$}] at (0,-1.5) (s2) {};
\draw[ultra thick] (0,-1.5) -- (3,-1.5);
\node at (1.5,-0.75) {\large$\varphi$};
\node at (1.5,-2.75) {\large$\psi$};
\node[label={[yshift=-2em]$U$}, forest] at (0.5,-4) {$\bullet$};
\node[label={[yshift=-1.8em]$V$}] at (2,-4) {};
\node[label={[yshift=-0.3em]$V$}] at (1.5,0) {};
\node at (1.5,-5) {$\slice_{s_1}\Gamma$}; 
\end{tikzpicture}};

\node at (8,0) {\begin{tikzpicture}[scale=0.7]
\coordinate (A) at (0,0);
\coordinate (B) at (3,0);
\coordinate (C) at (0,-4);
\coordinate (D) at (3,-4);
\draw[very thick] (A) rectangle (D);
\node[label=left:{$s_1$}] at (0,-2.5) (s2) {};
\node at (1.85,-3.25) {\large$\varphi$};
\node at (1.5,-1.25) {\large$\psi$};
\node[label={[yshift=-2em]$U$}, forest] at (0.7,-4) {$\bullet$};
\node[ forest] at (0.7,-2.5) {$\bullet$};
\draw [very thick, forest] (0.7,-4) to (0.7,-2.5);
\node at (0.35,-3.25) {\color{forest}\tiny$\id_U$};
\node[label={[yshift=-1.8em]$V$}] at (2,-4) {};
\node[label={[yshift=-0.3em]$V$}] at (1.5,0) {};
\node at (1.5,-5) {$\slice_{s_2}\Gamma$}; 
\draw[ultra thick] (0,-2.5) -- (3,-2.5);
\end{tikzpicture}};
\node at (6,0) {=};
\end{tikzpicture}
\caption{Comparing two slices.}
\end{figure}\label{fig_comp_slices}

In order to show that slicing induces a well-defined map $S\to S'$ we check that the relations (1)--(3) are respected. Decomposing any isotopy is a composition of small isotopies we can make sure that all transformations (1)--(3) affect only a small ball $B\subset M$. We can always choose $s_0$ in such a way that the neighborhood of $D_{s_0}$ where we do the local modification does not intersect $B$. The same transformations can be repeated in  $\pi_{s_0}^{-1}(B)$, and we obtain that the result of $\slice_{s_0}$ as an element of $\Hom(V, U\otimes V)$ remains invariant.

So we have a well-defined map $S\to S'$. Any morphism $\psi:V\to U\otimes V$ can be represented by a coupon labeled by $\psi$, and two endpoints can then be connected inside $M$. It can be checked that this induces a well-defined map $S' \to S$, which automatically satisfies that $S'\to S \to S'$ is the identity, hence $S'\to S$ is injective. To show that $S\to S'\to S$ is the identity, we start with a colored ribbon graph $\Gamma$. Applying the local modifications does not change its class in $S$. Applying transformation (3) inside $M\setminus D_{s_0}$ we can replace $\Gamma$ by a single coupon, and the parallel bands crossing $D_{s_0}$ can be replaced by a single band labeled by the tensor product. So we arrive at a ribbon graph equivalent to $\Gamma$ which is in the image of $S'\to S$. Thus $S'\to S$ is also surjective.
\end{proof}

\begin{lem}\label{lem_skein_sum}
If the category $\CC$ is additionally semi-simple, then for all objects $U$, the ribbon skein module $S_U$ is isomorphic to
\[\bigoplus_{I\in\Irr(\CC)}\Hom(I,U\otimes I).\]
\end{lem}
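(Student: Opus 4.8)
The plan is to deduce Lemma~\ref{lem_skein_sum} from Lemma~\ref{lem_torus_skein_module} by computing the quotient $S'$ explicitly when $\CC$ is semisimple. By Lemma~\ref{lem_torus_skein_module} we have
\[
S_U \cong \bigoplus_{V\in\CC} \Hom(V, U\otimes V)\;/\; \{(\Id_U\otimes \varphi)\circ\psi - \psi\circ\varphi\},
\]
so the task is to show that the right-hand side collapses onto $\bigoplus_{I\in\Irr(\CC)}\Hom(I,U\otimes I)$. First I would observe that there is an obvious map $\bigoplus_{I\in\Irr(\CC)}\Hom(I,U\otimes I)\to S_U$ induced by the inclusion of the simple objects among all objects; the content is that this map is an isomorphism.

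The key step is to use semisimplicity to reduce every $V$ to a direct sum of simples. Write $V\cong \bigoplus_{k} I_k$ with $I_k\in\Irr(\CC)$, let $\iota_k: I_k\to V$ and $p_k: V\to I_k$ be the structure maps of the biproduct, so $\sum_k \iota_k p_k = \Id_V$ and $p_k\iota_\ell = \delta_{k\ell}\Id_{I_k}$. Given $\psi\in\Hom(V,U\otimes V)$, I would apply the relation in $S'$ repeatedly: for each $k$, taking $\varphi = \iota_k p_k \in \Hom(V,V)$ (or more precisely decomposing through $I_k$) shows that the class of $\psi$ equals the class of $\sum_k (\Id_U\otimes \iota_k)\circ \psi_k \circ p_k$ where $\psi_k := (\Id_U\otimes p_k)\circ\psi\circ\iota_k \in \Hom(I_k, U\otimes I_k)$. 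Concretely, writing $\varphi=\iota_k\in\Hom(I_k,V)$ and $\psi'=(\Id_U\otimes p_k)\circ\psi\in\Hom(V,U\otimes I_k)$, no — one needs the two composable morphisms to go $V\to V'\to U\otimes V$; so I would take $V'=I_k$, $\varphi = p_k\in\Hom(V,I_k)$ is the wrong direction, instead $\varphi\in\Hom(V,V')$ forces $V'$ on the target side. The cleanest formulation: for $\varphi\in\Hom(V,V')$ and $\psi\in\Hom(V',U\otimes V)$ the relation reads $(\Id_U\otimes\varphi)\circ\psi \sim \psi\circ\varphi$ as elements of $\Hom(V',U\otimes V')$ and $\Hom(V,U\otimes V)$ respectively. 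Using this with $V'=I_k$, $\varphi=p_k$, and $\psi$ replaced by an arbitrary element of $\Hom(I_k,U\otimes V)$, together with $V'=V$, one moves any class supported on $V$ to a sum of classes supported on the $I_k$. Thus the map from $\bigoplus_{I\in\Irr}\Hom(I,U\otimes I)$ is surjective.

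For injectivity, I would show that the relations never identify anything among the $\Hom(I,U\otimes I)$ for $I$ simple. Given $\varphi\in\Hom(V,V')$ and $\psi\in\Hom(V',U\otimes V)$, decompose $V,V'$ into simples and use that $\Hom(I,J)=0$ for non-isomorphic simples and $=\BC$ for $I\cong J$ (by Schur's lemma, since $\CC$ is semisimple over $\BC$) to check that the image of the relation, projected to the $\Hom(I,U\otimes I)$ summands, is always zero: each relation, after decomposition, becomes a combination of terms $(\Id_U\otimes \varphi_{IJ})\circ\psi_{JI} - \psi_{JI}\circ\varphi_{IJ}$ with $\varphi_{IJ}\in\Hom(I,J)$, $\psi_{JI}\in\Hom(J,U\otimes I)$, and for $I\cong J$ the map $\varphi_{IJ}$ is a scalar, so the two terms cancel, while for $I\not\cong J$ both terms already live outside the diagonal summands. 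Hence the natural surjection is also injective, giving the desired isomorphism
\[
S_U \cong \bigoplus_{I\in\Irr(\CC)}\Hom(I,U\otimes I).
\]
The main obstacle I anticipate is purely bookkeeping: setting up the biproduct decomposition and keeping the source/target objects of $\varphi$ and $\psi$ straight so that the relation is applied in the correct Hom-space; once that is done, Schur's lemma does all the real work.
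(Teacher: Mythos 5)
Your proposal is correct and follows essentially the same route as the paper's proof: both invoke the presentation of $S_U$ from Lemma~\ref{lem_torus_skein_module}, decompose an arbitrary $V$ into simple summands via $\Id_V=\sum_k\iota_k p_k$, and apply the defining relation with $\varphi=p_k$, $\psi=g\circ\iota_k$ to push every class onto $\bigoplus_I\Hom(I,U\otimes I)$. The paper dispatches injectivity with ``clearly injective,'' whereas you supply the missing Schur's-lemma verification that the retraction $f\mapsto\sum_k(\Id_U\otimes p_k)\circ f\circ\iota_k$ annihilates the relations; aside from the exploratory false starts in the middle of your write-up, the final argument is sound.
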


\begin{proof}
Fix $W$ an object in the category $\CC$. Since $\CC$ is semi-simple, $W$ can be written as a finite sum of irreducible objects $W=\bigoplus_{i=1}^r W_i$. For $1\leq i\leq r$, consider the projections and inclusion maps:
\begin{align*}
\pi_i : W &\twoheadrightarrow W_i,\\
\iota_i : W_i & \hookrightarrow W.
\end{align*}
Let $f\in\Hom(W,U\otimes W)$, considered in the ribbon skein module $S_U$. Then using $\Id_W = \sum_{i=1}^r\iota_i \circ \pi_i$, one can write,  $f = \sum_{i=1}^r f \circ \iota_i  \circ \pi_i$. Moreover, for all $1\leq i\leq r$, 
\[f\circ \iota_i \circ \pi_i \sim (\Id_U\otimes \pi_i)\circ(f \circ \iota_i)=:f_i \quad \in \Hom(W_i,U\otimes W_i).\]
Hence the morphism
\[
S_U  \to \bigoplus_{I\in\Irr(\CC)}\Hom(I,U\otimes I),\quad
f  \mapsto \sum_{i=1}^r f_i,
\]
is well defined, and clearly injective. It is also surjective, as for all $I\in\Irr(\CC)$, and $f_I \in \Hom(I,U\otimes I)$, one can consider $f_I$ as a morphism in $S_U$.
\end{proof}

\subsection{DAHA action on quantum group intertwininers}

Consider again the solid torus $M$, with one marked point $p$ on the boundary $\partial M$. The fusion category $\CC_\CF$ is a semi-simple ribbon category, thus we can use the results of the previous section.

Let us color the point $p$ by the representation $U = D^{-a}\otimes L^b\otimes V^{\otimes m}$. Then using Lemma~\ref{lem_skein_sum}, the ribbon skein module $S_U$ is isomorphic to the direct sum
\begin{equation}\label{eq:S_U}
S_U \simeq \bigoplus_{\lambda\in\CA_{K,N}} \Hom([\lambda],U\otimes [\lambda]) \simeq \bigoplus_{\lambda\in\CA_{K,N}} M_\lambda \simeq W_{(K,N,a,b)},
\end{equation}
by \eqref{W_equal_sum_M}.

From the proof of Lemma~\ref{lem_torus_skein_module}, for each module $W$, each morphism $f :W \to U\otimes W$, seen as a ribbon graph as in Section~\ref{sect_ribbon_Uq}, can be embedded in $M$, as in Figure~\ref{fig_hom_torus}.

\begin{figure}\label{fig_hom_torus}
\begin{tikzpicture}
\draw[thick] (0,0) ellipse (5 and 3);
\draw[thick] (0,0) ellipse (1.25 and 1);
\draw[fill=lightgray, very thick] (-4,0.5) rectangle (-1.5,-0.5);
\node[scale=1.5] at (-2.75,0) {$f$};
\draw[line width = 3pt, purple] (-2,0.5) to[out=90, in = 180]  (0,2) to[out=0,in = 90] (2.5,0) to[out=270,in=0] (0,-2) to[out=180,in=270] (-2,-0.5); 
\node[purple, scale=1.5] at (3, 0) {$W$};
\draw[line width=2pt, blue] (214:5 and 3) to[out=34,in=270] (-3.75,-0.5)  ;
\draw[line width=2pt] (217:5 and 3) to[out=37,in=270] (-3.5,-0.5)  ;
\draw[line width=2pt] (220:5 and 3) to[out=40,in=270] (-3.25,-0.5)  ;
\draw[line width=2pt] (226:5 and 3) to[out=46,in=270] (-2.75,-0.5)  ;
\draw[line width=2pt] (229:5 and 3) to[out=49,in=270] (-2.5,-0.5)  ;
\node[xshift=-0.15cm, yshift=-0.2cm] at (229:5 and 3) {$1$};
\node[xshift=-0.15cm, yshift=-0.2cm] at (226:5 and 3) {$2$};
\node[xshift=-0.15cm, yshift=-0.2cm] at (217:5 and 3) {$m$};
\node at (-3.05,-1) {...};
\node[blue] at (-5,-1.75) {$D^{-a}L^b$};
\draw[line width = 3pt,forest] (212:5cm and 3cm) arc   (212:231:5cm and 3cm);
\node[forest, scale=1.5] at (-3,-2.75) {$U$};
\end{tikzpicture}
\caption{Embedding of $\Hom(W,U\otimes W)$ into the solid torus.}
\end{figure}
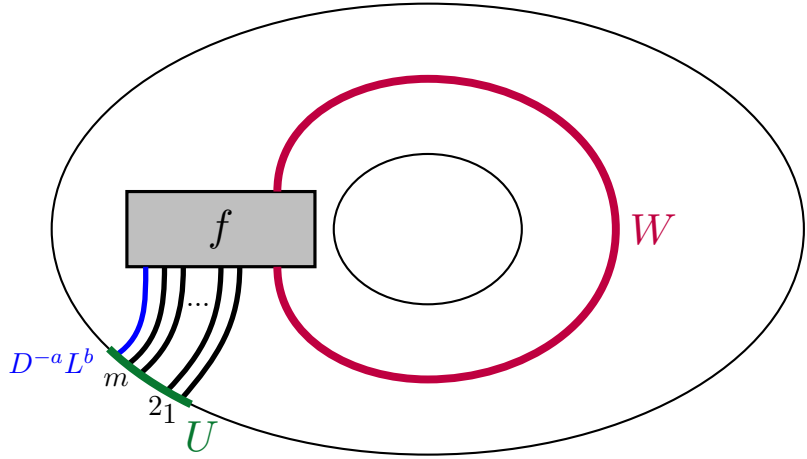

Then one can define an action of the generators $\pi^{\pm 1}$ of the DAHA $\ddot{H}_{q,t}(m)$ on the intertwining spaces, using ribbon calculus. For all $W\in \CF_v$, and $f\in\Hom(W, V^{\otimes m}\otimes D^{-a}\otimes L^b \otimes W)$, then $\pi f$ and $\pi^{-1}f$ are given as follows.

\begin{minipage}{0.45\textwidth}
\begin{equation}\label{act_pi}
\begin{tikzpicture}[scale=1.2]
\draw[fill=lightgray, very thick] (0,0) rectangle (3,1);
\node[scale=1.5] at (1.5,0.5) {$f$};
\draw[line width=2pt, purple] (2.5,0) --  (2.5,-2);
\draw[line width=2pt, purple] (2.5,1) --  (2.5,2.25);
\draw[line width=2pt] (1.5,0) --  (1.5,-0.75)to[out=270,in=90]  (1.25,-1.5) to (1.25,-2);
\draw[line width=2pt] (1.25,0) --  (1.25,-0.75) to[out=270,in=90]  (1,-1.5) to (1,-2);
\draw[line width=2pt] (0.75,0) --  (0.75,-0.75) to[out=270,in=90]  (0.5,-1.5) to (0.5,-2);
\draw[line width=2pt] (0.5,0) --  (0.5,-0.5);
\begin{knot}[clip width =4,ignore endpoint intersections=false]
\strand[line width=2pt] (0.5,-0.5) to[out=270,in=0]  (0,-1);
\strand[line width=2pt, blue] (0.25,0) to (0.25,-2);
\end{knot}
\draw[line width=2pt] (0,-1) to[out=180,in=270]  (-0.5,-0.5) to (-0.5,0.75) to[out=90,in=180]  (1,1.5)  to[out=0,in=270]  (2.25,1.75) to (2.25,2.25);
\draw[line width=2pt] (1.5,-2) to (1.5,-1.5) to[out=90,in=180] (2.05,-1) to[out=0,in=90]  (2.25,-1.25) to (2.25,-2);
\draw [purple,thick,decorate,decoration={brace,amplitude=4pt, mirror}] (2.15,-2.05) -- (2.6,-2.05);
\node[purple] at (2.4,-2.4) {$V^{\ast}\otimes W$};

\node at (1,-0.5) {$\cdots$};
\node at (0.75,-1.5) {$\cdots$};

\node[below, blue] at (-0.1,-2) {$D^{-a}L^b$};

\node[below] at (1.5,-2) {$1$};
\node[below] at (1.25,-2) {$2$};
\node[below] at (1,-2) {$3$};
\node[below] at (0.6,-2) {$m$};

\node[scale=1.5] at (-1.5,0) {$\pi \cdot f=$};
\draw [purple,thick,decorate,decoration={brace,amplitude=4pt}] (2.15,2.3) -- (2.6,2.3);
\node[purple] at (2.4,2.7) {$V^{\ast}\otimes W$};
\end{tikzpicture}
\end{equation}
\end{minipage}
\begin{minipage}{0.45\textwidth}
\begin{equation}\label{act_pi-1}
\begin{tikzpicture}[scale=1.2]
\draw[fill=lightgray, very thick] (0,0) rectangle (3,1);
\node[scale=1.5] at (1.5,0.5) {$f$};
\draw[line width=2pt, purple] (2.5,0) --  (2.5,-2);
\draw[line width=2pt, purple] (2.5,1) --  (2.5,2.25);
\draw[line width=2pt] (1.5,0) --  (1.5,-0.75) to[out=270,in=180] (2,-1.25) to[out=0,in=90]  (2.25,-1.5) to (2.25,-2);
\draw[line width=2pt] (1.25,0) --  (1.25,-0.75) to[out=270,in=90]  (1.5,-1.5) to (1.5,-2);
\draw[line width=2pt] (1,0) --  (1,-0.75) to[out=270,in=90]  (1.25,-1.5) to (1.25,-2);
\draw[line width=2pt] (0.5,0) --  (0.5,-0.75) to[out=270,in=90]  (0.75,-1.5) to (0.75,-2);
\begin{knot}[clip width =4,ignore endpoint intersections=false]
\strand[line width=2pt] (0,-1) to[out=0,in=90]  (0.5,-1.5) to (0.5,-2);
\strand[line width=2pt, blue] (0.25,0) to (0.25,-2);
\end{knot}
\draw[line width=2pt] (0,-1) to[out=180,in=270]  (-0.5,-0.5) to (-0.5,0.75) to[out=90,in=180]  (1,1.5)  to[out=0,in=270]  (2.25,1.75) to (2.25,2.25);
\draw [purple,thick,decorate,decoration={brace,amplitude=4pt, mirror}] (2.15,-2.05) -- (2.6,-2.05);
\node[purple] at (2.45,-2.4) {$V\otimes W$};
\draw[very thick] (1.5,-1.5)  -- (1.5,-2);

\node at (1,-1.5) {$\cdots$};
\node at (0.75,-0.5) {$\cdots$};

\node[blue, below] at (-0.2,-2) {$ D^{-a}L^b$};

\node[below] at (1.5,-2) {$1$};
\node[below] at (1.25,-2) {$2$};
\node[below] at (0.5,-2) {$m$};

\node[scale=1.5] at (-1.7,0) {$\pi^{-1} \cdot f=$};
\draw [purple,thick,decorate,decoration={brace,amplitude=4pt}] (2.15,2.3) -- (2.6,2.3);
\node[purple] at (2.45,2.7) {$V\otimes W$};
\end{tikzpicture}
\end{equation}
\end{minipage}

\begin{remark}
Note here that the top and bottom double strands $V^\ast\otimes W$ and $V\otimes W$ actually go around the core of the torus and meet each other.
\end{remark}

\begin{prop}
The action of $\pi^{\pm 1}$ given in \eqref{act_pi} and \eqref{act_pi-1} extends the $\dot{H}_{q}(m)$-module structure on $S_U$ defined in Proposition~\ref{prop_AHA_ribbon} to a $\ddot{H}_{q,t}(m)$-module structure. 
\end{prop}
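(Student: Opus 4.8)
The $\dot H_q(m)$-module structure on $S_U$ (with $U = D^{-a}\otimes L^b\otimes V^{\otimes m}$), assembled from Proposition~\ref{prop_AHA_ribbon} through the decomposition \eqref{eq:S_U}, is already in hand, so the task is to check that the operators $\pi^{\pm1}$ of \eqref{act_pi}--\eqref{act_pi-1} are well defined on $S_U$, are mutually inverse, and, together with the $R_i$, satisfy the relations of Definition~\ref{def_DAHA} that involve $\pi$: the relation $\pi X_i\pi^{-1}=X_{i+1}$ $(i<m)$, $\pi X_m\pi^{-1}=t^{-1}X_1$, relation~\eqref{rel_piTi}, and the instances of \eqref{rel_Hecke}, \eqref{rel_TTT}, \eqref{rel_TiTj}, \eqref{rel_TiXi}, \eqref{rel_TiXj} that mention the new generator $T_0$. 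The plan is to dispose of the purely geometric relations by ribbon isotopies inside the solid torus $M$, to introduce $T_0$ as a $\pi$-conjugate, and to derive all of its relations by conjugating the corresponding $\dot H_q(m)$-relations, the one genuinely non-formal point being the twist scalar $t=q^{-a-b}$.

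First I would check well-definedness: each of \eqref{act_pi}, \eqref{act_pi-1} is a local modification of a colored ribbon graph inside $M$ --- drag one $V$-strand once around the core of the torus, past the $D^{-a}\otimes L^b$ loop --- so it respects the moves (1)--(3) generating the ribbon skein module, and by Lemma~\ref{lem_skein_sum} it fixes the boundary colour $U$; composing $\pi$ with $\pi^{-1}$ and contracting the loop thereby created gives $\pi\pi^{-1}=\pi^{-1}\pi=\id$. Next, because $\pi$ cyclically rotates the $m$ parallel $V$-strands by one slot, inserting the braiding $R_i$ at the bottom of $\pi\cdot f$ is isotopic in $M$ to inserting $R_{i+1}$ below $f$ and then applying $\pi$, and likewise for the double braidings representing the $X_i$; this proves relation~\eqref{rel_piTi} for $1\le i\le m-2$ and $\pi X_i\pi^{-1}=X_{i+1}$ for $1\le i\le m-1$.

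Now set $T_0:=\pi(vR_{m-1})\pi^{-1}$, as relation~\eqref{rel_piTi} forces. The affine braid relations among $T_0,T_1,\dots,T_{m-1}$ --- i.e.\ the instances of \eqref{rel_TTT} and \eqref{rel_TiTj} involving $T_0$ --- hold because $T_0,\dots,T_{m-1}$ are the standard generators of the affine braid group on $m$ strands in $M$ and $\check R_{V,V}$ satisfies the Yang--Baxter equation, so these are ribbon isotopies in $M$; the Hecke relation \eqref{rel_Hecke} for $T_0$ and the relations \eqref{rel_TiXi}, \eqref{rel_TiXj} for $T_0$ are then obtained by conjugating the corresponding $T_{m-1}$-relations by $\pi$ (for instance $T_0X_mT_0 = q\,\pi X_m\pi^{-1}$), using the shifts just established together with the conventions $T_{i+m}=T_i$, $X_{i+m}=t^{-1}X_i$. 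This leaves the second half of relation~\eqref{rel_piTi}, namely $\pi T_0\pi^{-1}=T_1$ --- equivalently that $\pi^m$ acts compatibly with these identifications --- and the relation $\pi X_m\pi^{-1}=t^{-1}X_1$; both reduce to a single monodromy computation.

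That computation is the crux, and where I expect the difficulty to lie. When the last $V$-strand is dragged around the core it crosses the loop coloured $D^{-a}\otimes L^b$ and reappears in position $1$; I would evaluate the resulting scalar on each isotypic component from the ribbon-twist formula $\theta_{[\lambda]}=v^{\langle\lambda,\lambda+2\rho\rangle}\id$ of Lemma~\ref{lem_theta_lambda}, from Corollary~\ref{cor_lambda_mu_nu} for the full double braiding, and from the Pieri rules of Proposition~\ref{propVLD} for $V$, $D=[\omega_N]$, $L=[K\omega_1]$, the expectation being that this scalar collapses to $q^{a+b}=t^{-1}$ with the normalization $t=q^{-a-b}$; the same accounting simultaneously yields relation~\eqref{rel_TiXi} for $T_0$ and shows $\pi^m$ acts as a scalar multiple of $X_1\cdots X_m$, hence centrally, which is exactly what makes $\pi T_0\pi^{-1}=T_1$ hold. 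The main obstacle is thus the bookkeeping of these twist factors around the core of $M$: reconciling the conventions for $v$ versus $q$, the orientation of the braiding in \eqref{act_pi}, and the duals $V^\ast$ appearing there, so that the power of $q$ which emerges is precisely $-a-b$ rather than a neighbouring value. Everything else reduces to ribbon isotopy in the solid torus and to the formal observation that conjugation by $\pi$ carries the affine braid and cross relations among $T_1,\dots,T_{m-1},X_1,\dots,X_m$ to the relations involving $T_0$.
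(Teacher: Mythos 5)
Your plan matches the paper's in structure: define $T_0=\pi T_{m-1}\pi^{-1}$, check a short list of relations, dispose of the geometric ones by isotopy in the solid torus, and isolate the twist computation that produces $t^{-1}=q^{a+b}$ as the genuinely non-formal step. The paper's proof proceeds along exactly these lines, checking $\pi T_i=T_{i+1}\pi$ ($i\geq 1$), $\pi X_1=X_2\pi$, $T_1\pi^2=\pi^2 T_{m-1}$, and $\pi X_m=t^{-1}X_1\pi$, the first three by ribbon isotopy and the last by the ribbon-twist scalar from Lemma~\ref{lem_theta_lambda} and Corollary~\ref{cor_lambda_mu_nu}.

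However, your argument for $\pi T_0\pi^{-1}=T_1$ has a genuine gap. You reduce it (correctly) to $[\pi^m,T_{m-1}]=0$, but then justify this by asserting that $\pi^m$ acts as a scalar multiple of $X_1\cdots X_m$ and hence centrally. That assertion is false. On the graded module $W_{(K,N,a,b)}$ the element $\pi$ raises degree by $1$, so $\pi^m$ raises degree by $m$, while $X_1\cdots X_m$ is diagonal on the DPT basis and preserves degree; the two operators permute the isotypic components in entirely different ways and cannot be proportional. (Indeed $\pi^m\nu_\sigma=\nu_{\pi^m\sigma}$ is a permutation of basis vectors, whereas $X_1\cdots X_m\nu_\sigma=\prod_i w_\sigma(i)\nu_\sigma$ is a scalar.) In the abstract DAHA, $\pi^m$ is not proportional to $X_1\cdots X_m$ either: conjugation by $\pi$ multiplies $X_1\cdots X_m$ by $t^{-1}$, so if $\pi^m$ were a scalar multiple of $X_1\cdots X_m$ then $\pi^m$ would not commute with $\pi$. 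The commutativity $[\pi^m,T_{m-1}]=0$ is nonetheless true, but the correct reason — and the one the paper gives — is a direct ribbon isotopy: the crossing between two consecutive $V$-strands can be slid once around the core of the torus, giving $T_1\pi^2=\pi^2T_{m-1}$ without any reference to $X_1\cdots X_m$ or to centrality. Replacing your $\pi^m$-argument with this isotopy closes the gap; the rest of your reduction (conjugating the $T_{m-1}$-relations by $\pi$ to obtain the relations for $T_0$, and the final twist computation yielding $t^{-1}$) is sound and coincides with the paper.
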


\begin{proof}
The action of $T_0$ is given by the relation~\eqref{rel_piTi} $T_0 = \pi T_{m-1}\pi^{-1}$ of Definition~\ref{def_DAHA}. Then, in order to check that all the defining relations of the Double Affine Hecke Algebra $\ddot{H}_{q,t}(m)$ are satisfied, one has to check the following:
\begin{enumerate}
\item\label{pi_T_i} $\pi T_i = T_{i+1}\pi$, for all $i\geq 1$,
\item\label{pi_X_1} $\pi X_1 = X_2 \pi$,
\item\label{T_1_pi2} $T_1 \pi^2 = \pi^2 T_{m-1}$,
\item\label{pi_X_m} $\pi X_m = t^{-1}X_1 \pi$.
\end{enumerate}

Relation~\eqref{pi_T_i} is clear from the diagrams. For all $W\in \CF_v$ and $f\in\Hom(W,U\otimes W)$,

\begin{center}
\begin{tikzpicture}[scale=0.8]
\node[scale=1.2] at (0,3.5) {$\pi T_i \cdot f$};
\node[scale=1.2] at (8,3.5) {$T_{i+1}\pi \cdot f$};
\node at (0,0){
    \begin{tikzpicture}[scale=0.8]
        \coordinate (w) at (1.5,5);
        \coordinate (z) at (1.5,8);
        \coordinate (y) at (6.66,9.5);
        \foreach \x in {0,...,11} \coordinate (\x) at (2+\x*.5,6);
        \foreach \x in {0,...,11} \coordinate (\x\x) at (2+\x*.5,5);
        \foreach \x in {0,...,11} \coordinate (\x\x\x) at (2+\x*.5,4);
        \foreach \x in {0,...,11} \coordinate (\x\x\x\x) at (2+\x*.5,3);
        \draw[line width=5pt, white](1) to [out=-90,in=90] (1111);
        \draw[blue, line width=2pt](1) to [out=-90,in=90] (1111);
        \draw[line width=5pt, white](2) to [out=-90,in=90] (22) to [out=-90, in=-90] (w) to [out=90,in=-90] (z) to [out=90, in=-90] (y);
        \draw[line width=2pt](2) to [out=-90,in=90] (22) to [out=-90, in=-90] (w) to [out=90,in=-90] (z) to [out=90, in=-90] (y) to [out=90,in=-90] (6.66,10);
        \draw[line width=5pt, white](3) to [out=-90,in=90] (33);
        \draw[line width=2pt](3) to [out=-90,in=90] (33);
        \draw[line width=5pt, white](33) to [out=-90,in=90] (222) to [out=-90,in=90](2222);
        \draw[line width=2pt](33) to [out=-90,in=90] (222) to [out=-90,in=90](2222);
        \draw[line width=5pt, white](5) to [out=-90,in=90] (66);
        \draw[line width=2pt](5) to [out=-90,in=90] (66);
        \draw[line width=5pt, white](66) to [out=-90,in=90] (555) to [out=-90,in=90](5555);
        \draw[line width=2pt](66) to [out=-90,in=90] (555) to [out=-90,in=90](5555);
        \draw[line width=5pt, white](6) to [out=-90,in=90] (55);
        \draw[line width=2pt](6) to [out=-90,in=90] (55);
        \draw[line width=5pt, white](55) to [out=-90,in=90] (444) to [out=-90,in=90](4444);
        \draw[line width=2pt](55) to [out=-90,in=90] (444) to [out=-90,in=90](4444);
        \draw[line width=5pt, white](8) to [out=-90,in=90] (88);
        \draw[line width=2pt](8) to [out=-90,in=90] (88);
        \draw[line width=5pt, white](88) to [out=-90,in=90] (777) to [out=-90,in=90](7777);
        \draw[line width=2pt](88) to [out=-90,in=90] (777) to [out=-90,in=90](7777);
        \draw[line width=5pt, white](10) to [out=-90,in=90] (10101010);
        \draw[line width=2pt,purple](10) to [out=-90,in=90] (10101010);
        \draw[line width=2pt, purple](7,8) to [out=90,in=-90] (7,10);
        \draw[line width=2pt] (8888) to [out=90, in=180] (6.33,4) to [out=0,in=90] (6.66,3);
        \draw[ultra thick, fill=lightgray] (2,6) rectangle (7.5,8);
        \node[scale=2] at (4.5,7){$f$};
        \node[below] at (8888){$1$};
        \node[below] at (7777){$2$};
        \node[below] at (5555){$i+1$};
        \node[below] at (2222){$m$};
        \node[above] at (8){$1$};
        \node[above] at (6){$i$};
        \node[above] at (2){$m$};
        \node at (4,5.5){$\dots$};
        \node at (5.5,5.5){$\dots$};
        \node at (3.5,3.5){$\dots$};
        \node at (5,3.5){$\dots$};
        \node[blue, below] at (2,3) {$ D^{-a}L^b$};
        \node[purple] at (7.5,9) {$ W$};
        \node[purple] at (7.5,5) {$ W$};
    \end{tikzpicture}
};
\node[scale=2] at (4,0){$=$};
\node at (8,0){
    \begin{tikzpicture}[scale=0.8]
        \coordinate (w) at (1.5,5);
        \coordinate (z) at (1.5,8);
        \coordinate (y) at (6.66,9.5);
        \foreach \x in {0,...,11} \coordinate (\x) at (2+\x*.5,6);
        \foreach \x in {0,...,11} \coordinate (\x\x) at (2+\x*.5,5);
        \foreach \x in {0,...,11} \coordinate (\x\x\x) at (2+\x*.5,4);
        \foreach \x in {0,...,11} \coordinate (\x\x\x\x) at (2+\x*.5,3);
        \draw[line width=5pt, white](1) to [out=-90,in=90] (1111);
        \draw[blue, line width=2pt](1) to [out=-90,in=90] (1111);
        \draw[line width=5pt, white](2) to [out=-90,in=90] (22) to [out=-90, in=-90] (w) to [out=90,in=-90] (z) to [out=90, in=-90] (y);
        \draw[line width=2pt](2) to [out=-90,in=90] (22) to [out=-90, in=-90] (w) to [out=90,in=-90] (z) to [out=90, in=-90] (y) to [out=90,in=-90] (6.66,10);
        \draw[line width=5pt, white](3) to [out=-90,in=90] (33);
        \draw[line width=2pt](3) to [out=-90,in=90] (33);
        \draw[line width=5pt, white](33) to [out=-90,in=90] (222) to [out=-90,in=90](2222);
        \draw[line width=2pt](33) to [out=-90,in=90] (222) to [out=-90,in=90](2222);
        \draw[line width=5pt, white](5) to [out=-90,in=90] (55);
        \draw[line width=2pt](5) to [out=-90,in=90] (55);
        \draw[line width=5pt, white](66) to [out=-90,in=90] (555) to [out=-90,in=90](4444);
        \draw[line width=2pt](66) to [out=-90,in=90] (555) to [out=-90,in=90](4444);
        \draw[line width=5pt, white](6) to [out=-90,in=90] (66);
        \draw[line width=2pt](6) to [out=-90,in=90] (66);
        \draw[line width=5pt, white](55) to [out=-90,in=90] (444) to [out=-90,in=90](5555);
        \draw[line width=2pt](55) to [out=-90,in=90] (444) to [out=-90,in=90](5555);
        \draw[line width=5pt, white](8) to [out=-90,in=90] (88);
        \draw[line width=2pt](8) to [out=-90,in=90] (88);
        \draw[line width=5pt, white](88) to [out=-90,in=90] (777) to [out=-90,in=90](7777);
        \draw[line width=2pt](88) to [out=-90,in=90] (777) to [out=-90,in=90](7777);
        \draw[line width=5pt, white](10) to [out=-90,in=90] (10101010);
        \draw[line width=2pt,purple](10) to [out=-90,in=90] (10101010);
        \draw[line width=2pt, purple](7,8) to [out=90,in=-90] (7,10);
        \draw[line width=2pt] (8888) to [out=90, in=-90] (888) to [out=90, in=180] (6.33,4.5) to [out=0,in=90] (6.66,4) to [out=-90,in=90] (6.66,3);
        \draw[ultra thick, fill=lightgray] (2,6) rectangle (7.5,8);
        \node[scale=2] at (4.5,7){$f$};
        \node[below] at (8888){$1$};
        \node[below] at (7777){$2$};
        \node[below] at (5555){$i+1$};
        \node[below] at (2222){$m$};
        \node[above] at (8){$1$};
        \node[above] at (6){$i$};
        \node[above] at (2){$m$};
        \node at (4,5.5){$\dots$};
        \node at (5.5,5.5){$\dots$};
        \node at (3.5,3.5){$\dots$};
        \node at (5,3.5){$\dots$};
        \node[blue, below] at (2,3) {$ D^{-a}L^b$};
        \node[purple] at (7.5,9) {$ W$};
        \node[purple] at (7.5,5) {$ W$};
    \end{tikzpicture}
};
\end{tikzpicture}
\end{center}

Relation~\eqref{pi_X_1} is also clearly satisfied.

\begin{center}
\begin{tikzpicture}[scale=0.8]
\node[scale=1.2] at (0,3.5) {$X_2\pi \cdot f$};
\node[scale=1.2] at (8,3.5) {$\pi X_1 \cdot f$};
\node at (0,0){
    \begin{tikzpicture}[scale=0.8]
        \coordinate (w) at (1,6);
        \coordinate (z) at (1,8);
        \coordinate (y) at (5.5,9.5);
        \foreach \x in {0,...,11} \coordinate (\x) at (2+\x*.5,6);
        \foreach \x in {0,...,11} \coordinate (\x\x) at (2+\x*.5,5);
        \foreach \x in {0,...,11} \coordinate (\x\x\x) at (2+\x*.5,4);
        \foreach \x in {0,...,11} \coordinate (\x\x\x\x) at (2+\x*.5,3);
        \draw[line width=5pt, white](0) to [out=-90,in=90] (0000);
        \draw[blue, line width=2pt](0) to [out=-90,in=90] (0000);
        \draw[line width=5pt, white](1) to [out=-90, in=-90] (w) to [out=90,in=-90] (z) to [out=90, in=-90] (y) to [out=90,in=-90] (5.5,10);
        \draw[line width=2pt](1) to [out=-90, in=-90] (w) to [out=90,in=-90] (z) to [out=90, in=-90] (y) to [out=90,in=-90] (5.5,10);
        \draw[line width=2pt](2) to [out=-90,in=90] (11)--(1111);
        \draw[line width=2pt](4) to [out=-90,in=90] (33)--(3333);
        \draw[line width=2pt](555) to [out=90,in=180] (66) to [out=0,in=90] (777);
        \draw[line width=2pt,purple](8) to [out=-90,in=90] (888);
        \draw[line width=5pt, white](5) to [out=-90,in=90] (44) to [out=-90,in=180] (4.5,4.5) to [out=0,in=180] (6,4.5) to [out=0,in=90] (6.5,4) to [out=-90,in=0] (6,3.5);
        \draw[line width=2pt](5) to [out=-90,in=90] (44) to [out=-90,in=180] (4.5,4.5) to [out=0,in=180] (6,4.5) to [out=0,in=90] (6.5,4) to [out=-90,in=0] (6,3.5) -- (4.5,3.5) to[out=180,in=90] (4444);
        \draw[line width=2pt, purple](6,8) to [out=90,in=-90] (6,10);
        \draw[line width=5pt,white](888) -- (8888);
        \draw[line width=2pt,purple](888) -- (8888);
        \draw[line width=5pt,white](777) -- (7777);
        \draw[line width=2pt](777) -- (7777);
        \draw[line width=5pt,white](555) -- (5555);
        \draw[line width=2pt](555) -- (5555);
        \draw[ultra thick, fill=lightgray] (1.5,6) rectangle (6.5,8);
        \node[scale=2] at (4,7){$f$};
        \node[below] at (5555){$1$};
        \node[below] at (4444){$2$};
        \node[below] at (3333){$3$};
        \node[below] at (1111){$m$};
        \node[above] at (5){$1$};
        \node[above] at (4){$2$};
        \node[above] at (1){$m$};
        \node at (3.25,5.5){$\dots$};
        \node at (3,3.5){$\dots$};
        \node[blue, below] at (1.5,3) {$ D^{-a}L^b$};
        \node[purple] at (6.5,9) {$ W$};
        \node[purple] at (6.5,5) {$ W$};
    \end{tikzpicture}
};
\node[scale=2] at (4,0){$=$};
\node at (8,0){
        
    \begin{tikzpicture}[scale=0.8]
        \coordinate (w) at (1,4);
        \coordinate (z) at (1,8);
        \coordinate (y) at (5.5,9.5);
        \foreach \x in {0,...,11} \coordinate (\x) at (2+\x*.5,6);
        \foreach \x in {0,...,11} \coordinate (\x\x) at (2+\x*.5,5);
        \foreach \x in {0,...,11} \coordinate (\x\x\x) at (2+\x*.5,4);
        \foreach \x in {0,...,11} \coordinate (\x\x\x\x) at (2+\x*.5,3);
        \draw[line width=5pt, white](0) to [out=-90,in=90] (0000);
        \draw[blue, line width=2pt](0) to [out=-90,in=90] (0000);
        \draw[line width=5pt, white](1) --(111) to [out=-90, in=-90] (w) to [out=90,in=-90] (z) to [out=90, in=-90] (y) to [out=90,in=-90] (5.5,10);
        \draw[line width=2pt](1) --(111) to [out=-90, in=-90] (w) to [out=90,in=-90] (z) to [out=90, in=-90] (y) to [out=90,in=-90] (5.5,10);
        \draw[line width=2pt](2) -- (222) to [out=-90,in=90] (1111);
        \draw[line width=2pt](4) -- (444) to [out=-90,in=90] (3333);
        \draw[line width=2pt](5555) to [out=90,in=180] (5,3.6) to [out=0,in=90] (7777);
        \draw[line width=2pt,purple](8) to [out=-90,in=90] (88);
        \draw[line width=5pt, white](5) to [out=-90,in=180] (5,5.5) to [out=0,in=180] (6,5.5) to [out=0,in=90] (6.5,5);
        \draw[line width=2pt](5) to [out=-90,in=180] (5,5.5) to [out=0,in=180] (6,5.5) to [out=0,in=90] (6.5,5) to [out=-90,in=0] (6,4.5) to [out=-90,in=0] (6,4.5) -- (5,4.5) to [out=180,in=90] (555) to [out=-90,in=90] (4444);
        \draw[line width=2pt, purple](6,8) to [out=90,in=-90] (6,10);
        \draw[line width=5pt,white](88) -- (8888);
        \draw[line width=2pt,purple](88) -- (8888);
        \draw[ultra thick, fill=lightgray] (1.5,6) rectangle (6.5,8);
        \node[scale=2] at (4,7){$f$};
        \node[below] at (5555){$1$};
        \node[below] at (4444){$2$};
        \node[below] at (3333){$3$};
        \node[below] at (1111){$m$};
        \node[above] at (5){$1$};
        \node[above] at (4){$2$};
        \node[above] at (1){$m$};
        \node at (3.5,5.5){$\dots$};
        \node at (3.25,3.5){$\dots$};
        \node[blue, below] at (1.5,3) {$ D^{-a}L^b$};
        \node[purple] at (6.5,9) {$ W$};
        \node[purple] at (6.5,4) {$ W$};
    \end{tikzpicture}
};
\end{tikzpicture}
\end{center}

The picture for Relation~\eqref{T_1_pi2} is the following.

\begin{center}
  \begin{tikzpicture}[scale=0.8]
\node[scale=1.2] at (0,4) {$T_1\pi^2 \cdot f$};
\node[scale=1.2] at (8,4) {$\pi^2 T_{m-1} \cdot f$};
\node at (0,0){
    \begin{tikzpicture}[scale=0.8]
        \coordinate (w) at (1.5,6);
        \coordinate (z) at (1.5,8);
        \coordinate (y) at (6,9.5);
        \coordinate (ww) at (1,6);
        \coordinate (zz) at (1,8.5);
        \coordinate (yy) at (5.5,10);
        \foreach \x in {0,...,11} \coordinate (\x) at (2+\x*.5,6);
        \foreach \x in {0,...,11} \coordinate (\x\x) at (2+\x*.5,5);
        \foreach \x in {0,...,11} \coordinate (\x\x\x) at (2+\x*.5,4);
        \foreach \x in {0,...,11} \coordinate (\x\x\x\x) at (2+\x*.5,3);
        \draw[blue, line width=2pt](1) to [out=-90,in=90] (1111);
        \draw[line width=5pt, white](2) to [out=-90, in=-90] (w) to [out=90,in=-90] (z) to [out=90, in=-90] (y) to [out=90,in=-90] (6.166,10);
        \draw[line width=2pt,forest](2) to [out=-90, in=-90] (w) to [out=90,in=-90] (z) to [out=90, in=-90] (y) to [out=90,in=-90] (6,10);
        \draw[line width=5pt, white](3) to [out=-90, in=-90] (ww) to [out=90,in=-90] (zz) to [out=90, in=-90] (yy) to [out=90,in=-90] (5.5,10);
        \draw[line width=2pt,orange](3) to [out=-90, in=-90] (ww) to [out=90,in=-90] (zz) to [out=90, in=-90] (yy) to [out=90,in=-90] (5.5,10);
        \draw[line width=2pt](4) --(44) to [out=-90,in=90] (222)--(2222);
        \draw[line width=2pt](6) --(66) to [out=-90,in=90] (444)--(4444);
        \draw[line width=2pt,orange] (5555) to [out=90,in=-90] (666) to [out=90,in=90] (777)--(7777);
        \draw[line width=5pt,white] (6666) to [out=90,in=-90] (555) to [out=90,in=90] (888)--(8888);
        \draw[line width=2pt,forest] (6666) to [out=90,in=-90] (555) to [out=90,in=90] (888)--(8888);
        \draw[line width=2pt,purple] (9)--(9999);
        \draw[line width=2pt,purple] (6.5,8)--(6.5,10);
        \draw[ultra thick, fill=lightgray] (2,6) rectangle (7,8);
        \node[scale=2] at (4.5,7){$f$};
        \node[below] at (6666){$1$};
        \node[below] at (5555){$2$};
        \node[below] at (4444){$3$};
        \node[below] at (2222){$m$};
        \node[above] at (6){$1$};
        \node[above] at (2){$m$};
        \node at (4.5,5.5){$\dots$};
        \node at (3.5,3.5){$\dots$};
        \node[blue, below] at (2,3) {$ D^{-a}L^b$};
        \node[purple] at (7,9) {$ W$};
        \node[purple] at (7,5) {$ W$};
    \end{tikzpicture}
};
\node[scale=2] at (4,0){$=$};
\node at (8,0){
        
    \begin{tikzpicture}[scale=0.8]
        \coordinate (w) at (1.5,6);
        \coordinate (z) at (1.5,8);
        \coordinate (y) at (6,9.5);
        \coordinate (ww) at (1,6);
        \coordinate (zz) at (1,8.5);
        \coordinate (yy) at (5.5,10);
        \foreach \x in {0,...,11} \coordinate (\x) at (2+\x*.5,6);
        \foreach \x in {0,...,11} \coordinate (\x\x) at (2+\x*.5,5);
        \foreach \x in {0,...,11} \coordinate (\x\x\x) at (2+\x*.5,4);
        \foreach \x in {0,...,11} \coordinate (\x\x\x\x) at (2+\x*.5,3);
        \draw[blue, line width=2pt](1) to [out=-90,in=90] (1111);
        \draw[line width=5pt, white](2) to [out=-90,in=90] (33) to [out=-90,in=0] (2,4.5) to [out=180, in=-90] (ww) to [out=90,in=-90] (zz) to [out=90, in=-90] (yy) to [out=90,in=-90] (5.5,10);
        \draw[line width=2pt,orange](3) to [out=-90,in=0] (11) to [out=180, in=-90] (w) to [out=90,in=-90] (z) to [out=90, in=-90] (y) to [out=90,in=-90] (6,10);
        \draw[line width=5pt,white](2) to [out=-90,in=90] (33) to [out=-90,in=0] (2,4.5) to [out=180, in=-90] (ww) to [out=90,in=-90] (zz) to [out=90, in=-90] (yy) to [out=90,in=-90] (5.5,10);
        \draw[line width=2pt,forest](2) to [out=-90,in=90] (33) to [out=-90,in=0] (2,4.5) to [out=180, in=-90] (ww) to [out=90,in=-90] (zz) to [out=90, in=-90] (yy) to [out=90,in=-90] (5.5,10);

        \draw[line width=2pt](4) --(44) to [out=-90,in=90] (222)--(2222);
        \draw[line width=2pt](6) --(66) to [out=-90,in=90] (444)--(4444);
        \draw[line width=2pt,orange] (5555) to [out=90,in=-90] (555) to [out=90,in=90] (888)--(8888);
        \draw[line width=2pt,forest] (6666) to [out=90,in=-90] (666) to [out=90,in=90] (777)--(7777);
        \draw[line width=2pt,purple] (9)--(9999);
        \draw[line width=2pt,purple] (6.5,8)--(6.5,10);
        \draw[ultra thick, fill=lightgray] (2,6) rectangle (7,8);
        \node[scale=2] at (4.5,7){$f$};
        \node[below] at (6666){$1$};
        \node[below] at (5555){$2$};
        \node[below] at (4444){$3$};
        \node[below] at (2222){$m$};
        \node[above] at (6){$1$};
        \node[above] at (2){$m$};
        \node at (4.5,5.5){$\dots$};
        \node at (3.5,3.5){$\dots$};
        \node[blue, below] at (2,3) {$ D^{-a}L^b$};
        \node[purple] at (7,9) {$ W$};
        \node[purple] at (7,5) {$ W$};
    \end{tikzpicture}
};
\end{tikzpicture}
\end{center}

We can slide the crossing around the torus, or equivalently use the equivalence
\[(\Id_U\otimes \check{R}_{V,V}\otimes \Id_W)\circ(\Id_{V\otimes V}\otimes f) \sim (\Id_{V\otimes V}\otimes f) \circ (\check{R}_{V,V}\otimes \Id_W),\]
from Lemma~\ref{lem_torus_skein_module}.

For relation~\eqref{pi_X_m}, we draw the embeddings in the solid torus, as in Figure~\ref{fig_hom_torus}. In order to simplify the picture, the torus is only represented by its core, noted $*$, and the band $U$ on its boundary.
Then one has to remember that the lines we draw are actually bands, and take into account the twists which may appear in those bands.
\begin{equation}\label{draw_pi_Xm_1}
    \begin{tikzpicture}[scale=0.8]
        \coordinate (p) at (5.5,5.5);
        \coordinate (w) at (1.5,6);
        \coordinate (z) at (1.5,8);
        \coordinate (y) at (5.1339,5.5);
        \coordinate (ww) at (1,6);
        \coordinate (zz) at (1,8.5);
        \coordinate (yy) at (6,5);
        \coordinate (www) at (1,6);
        \coordinate (zzz) at (1,8);
        \coordinate (yyy) at (6.7,9.5);
        \foreach \x in {0,...,11} \coordinate (\x) at (2+\x*.5,6);
        \foreach \x in {0,...,11} \coordinate (\x\x) at (2+\x*.5,5);
        \foreach \x in {0,...,11} \coordinate (\x\x\x) at (2+\x*.5,4);
        \foreach \x in {0,...,11} \coordinate (\x\x\x\x) at (2+\x*.5,3);
        \draw[blue, line width=2pt](1) to [out=-90,in=90] (2.5,5.25);
            \draw[purple,line width=2pt] (5,8) to [out=90,in=-135] ({6-1.414*.5},{8+1.414*.5});
                \draw[line width=5pt,white](2) to [out=-90,in=180] (y) to [out=0,in=180] (p) to [out=0,in=90] (yy);
                \draw[line width=2pt](2) to [out=-90,in=180] (4,5.5)-- (y) to [out=0,in=180] (p) to [out=0,in=90] (yy) to [out=-90,in=0] (5.5,4.5)-- (3,4.5);
            \draw[purple,line width=2pt] ({6-1.414*.5},{8+1.414*.5}) to[out=45,in=180] (6,9) to[out=0,in=90](7,8) to [out=-90,in=90] (7,6) to [out=-90,in=0] (6,5) to [out=180,in=-90] (5,6);
        \draw[white, line width=5pt](2.5,5.25) --(2.5,4.5);
        \draw[blue, line width=2pt](2.5,5.25) --(2.5,3);
                \draw[white, line width=5pt](3,4.5) to [out=180,in=-90] (www);
                \draw[line width=2pt](3,4.5) to [out=180,in=-90]  (www)--(zzz) to [out=90,in=135] (yyy) to [out=-45,in=90] (7.5,7) to [out=-90,in=0](6,4) to [out=180,in=90] (6666);
        \draw[line width=2pt] (3)--(3333);
        \draw[line width=2pt] (5)--(5555);
                \draw[line width=5pt,white](2) to [out=-90,in=180] (4,5.5)-- (y) to [out=0,in=180] (p) to [out=0,in=90] (yy);
                \draw[line width=2pt](2) to [out=-90,in=180] (4,5.5)-- (y) to [out=0,in=180] (p) to [out=0,in=90] (yy);
            \draw[white,line width=5pt] ({6-1.414*.5},{8+1.414*.5}) to[out=45,in=180] (6,9) to[out=0,in=90](7,8) to [out=-90,in=90] (7,6) to [out=-90,in=0] (6,5) to [out=180,in=-45] (5.292,5.292);
            \draw[purple,line width=2pt] ({6-1.414*.5},{8+1.414*.5}) to[out=45,in=180] (6,9) to[out=0,in=90](7,8) to [out=-90,in=90] (7,6) to [out=-90,in=0] (6,5)to [out=180,in=-45] (5.292,5.292);
        \draw[line width=5pt, white] (33)--(3333);
        \draw[line width=5pt, white] (55)--(5555);
        \draw[line width=2pt] (33)--(3333);
        \draw[line width=2pt] (55)--(5555);
        \draw[ultra thick, fill=lightgray] (2,6) rectangle (5.5,8);
        \node[scale=2] at (4,7){$f$};
        \node[below] at (5,3) {$1$};
        \node[below] at (4.5,3) {$2$};
        \node[below] at (3.5,3){$m$};
        \node at (4,5){$\dots$};
        \node at (4,3.5){$\dots$};
        \node[scale=1.5] at (6,7){$*$};
        \draw[line width=2pt,forest] (2,3)--(5.5,3);
        \node[forest] at (6,3) {$U$};
        \node[purple] at (6.5,8) {$W$};
        \node[below,blue] at (2.5,3){$D^{-a}L^b$};
        \node[scale=1.5] at (4,11) {$\pi X_m\cdot f$};
        \node[scale=1.5] at (9,6) {$=$};
\begin{scope}[xshift=10cm]
        \coordinate (p) at (5.5,8.5);
        \coordinate (w) at (1.5,6);
        \coordinate (z) at (1.5,8);
        \coordinate (y) at (5.1339,8.5);
        \coordinate (ww) at (1,6);
        \coordinate (zz) at (1,8.5);
        \coordinate (yy) at (5.5,8.866);
        \coordinate (www) at (.5,6);
        \coordinate (zzz) at (.5,9);
        \coordinate (yyy) at (6.7,9.5);
        \foreach \x in {0,...,11} \coordinate (\x) at (2+\x*.5,6);
        \foreach \x in {0,...,11} \coordinate (\x\x) at (2+\x*.5,5);
        \foreach \x in {0,...,11} \coordinate (\x\x\x) at (2+\x*.5,4);
        \foreach \x in {0,...,11} \coordinate (\x\x\x\x) at (2+\x*.5,3);
        \draw[blue, line width=2pt](1) to [out=-90,in=90] (2.5,5.25);
            \draw[purple,line width=2pt] (5,8) to [out=90,in=-135] ({6-1.414*.5},{8+1.414*.5});
            \draw[line width=5pt,white](2) to [out=-90,in=-90] (w) --(z) to [out=90,in=135] (y) to [out=-45,in=-135] (p);
                \draw[line width=2pt](2) to [out=-90,in=-90] (w) --(z) to [out=90,in=135] (y) to [out=-45,in=-135] (p);
                \draw[white, line width=5pt] (p) to [out=45,in=-45] (yy) to [out=135,in=90] (zz);
                \draw[line width=2pt] (p) to [out=45,in=-45] (yy) to [out=135,in=90] (zz);
                \draw[white,line width=5pt] ({6-1.414*.5},{8+1.414*.5}) to[out=45,in=180] (6,9) to[out=0,in=90](7,8) to [out=-90,in=90] (7,6) to [out=-90,in=0] (6,5) to [out=180,in=-90] (5,6);
            \draw[purple,line width=2pt] ({6-1.414*.5},{8+1.414*.5}) to[out=45,in=180] (6,9) to[out=0,in=90](7,8) to [out=-90,in=90] (7,6) to [out=-90,in=0] (6,5) to [out=180,in=-90] (5,6);
                \draw[line width=2pt](zz) --(ww) to [out = -90,in=180] (2.5,5) to [out=0,in=90] (3,4.5) ;
        \draw[white, line width=5pt](2.5,5.25) --(2.5,4.5);
        \draw[blue, line width=2pt](2.5,5.25) --(2.5,3);
                \draw[white, line width=5pt](3,4.5) to [out=-90,in=0] (2.5,4) to [out=180,in=-90] (www);
                \draw[line width=2pt](3,4.5) to [out=-90,in=0] (2.5,4) to [out=180,in=-90] (www)--(zzz) to [out=90,in=135] (yyy) to [out=-45,in=90] (7.5,7) to [out=-90,in=0](6,4) to [out=180,in=90] (6666);
        \draw[line width=2pt] (3)--(3333);
        \draw[line width=2pt] (5)--(5555);
        \draw[ultra thick, fill=lightgray] (2,6) rectangle (5.5,8);
        \node[scale=2] at (4,7){$f$};
        \node[below] at (5,3) {$1$};
        \node[below] at (4.5,3) {$2$};
        \node[below] at (4.5,3) {$2$};
        \node[below] at (3.5,3){$m$};
        \node at (4,5.5){$\dots$};
        \node at (4,3.5){$\dots$};
        \draw[ultra thick,fill=lightgray] (-0.25,7) rectangle (.75,8);
        \node at (.25,7.5){$\theta_V^{-1}$};
        \node[scale=1.5] at (6,7){$*$};
        \draw[line width=2pt,forest] (2,3)--(5.5,3);
        \node[forest] at (6,3) {$U$};
        \node[purple] at (6.5,8) {$W$};
        \node[below,blue] at (2.5,3){$D^{-a}L^b$};
        \end{scope}
    \end{tikzpicture}
\end{equation}

During this operation, a (negative) twist has appeared in the leftmost $V$ strand, which correspond to multiplying the strand by the inverse of the ribbon element $\theta_V^{-1}$.

We assume without loss of generality that $0 \leq b < N$, then using \eqref{D_lambda} and \eqref{L_lambda}, $D^{-a}\otimes L^b = L(\lambda)$ and $V\otimes D^{-a}\otimes L^b = L(\mu)$, where 
\begin{align*}
\lambda & = (\underbrace{K-a, \ldots, K-a}_{b},-a,-a,\ldots, -a),\\
\mu & = (\underbrace{K-a, \ldots, K-a}_{b},1-a,-a,\ldots, -a).
\end{align*}
Using Corollary~\ref{cor_lambda_mu_nu}, $\check{R}_{D^{-a}\otimes L^b, V}^{-1}\check{R}_{V,D^{-a}\otimes L^b}^{-1}= (\check{R}_{V,D^{-a}\otimes L^b}\check{R}_{D^{-a}\otimes L^b, V})^{-1}$ acts on $V\otimes D^{-a}\otimes L^b$ by multiplication by the constant
\[ v^{\left\langle \varepsilon_1, \varepsilon_1 + 2\rho\right\rangle + \left\langle \lambda, \lambda + 2\rho\right\rangle - \left\langle \mu, \mu + 2\rho\right\rangle} = q^{a+b}=t^{-1}.\]
Thus 

\begin{equation}\label{eq:t relation skein}
\begin{tikzpicture}[baseline=-1cm]
\begin{knot}[clip width =4]
\strand[line width =2pt] (0,0) to[out= 270, in=90] (1,-1) to[out= 270, in=90] (0,-2);
\strand[line width =2pt,blue] (0.5,0) to (0.5,-2);
\flipcrossings{1}
\end{knot}
\node[scale=1.5] at (2,-1) {$= t^{-1}\cdot$};
\draw[line width =2pt] (3,0) to (3,-2);
\draw[line width =2pt,blue] (3.5,0) to (3.5,-2);
\node[above] at (0,0) {$V$};
\node[above] at (3,0) {$V$};
\node[xshift = 0.5cm, yshift= 0.3cm,blue] at (0.5,0) {$D^{-a} L^b$};
\node[xshift = 0.5cm, yshift= 0.3cm,blue] at (3.5,0) {$D^{-a} L^b$};
\end{tikzpicture}
\end{equation}

Hence, we can replace the left picture of \eqref{draw_pi_Xm_1} by

\begin{equation}\label{draw_pi_Xm_2}
    \begin{tikzpicture}[scale=0.8]
        \coordinate (p) at (5.5,5.5);
        \coordinate (w) at (1.5,6);
        \coordinate (z) at (1.5,8);
        \coordinate (y) at (5.1339,5.5);
        \coordinate (ww) at (1,6);
        \coordinate (zz) at (1,8.5);
        \coordinate (yy) at (6,5);
        \coordinate (www) at (.5,6);
        \coordinate (zzz) at (.5,9);
        \coordinate (yyy) at (6.7,9.5);
        \foreach \x in {0,...,11} \coordinate (\x) at (2+\x*.5,6);
        \foreach \x in {0,...,11} \coordinate (\x\x) at (2+\x*.5,5);
        \foreach \x in {0,...,11} \coordinate (\x\x\x) at (2+\x*.5,4);
        \foreach \x in {0,...,11} \coordinate (\x\x\x\x) at (2+\x*.5,3);
                    \draw[blue, line width=2pt](1) to [out=-90,in=90] (2.5,5.25);
            \draw[purple,line width=2pt] (5,8) to [out=90,in=-135] ({6-1.414*.5},{8+1.414*.5});
            \draw[purple,line width=2pt] ({6-1.414*.5},{8+1.414*.5}) to[out=45,in=180] (6,9) to[out=0,in=90](7,8) to [out=-90,in=90] (7,6) to [out=-90,in=0] (6,5) to [out=180,in=-90] (5,6);
                \draw[white,line width=5pt] (2) to [out=-90,in=-90] (ww) -- (zz) to[out=90,in=135] (yyy) to [out=-45,in=45] (6.5,8.5) to [out=-135,in=180] (7,7.5) to [out=0,in=90] (7.5,6) to [out=-90,in=0] (6,4.5) to [out=180,in=90] (5,4) -- (5,3);
                \draw[line width=2pt] (2) to [out=-90,in=-90] (ww) -- (zz) to[out=90,in=135] (yyy) to [out=-45,in=45] (6.5,8.5) to [out=-135,in=180] (7,7.5) to [out=0,in=90] (7.5,6) to [out=-90,in=0] (6,4.5) to [out=180,in=90] (5,4) -- (5,3);
                    \draw[white, line width=5pt](2.5,5.25) --(2.5,4.5);
                    \draw[blue, line width=2pt](2.5,5.25) --(2.5,3);
                    \draw[line width=2pt] (3)--(3333);
                    \draw[line width=2pt] (5)--(5555);
            \draw[white,line width=5pt](7,8) to [out=-90,in=90] (7,6) to [out=-90,in=0] (6,5) to [out=180,in=-45] (5.292,5.292);
            \draw[purple,line width=2pt](7,8) to [out=-90,in=90] (7,6) to [out=-90,in=0] (6,5)to [out=180,in=-45] (5.292,5.292);
                    \draw[line width=5pt, white] (33)--(3333);
                    \draw[line width=5pt, white] (55)--(5555);
                    \draw[line width=2pt] (33)--(3333);
                    \draw[line width=2pt] (55)--(5555);
        \draw[ultra thick, fill=lightgray] (2,6) rectangle (5.5,8);
        \node[scale=2] at (4,7){$f$};
        \node[below] at (5,3) {$1$};
        \node[below] at (4.5,3) {$2$};
        \node[below] at (3.5,3){$m$};
        \node at (4,5.5){$\dots$};
        \node at (4,3.5){$\dots$};
        \draw[ultra thick,fill=lightgray] (0.5,7) rectangle (1.5,8);
        \node at (1,7.5){$\theta_V^{-1}$};
        \node[scale=1.5] at (6,7){$*$};
        \draw[line width=2pt,forest] (2,3)--(5.5,3);
        \node[scale=1.5] at (0,6.5){$t^{-1}\cdot$};
        \node[forest] at (6,3) {$U$};
        \node[purple] at (6,8.5) {$W$};
        \node[below,blue] at (2.5,3){$D^{-a}L^b$};
        \node[scale=1.5] at (8.3,6) {$=$};
\begin{scope}[xshift=9.5cm]
        \coordinate (p) at (5.5,5.5);
        \coordinate (w) at (1.5,6);
        \coordinate (z) at (1.5,8);
        \coordinate (y) at (5.1339,5.5);
        \coordinate (ww) at (1,6);
        \coordinate (zz) at (1,8.5);
        \coordinate (yy) at (5.5,5);
        \coordinate (www) at (.5,6);
        \coordinate (zzz) at (.5,9);
        \coordinate (yyy) at (6.7,9.5);
        \foreach \x in {0,...,11} \coordinate (\x) at (2+\x*.5,6);
        \foreach \x in {0,...,11} \coordinate (\x\x) at (2+\x*.5,5);
        \foreach \x in {0,...,11} \coordinate (\x\x\x) at (2+\x*.5,4);
        \foreach \x in {0,...,11} \coordinate (\x\x\x\x) at (2+\x*.5,3);
                    \draw[blue, line width=2pt](1) to [out=-90,in=90] (2.5,5.25);
            \draw[purple,line width=2pt] (5,8) to [out=90,in=-135] ({6-1.414*.5},{8+1.414*.5});
            \draw[purple,line width=2pt] ({6-1.414*.5},{8+1.414*.5}) to[out=45,in=180] (6,9) to[out=0,in=90](7,8) to [out=-90,in=90] (7,6) to [out=-90,in=0] (6,5) to [out=180,in=-90] (5,6);
                \draw[line width=5pt,white] (2) to [out=-90,in=-90] (ww) -- (zz) to[out=90,in=135] (yyy) to [out=-45,in=90] (7.5,6) to [out=-90,in=0] (6,4.5) to [out=180,in=-45] (5,5) to [out=135,in=90] (4.5,4.5) to [out=-90,in=-90] (5.5,4.5) to [out=90,in=-90] (yy) to [out=90,in=180] (6,5.5) to [out=0,in=90] (6.5,5) to [out=-90,in=90] (6.5,4) to [out=-90,in=90] (5,3.5)--(5,3);
                \draw[line width=2pt] (2) to [out=-90,in=-90] (ww) -- (zz) to[out=90,in=135] (yyy) to [out=-45,in=90] (7.5,6) to [out=-90,in=0] (6,4.5) to [out=180,in=-45] (5,5) to [out=135,in=90] (4.5,4.5) to [out=-90,in=-90] (5.5,4.5) to [out=90,in=-90] (yy) to [out=90,in=180] (6,5.5) to [out=0,in=90] (6.5,5) to [out=-90,in=90] (6.5,4) to [out=-90,in=90] (5,3.5)--(5,3);
                    \draw[white, line width=5pt](2.5,5.25) --(2.5,4.5);
                    \draw[blue, line width=2pt](2.5,5.25) --(2.5,3);
                    \draw[line width=2pt] (3) to [out=-90,in=90] (22)--(2222);
                    \draw[line width=2pt] (5)to [out=-90,in=90] (44)--(4444);
            \draw[white,line width=5pt](7,8) to [out=-90,in=90] (7,6) to [out=-90,in=0] (6,5) to [out=180,in=-45] (5.292,5.292);
            \draw[purple,line width=2pt](7,8) to [out=-90,in=90] (7,6) to [out=-90,in=0] (6,5)to [out=180,in=-45] (5.292,5.292);
                \draw[line width=5pt,white] (7.5,6) to [out=-90,in=0] (6,4.5);
                \draw[line width=2pt] (7.5,6) to [out=-90,in=0] (6,4.5);
                
                \draw[line width=5pt,white] (4.5,4.5) to [out=-90,in=-90] (5.5,4.5) to [out=90,in=-90] (yy) to [out=90,in=180] (6,5.5);
                \draw[line width=2pt] (4.5,4.5) to [out=-90,in=-90] (5.5,4.5) to [out=90,in=-90] (yy) to [out=90,in=180] (6,5.5);
        \draw[ultra thick, fill=lightgray] (2,6) rectangle (5.5,8);
        \node[scale=2] at (4,7){$f$};
        \node[below] at (5,3) {$1$};
        \node[below] at (4,3) {$2$};
        \node at (3.7,5.5){$\dots$};
        \node at (3.5,3.5){$\dots$};
        \node[scale=1.5] at (6,7){$*$};
        \draw[line width=2pt,forest] (2,3)--(5.5,3);
        \node[scale=1.5] at (0,6.5){$t^{-1}\cdot$};
        \node[forest] at (6,3) {$U$};
        \node[purple] at (6.5,8) {$W$};
        \node[below,blue] at (2.5,3){$D^{-a}L^b$};
\end{scope}
    \end{tikzpicture}
\end{equation}
In the picture on the right of \eqref{draw_pi_Xm_2}, the loop on the bottom introduces a new twist in the strand and it simplifies with the twist which appeared in \eqref{draw_pi_Xm_1}. In the picture on the right, we recognize $X_1 \pi\cdot f$, which concludes the proof.

\end{proof}

\subsection{Application: proof of the Morton-Samuelson conjecture}

In \cite{morton2021dahas} the algebra $\ddot{H}_{q,t}(m)$ was identified with the group algebra of braids on the torus $T^2\setminus\{*\}$ modulo certain relations. This algebra is then mapped to another algebra denoted by $\Sk_n(T^2,*)$ (Theorem 4.1 in op.cit.). By definition, $\Sk_n(T^2,*)$ is an algebra over $\BC[s^{\pm 1}, c^{\pm 1}, v^{\pm 1}]$ consists of formal linear combinations of ribbon graphs in $T^2\times I$ avoiding the so-called \emph{base string} $*\times I$ modulo isotopy, skein relations and a relation that allows a band going on one side of the base string to be replaced by a band going on the other side multiplied by $c^2$ (see Section 4 in \cite{morton2021dahas}). The bands have $m$ inputs on $T^2\times \{0\}$ and $m$ outputs on $T^2\times \{1\}$. Then the following is shown:
\begin{thm}[Theorem 4.2, \cite{morton2021dahas}]\label{thm:ms} The homomorphism 
\[
\ddot{H}_{q,t} \otimes \BC[v^{\pm 1}] \otimes_{\BC[q^{\pm 1},t^{\pm 1}]} \BC(s,c) \to \Sk_n(T^2,*)\otimes_{\BC[s^{\pm 1},c^{\pm 1}]} \BC(s,c)
\]
is surjective\footnote{In \cite{morton2021dahas} the theorem is formulated without the localization $\otimes_{\BC[s^{\pm 1},c^{\pm 1}]} \BC(s,c)$, but localization is clearly necessary, for instance an unknot is equivalent to $\frac{v^{-1}-v}{s-s^{-1}}$, which has denominator. Another point is that tensoring with $\BC[v^\pm]$ is clearly assumed but not explicitly stated in the original formulation.}.
\end{thm}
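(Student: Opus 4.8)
The plan is to take the homomorphism of Theorem~4.1 of \cite{morton2021dahas} as given (its construction already verifies that the defining relations of Definition~\ref{def_DAHA} hold among the displayed skein elements, under the identification $q=s^2$ and the prescribed substitution for $t$ in terms of $s$ and $c$), and to prove surjectivity by exhibiting a generating set of $\Sk_n(T^2,*)\otimes_{\BC[s^{\pm1},c^{\pm1}]}\BC(s,c)$ all of whose members lie in the image. Under that map $T_i$ goes to the elementary positive crossing of the $i$-th and $(i{+}1)$-st through-bands, $\pi^{\pm1}$ to the elementary cyclic-shift braid (with the outermost band carried once around the base string), $X_i^{\pm1}$ to the $i$-th band winding once positively resp.\ negatively around the longitude of $T^2$, and the Bernstein--Lusztig elements $Y_i^{\pm1}$ (which are words in the $T_j$ and $\pi^{\pm1}$) to single bands winding around the meridian. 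Hence it suffices to show that $\Sk_n(T^2,*)\otimes\BC(s,c)$ is generated as an algebra by crossings of adjacent bands, single-band longitude and meridian windings, and cyclic shifts.

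To see this, let $\Gamma$ be a ribbon graph in $T^2\times I$ avoiding the base string, isotoped so that the height function $T^2\times I\to I$ restricts to a Morse function on it with distinct critical values. Then $\Gamma$ is a vertical stacking of elementary slices, each carrying a single feature: a crossing, a local maximum or minimum, a band arc winding once around a loop generating $\pi_1(T^2\setminus\{*\})$, or a rearrangement of the cyclic order of the bands. Conjugating by cyclic shifts moves any crossing to a crossing of adjacent bands; the HOMFLY skein relation $v^{-1}\beta_+-v\beta_-=(s-s^{-1})\beta_0$ rewrites each crossing as a $\BC(s,c)$-combination of the two elementary crossings and the oriented smoothing; and the base-string relation normalizes each winding arc to run on a fixed side of $*\times I$ at the cost of a power of $c^2$. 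After these steps every winding slice is a single-band longitude or meridian winding and every crossing is elementary.

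It remains to dispose of closed components and local extrema. A closed component contractible in the punctured torus is isotoped clear of the through-bands, its self-crossings resolved by the skein relation, and it is deleted against the unknot scalar $\tfrac{v^{-1}-v}{s-s^{-1}}$ times a power of $c$; that scalar has a denominator, which is exactly why the theorem must be stated over the fraction field rather than over $\BC[s^{\pm1},c^{\pm1}]$. A non-contractible closed component is straightened to a standard longitude or meridian loop and then absorbed into one of the $m$ through-bands: pushed to cross band $j$ and resolved with the HOMFLY relation, it becomes a $\BC(s,c)$-combination of the identity, of band $j$ carrying a longitude (resp.\ meridian) winding, i.e.\ of $1$ and $X_j^{\pm1}$ (resp.\ $Y_j^{\pm1}$), and of terms with strictly fewer loop--band crossings. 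Finally, once everything above and below is straightened, a matched maximum--minimum pair on a single through-band bounds an embedded disk and is removed by an isotopy rel boundary, while an extremum still linked with other bands is first traded for crossing data, already handled. Iterating, $\Gamma$ is reduced to a $\BC(s,c)$-combination of products of elementary crossings, single-band windings and cyclic shifts, proving surjectivity.

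The hard part is making the last reduction terminate. One must order the moves and fix a complexity measure (number and type of closed components, total winding, number of extrema, number of crossings) so that absorbing a non-contractible loop into a through-band, or trading a linked extremum for crossings, strictly decreases complexity without creating new obstructed loops or new non-monotone pieces; this requires tracking the free structure of $\pi_1(T^2\setminus\{*\})$ together with the base-string relation (which identifies the two sides of the puncture up to $c^2$). Organizing this bookkeeping is the technical content of Theorem~4.2 of \cite{morton2021dahas}, and it is also what forces the simultaneous passage to the fraction field $\BC(s,c)$ on both sides of the map.
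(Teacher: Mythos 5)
This statement is not proved in the paper at all: as the theorem header indicates, it is quoted verbatim from Morton--Samuelson (their Theorem 4.2), and the paper only adds the footnote about the necessity of localizing at $\BC(s,c)$ and of adjoining $v$. The paper's own contribution is the subsequent injectivity theorem for $\varphi$, proved via the faithful representation $\CW$; surjectivity is taken as an external input. So there is no proof in the paper to compare yours against, and the relevant question is whether your sketch would stand on its own as a proof of the cited result.

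As it stands it does not, and you essentially say so yourself. Your outline (put the tangle in Morse position, use cyclic shifts to make crossings adjacent, resolve crossings with the HOMFLY relation $v^{-1}\beta_+-v\beta_-=(s-s^{-1})\beta_0$, normalize passes around the base string by powers of $c^2$, delete null-homotopic closed components against the unknot scalar $\frac{v^{-1}-v}{s-s^{-1}}$, absorb non-contractible closed components into through-bands, cancel extrema) is the expected shape of the argument and correctly identifies why the fraction field is needed. But the entire mathematical content of the theorem is the claim that this rewriting process terminates and lands in the span of braid words: one needs a complexity measure under which resolving a loop--band crossing or trading a linked extremum for crossings strictly decreases, and one must check that the base-string relation and the creation of new crossings by isotopy do not spoil the induction. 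You explicitly defer exactly this to ``the technical content of Theorem~4.2 of \cite{morton2021dahas}'', which makes the proposal circular as a proof of that theorem. Two smaller points would also need care if you wanted to complete it: the images of the Bernstein--Lusztig elements $Y_i$ are not needed for surjectivity and invoking them only obscures which generators you must hit; and ``absorbing'' a non-contractible closed component via a single skein resolution produces, besides the merged terms, a term in which the loop persists on the other side of the band, so without the missing complexity argument this step alone does not visibly make progress.
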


It is conjectured in op.cit. that the homomorphism above is an isomorphism. Denote 
\[
\ddot{H}_{s,c,v}=\ddot{H}_{q,t} \otimes_{\BC[q^{\pm 1},t^{\pm 1}]} \BC[s^{\pm 1},c^{\pm 1}] \otimes\BC[v^{\pm 1}]
\]
where the variables $q,t$ are related to $s,c$ as follows:
\[
q=s^2,\qquad t=c^2.
\]
Let $\varphi$ be the homomorphism before taking the localization:
\[
\varphi: \ddot{H}_{s,c,v} \to \Sk_n(T^2,*).
\]
\begin{thm}
The homomorphism $\varphi$ is injective.
\end{thm}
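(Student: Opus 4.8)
The plan is to deduce injectivity of $\varphi$ from the faithfulness result already established in Proposition~\ref{prop:faithful}, by factoring a suitable evaluation of $\varphi$ through the faithful representation $\CW=\bigoplus_{aN-bK=m}W_{(K,N,a,b)}$. The point is that each summand $W_{(K,N,a,b)}$ was identified in \eqref{eq:S_U} with a ribbon skein module $S_U$ of the solid torus, where the generators of $\ddot{H}_{q,t}(m)$ act by concrete ribbon-graph operations (the $T_i$ and $X_1$ via the braidings $R_i,R_0^2$ of Proposition~\ref{prop_AHA_ribbon}, and $\pi^{\pm 1}$ via \eqref{act_pi} and \eqref{act_pi-1}). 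These operations are precisely the images of the corresponding generators of $\Sk_n(T^2,*)$ under gluing the solid torus into $T^2\times I$ along a neighborhood of the base string; in other words, there is a natural module map
\[
\Sk_n(T^2,*) \longrightarrow \End_{\BC}(S_U)
\]
sending a ribbon graph in $T^2\times I$ to the operator ``insert the graph below $f$'' on $S_U$, and by construction this map intertwines $\varphi$ with the $\ddot{H}_{q,t}(m)$-action on $W_{(K,N,a,b)}$, once the ground ring scalars $s,c,v$ are specialized so that $q=s^2$, $t=c^2=q^{-a-b}$, and $v$ is the chosen square root of $q$.

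First I would make the previous paragraph precise: I would construct, for each choice $(K,N,a,b)$ together with a compatible specialization of $(s,c,v)$, an algebra homomorphism $\rho_{K,N,a,b}\colon \Sk_n(T^2,*)\to\End_\BC(S_U)$, using the slicing construction of Lemma~\ref{lem_torus_skein_module}: a ribbon graph $\gamma$ in $T^2\times I\smallsetminus(*\times I)$ acts on a graph $f$ embedded in the solid torus (as in Figure~\ref{fig_hom_torus}) by stacking $\gamma$ underneath and reclosing; the skein relations, isotopy, and the base-string relation all hold in $\End_\BC(S_U)$ because $\CC_\CF$ is a ribbon category and the base string corresponds to the puncture $*$. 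Then I would check that $\rho_{K,N,a,b}\circ\varphi$ equals the action of $\ddot{H}_{q,t}(m)$ on $W_{(K,N,a,b)}$ transported through \eqref{eq:S_U}; this is a generator-by-generator comparison which is exactly the content of Propositions~\ref{prop_AHA_ribbon} and the proposition establishing the $\pi^{\pm 1}$-action, so nothing new is needed beyond bookkeeping of the scalars.

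Given this, the argument concludes as follows. Suppose $\xi\in\ddot{H}_{s,c,v}$ lies in $\ker\varphi$. For every allowed $(K,N,a,b)$ and every compatible numerical specialization of $(s,c,v)$, the element $\rho_{K,N,a,b}(\varphi(\xi))=0$ acts as the image of $\xi$ on $W_{(K,N,a,b)}$; hence the specialization of $\xi$ acts as zero on $\CW=\bigoplus W_{(K,N,a,b)}$. By Proposition~\ref{prop:faithful}, $\CW$ is a faithful $\ddot{H}_{q,t}(m)$-module, so the specialized $\xi$ is zero in $\ddot{H}_{q,t}(m)$ for each such specialization; since (as in the density argument in the proof of Proposition~\ref{prop:faithful}) the admissible specializations $(q,t)=(s^2,c^2)$ with $q$ a root of unity of order $N+K$ and $t=q^{-a-b}$ are Zariski dense, and $\varphi$ is defined over the polynomial ring $\BC[s^{\pm1},c^{\pm1},v^{\pm1}]$ with $v^2=q$, it follows that $\xi=0$ in $\ddot{H}_{s,c,v}$. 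Thus $\varphi$ is injective.

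The main obstacle I expect is the construction and well-definedness of $\rho_{K,N,a,b}$ together with the precise matching of ground-ring scalars: one must verify that the base-string relation of $\Sk_n(T^2,*)$ (moving a band past $*$ at the cost of $c^2$) corresponds, under the solid-torus model, exactly to the relation $\check R\check R$ acting by $t^{-1}=q^{a+b}$ computed via Corollary~\ref{cor_lambda_mu_nu} in the proof of the $\pi X_m$ relation, and that the skein evaluation of an unknot as $\tfrac{v^{-1}-v}{s-s^{-1}}$ is consistent with the ribbon calculus in $\CC_\CF$. Once the dictionary between the $\Sk_n(T^2,*)$ generators and the ribbon-graph operators on $S_U$ is pinned down, the rest is a direct application of faithfulness plus the Zariski-density argument already present in the paper.
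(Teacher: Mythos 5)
Your overall strategy --- factoring the $\ddot{H}_{s,c,v}$-action on $\CW$ through $\varphi$ using the skein-module realization $W_{(K,N,a,b)}\simeq S_U$, then invoking faithfulness --- is the same as the paper's, and the geometric bookkeeping you sketch (realizing the $\Sk_n(T^2,*)$-action on $S_U$ by stacking in $T^2\times I$, matching the base-string relation with the scalar $t^{-1}$ via Corollary~\ref{cor_lambda_mu_nu}) is what the paper does. However, there is a genuine gap in how you handle the variable $v$.

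You assert that ``$\varphi$ is defined over the polynomial ring $\BC[s^{\pm1},c^{\pm1},v^{\pm1}]$ with $v^2=q$.'' This is false: in the paper's definition $\ddot{H}_{s,c,v}=\ddot{H}_{q,t}\otimes_{\BC[q^{\pm1},t^{\pm1}]}\BC[s^{\pm1},c^{\pm1}]\otimes\BC[v^{\pm1}]$, the relations imposed are $q=s^2$ and $t=c^2$, and $v$ is an \emph{independent} third variable. On each specific $W_{(K,N,a,b)}$ (at a primitive $(K+N)$-th root of unity $q$, etc.), $v$ acts not as a free parameter but by a definite scalar, namely $s^{-N}$, which depends on the chosen summand. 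So an element $\xi\in\ker\varphi$ could a priori be something like a polynomial in $v-s^{-N_0}$ that vanishes on every $W_{(K,N,a,b)}$ in your list yet is nonzero in $\ddot{H}_{s,c,v}$. Your density argument therefore only establishes density of the attainable pairs $(q,t)=(s^2,c^2)$, not of the attainable triples $(s,c,v)$. Proposition~\ref{prop:faithful} gives faithfulness over $\ddot{H}_{q,t}$; it does not automatically extend to $\ddot{H}_{s,c,v}$, and the extension is exactly what requires the new work. The paper handles this by a three-variable Zariski density statement: after suitable substitutions the problem reduces to showing that the union of the curves $C_r=\{(z,z^r,z^{r^2})\mid z\in\BC\}$ over positive integers $r$ is Zariski dense in $\BC^3$, which it proves by a degree-counting argument on a putative vanishing polynomial. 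You would need to add an argument of this kind to close the gap.
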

\begin{proof}
Let us call a representation $W$ of $\ddot{H}_{s,c,v}$ \emph{ribbon} if the homomorphism $\ddot{H}_{s,c,v}\to \End(W)$ factors through $\varphi$. Suppose we have a ribbon representation $W$ which is faithful. Then the composition
\[
\ddot{H}_{q,t} \otimes \BC[v^\pm] \xrightarrow{\varphi} \Sk_n(T^2,*) \to \End(W)
\]
is injective, which implies injectivity of $\varphi$, so we are done.

For each $K,N,a,b$ the representation $W_{(K,N,a,b)}$ after the base change to $\BC[s^{\pm1},c^{\pm1}]$ is isomorphic to the ribbon module $S_U$ (see \eqref{eq:S_U}), which is naturally a module for the skein algebra $\Sk_n(T^2,*)$: the action is obtained by placing $T\times I$ on the boundary of the solid torus and connecting bands in the solid torus which represent elements of $S_U$ with bands in $T\times I$ representing elements of $\Sk_n(T^2,*)$. The base string is labeled by the representation $D^{-a} L^b$ and therefore satisfies the local relation, see \eqref{eq:t relation skein}. Thus we see that $W_{(K,N,a,b)}$ is ribbon and therefore the direct sum $\CW=\bigoplus_{aN-bK=m} W_{(K,N,a,b)}$ is ribbon.

By Proposition \ref{prop:faithful}, $\CW$ is a faithful representation of $\ddot{H}_{q,t}$. We need to show that it remains a faithful representation when passing to the larger algebra $\ddot{H}_{s,c,v}$. Note that the action of $v$ is given by $s^{-N}$. Similarly to the proof of Proposition \ref{prop:faithful}, it is enough to show that the set of triples $(s,c,v)$ is Zariski dense in $\BC^3$ where $c=s^{m(u+v)}$, $v=s^{-N}$, $s^2=q$, $q$ is a primitive $m(N+\alpha)$-th root of unity and $u,N,v,\alpha$ range over the set of solutions of $uN-v\alpha=1$, $N$ and $\alpha$ sufficiently large. It is sufficient to prove that the image of this set under the map 
\[
(s,c,u)\to (s^{2m},c^2,u^{-2m})=(q^m,q^{m(u+v)},q^{mN})
\]
is dense, so replacing $q^m$ by $q$ we reduce the claim to the following statement: The set of triples $(q,q^{u+v},q^N)$ is Zariski dense in $\BC^3$ where $q$ runs over the primitive $(N+\alpha)$-th roots of unity and $u,v,N,\alpha$ are as above.

We have $(u+v)N-v(\alpha+N)=1$. In particular, $u+v$ and $\alpha+N$ are relatively prime and we can replace $q$ by $q^{u+v}$. We then have
\[
(q^{u+v}, q^{(u+v)^2},q^{N(u+v)})=(q^{u+v}, q^{(u+v)^2},q).
\]
As in the proof of Proposition \ref{prop:faithful} such triples for fixed $u,v$ are Zariski dense on the curve $\{(z^{u+v}, z^{(u+v)^2},z)\,|\,z\in\BC\}$. So it remains to show that the union of curves
\[
C_r = \{(z,z^r,z^{r^2})\,|\,z\in\BC\}
\]
where $r$ ranges over positive integers are Zariski dense in $\BC^3$. Suppose this is not the case, i.e. there exists a polynomial
\[
P(x,y,z)=\sum_{i,j,k<d} c_{ijk} x^i y^j z^k\qquad(c_{ijk}\in \BC)
\]
vanishing on all these curves. Take $r$ larger than $d$. Then $i+rj+r^2 k$ are all distinct for distinct triples $(i,j,k)$ satisfying $i,j,k<d$ and so $P(z,z^r,z^{r^2})$ cannot vanish. Contradiction.
\end{proof}

\begin{cor}
Morton-Samuelson's conjecture is true, i.e. the homomorphism in Theorem \ref{thm:ms} is an isomorphism.
\end{cor}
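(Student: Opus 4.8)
The plan is to obtain the isomorphism immediately by combining the two ingredients already in hand: the surjectivity supplied by Theorem~\ref{thm:ms}, and the injectivity of $\varphi$ established in the preceding theorem. First I would observe that the homomorphism appearing in the statement of Theorem~\ref{thm:ms} is nothing but the base change of $\varphi$ along the localization map $\BC[s^{\pm 1},c^{\pm 1}]\to\BC(s,c)$. Indeed, by the very definition of $\ddot{H}_{s,c,v}$ together with the relations $q=s^2$, $t=c^2$, one has a canonical identification
\[
\ddot{H}_{s,c,v}\otimes_{\BC[s^{\pm 1},c^{\pm 1}]}\BC(s,c)\;\cong\;\ddot{H}_{q,t}\otimes\BC[v^{\pm 1}]\otimes_{\BC[q^{\pm 1},t^{\pm 1}]}\BC(s,c),
\]
so that the map of Theorem~\ref{thm:ms} is precisely $\varphi$ tensored up with $\BC(s,c)$ over $\BC[s^{\pm 1},c^{\pm 1}]$ (composed with the evident localization on $\Sk_n(T^2,*)$).

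Next I would invoke the exactness of localization: since $\BC(s,c)$ is a flat $\BC[s^{\pm 1},c^{\pm 1}]$-module, the functor $-\otimes_{\BC[s^{\pm 1},c^{\pm 1}]}\BC(s,c)$ carries injections to injections, so the injectivity of $\varphi$ (the preceding theorem) forces the localized map to be injective as well. Theorem~\ref{thm:ms} already provides its surjectivity. A homomorphism of algebras that is both injective and surjective is an isomorphism, which is exactly the content of Morton--Samuelson's conjecture.

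There is essentially no genuine obstacle at this final step: the entire weight of the argument rests on the preceding theorem, whose proof in turn rests on the faithfulness of $\CW$ over $\ddot{H}_{q,t}$ (Proposition~\ref{prop:faithful}) upgraded to faithfulness over the larger ring $\ddot{H}_{s,c,v}$ via the Zariski-density computation with the curves $C_r=\{(z,z^r,z^{r^2})\}$. The only point requiring any care in the present deduction is the bookkeeping of base rings in the identification displayed above, and the standard fact that localization is exact; neither is a real difficulty.
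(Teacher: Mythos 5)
Your argument is exactly the paper's own: identify the map in Theorem~\ref{thm:ms} as the localization of $\varphi$, use flatness of localization to transport injectivity, and combine with the already-known surjectivity. Your version simply spells out the base-change identification in more detail than the paper's one-line proof.
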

\begin{proof}
Since $\varphi$ is injective and localization is flat, the homomorphism in Theorem \ref{thm:ms} is injective.
\end{proof}

\bibliographystyle{plain}
\bibliography{references}

\end{document}